\newtheorem{thm}{Theorem}[section]
\newtheorem{cor}[thm]{Corollary}
\newtheorem{lem}[thm]{Lemma}
\newtheorem{prop}[thm]{Proposition}
\newtheorem{rem}[thm]{Remark}
\newtheorem{exam}[thm]{Example}
\newcommand{\rhou}{\textrm{\raisebox{0.6mm}{$\rho $}}}
\newcommand{\chiu}{\textrm{\raisebox{0.6mm}{$\chi $}}}
\numberwithin{equation}{section}
\begin{document}

\title{\bf \Large Polynomial invariants for a semisimple and cosemisimple Hopf algebra of finite dimension 
\centerline{\small \it Dedicated to Professor Noriaki Kawanaka on the occasion of his 60th birthday}
} 
\author{Michihisa Wakui \thanks{E-mail address: wakui@ipcku.kansai-u.ac.jp} \\ \centerline{\small Department of Mathematics, Faculty of Engineering Science, Kansai University,} \\ 
\centerline{\small Suita-shi, Osaka 564-8680, Japan}}
\date{preliminary version,\ June 28, 2009}

\maketitle 
\begin{abstract}
We introduce new polynomial invariants of a finite-dimensional semisimple and cosemisimple Hopf algebra $A$ over a field $\boldsymbol{k}$ by using the braiding structures of $A$. 
We investigate basic properties of the polynomial invariants including stability under extension of the base field. 
Furthermore, we show that our polynomial invariants are indeed tensor invariants of the representation category of $A$, and recognize the difference of the representation category and the representation ring of $A$. 
Actually, by computing and comparing polynomial invariants, we find new examples of pairs of Hopf algebras whose representation rings are isomorphic, but representation categories are distinct. 
\end{abstract}

{\small {\bf Mathematics Subject Classifications (2000):} 16W30, 18D10, 19A49}

\baselineskip 16pt

\section{Introduction}
In representation theory of Hopf algebras over a field $\boldsymbol{k}$ it is a fundamental problem to know conditions for that the representation categories of two given Hopf algebras are equivalent as (abstract) $\boldsymbol{k}$-linear monoidal categories. 
A complete answer for this is given by Schauenburg~\cite{Sc2, Sc4}. 
He introduced a notion of bi-Galois extensions, 
and showed that the monoidal equivalences of comodule categories over Hopf algebras are classified  by bi-Galois extensions of the base field $\boldsymbol{k}$. 
If the Hopf algebra $A$ is of finite dimension, then this is equivalent to that $A$ and  $B$ are cocycle deformations of each other, which are introduced by Doi~\cite{Doi}. 
Many researchers have been successful to determine the bi-Galois objects and the  cocycle deformations for various special families of Hopf algebras, e.g. \cite{Mas0, DT, Sc3, Mas2}. 
However, it is very difficult to do so in general. 
\par 
In this paper we introduce a new family of invariants of a semisimple and cosemisimple Hopf algebra of finite dimension by using the braiding structures of it, 
and show that our invariants are useful for examining whether the representation categories of two such Hopf algebras are monoidal equivalent or not. 
\par 
The basic idea of our method is to utilize quantum invariants of low-dimensional manifolds, that are topological invariants defined by using quantum groups, namely, Hopf algebras with braiding structures. 
In contrast to most of current investigations on quantum invariants in which 
topological problems of low-dimensional manifolds are studied under a fixed Hopf algebra, in this  research, we fix a framed knot or link, and study on representation categories of Hopf algebras. 
In particular, in this paper, by use of quantum invariants of the unknot with $(+1)$-framing we introduce polynomials $P_A^{(d)}(x)\ (d=1,2,\ldots )$ as invariants of  a finite-dimensional semisimple and cosemisimple Hopf algebra $A$ over $\boldsymbol{k}$. 
For each positive integer $d$ the polynomial $P_A^{(d)}(x)$ is defined by 
$$P_A^{(d)}(x)=\prod\limits_{i=1}^t\prod\limits_{R\textrm{\,:\,braidings of $A$}}\Bigl( x-\frac{\underline{\dim }_R M_i}{\dim M_i}\Bigr)\ \ \in \ \ \boldsymbol{k}[x],$$
where $\{ M_1,\ldots ,M_t\} $ is a full set of non-isomorphic absolutely simple left $A$-modules of dimension $d$ (so, $\dim M_i=d$ for all $i$), and $\underline{\dim }_R M_i\in \boldsymbol{k}$ is the quantum invariant of unknot with $(+1)$-framing and colored by $M_i$. 
In algebraic language, $\underline{\dim }_R M_i$ is the category-theoretic rank of $M_i$ in the left  rigid braided monoidal category $({}_A\mathbb{M}^{\textrm{f.d.}}, c_R)$ \cite{Maj}, 
where ${}_A\mathbb{M}^{\textrm{f.d.}}$ is the monoidal category of finite-dimensional left $A$-modules and $A$-linear maps, and $c_R$ is the braiding of ${}_A\mathbb{M}^{\textrm{f.d.}}$ determined by $R$. 
\par 
Provided that the polynomial $P_A^{(d)}(x)$ is not a constant, 
 all roots of $P_A^{(d)}(x)$ are $n$-th roots of unity for some positive integer $n$. 
Furthermore the polynomial has nice properties such as integer property and  stability under extension of the base field as follows.
All coefficients of the polynomial are integers if $\boldsymbol{k}$ is a finite Galois extension of the rational number field $\mathbb{Q}$, and $A$ is a scalar extension of some finite-dimensional semisimple Hopf algebra over $\mathbb{Q}$. 
The polynomial is stabilized by taking some suitable extension of the base field, more precisely, there is a finite separable field extension $L/\boldsymbol{k}$ so that  $P_{A^E}^{(d)}(x)=P_{A^L}^{(d)}(x)$ for any field extension $E/L$. 
\par 
It is more interesting to note that our polynomial invariants are indeed invariants of representation categories of Hopf algebras, and recognize the difference of  representation categories and representation rings of that. 
In general, if representation categories of two finite-dimensional semisimple Hopf algebras are equivalent as monoidal categories, then their representation rings are isomorphic. 
However, the converse is not true. 
For example, by Tambara and Yamagami~\cite{TY}, and also Masuoka~\cite{Mas2}, it has proved that if the characteristic of $\boldsymbol{k}$ is $0$ or $p>2$, then 
three non-commutative and semisimple Hopf algebras $\boldsymbol{k}[D_8], \boldsymbol{k}[Q_8], K_8$ of dimension $8$ have the same representation ring, but their representation categories are not mutually equivalent, where $D_8$ is the dihedral group of order $8$, $Q_8$ is the quaternion group, and $K_8$ is the Kac-Paljutkin algebra \cite{KP, Mas1}. 
This result is confirmed by our polynomial invariants, again. 
Moreover, by computing and comparing polynomial invariants we find new examples of pairs of Hopf algebras, whose representation rings are the same, but representation categories are distinct. 
\par 
This paper consists of six sections in total, and they are divided into two parts except this introduction:  
from Section 2 to Section 4 the definition and general properties of the polynomial invariants are discussed, and from Section 5 to the final section several concrete examples are computed, and applications are described. 
Detailed contents are as follows. 
In Section~2 we introduce the definition of our polynomial invariants of a semisimple and cosemisimple Hopf algebra of finite dimension. It is proved that the polynomial invariants are indeed invariants of the representation category of such a Hopf algebra. 
In Section~3 some basic properties of polynomial invariants are studied. 
It is shown that the polynomial invariants have nice properties such as integer property and  stability under extension of the base field. 
In Section~4 by dualizing the method of construction of our polynomial invariants, we state a formula to compute them in terms of coalgebraic and comodule-categorical language. 
In Section~5 we demonstrate computations of polynomial invariants for several Hopf algebras including the Hopf algebras $A_{Nn}^{+\lambda }$ ($N$ is odd, and $\lambda =\pm 1$), that are introduced by Satoshi Suzuki \cite{Suz}, and 
by comparing them we re-prove the Tambara, Yamagami and Masuoka's result as previously mentioned, and also find some pairs of Hopf algebras, whose representation rings are isomorphic, but representation categories are distinct. 
In the final section as an appendix we determine the structures of representation rings of the Hopf algebras $A_{Nn}^{+\lambda }$, and determine when they are self-dual, which are used in Section 5. 
\par 
Throughout this paper, we use the notation $\otimes $ instead of $\otimes _{\boldsymbol{k}}$, and denote by $\textrm{ch}(\boldsymbol{k})$ the characteristic of the field $\boldsymbol{k}$. 
For a Hopf algebra $A$, denoted by $\Delta $, $\varepsilon $ and $S$ are 
the coproduct, the counit, and the antipode of $A$,  respectively, and 
$G(A)$ is the group consisting of the group-like elements in $A$, and $A^{\textrm{cop}}$ is the resulting Hopf algebra from $A$ by replacing $\Delta$ by the  opposite coproduct $\Delta ^{\textrm{cop}}$. 
We write ${}_A\mathbb{M}$ for the $\boldsymbol{k}$-linear monoidal category whose objects are left $A$-modules and morphisms are left $A$-linear maps, and 
write ${}^A\mathbb{M}$ for the $\boldsymbol{k}$-linear monoidal category whose objects are left $A$-comodules and morphisms are left $A$-colinear maps. 
For general references on Hopf algebras we refer to Abe's book \cite{A}, Montgomery's book \cite{Mon} and Sweedler's book \cite{Sw}. 
For general references on monoidal categories we refer to MacLane's book \cite{Mac} and Joyal and Street's paper \cite{JS}. 

\section{Definition of polynomial invariants}
\par 
In this section we introduce a new family of invariants of a semisimple and cosemisimple Hopf algebra of finite dimension over an arbitrary field. 
They are given by polynomials derived from the quasitriangular  structures of the Hopf algebra. 
By the method of construction of the polynomials they also become invariants under monoidal equivalence of representation categories of Hopf algebras. 
\par 
Let us recall the definition of a quasitriangular Hopf algebra \cite{Dri}. 
Let $A$ be a Hopf algebra over a field $\boldsymbol{k}$, and $R \in A\otimes A$ an invertible element. 
The pair $(A, R)$ is said to be a \textit{quasitriangular Hopf algebra}, and $R$ is said to be a \textit{universal $R$-matrix} of $A$, if the following three conditions are satisfied:
\begin{enumerate}
\item[$\bullet$] $\Delta ^{\textrm{cop}}(a)=R\cdot \Delta (a)\cdot R^{-1} \ \ \mbox{ for all } a\in A$, 
\item[$\bullet$] $(\Delta \otimes \textrm{id})(R)=R_{13}R_{23}$, 
\item[$\bullet$] $(\textrm{id}\otimes \Delta )(R)=R_{13}R_{12}$. 
\end{enumerate}
Here $\Delta ^{\textrm{cop}}=T\circ \Delta $, $T: A\otimes A\longrightarrow A\otimes A, \ T(a\otimes b)=b\otimes a$, and $R_{ij}\in A\otimes A\otimes A$ is given by $R_{12}=R\otimes 1, \ R_{23}=1\otimes R,\ R_{13}=(T\otimes \textrm{id})(R_{23})=(\textrm{id}\otimes T)(R_{12})$. 
\par 
If $R=\sum _{i}\alpha _i\otimes \beta _i$ is a universal $R$-matrix of $A$, then   the element $u=\sum _{i}S(\beta _i)\alpha _i$ of $A$ is invertible, and it 
has the following properties: 
\begin{enumerate}
\item[(i)] $S^2(a)=uau^{-1}$ for all $a\in A$, 
\item[(ii)] $S(u)=\sum_{i}\alpha _iS(\beta_i)$. 
\end{enumerate}

The above element $u$ is called the \textit{Drinfel'd element} associated to $R$. 
If $A$ is semisimple and cosemisimple of finite dimension, then the Drinfel'd element $u$ belongs to the center of $A$ by the property (i) and $S^2=\textrm{id}_A$ \cite[Corollary 3.2(i)]{EG2}. 
\par 
Let $(A, R)$ be a quasitriangular Hopf algebra over a field $\boldsymbol{k}$ and $u$ the Drinfel'd element associated to $R$. 
For a finite-dimensional left $A$-module $M$, we denote by $\underline{\dim}_R\kern0.1em M$ the trace of the left action of $u$ on $M$, and call it the   \textit{$R$-dimension} of $M$. 
The $R$-dimension $\underline{\dim}_R\kern0.1em M$ is a special case of the braided dimension of $M$ in the left rigid braided monoidal category $({}_A\mathbb{M}^{\text{f.d.}}, c_R)$ (see Section 4 for the definition of braided dimensions). 

\par  
To define polynomial invariants, we  use the following result established by Etingof and Gelaki \cite{EG2}. 

\par \smallskip 
\begin{thm}[{\bf Etingof and Gelaki}]\label{2.1}
Let $A$ be a cosemisimple Hopf algebra of finite dimension over a  field $\boldsymbol{k}$. Then 
\par 
(1) the set of universal $R$-matrices $\underline{\rm{Braid}}(A)$ is finite, 
\par 
(2) provided that $A$ is semisimple, $(\dim M)1_{\boldsymbol{k}}\not= 0$ for any absolutely simple left $A$-module $M$.  
\end{thm}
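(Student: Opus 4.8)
The plan is to prove the two assertions of Theorem~\ref{2.1} separately, treating the second as a consequence of a dimension/trace argument once the finiteness of $\underline{\rm{Braid}}(A)$ is in hand; throughout we lean heavily on the hypothesis that $A$ is cosemisimple of finite dimension, which by a theorem of Larson--Radford forces $S^2=\textrm{id}_A$ (and in the semisimple case makes $A$ a separable algebra), and we pass to the algebraic closure $\overline{\boldsymbol{k}}$ whenever it is convenient since universal $R$-matrices and absolutely simple modules behave well under scalar extension.

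For part (1), I would first observe that a universal $R$-matrix $R\in A\otimes A$ is completely determined by the induced braiding $c_R$ on ${}_A\mathbb{M}^{\textrm{f.d.}}$, and in fact by its components on the simple modules; so it suffices to bound the set of possible braidings. The key structural input is that, since $A$ is finite-dimensional cosemisimple (hence $A^*$ is a finite-dimensional semisimple Hopf algebra), the convolution algebra of the relevant Hom-spaces is finite-dimensional and semisimple, and the conditions defining a universal $R$-matrix are polynomial (indeed, after base change, algebraic) equations in the finitely many structure constants. Concretely, writing $R=\sum \alpha_i\otimes\beta_i$ in a fixed basis, the hexagon/quasi-cocommutativity axioms cut out an affine variety $\underline{\rm{Braid}}(A)\subseteq A\otimes A$; to see it is \emph{finite} rather than merely closed, I would use that each $R$ has finite order in a suitable sense — the Drinfel'd element $u$ associated to $R$ satisfies $S(u)u\in Z(A)$ and, because $S^2=\textrm{id}$, one gets strong constraints (as in Etingof--Gelaki's integrality results) forcing the eigenvalues of the braiding operators to be roots of unity of bounded order; combined with the fact that the braiding on each isotypic block is a root of unity times a fixed normalization, only finitely many tuples can occur. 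This is the step I expect to be the main obstacle: making precise why the continuous moduli collapse, i.e. why $\underline{\rm{Braid}}(A)$ has no positive-dimensional components. The cleanest route is probably to invoke that $A^*$ is semisimple so $D(A)$ (the Drinfel'd double) is semisimple, quasitriangular structures on $A$ correspond to certain Hopf algebra maps $D(A)\to$ (something finite), and the set of such maps is finite because a semisimple Hopf algebra has only finitely many such quotients/images — or, alternatively, to cite directly the relevant proposition from \cite{EG2}.

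For part (2), assume now $A$ is also semisimple, and let $M$ be an absolutely simple left $A$-module, so $\textrm{End}_A(M)=\boldsymbol{k}$ and $M\otimes\overline{\boldsymbol{k}}$ is simple over $A\otimes\overline{\boldsymbol{k}}$. The statement $(\dim M)1_{\boldsymbol{k}}\neq 0$ is equivalent to $\textrm{ch}(\boldsymbol{k})\nmid \dim M$ (when $\textrm{ch}(\boldsymbol{k})=0$ there is nothing to prove), so we may assume $\textrm{ch}(\boldsymbol{k})=p>0$. The idea is to compute the image under the $A$-action map of a two-sided integral: since $A$ is semisimple and cosemisimple, there is a two-sided integral $\Lambda\in A$ with $\varepsilon(\Lambda)=1_{\boldsymbol{k}}$ (cosemisimplicity gives $\varepsilon(\Lambda)\neq 0$, semisimplicity gives the existence of the integral), and by a standard argument the scalar by which the central primitive idempotent attached to $M$ acts, together with the trace formula for the regular representation, yields that $\dim_{\boldsymbol{k}}M$ divides $\dim_{\boldsymbol{k}}A$ in a way visible $1$-to-$1$ modulo $p$; more directly, one shows $\chi_M(\Lambda)=(\dim M)1_{\boldsymbol{k}}$ for the character $\chi_M$ and, using that $\Lambda$ acts as a nonzero idempotent-type element and that $M$ is a direct summand of $A$, that this scalar cannot vanish. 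The honest reference here is again \cite{EG2}, whose proof runs through the fact that $A$ being semisimple and cosemisimple over $\boldsymbol{k}$ implies it stays so over $\overline{\boldsymbol{k}}$, where one can apply the Class Equation / Nichols--Zoeller type divisibility to conclude $p\nmid\dim M$; I would present that argument, noting that absolute simplicity is exactly what is needed to transport the conclusion from $\overline{\boldsymbol{k}}$ back to $\boldsymbol{k}$. The main subtlety in this part is the interplay of the two semisimplicity hypotheses — neither alone suffices in positive characteristic — and correctly citing the Larson--Radford and Etingof--Gelaki inputs that guarantee semisimplicity and cosemisimplicity persist under base field extension.
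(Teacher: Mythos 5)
The paper does not reprove this theorem from scratch: Part~(1) is cited to \cite[Corollary 1.5]{EG2}, with a self-contained argument supplied later in Remark~\ref{3.12}, and Part~(2) is cited to \cite[Corollary 3.2(ii)]{EG2}, which rests on $S^2=\mathrm{id}_A$ together with Larson's \cite[Theorem 2.8]{L}. Your overall shape (realize $\underline{\rm Braid}(A)$ inside a scheme of Hopf algebra maps; for (2) reduce to $S^2=\mathrm{id}$ and a character-theoretic statement) is the right one, but the concrete mechanisms you propose in place of the citations do not work.

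For Part~(1): you correctly observe that the axioms cut out an affine scheme, and you correctly identify the real issue as ruling out positive-dimensional components, but neither of your two suggested mechanisms does this. Knowing that the eigenvalues of the braiding operators (equivalently, of the Drinfel'd element) are roots of unity of bounded order does not bound the set of $R$-matrices: a positive-dimensional family of operators can share a fixed finite spectrum, so ``only finitely many tuples can occur'' does not follow. Likewise, ``a semisimple Hopf algebra has only finitely many such quotients/images'' is essentially the statement to be proved, not an available input. The actual mechanism, which is what Proposition~\ref{3.9} supplies, is a tangent-space computation: via Radford's injection $j_A:\underline{\rm Braid}(A)\hookrightarrow \mathrm{Hopf}_{\boldsymbol{k}}(A^{\ast},A^{\rm cop})$ (Lemma~\ref{3.10}), one reduces to the functor $\textbf{Hopf}(A^{\ast},A^{\rm cop})$, whose representing object $Z$ satisfies $\mathrm{Der}_{\phi}(\bar Z,\bar{\boldsymbol{k}})=0$ at every point $\phi$ because $A^{\ast}$ is semisimple and $A^{\rm cop}$ is cosemisimple; hence $\mathfrak m_{\phi}=\mathfrak m_{\phi}^2$ for every maximal ideal, $\bar Z$ is reduced Artinian, and $Z$ is finite \'etale. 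Without this (or an equivalent rigidity statement) your argument has a genuine hole exactly where you flagged it.

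For Part~(2): the central formula you propose, $\chiu_M(\Lambda)=(\dim M)1_{\boldsymbol{k}}$ for a two-sided integral $\Lambda$ with $\varepsilon(\Lambda)=1$, is false. Since $a\Lambda=\varepsilon(a)\Lambda$, the element $\Lambda$ acts on $M$ as the idempotent projection onto the invariants $M^A$, so $\chiu_M(\Lambda)=(\dim M^A)1_{\boldsymbol{k}}$, which is $0$ for every nontrivial absolutely simple $M$; this cannot detect $\dim M$. The appeal to ``Nichols--Zoeller type divisibility'' of $\dim M$ in $\dim A$ is also not available: that divisibility is Kaplansky's conjecture and is not known for general semisimple cosemisimple Hopf algebras. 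The correct route is the one the paper cites: semisimplicity and cosemisimplicity force $S^2=\mathrm{id}_A$ \cite[Corollary 3.2(i)]{EG2}, and then Larson's \cite[Theorem 2.8]{L} (an orthogonality-of-characters argument, not an integral-acting-as-$\dim M$ argument) gives $(\dim M)1_{\boldsymbol{k}}\neq 0$ for absolutely simple $M$. Your remark that absolute simplicity is what transports the conclusion between $\boldsymbol{k}$ and $\overline{\boldsymbol{k}}$ is correct, but the core computation needs to be replaced.
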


\par \smallskip 
\begin{rem}\rm 
The proof of Part (1) was given in \cite[Corollary 1.5]{EG2} (see also Remark~\ref{3.12} in Section 3). 
In an extra case such as characteristic $0$ or positive characteristic with some additional assumptions, it was proved by Radford~\cite[Theorem 1]{Ra2}. 
The proof of Part (2) was given in \cite[Corollary 3.2(ii)]{EG2}. 
The Etingof and Gelaki's proof is based on a Larson's result \cite[Theorem 2.8]{L}, that is the same as Part (2) with the assumption $S^2=\textrm{id}_A$.  
\end{rem}

\par \smallskip 
Let $A$ be a semisimple and cosemisimple Hopf algebra of finite dimension over a  field $\boldsymbol{k}$. 
For a finite-dimensional left $A$-module $M$ with 
$(\dim M)1_{\boldsymbol{k}}\not= 0$, we have a polynomial 
$$P_{A,M}(x):=\prod\limits_{R\in \underline{\textrm{Braid}}(A)}\Bigl( x-\frac{\underline{\dim }_R M}{\dim M}\Bigr)\ \ \in \ \ \boldsymbol{k}[x].$$

Furthermore, for each positive integer $d$ a polynomial $P_A^{(d)}(x)$ is defined by 
$$P_A^{(d)}(x):=\prod\limits_{i=1}^tP_{A,M_i}(x)\ \ \in \ \ \boldsymbol{k}[x],$$
where $\{ M_1,\ldots ,M_t\} $ is a full set of non-isomorphic absolutely simple left $A$-modules of dimension $d$.  
Here, if there is no absolutely simple left $A$-module of dimension $d$, then we set  
$P_A^{(d)}(x)=1$. 

\par \smallskip 
\begin{exam}\label{2.3} \rm 
Let $G=C_m$ be the cyclic group of order $m$ generated by $g$, and let $\boldsymbol{k}$ be a field whose characteristic does not divide $m$. 
Suppose that $\boldsymbol{k}$ contains a primitive $m$-th root of unity $\omega $.  
Then any universal $R$-matrix of the group Hopf algebra $\boldsymbol{k}[C_m]$ is given by 
\begin{equation}\label{eq2.3}
R_d=\sum\limits_{i,j=0}^{m-1} \omega ^{dij}E_i\otimes E_j\qquad (d=0,1,\ldots , m-1),
\end{equation}
where 
$E_i=\frac{1}{m}\sum_{j=0}^{m-1} \omega ^{-ij} g^j$ (see \cite{Ra3} for example). 
Let $M_j=\boldsymbol{k}$ be the (absolutely) simple left $\boldsymbol{k}[C_m]$-module equipped with the action $\chi_j(g^p)=\omega ^{jp}\ (p=0,1,\cdots ,m-1)$. 
For each $d$ and $i$, then $\underline{\dim}_{R_d}M_j=\omega ^{-dj^2}$ since 
the Drinfel'd element $u_d$ of $R_d$ is given by 
$u_d=\sum_{i=0}^{m-1} \omega ^{-di^2}E_i$. 
Thus we have 
$$P_{\boldsymbol{k}[C_m]}^{(1)}(x)=\prod_{d,j=0}^{m-1}(x-\omega ^{-dj^2})=\prod_{j=0}^{m-1}(x^{\frac{m}{\textrm{gcd}\kern0.1em (j^2,m)}}-1)^{\textrm{gcd}\kern0.1em (j^2,m)}.$$
\end{exam}

\par \medskip 
Two Hopf algebras $A$ and $B$ over $\boldsymbol{k}$ are said to be \textit{monoidally Morita equivalent} 
if monoidal categories ${}_A\mathbb{M}$ and ${}_B\mathbb{M}$ are equivalent as $\boldsymbol{k}$-linear monoidal categories. 

\par \smallskip 
\begin{lem}\label{2.2}
Let $A$ and $B$ be Hopf algebras of finite dimension over $\boldsymbol{k}$.  
If a $\boldsymbol{k}$-linear monoidal functor 
$F:{}_A\mathbb{M}\longrightarrow {}_B\mathbb{M}$ is an equivalence between  monoidal categories, 
then $\dim M=\dim F(M)$ for any finite-dimensional left $A$-module $M$. 
\end{lem}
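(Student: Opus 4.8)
The plan is to exploit the fact that a monoidal equivalence preserves duals, hence preserves the categorical (intrinsic) notion of dimension, and then to identify this categorical dimension with the vector-space dimension in a representation category. First I would recall that in the left rigid monoidal category ${}_A\mathbb{M}^{\textrm{f.d.}}$ every object $M$ has a left dual $M^\ast$ together with evaluation and coevaluation morphisms; the composite $\mathbf{1}\xrightarrow{\textrm{coev}} M\otimes M^\ast \xrightarrow{\textrm{ev}'} \mathbf{1}$ (using the right evaluation coming from $S^2$, or equivalently the pivotal structure) is multiplication by a scalar, and because $A$ is semisimple and cosemisimple one has $S^2=\textrm{id}_A$, so this scalar is exactly $(\dim_{\boldsymbol k} M)1_{\boldsymbol k}$. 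The key point is that this construction is phrased entirely in terms of the monoidal structure (objects, tensor product, unit, and the duality morphisms), so a $\boldsymbol{k}$-linear monoidal equivalence $F$ sends the evaluation/coevaluation data of $M$ to that of $F(M)$ and therefore sends the scalar endomorphism $(\dim_{\boldsymbol k} M)1_{\boldsymbol k}$ of $\mathbf 1_{{}_A\mathbb M}$ to $(\dim_{\boldsymbol k} F(M))1_{\boldsymbol k}$ times $\textrm{id}_{F(\mathbf 1)}$.

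Concretely, the steps would be: (1) observe that $F$, being a monoidal equivalence, preserves rigidity, so $F(M^\ast)$ is a left dual of $F(M)$ and $F$ carries $\textrm{ev}_M,\textrm{coev}_M$ to $\textrm{ev}_{F(M)},\textrm{coev}_{F(M)}$ up to the coherence isomorphisms of $F$; (2) note that $\textrm{End}_A(\boldsymbol{k})\cong\boldsymbol{k}$ and likewise for $B$, and that $F$ induces the identity-like isomorphism $\textrm{End}(\mathbf 1)\to\textrm{End}(F(\mathbf 1))$ compatible with the $\boldsymbol k$-algebra structure, so a scalar $c\,1_{\boldsymbol k}$ is sent to $c\,1_{\boldsymbol k}$; (3) compute that the categorical dimension (the scalar obtained from $\textrm{ev}\circ\textrm{coev}$ after composing with the canonical iso $M\simeq M^{\ast\ast}$ coming from $S^2=\textrm{id}$) of a left $A$-module equals $(\dim_{\boldsymbol k}M)1_{\boldsymbol k}$ — this is a direct trace computation using a basis of $M$ and its dual basis; (4) apply the same computation in ${}_B\mathbb M$ to $F(M)$ and conclude $(\dim_{\boldsymbol k}M)1_{\boldsymbol k}=(\dim_{\boldsymbol k}F(M))1_{\boldsymbol k}$ in $\boldsymbol k$; (5) upgrade the equality in $\boldsymbol k$ to an equality of integers.

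The main obstacle is step (5): the categorical argument only yields $(\dim_{\boldsymbol k}M)1_{\boldsymbol k}=(\dim_{\boldsymbol k}F(M))1_{\boldsymbol k}$, which in positive characteristic $p$ says only that $\dim_{\boldsymbol k}M\equiv\dim_{\boldsymbol k}F(M)\pmod p$. To remove the ambiguity I would argue by induction on $\dim_{\boldsymbol k} M$, using that $F$ is an equivalence of \emph{abelian} (or at least additive) categories so it preserves direct sums, short exact sequences, and sends simple modules to simple modules, together with the fact that $\boldsymbol k$-linearity forces $F$ to be exact; then it suffices to prove $\dim_{\boldsymbol k} S=\dim_{\boldsymbol k}F(S)$ for every simple $S$. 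For a simple $S$, semisimplicity of $A$ and $B$ lets one decompose $S\otimes S^\ast$ (resp. $F(S)\otimes F(S)^\ast$) and extract $\dim_{\boldsymbol k}S$ as a genuine multiplicity — the multiplicity of the trivial module in $S\otimes S^\ast$ relates to $\dim_{\boldsymbol k}\textrm{Hom}_A(\boldsymbol k, S\otimes S^\ast)=\dim_{\boldsymbol k}\textrm{Hom}_A(S,S)$, which by Theorem~\ref{2.1}(2) applied to absolutely simple modules (after a harmless separable base extension to split $A$ and $B$, which does not change vector-space dimensions) is nonzero in $\boldsymbol k$ and equals $1$ — and multiplicities are preserved by any $\boldsymbol k$-linear equivalence. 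Combining the congruence from the categorical computation with the bound coming from this multiplicity comparison pins down the dimensions exactly, completing the proof.
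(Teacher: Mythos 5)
Your approach is genuinely different from the paper's, which identifies ${}_A\mathbb{M}$ with ${}^{H}\mathbb{M}$ for $H=A^{\ast\mathrm{cop}}$, invokes Schauenburg's theorem to replace $F$ by $X\square_H(-)$ for a bi-Galois object $X$, and reads off $\dim F(M)=\dim M$ from $X\otimes(X\square_HM)\cong(X\otimes X)\square_HM\cong(X\otimes H)\square_HM\cong X\otimes M$ --- an argument needing no hypothesis beyond finite-dimensionality. Your plan has two genuine gaps. First, in steps (1)--(4): without a braiding, the scalar obtained by composing the left coevaluation with the right evaluation is a \emph{pivotal} invariant, not a monoidal one. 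A monoidal equivalence transports the identification $M\cong M^{\ast\ast}$ of ${}_A\mathbb{M}$ to \emph{some} pivotal structure on ${}_B\mathbb{M}$, but not necessarily to the standard one coming from $S_B^2=\mathrm{id}$; the two may differ by a central group-like element $g$ of $B$, in which case your computation yields $(\dim M)1_{\boldsymbol{k}}=\mathrm{Tr}_{F(M)}(g)$ rather than $(\dim F(M))1_{\boldsymbol{k}}$. (This is precisely why Lemma~\ref{2.4} inserts the braiding $c_{M,M^{\ast}}$ and works with braided monoidal functors.) Second, and more seriously, step (5) does not close: the multiplicity of the trivial module in $S\otimes S^{\ast}$ equals $\dim\mathrm{Hom}_A(S,S)=1$ for \emph{every} absolutely simple $S$, so it carries no information about $\dim S$; there is no bound to combine with the congruence $\dim S\equiv\dim F(S)\pmod p$, and the dimensions are not pinned down.

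If you want a proof in this categorical spirit, at least for semisimple $A$ and $B$ (which is all Theorem~\ref{2.5} requires), the right invariant is the Frobenius--Perron dimension: $\dim_{\boldsymbol{k}}$ is a ring homomorphism on the Grothendieck ring taking positive values on the simple classes, hence coincides with $\mathrm{FPdim}$ by uniqueness, and $\mathrm{FPdim}$ depends only on the Grothendieck ring, which any $\boldsymbol{k}$-linear monoidal equivalence preserves. That replaces both the pivotal argument and step (5) in one stroke. Note finally that the lemma as stated assumes nothing about semisimplicity of $A$ and $B$, so a complete proof of the statement as written must either follow the paper's bi-Galois route or appeal to Frobenius--Perron theory for general finite tensor categories.
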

\begin{proof}
We set $H:=A^{\ast\textrm{cop}},\ L:=B^{\ast\textrm{cop}}$.  
Then the monoidal categories ${}_A\mathbb{M}$ and ${}_B\mathbb{M}$ can be identified with the monoidal categories ${}^{H}\mathbb{M},\ {}^{L}\mathbb{M}$, respectively, and $F$ can be regarded  as a $\boldsymbol{k}$-linear monoidal functor from ${}^{H}\mathbb{M}$ to ${}^{L}\mathbb{M}$. 
By a Schauenburg's result \cite[Corollary 5.7]{Sc2}, there is an $(L,H)$-bi-Galois object $X$ and a monoidal natural equivalence $\xi : F\Longrightarrow F_0$, where $F_0$ is the $\boldsymbol{k}$-linear monoidal functor defined by 
$$F_0: {}^{H}\mathbb{M}\longrightarrow {}^{L}\mathbb{M},\quad M\longmapsto X\square _H M,$$
and $\square _H$ denotes the cotensor product over $H$. 
Thus for a finite-dimensional left $A$-module $M$, 
we have a $\boldsymbol{k}$-linear isomorphism 
\begin{align*}
X\otimes F(M)\cong X\otimes (X\square _H M)&\cong (X\otimes X)\square _HM \\  
&\cong (X\otimes H)\square _H M
\cong X\otimes (H\square _H M)\cong X\otimes M.
\end{align*}
This implies that $\dim F(M)=\dim M$. 
\end{proof}

\par \smallskip 
\begin{lem}\label{2.4}
Let $A$ and $B$ be Hopf algebras of finite dimension over $\boldsymbol{k}$.  
If a $\boldsymbol{k}$-linear monoidal functor 
$(F,\phi ,\omega ):{}_A\mathbb{M}\longrightarrow {}_B\mathbb{M}$ is an equivalence between monoidal categories, 
then there is a bijection $\Phi : \underline{\rm{Braid}}(A)\longrightarrow \underline{\rm{Braid}}(B)$ such that for a finite-dimensional left $A$-module $M$ and a universal $R$-matrix $R\in \underline{\rm{Braid}}(A)$, 
$$\underline{\dim}_R M=\underline{\dim}_{\Phi (R)}F(M).$$
\end{lem}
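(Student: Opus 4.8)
The plan is to transport the braiding structure through the monoidal equivalence and then translate the resulting braiding on ${}_B\mathbb{M}$ back into a universal $R$-matrix. First I would recall that universal $R$-matrices of a finite-dimensional Hopf algebra $A$ are in natural bijection with braidings on the monoidal category ${}_A\mathbb{M}$: given $R=\sum_i\alpha_i\otimes\beta_i\in\underline{\textrm{Braid}}(A)$, the braiding is $c_R^{M,N}(m\otimes n)=\sum_i(\beta_i\cdot n)\otimes(\alpha_i\cdot m)$, and conversely any natural family of isomorphisms satisfying the hexagon axioms is of this form (one recovers $R$ by evaluating the braiding $c_{A,A}$ on $1\otimes 1$, using that $A$ is a left $A$-module via the regular action and that ${}_A\mathbb{M}$ has enough objects — equivalently, ${}_A\mathbb{M}$ is the category of comodules over the finite-dimensional dual and one applies the standard Tannakian dictionary). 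Thus $\underline{\textrm{Braid}}(A)$ is identified with the set $\mathrm{Br}({}_A\mathbb{M})$ of braidings on the monoidal category ${}_A\mathbb{M}$, and likewise for $B$.

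Next, given the $\boldsymbol{k}$-linear monoidal equivalence $(F,\phi,\omega):{}_A\mathbb{M}\to{}_B\mathbb{M}$, I would define $\Phi$ by transport of structure: for a braiding $c$ on ${}_A\mathbb{M}$, define a braiding $\widetilde c$ on ${}_B\mathbb{M}$ by conjugating with $F$ and the monoidal constraint $\phi$. Concretely, choosing a monoidal quasi-inverse $(G,\psi)$ of $F$, one sets
$$\widetilde c_{X,Y}:=F\bigl(c_{G(X),G(Y)}\bigr)$$
suitably composed with the coherence isomorphisms $F(G(X))\cong X$, $F(G(Y))\cong Y$ and the monoidal constraints of $F$ and $G$, so that $\widetilde c$ is a natural family of isomorphisms $X\otimes Y\to Y\otimes X$. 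That $\widetilde c$ satisfies the hexagon axioms is a diagram chase using the hexagon axioms for $c$ together with the fact that $F$ (and $G$) are monoidal functors; this is routine but tedious, so I would either carry it out carefully or cite that braidings are transported along monoidal equivalences. Setting $\Phi(R):=$ the universal $R$-matrix corresponding to the braiding obtained from $c_R$ in this way gives the desired map, and applying the same construction to $G$ yields an inverse, so $\Phi$ is a bijection.

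Finally I would verify the $R$-dimension identity. The key point is that the braided (category-theoretic) dimension of an object in a left rigid braided monoidal category is preserved by braided monoidal equivalences, because it is defined purely in terms of the braiding, the duality (coevaluation and evaluation), and composition — all of which $F$ respects up to the coherence isomorphisms. Since, as noted in Section 2, $\underline{\dim}_R M$ equals the braided dimension of $M$ in $({}_A\mathbb{M}^{\textrm{f.d.}},c_R)$, and since by construction $F:({}_A\mathbb{M}^{\textrm{f.d.}},c_R)\to({}_B\mathbb{M}^{\textrm{f.d.}},c_{\Phi(R)})$ is a braided monoidal equivalence, we get $\underline{\dim}_R M=\underline{\dim}_{\Phi(R)}F(M)$. (Here one uses that ${}_B\mathbb{M}$ is left rigid, and that $F$ carries a left dual of $M$ to a left dual of $F(M)$ compatibly with the evaluation/coevaluation maps; this is automatic for a monoidal equivalence between rigid categories.)

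The main obstacle is the bookkeeping in the middle step: writing down $\widetilde c$ precisely with all the coherence isomorphisms $\phi$, $\omega$, $\psi$ inserted in the correct places, and checking that the hexagon axioms genuinely transport. Everything else — the identification $\underline{\textrm{Braid}}(A)\cong\mathrm{Br}({}_A\mathbb{M})$ and the invariance of braided dimension — is standard Tannakian/categorical machinery. In the write-up I would keep the coherence chase at the level of "transport of structure along a monoidal equivalence" and emphasize instead the two conceptual inputs: braidings $\leftrightarrow$ $R$-matrices, and braided dimension is a braided-monoidal invariant.
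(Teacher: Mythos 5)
Your proposal is correct and follows essentially the same route as the paper: transport the braiding $c_R$ along the monoidal equivalence to a braiding $c'$ on ${}_B\mathbb{M}$, recover $\Phi(R)$ from $c'$ evaluated on the regular module (the paper writes $\Phi(R)=(T\circ c'_{B,B})(1)$, with the twist $T$ you implicitly need), get bijectivity from the quasi-inverse, and conclude the dimension identity from the fact that $F$ is then a braided monoidal functor preserving duals, hence braided dimensions. The only cosmetic difference is that you package the first step as an explicit identification $\underline{\mathrm{Braid}}(A)\cong\mathrm{Br}({}_A\mathbb{M})$, which the paper uses implicitly.
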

\begin{proof}
Let $(G,\phi',\omega ' ):{}_B\mathbb{M}\longrightarrow {}_A\mathbb{M}$ be a quasi-inverse of $(F,\phi ,\omega )$. 
Then there are $\boldsymbol{k}$-linear monoidal natural transformations 
$\varphi :(F,\phi ,\omega )\circ (G,\phi',\omega ' )\Longrightarrow 1_{{}_B\mathbb{M}}$ and $\psi :(G,\phi ', \omega ')\circ (F,\phi ,\omega )\Longrightarrow 1_{{}_A\mathbb{M}}$, where $1_{\mathcal{\nu }}$ stands for the identity functor on $\mathcal{\nu }={}_A\mathbb{M}, {}_B\mathbb{M}$. 
\par 
A universal $R$-matrix $R=\sum_{i}\alpha _i\otimes \beta _i$ of $A$ defines a braiding $c=\{ c_{M,N}:M\otimes N\longrightarrow N\otimes M\} _{M,N\in {}_A\mathbb{M}}$ consisting of $A$-linear isomorphisms 
$$c_{M,N}(m\otimes n)=\sum\limits_{i}\beta _in\otimes \alpha _im\quad (m\in M,\ n\in N).$$ 
The braiding $c$ gives rise to a braiding $c^{\prime}$ of ${}_B\mathbb{M}$,  which consists of $B$-linear isomorphisms 
$c_{P,Q}^{\prime}:P\otimes Q\longrightarrow Q\otimes P\ (P,Q\in {}_B\mathbb{M})$ such that the following diagram commutes. 
\begin{equation*}
\begin{CD}
P\otimes Q @>\qquad c_{P,Q}'\qquad >> Q\otimes P \\ 
@A\varphi (P)\otimes \varphi (Q)AA  @AA\varphi (Q)\otimes \varphi (P)A \\ 
FG(P)\otimes FG(Q) @. FG(Q)\otimes FG(P) \\ 
@V\phi _{G(P), G(Q)}VV  @VV\phi _{G(Q), G(P)}V \\ 
F(G(P)\otimes G(Q)) @>F(c_{G(P),G(Q)})>> F(G(Q)\otimes G(P))
\end{CD}
\end{equation*}

\par \medskip 
Then $\Phi (R):=(T\circ c^{\prime}_{B,B})(1)$ is a universal $R$ -matrix of $B$, 
where $T:B\otimes B\longrightarrow B\otimes B$ is defined by $T(a\otimes b)=b\otimes a\ (a,b\in B)$. 
It is easy to see from definition that the map $\Phi :\underline{\textrm{Braid}}(A)\longrightarrow \underline{\textrm{Braid}}(B)$ defined as above is bijective. 
\par 
Let $M$ be a finite-dimensional left $A$-module, and $e_M$ and $n_M$ the evaluation and coevaluation morphisms defined by 
\begin{align*}
e_M:M^{\ast}\otimes M\longrightarrow \boldsymbol{k},& \quad 
e_M(f\otimes m)=f(m)\quad (f\in M^{\ast}, m\in M),\\ 
n_M: \boldsymbol{k}\longrightarrow M\otimes M^{\ast},& \quad 
n_M(1)=\sum_i e_i\otimes e_i^{\ast} \quad (\textrm{the canonical element}). 
\end{align*}

Then $e_M\circ c_{M,M^{\ast}}\circ n_M=(\underline{\dim}_RM)\textrm{id}_{\boldsymbol{k}}$. 
So, we may identify $\underline{\dim}_RM=e_M\circ c_{M,M^{\ast}}\circ n_M$. 
We set $e^{\prime}_{F(M)}:=\omega ^{-1}\circ F(e_M)\circ \phi _{M^{\ast}, M}$ and $n^{\prime}_{F(M)}:=\phi ^{-1}_{M,M^{\ast}}\circ F(n_M)\circ \omega $. 
Then $(F(M^{\ast}), e^{\prime}_{F(M)}, n^{\prime}_{F(M)})$ is a left dual for $F(M)$. 
Since the $\boldsymbol{k}$-linear monoidal functor 
$(F,\phi ,\omega )$ becomes a braided monoidal functor from $({}_A\mathbb{M}, c)$ to $({}_B\mathbb{M}, c')$, it follows that 
\begin{align*}
\underline{\dim}_{\Phi (R)}F(M)
&=e^{\prime}_{F(M)}\circ c^{\prime}_{F(M), F(M^{\ast})}\circ n^{\prime}_{F(M)}\\ 
&=\omega ^{-1}\circ F(e_M\circ c_{M,M^{\ast}}\circ n_M)\circ \omega \\ 
&=(\underline{\dim}_R M)\kern0.2em \omega ^{-1}\circ F(\textrm{id}_{\boldsymbol{k}})\circ \omega \\ 
&=\underline{\dim}_R M . 
\end{align*}
\end{proof}

\par 
\begin{thm} \label{2.5} 
Let $A$ and $B$ be semisimple and cosemisimple Hopf algebras of finite dimension over $\boldsymbol{k}$. 
If $A$ and $B$ are monoidally Morita equivalent, then $P_A^{(d)}(x)=P_B^{(d)}(x)$ for any positive integer $d$. 
\end{thm}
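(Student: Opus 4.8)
The plan is to combine Lemmas~\ref{2.2} and~\ref{2.4} with the observation that the notion of ``absolutely simple left $A$-module of dimension $d$'' is preserved by a $\boldsymbol{k}$-linear monoidal equivalence. First I would fix a $\boldsymbol{k}$-linear monoidal equivalence $(F,\phi ,\omega ):{}_A\mathbb{M}\longrightarrow {}_B\mathbb{M}$. Since $F$ is in particular a $\boldsymbol{k}$-linear equivalence of abelian categories, it sends simple objects to simple objects and preserves endomorphism algebras, hence sends absolutely simple modules to absolutely simple modules; by Lemma~\ref{2.2} it preserves dimensions, so $F$ restricts to a bijection between a full set $\{M_1,\dots ,M_t\}$ of non-isomorphic absolutely simple left $A$-modules of dimension $d$ and a full set $\{N_1,\dots ,N_s\}$ of non-isomorphic absolutely simple left $B$-modules of dimension $d$; in particular $s=t$ and, after reindexing, $F(M_i)\cong N_i$.

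Next I would invoke Lemma~\ref{2.4} to obtain the bijection $\Phi :\underline{\mathrm{Braid}}(A)\longrightarrow \underline{\mathrm{Braid}}(B)$ with $\underline{\dim}_R M=\underline{\dim}_{\Phi(R)}F(M)$ for every finite-dimensional left $A$-module $M$ and every $R\in\underline{\mathrm{Braid}}(A)$. Applying this to $M=M_i$ and using $F(M_i)\cong N_i$ together with the fact that the $R$-dimension depends only on the isomorphism class of the module (it is a trace), we get $\underline{\dim}_R M_i=\underline{\dim}_{\Phi(R)}N_i$ and $\dim M_i=d=\dim N_i$. Hence for each $i$,
\[
P_{A,M_i}(x)=\prod_{R\in\underline{\mathrm{Braid}}(A)}\Bigl(x-\frac{\underline{\dim}_R M_i}{\dim M_i}\Bigr)
=\prod_{R\in\underline{\mathrm{Braid}}(A)}\Bigl(x-\frac{\underline{\dim}_{\Phi(R)}N_i}{\dim N_i}\Bigr)
=\prod_{R'\in\underline{\mathrm{Braid}}(B)}\Bigl(x-\frac{\underline{\dim}_{R'}N_i}{\dim N_i}\Bigr)=P_{B,N_i}(x),
\]
where the third equality uses that $\Phi$ is a bijection, so reindexing the product by $R'=\Phi(R)$ is legitimate. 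Taking the product over $i=1,\dots ,t$ yields $P_A^{(d)}(x)=\prod_i P_{A,M_i}(x)=\prod_i P_{B,N_i}(x)=P_B^{(d)}(x)$. The edge case where there is no absolutely simple module of dimension $d$ on one side forces the same on the other side (since $F$ preserves absolute simplicity and dimension), so both polynomials equal $1$.

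The genuinely substantive content here has already been isolated into Lemmas~\ref{2.2} and~\ref{2.4}; what remains is essentially bookkeeping. The one point that deserves a careful word rather than a one-line dismissal is that a $\boldsymbol{k}$-linear monoidal equivalence really does preserve \emph{absolute} simplicity, not merely simplicity: one needs that $F$ induces a $\boldsymbol{k}$-algebra isomorphism $\mathrm{End}_A(M)\xrightarrow{\sim}\mathrm{End}_B(F(M))$, so that $\mathrm{End}_A(M)=\boldsymbol{k}$ if and only if $\mathrm{End}_B(F(M))=\boldsymbol{k}$, and this property is exactly what ``absolutely simple'' means for a finite-dimensional module over a finite-dimensional algebra (equivalently, it remains simple after every field extension). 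I would state this explicitly as the first step of the proof; it is the only place where the argument is more than a formal manipulation of the two lemmas.
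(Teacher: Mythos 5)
Your proposal is correct and follows essentially the same route as the paper: both reduce the theorem to Lemma~\ref{2.2} (preservation of dimension) and Lemma~\ref{2.4} (the bijection $\Phi$ on universal $R$-matrices compatible with braided dimensions), establish $P_{A,M}(x)=P_{B,F(M)}(x)$ for each absolutely simple $M$, and take the product over a full set of representatives in dimension $d$. Your explicit justification that $F$ preserves \emph{absolute} simplicity via the induced isomorphism of endomorphism algebras is a point the paper leaves implicit, but it does not change the argument.
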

\begin{proof}
Let $F:{}_A\mathbb{M}\longrightarrow {}_B\mathbb{M}$ be a 
$\boldsymbol{k}$-linear monoidal functor which gives an equivalence of monoidal categories, and let us consider the bijection $\Phi : \underline{\textrm{Braid}}(A)\longrightarrow \underline{\textrm{Braid}}(B)$ given as in the proof of Lemma \ref{2.2}. 
\par 
Let $M$ be an absolutely simple left $A$-module. 
Then $F(M)$ is also an absolutely simple left $B$-module, and by Lemma \ref{2.2} and Lemma~\ref{2.4} we have 
\begin{equation}\label{eq2.4}
P_{A,M}(x)=P_{B,F(M)}(x). 
\end{equation}

Let $\{ M_1,\ldots ,M_t\} $ be a full set of non-isomorphic absolutely simple left $A$-modules of dimension $d$.  
Then $\{ F(M_1),\ldots , F(M_t)\} $ is also a full set of non-isomorphic absolutely simple left $B$-modules of dimension $d$. 
Applying the equation (\ref{eq2.4}) to $M=M_i\ (i=1,\ldots ,t)$, and taking the product of them, we have $P_{A}^{(d)}(x)=P_{B}^{(d)}(x)$. 
\end{proof}

\par \smallskip 
\begin{rem}\rm 
Our polynomial invariants are useful only if a semisimple and cosemisimple Hopf algebra has a quasitriangular structure. 
However, by considering the polynomial invariants of the Drinfeld double of it  
we have monoidal invariants of the original (arbitrary) semisimple Hopf algebra of finite dimension. 
\end{rem}

\par \medskip 
\section{Properties of polynomial invariants}
\par 
In this section we investigate basic properties of the polynomial invariants  $P_A^{(d)}(x)\ (d=1,2,\cdots )$ defined in Section 2. 
This section consists of two subsections. 
In the subsection~3.1 we prove that all coefficients of $P_A^{(d)}(x)$ are integers if $\boldsymbol{k}$ is a finite Galois extension of $\mathbb{Q}$, and $A$ is a scalar extension of some finite-dimensional semisimple Hopf algebra over $\mathbb{Q}$. In the subsection~3.2  we prove that $P_A^{(d)}(x)$ is stabilized by taking some suitable extension of the base field, more precisely, there is a finite separable field extension $L/\boldsymbol{k}$ so that  $P_{A^E}^{(d)}(x)=P_{A^L}^{(d)}(x)$ for any field extension $E/L$. 

\par \smallskip 
\subsection{Integer property of polynomial invariants} 
First of all, we show that the coefficients of $P_A^{(d)}(x)$ lie in the integral closure of the prime ring of the base field of $A$. 
To do this we need the following lemma. 

\par \smallskip 
\begin{lem}\label{3.1}
Let $(A, R)$ be a quasitriangular Hopf algebra over $\boldsymbol{k}$ and $u$ the Drinfel'd element associated to $R$. 
If $A$ is semisimple and cosemisimple, then $u^{(\dim A)^3}=1$. 
\end{lem}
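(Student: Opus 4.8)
The plan is to show that the Drinfel'd element $u$ has finite multiplicative order and to control that order in terms of $\dim A$. The key inputs are: (i) since $A$ is semisimple and cosemisimple of finite dimension, $S^2=\mathrm{id}_A$, so by property (i) of the Drinfel'd element $u$ is central in $A$; (ii) by Etingof–Gelaki's theory (used already for Theorem~\ref{2.1}), such an $A$ over any field has $u$ acting as a root of unity on each simple module, with a uniform bound on the order. Concretely, I would first reduce to the case where $\boldsymbol{k}$ is algebraically closed: if $\bar{\boldsymbol{k}}$ is an algebraic closure, then $(A\otimes\bar{\boldsymbol{k}}, R\otimes 1)$ is again quasitriangular, semisimple and cosemisimple of the same dimension with the same Drinfel'd element, and $u^N=1$ in $A$ iff $u^N=1$ in $A\otimes\bar{\boldsymbol{k}}$; so assume $\boldsymbol{k}=\bar{\boldsymbol{k}}$.

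Over an algebraically closed field, $A\cong\prod_{i}\mathrm{End}_{\boldsymbol{k}}(M_i)$ as algebras, where $M_1,\dots,M_s$ are the simple modules, and $\dim M_i\mid\dim A$ for each $i$ (this is a standard consequence of semisimplicity; in fact $\dim M_i$ divides $\dim A$ by the Nichols–Zoeller freeness / Kac–Zhu type argument, but one only needs $\dim M_i\le\dim A$ for a crude bound, so I would be careful about which divisibility I actually invoke). Since $u$ is central, it acts on $M_i$ as a scalar $\lambda_i\in\boldsymbol{k}^{\times}$. The heart of the matter is to show each $\lambda_i$ is a root of unity of controlled order. For this I would use the identity relating $u$ and the antipode: by property (ii), $S(u)=\sum_i\alpha_i S(\beta_i)$, and one has the well-known relation $uS(u)=S(u)u\in Z(A)$ acting on $M_i$ by $(\dim_R M_i)$-type scalars; more efficiently, one uses that $\theta:=uv^{-1}$ or the ribbon-type element has finite order, but the cleanest route in the semisimple cosemisimple setting is Etingof–Gelaki's result that the quantum dimension $\underline{\dim}_R M_i/\dim M_i$ is a root of unity — indeed this is precisely the statement quoted just before Theorem~\ref{2.1} that the roots of $P_A^{(d)}(x)$ are $n$-th roots of unity. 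I would invoke that $\lambda_i$ itself (not just the ratio) is a root of unity: since $S^2=\mathrm{id}$, $u$ is a grouplike-like central unit whose order divides the exponent of $A$, and the exponent of a semisimple cosemisimple Hopf algebra of dimension $n$ divides $n^3$ by the Etingof–Gelaki bound on the exponent (this is exactly their theorem that $\exp(A)\mid (\dim A)^3$).

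So the structure of the argument is: (1) reduce to $\boldsymbol{k}$ algebraically closed; (2) observe $u$ is central, hence a scalar $\lambda_i$ on each simple $M_i$; (3) show each $\lambda_i^{(\dim A)^3}=1$, which follows because $\lambda_i$ is a root of unity whose order divides $\exp(A)$, and $\exp(A)\mid(\dim A)^3$ by Etingof–Gelaki; (4) conclude $u^{(\dim A)^3}=1$ in $A$ since $A$ acts faithfully on $\bigoplus_i M_i$ and $u^{(\dim A)^3}$ acts as $1$ on each. The main obstacle is step (3): pinning down exactly why $\lambda_i$ (rather than merely the quotient $\underline{\dim}_R M_i/\dim M_i$) is a root of unity, and getting the clean exponent bound $(\dim A)^3$. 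I expect the honest proof to go through the fact that for semisimple cosemisimple $A$ the antipode satisfies $S^2=\mathrm{id}$, so $u$ squared is central and invertible with finite order dividing $\exp(A)$; combined with $\lambda_i\bar\lambda_i$ being controlled and $\lambda_i$ algebraic over the prime field with all Galois conjugates of the same absolute value (forcing it onto the unit circle), one gets $\lambda_i$ is a root of unity, and then the exponent bound of Etingof–Gelaki finishes it. If a self-contained bound is wanted, one replaces the appeal to $\exp(A)\mid(\dim A)^3$ by the cruder estimate that $u$ lies in the finite group of central units of $A=\prod\mathrm{End}(M_i)$ consisting of roots of unity, whose orders divide $\mathrm{lcm}$ of things bounded by $\dim A$, and then checks $(\dim A)^3$ dominates that lcm.
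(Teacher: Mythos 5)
Your steps (1), (2) and (4) are fine, and you have correctly located the external input: Etingof--Gelaki's bound on the exponent. But step (3) contains a genuine gap, and it is exactly the step the paper's proof is built around. In \cite{EG1} the relevant theorem is that the Drinfel'd element of the \emph{Drinfel'd double} $D(H)$ of a semisimple cosemisimple $H$ satisfies $\tilde{u}^{(\dim H)^3}=1$; this says nothing directly about the Drinfel'd element $u$ of your given $(A,R)$. Your assertion that ``the order of $u$ divides $\exp(A)$'' is precisely what needs proof. The missing idea is Radford's theory of minimal quasitriangular Hopf algebras: the legs of $R$ generate subHopf algebras $B=\{(\alpha\otimes\mathrm{id})(R)\}$ and $H=\{(\mathrm{id}\otimes\alpha)(R)\}$ with $B\cong H^{\ast\mathrm{cop}}$, and there is a homomorphism of \emph{quasitriangular} Hopf algebras $F:(D(H),\mathcal{R})\to (A,R)$ carrying $\tilde{u}$ to $u$. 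One then needs $H$ to be semisimple and cosemisimple (this uses Larson--Radford: a subHopf algebra of a semisimple Hopf algebra is semisimple, applied to both $H$ and $B\cong H^{\ast\mathrm{cop}}$), which yields $u^{(\dim H)^3}=F(\tilde{u})^{(\dim H)^3}=1$, and finally $\dim H\mid\dim A$ (Nichols--Zoeller, via Radford) to replace $(\dim H)^3$ by $(\dim A)^3$. Without this chain your claim is unsupported.

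Your proposed fallbacks for step (3) do not repair the gap. The Galois-conjugates-on-the-unit-circle (Kronecker) argument is an archimedean, characteristic-zero argument, whereas the lemma is stated over an arbitrary field; and the ``finite group of central units of $\prod_i\mathrm{End}(M_i)$ consisting of roots of unity'' does not exist --- the central units form $(\boldsymbol{k}^{\times})^{s}$, which is infinite and contains elements of infinite order, so centrality alone gives no constraint on the scalars $\lambda_i$. The only route to finiteness of the order of $u$ in this generality is through the double, as above.
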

\begin{proof}
Let us consider the following subHopf algebras of $A$ : 
$$
B=\{ \ (\alpha \otimes \textrm{id})(R)\ \vert \ \alpha \in A^{\ast} \ \} ,\quad 
H=\{ \ (\textrm{id}\otimes \alpha )(R)\ \vert \ \alpha \in A^{\ast} \ \}. 
$$
By \cite[Proposition 2]{Ra4}, the Hopf algebra $B$ is isomorphic to the Hopf algebra $H^{\ast \textrm{cop}}$. 
Let $(D(H), \mathcal{R})$ be the Drinfel'd double of $H$. 
By \cite[Theorem 2]{Ra4}, there is a  homomorphism $F: (D(H),\mathcal{R})\longrightarrow (A,R)$ of quasitriangular Hopf algebras. 
It follows that the Drinfel'd element $\tilde{u}$ of $(D(H), \mathcal{R})$ satisfies $F(\tilde{u})=u$. 
Since $A$ is semisimple, subHopf algebras $H$ and  $H^{\ast \textrm{cop}}\cong B$ are also semisimple \cite[Corollary 2.5]{LR3}. 
Thus $H$ is semisimple and cosemisimple. 
So,  we have $\tilde{u}^{(\dim H)^3}=1$ by \cite[Theorem 2.5 \& Theorem 4.3]{EG1}, and $u^{(\dim H)^3}=1$. 
Since $\dim A$ is divided by $\dim H$ \cite[Proposition 2]{Ra4}, 
the equation $u^{(\dim A)^3}=1$ is obtained. 
\end{proof}

\par \smallskip 
For a field $K$, let $Z_K$ denote the integral closure of the prime ring of $K$, 
that is, if the characteristic of $K$ is $0$, then $Z_K$ is the ring of algebraic integers in $K$, and if the characteristic of $K$ is $p>0$, then $Z_K$ is the algebraic closure of the prime field  $\mathbb{F}_p$ in $K$. 

\par \smallskip 
\begin{lem}\label{3.2}
Let $(H,R)$ be a semisimple and cosemisimple quasitriangular Hopf algebra over a field $K$. 
If $M$ is an absolutely simple left $H$-module, then 
(by Theorem \ref{2.1} $(\dim M)1_K\not= 0$ and, )
$$\biggl( \frac{\underline{\dim}_R M}{\dim M}\biggr) ^{(\dim H)^3}=1.$$
In particular, 
$\frac{\underline{\dim}_R M}{\dim M}\in Z_K$. 
\end{lem}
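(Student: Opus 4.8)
The plan is to reduce the statement to the centrality of the Drinfel'd element $u$ together with Lemma~\ref{3.1}. First I would observe that, since $H$ is finite-dimensional, semisimple and cosemisimple, one has $S^2=\textrm{id}_H$; hence property (i) of the Drinfel'd element gives $uau^{-1}=S^2(a)=a$ for all $a\in H$, so $u$ lies in the center of $H$. Because $M$ is an absolutely simple left $H$-module, $\textrm{End}_H(M)=K\,\textrm{id}_M$, and the left action of the central element $u$ on $M$ is $H$-linear; by Schur's lemma this action is multiplication by a scalar $\lambda\in K$.

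Next I would compute $\underline{\dim}_R M$. By definition it is the trace of the action of $u$ on $M$, which equals $\lambda\cdot\dim M$. By Theorem~\ref{2.1}(2) we have $(\dim M)1_K\neq 0$ (this is the parenthetical remark in the statement), so the quotient $\underline{\dim}_R M/\dim M$ is well defined and equals $\lambda$.

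Then I would invoke Lemma~\ref{3.1}: $u^{(\dim H)^3}=1$ in $H$. Letting both sides act on $M$, the left side acts as $\lambda^{(\dim H)^3}\textrm{id}_M$ and the right side as $\textrm{id}_M$; since $M\neq 0$ this forces $\lambda^{(\dim H)^3}=1$, which is exactly the displayed identity. Finally, $\lambda$ is a root of the monic polynomial $x^{(\dim H)^3}-1$ with coefficients in the prime ring of $K$, hence is integral over that prime ring, so $\lambda=\underline{\dim}_R M/\dim M\in Z_K$.

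As for the main obstacle: there really is none of substance here — the lemma is a formal corollary of Lemma~\ref{3.1} once one knows $u$ is central and therefore acts by a scalar on an absolutely simple module. The only points demanding care are the use of Theorem~\ref{2.1}(2) to guarantee that the quotient makes sense in positive characteristic, and the fact that it is \emph{absolute} simplicity (not mere simplicity) that ensures $\textrm{End}_H(M)=K$, so that the Schur scalar $\lambda$ lies in $K$ itself.
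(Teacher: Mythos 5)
Your proposal is correct and follows essentially the same route as the paper: centrality of the Drinfel'd element (from $S^2=\textrm{id}$), Schur's lemma on the absolutely simple module to get a scalar $\omega_M\in K$, and Lemma~\ref{3.1} to conclude $\omega_M^{(\dim H)^3}=1$, whence integrality. The only cosmetic difference is that the paper extracts $\omega_M^{(\dim H)^3}=1$ by comparing traces (dividing by $\dim M$, which is nonzero by Theorem~\ref{2.1}(2)), whereas you let $u^{(\dim H)^3}=1$ act directly on $M$; both are fine.
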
 
\begin{proof}
The Drinfel'd element $u$ of $(H, R)$ belongs to the center of $H$ since $H$ is semisimple and cosemisimple. 
Thus the left action $\underline{u}_M:M\longrightarrow M$ of $u$ is a left $H$-endomorphism. 
Since $M$ is absolutely simple, 
$\underline{u}_M$ is a scalar multiple of the identity morphism, so it can be written as 
$\underline{u}_M=\omega _M\kern0.1em \textrm{id}_M$ for some $\omega _M\in K$. 
Then by Lemma \ref{3.1} we have 
$$\dim M
=\textrm{Tr}(\underline{u}_M^{(\dim H)^3})
=\omega _M^{(\dim H)^3}\textrm{Tr}(\textrm{id}_M)
=\omega _M^{(\dim H)^3}\dim M.$$
Thus $\omega _M^{(\dim H)^3}=1$. 
This implies that 
$\omega _M=\frac{\underline{\dim}_R M}{\dim M}$ belongs to $Z_K$. 
\end{proof}

\par \smallskip 
Form Lemma \ref{3.2}, we have the following immediately. 

\par \smallskip 
\begin{prop}\label{3.3}
Let $H$ be a semisimple and cosemisimple Hopf algebra of finite dimension over a  field $K$. 
Then for any absolutely simple left $H$-module $M$,  the coefficients of the polynomial 
$P_{H,M}(x)$ are in $Z_K$. 
Therefore, $P_H^{(d)}(x)\in Z_K[x]$ for any positive integer $d$.  
\end{prop}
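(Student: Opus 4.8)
The plan is to deduce Proposition~\ref{3.3} directly from Lemma~\ref{3.2} together with the definitions of $P_{H,M}(x)$ and $P_H^{(d)}(x)$ given in Section~2. Recall that
$$P_{H,M}(x)=\prod_{R\in\underline{\textrm{Braid}}(H)}\Bigl(x-\tfrac{\underline{\dim}_R M}{\dim M}\Bigr),$$
so the coefficients of $P_{H,M}(x)$ are, up to sign, the elementary symmetric polynomials in the finitely many scalars $\omega_{M,R}:=\frac{\underline{\dim}_R M}{\dim M}$ indexed by $R\in\underline{\textrm{Braid}}(H)$. By Theorem~\ref{2.1}(1) this index set is finite, so the product is a genuine polynomial of degree $\#\underline{\textrm{Braid}}(H)$, and by Theorem~\ref{2.1}(2) the denominator $(\dim M)1_K$ is nonzero, so each $\omega_{M,R}$ is a well-defined element of $K$.

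First I would invoke Lemma~\ref{3.2}, applied to each quasitriangular structure $(H,R)$ with $R\in\underline{\textrm{Braid}}(H)$: since $H$ is semisimple and cosemisimple of finite dimension and $M$ is absolutely simple, we get $\omega_{M,R}\in Z_K$ for every $R$. Then, because $Z_K$ is a subring of $K$ (it is by definition an integral closure, hence closed under addition and multiplication), every polynomial expression in the $\omega_{M,R}$ with coefficients in the prime ring again lies in $Z_K$; in particular the elementary symmetric functions of the family $\{\omega_{M,R}\}_{R}$ lie in $Z_K$. Hence all coefficients of $P_{H,M}(x)$ belong to $Z_K$, i.e.\ $P_{H,M}(x)\in Z_K[x]$.

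For the final assertion, recall $P_H^{(d)}(x)=\prod_{i=1}^{t}P_{H,M_i}(x)$, where $M_1,\dots,M_t$ is a full set of non-isomorphic absolutely simple left $H$-modules of dimension $d$ (a finite list, since $H$ is finite-dimensional), and $P_H^{(d)}(x)=1$ if no such module exists. In either case $P_H^{(d)}(x)$ is a finite product of polynomials each lying in $Z_K[x]$, and since $Z_K[x]$ is a ring, the product again lies in $Z_K[x]$. This completes the argument.

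There is essentially no obstacle here: the proposition is a formal consequence of Lemma~\ref{3.2} and the finiteness statements in Theorem~\ref{2.1}, the only point worth stating explicitly being that $Z_K$ is a subring of $K$ so that symmetric functions and finite products stay inside it. The genuinely substantive input — the root-of-unity bound $\omega_M^{(\dim H)^3}=1$ forcing $\omega_M\in Z_K$ — has already been established in Lemma~\ref{3.2} (ultimately resting on Lemma~\ref{3.1} and the Etingof--Gelaki order bound on Drinfel'd elements), so nothing further is needed.
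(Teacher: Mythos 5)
Your proposal is correct and follows exactly the route the paper intends: the paper states the proposition as an immediate consequence of Lemma~\ref{3.2}, and your argument simply makes explicit the routine steps (finiteness of $\underline{\textrm{Braid}}(H)$, $Z_K$ being a subring, closure under symmetric functions and finite products) that the paper leaves unsaid. Nothing is missing.
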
 

\par \smallskip 
Now, we will examine relationship between polynomial invariants and Galois extensions of fields. 
Let $K/\boldsymbol{k}$ be a field extension, and let $\textrm{Aut}(K/\boldsymbol{k})$ denote the automorphism group of $K/\boldsymbol{k}$. 
For a $K$-linear space $M$ and $\sigma \in \textrm{Aut}(K/\boldsymbol{k})$, 
a $K$-linear space ${}^{\sigma }\kern-0.2em M$ is defined as follows : 

\begin{enumerate}
\item[(i)] ${}^{\sigma }\kern-0.2em M=M$ as additive groups,  
\item[(ii)] the action  $\star$ of $K$ on ${}^{\sigma }\kern-0.2em M$ is given by 
\begin{equation}\label{eq3.1}
c\star m:=\sigma (c)\cdot m\qquad (c\in K,\ m \in M),
\end{equation}
where $\cdot $ in the right-hand side stands for the original action of $K$ on $M$.
\end{enumerate}

For a $K$-linear map $f: M\longrightarrow N$ and $\sigma \in \textrm{Aut}(K/\boldsymbol{k})$, we have 
$$f(c\star m)=f(\sigma (c)\cdot m)=\sigma (c)\cdot f(m)=c\star f(m)\qquad (c\in K,\ m\in M),$$
thus $f$ can be regarded as a $K$-linear map from ${}^{\sigma }\kern-0.2em M$ to ${}^{\sigma }\kern-0.2em N$. 
We denote by ${}^{\sigma }\kern-0.2em f$ the $K$-linear map $f: {}^{\sigma }\kern-0.2em M\longrightarrow {}^{\sigma }\kern-0.2em N$. 
\par 
The monoidal category ${}_K\mathbb{M}$ of $K$-linear spaces and $K$-linear maps has a canonical braiding, which is given by usual twist maps. 
For an automorphism $\sigma \in \textrm{Aut}(K/\boldsymbol{k})$, the functor 
\begin{equation}
{}^{\sigma }\kern-0.2em F: {}_K\mathbb{M}\longrightarrow {}_K\mathbb{M},\qquad 
M\longmapsto {}^{\sigma }\kern-0.2em M,\quad f\longmapsto  {}^{\sigma }\kern-0.2em f
\end{equation}

\noindent 
gives a $K$-linear braided monoidal functor. 
Since 
${}^{\sigma }\kern-0.2em F\circ {}^{\tau }\kern-0.2em F={}^{\sigma \tau }\kern-0.2em F$ 
for all $\sigma , \tau \in \textrm{Aut}(K/\boldsymbol{k})$, 
the functor ${}^{\sigma }\kern-0.2em F: {}_K\mathbb{M}\longrightarrow {}_K\mathbb{M}$ gives an isomorphism of  $K$-linear braided monoidal categories. 
In general, a $K$-linear braided monoidal functor
$F: {}_K\mathbb{M}\longrightarrow {}_K\mathbb{M}$ 
maps a Hopf algebra to a Hopf algebra. 
So, for a Hopf algebra $H$ over $K$ and an automorphism $\sigma \in \textrm{Aut}(K/\boldsymbol{k})$, ${}^{\sigma }\kern-0.2em H$ also a Hopf algebra over $K$. The Hopf algebra structure of ${}^{\sigma }\kern-0.2em H$ is the same as that of $H$ with the exception that 
the action of $K$ on ${}^{\sigma }\kern-0.2em H$ is given by (\ref{eq3.1}), and 
the counit $\varepsilon _{{}^{\sigma }\kern-0.2em H}$ of ${}^{\sigma }\kern-0.2em H$ is given by $\varepsilon _{{}^{\sigma }\kern-0.2em H}=\sigma ^{-1}\circ \varepsilon _H$.  

\par \bigskip 
\begin{lem}\label{3.4}
Let $K/\boldsymbol{k}$ be a field extension, and $H$ a Hopf algebra over $K$. 
Let $R\in H\otimes _KH$ be a universal $R$-matrix of $H$, and $\sigma \in \textrm{Aut}(K/\boldsymbol{k})$. Then 
\par 
(1) $R$ is also a universal $R$-matrix of ${}^{\sigma }\kern-0.2em H$. 
We write ${}^{\sigma }\kern-0.2em R$ for this universal $R$-matrix of ${}^{\sigma }\kern-0.2em H$ . 
\par 
(2) the Drinfel'd element of the quasitriangular Hopf algebra $({}^{\sigma }\kern-0.2em H, {}^{\sigma }\kern-0.2em R)$ coincides with the Drinfel'd element of $(H,R)$. 
\par 
(3) for a finite-dimensional left $H$-module $M$,  we have $\underline{\dim}_{{}^{\sigma }\kern-0.2em R}\kern0.1em {}^{\sigma }\kern-0.2em M=\sigma ^{-1}(\underline{\dim}_R\kern0.1em M)$. 
\end{lem}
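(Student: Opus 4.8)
The plan is to lean on the fact, recalled just before the lemma, that ${}^{\sigma}F\colon {}_K\mathbb{M}\to{}_K\mathbb{M}$ is an \emph{isomorphism} of $K$-linear braided monoidal categories, so that it transports the whole quasitriangular Hopf algebra structure, its Drinfel'd element, and all its modules; the only distortion is that $K$-valued numerical data acquire an application of $\sigma^{-1}$. The first step is to make the tensor bookkeeping explicit: since $\sigma$ is bijective, the defining relations of ${}^{\sigma}H\otimes_K{}^{\sigma}H$ and of $H\otimes_K H$ cut out the same subgroup of $H\otimes_{\mathbb{Z}}H$, so these two $K$-spaces share the same underlying additive group and the same componentwise multiplication, differing only in their $K$-actions, which are related by (\ref{eq3.1}) (and similarly for the threefold tensor powers). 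Under this identification the universal $R$-matrix $R=\sum_i\alpha_i\otimes\beta_i$ of $H$ is \emph{literally the same element} of ${}^{\sigma}H\otimes_K{}^{\sigma}H$, which we name ${}^{\sigma}R$, and $R^{-1}$ is the same element too.

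For (1): the three defining conditions of a universal $R$-matrix involve only the coproduct of $H$, the multiplications of $H\otimes_K H$ and $H\otimes_K H\otimes_K H$, and inversion in $H\otimes_K H$. As recalled before the lemma, ${}^{\sigma}H$ has the same coproduct $\Delta$ as $H$, and by the previous paragraph the relevant multiplications and inverses also coincide; neither the $K$-action nor the counit enters these equations. Hence all three hold for ${}^{\sigma}H$ verbatim, so ${}^{\sigma}R$ is a universal $R$-matrix of ${}^{\sigma}H$.

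For (2): I would first observe that the antipode of ${}^{\sigma}H$ again equals that of $H$. The antipode axiom involves only $m$, $\Delta$, and the composite (unit)$\,\circ\,$(counit); the unit element of ${}^{\sigma}H$ is still $1_H$, and for $a\in H$ one has $\sigma\bigl(\varepsilon_{{}^{\sigma}H}(a)\bigr)\cdot 1_H=\sigma\bigl(\sigma^{-1}(\varepsilon_H(a))\bigr)\cdot 1_H=\varepsilon_H(a)1_H$, so that composite is unchanged; by uniqueness of the antipode, $S_{{}^{\sigma}H}=S$. Since the Drinfel'd element is $u=\sum_i S(\beta_i)\alpha_i$, a formula in the unchanged multiplication and antipode applied to the unchanged element ${}^{\sigma}R$, the Drinfel'd element of $({}^{\sigma}H,{}^{\sigma}R)$ is the same element $u$.

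For (3): by (2) the element $u$ acts on ${}^{\sigma}M$ by the very same additive endomorphism $\underline{u}_M\colon M\to M$ it induces on $M$, so all that remains is to compare the two traces. Fixing a $K$-basis $e_1,\dots,e_n$ of $M$ — simultaneously a $K$-basis of ${}^{\sigma}M$, since $\sigma$ is bijective — and writing $\underline{u}_M(e_j)=\sum_i t_{ij}\,e_i$ with $t_{ij}\in K$, the identity $t_{ij}\cdot e_i=\sigma^{-1}(t_{ij})\star e_i$ (where $\star$ is the twisted $K$-action on ${}^{\sigma}M$) shows that the matrix of ${}^{\sigma}(\underline{u}_M)$ over ${}^{\sigma}M$ is $\bigl(\sigma^{-1}(t_{ij})\bigr)$, whence $\underline{\dim}_{{}^{\sigma}R}\,{}^{\sigma}M=\sigma^{-1}\bigl(\underline{\dim}_R M\bigr)$ on taking traces. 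I do not expect a genuine obstacle here: the only point demanding care is to keep straight which structures of ${}^{\sigma}H$ literally coincide with those of $H$ (additive group, multiplication, coproduct, antipode) versus which are twisted ($K$-action, counit), and to remember that $\sigma^{-1}$ appears precisely when a scalar is read off through the twisted $K$-action, as in the trace of part (3).
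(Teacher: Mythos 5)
Your proposal is correct and follows essentially the same route as the paper: the paper dismisses parts (1) and (2) as following from the definitions (which you simply spell out in more detail, checking that the twisted tensor product, multiplication, coproduct and antipode literally coincide), and for part (3) it performs the identical computation — express $u\cdot e_i$ in a $K$-basis, observe the matrix entries acquire $\sigma^{-1}$ when read through the twisted action, and take the trace.
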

\begin{proof}
Parts (1) and (2) follow from definition. 
We show Part (3). 
Let $u\in H$ be the Drinfel'd element of  $(H,R)$. 
By Part (2), $u$ is also the Drinfel'd element of  $({}^{\sigma }\kern-0.2em H, {}^{\sigma }\kern-0.2em R)$. 
Let $\{ e_i\} _{i=1}^n$ be a basis of $M$ over $K$, and 
write 
$$u\cdot  e_i=\sum\limits_{j=1}^n a_{ji} \kern0.1em e_j\qquad (a_{ji}\in K).$$
Then 
$u\cdot  e_i=\sum_{j=1}^n \sigma^{-1}(a_{ji}) \star  e_j$, and whence 
$$\underline{\dim}_{{}^{\sigma }\kern-0.2em R}\kern0.1em {}^{\sigma }\kern-0.2em M
=\sum\limits_{i=1}^n \sigma^{-1}(a_{ii})=\sigma^{-1}\Bigl( \sum\limits_{i=1}^n a_{ii} \Bigr) 
=\sigma ^{-1}(\underline{\dim}_R\kern0.1em M).$$ 
\end{proof}

For a field automorphism $\sigma :K\longrightarrow K$ and a polynomial $P(x)=c_0+c_1x+\cdots +c_mx^m\ (c_i\in K,\ i=1,\ldots ,m)$, we define $\sigma \cdot P(x)\in K[x]$ by
$$\sigma \cdot P(x):=\sigma (c_0)+\sigma (c_1)x+\cdots +\sigma (c_m)x^m.$$

\par 
\begin{lem}\label{3.5}
Let $K/\boldsymbol{k}$ be a field extension, and $H$ a semisimple and cosemisimple Hopf algebra over $K$ of finite dimension. 
If $M$ is a finite-dimensional left $H$-module such that $(\dim M)1_K\not= 0$, then for an automorphism $\sigma \in \textrm{Aut}(K/\boldsymbol{k})$ we have 
$$\sigma ^{-1}\cdot P_{H,M}(x)=P_{{}^{\sigma }\kern-0.2em H,{}^{\sigma }\kern-0.2em M}(x).$$ 
\end{lem}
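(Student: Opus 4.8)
The plan is to expand both sides from the definition $P_{H,M}(x)=\prod_{R\in\underline{\textrm{Braid}}(H)}\bigl(x-\underline{\dim}_R M/\dim M\bigr)$ and to match the factors using Lemma~\ref{3.4}. Two preliminary observations are needed. First, $\dim_K{}^{\sigma}\kern-0.2em M=\dim_K M$: since ${}^{\sigma}\kern-0.2em M$ has the same underlying abelian group as $M$ and $\sigma$ is bijective, any $K$-basis of $M$ is again a $K$-basis of ${}^{\sigma}\kern-0.2em M$; in particular $(\dim{}^{\sigma}\kern-0.2em M)1_K\neq 0$, so $P_{{}^{\sigma}\kern-0.2em H,{}^{\sigma}\kern-0.2em M}(x)$ is defined. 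Second, the assignment $R\mapsto{}^{\sigma}\kern-0.2em R$ of Lemma~\ref{3.4}(1) is a bijection from the finite set $\underline{\textrm{Braid}}(H)$ (finite by Theorem~\ref{2.1}(1)) onto $\underline{\textrm{Braid}}({}^{\sigma}\kern-0.2em H)$: its inverse is $R'\mapsto{}^{\sigma^{-1}}\kern-0.2em R'$, well defined by applying Lemma~\ref{3.4}(1) to the pair $({}^{\sigma}\kern-0.2em H,\sigma^{-1})$ and using ${}^{\sigma^{-1}}({}^{\sigma}\kern-0.2em H)=H$, a consequence of ${}^{\sigma^{-1}}\kern-0.2em F\circ{}^{\sigma}\kern-0.2em F$ being the identity functor on ${}_K\mathbb{M}$.

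Next I would compute directly. By definition,
$$P_{{}^{\sigma}\kern-0.2em H,{}^{\sigma}\kern-0.2em M}(x)=\prod_{R'\in\underline{\textrm{Braid}}({}^{\sigma}\kern-0.2em H)}\Bigl(x-\frac{\underline{\dim}_{R'}\kern0.1em{}^{\sigma}\kern-0.2em M}{\dim{}^{\sigma}\kern-0.2em M}\Bigr).$$
Reindexing the product by $R'={}^{\sigma}\kern-0.2em R$ with $R$ running over $\underline{\textrm{Braid}}(H)$, and then invoking Lemma~\ref{3.4}(3) together with $\dim{}^{\sigma}\kern-0.2em M=\dim M$, the right-hand side becomes $\prod_{R\in\underline{\textrm{Braid}}(H)}\bigl(x-\sigma^{-1}(\underline{\dim}_R M)/\dim M\bigr)$. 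Since $\sigma$ fixes $\boldsymbol{k}$, hence the prime subfield of $K$, it fixes $(\dim M)1_K$ and its inverse, so $\sigma^{-1}(\underline{\dim}_R M)/\dim M=\sigma^{-1}\bigl(\underline{\dim}_R M/\dim M\bigr)$. Writing $\alpha_R:=\underline{\dim}_R M/\dim M\in K$, this yields $P_{{}^{\sigma}\kern-0.2em H,{}^{\sigma}\kern-0.2em M}(x)=\prod_{R}\bigl(x-\sigma^{-1}(\alpha_R)\bigr)$.

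Finally I would compare this with $\sigma^{-1}\cdot P_{H,M}(x)$. From $P_{H,M}(x)=\prod_R(x-\alpha_R)$ the coefficients of $P_{H,M}(x)$ are, up to sign, the elementary symmetric functions of the finite family $(\alpha_R)_{R\in\underline{\textrm{Braid}}(H)}$; as $\sigma^{-1}$ is a ring homomorphism, applying it coefficientwise commutes with forming these symmetric functions, whence $\sigma^{-1}\cdot P_{H,M}(x)=\prod_R\bigl(x-\sigma^{-1}(\alpha_R)\bigr)$. Combined with the previous paragraph this is exactly $P_{{}^{\sigma}\kern-0.2em H,{}^{\sigma}\kern-0.2em M}(x)$, which proves the lemma.

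The argument is essentially bookkeeping once Lemma~\ref{3.4} is in hand; the only steps meriting any attention are the bijectivity of $R\mapsto{}^{\sigma}\kern-0.2em R$ between the two finite sets of universal $R$-matrices, which I would justify via the functoriality ${}^{\sigma}\kern-0.2em F\circ{}^{\tau}\kern-0.2em F={}^{\sigma\tau}\kern-0.2em F$ recorded just before the lemma, and the observation that $\sigma$ fixes $(\dim M)1_K$, which is what permits the scalar $1/\dim M$ to be pulled through $\sigma^{-1}$. I expect the actual write-up to be short.
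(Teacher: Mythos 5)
Your proposal is correct and follows essentially the same route as the paper's proof: expand both polynomials, apply Lemma~\ref{3.4}(3) to each factor, pull the prime-field scalar $\dim M$ through $\sigma^{-1}$, and reindex the product via the bijection $R\mapsto{}^{\sigma}\kern-0.2em R$ from Lemma~\ref{3.4}(1). The extra details you supply (the explicit inverse of that bijection and the compatibility of $\sigma^{-1}$ with the elementary symmetric functions of the roots) are correct elaborations of steps the paper leaves implicit.
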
 
\begin{proof}
Setting $N={}^{\sigma }\kern-0.2em M$, by Lemma \ref{3.4}(3) we have 
\begin{align*}
\sigma ^{-1}\cdot P_{H,M}(x)
&=\prod\limits_{R\in \underline{\textrm{Braid}}(H)}\Bigl( x-\sigma ^{-1}\Bigl( \frac{\underline{\dim}_R M}{\dim M}\Bigr)\Bigr)  \\ 
&=\prod\limits_{R\in \underline{\textrm{Braid}}(H)}\Bigl( x-\frac{\sigma ^{-1}(\underline{\dim}_R M)}{\dim M}\Bigr)  \\ 
&=\prod\limits_{R\in \underline{\textrm{Braid}}(H)}\Bigl( x-\frac{\underline{\dim}_{\kern0.1em {}^{\sigma }\kern-0.1em R} N}{\dim N}\Bigr)  \\ 
&\underset{(\ast )}{=} P_{{}^{\sigma }\kern-0.2em H,N}(x). 
\end{align*}
Here, the last equation ($\ast$) follows from what the map $\underline{\textrm{Braid}}(H)\longrightarrow \underline{\textrm{Braid}}({}^{\sigma }\kern-0.2em H),\  R\longmapsto {}^{\sigma }\kern-0.2em R$ is bijective by Lemma \ref{3.4}(1). 
\end{proof}

Let $A$ be a Hopf algebra over a field $\boldsymbol{k}$, and $K$ an extension field  of $\boldsymbol{k}$. 
Then $A^K=A\otimes K$ becomes a Hopf algebra over $K$, and the automorphism group $\textrm{Aut}(K/\boldsymbol{k})$ acts on $A^K$ as follows:  
\begin{equation}\label{eq:action}
\sigma \cdot (a\otimes c)=a\otimes \sigma (c)\qquad (\sigma \in \textrm{Aut}(K/\boldsymbol{k}),\ a\in A,\ c\in K).
\end{equation}
We set $H=A^K$. 
Then for each $\sigma \in \textrm{Aut}(K/\boldsymbol{k})$ the left action on $H$ given by (\ref{eq:action}) defines a Hopf algebra isomorphism from $H$ to ${}^{\sigma } \kern-0.2em H$, denoted by $\tilde{\sigma }: H\longrightarrow {}^{\sigma } \kern-0.2em H$. 
Furthermore, we see that if $R$ is a universal $R$-matrix of $H$, then $\tilde{\sigma }$ becomes a homomorphism of quasitriangular Hopf algebras from $(H, R)$ to $({}^{\sigma }\kern-0.2em H, {}^{\sigma }\kern-0.2em R)$. 
Thus  $\textrm{Aut}(K/\boldsymbol{k})$ acts on $\underline{\textrm{Braid}}(H)$ from the right by 
$$R\in \underline{\textrm{Braid}}(H)\longmapsto (\tilde{\sigma }^{-1}\otimes _K \tilde{\sigma}^{-1})({}^{\sigma }\kern-0.2em R)\in \underline{\textrm{Braid}}(H).$$

\par \medskip 
\begin{thm}\label{3.6}
Let $K/\boldsymbol{k}$ be a finite Galois extension of fields, and 
$A$ a semisimple and cosemisimple Hopf algebra over $\boldsymbol{k}$ of finite dimension. 
Then $P_{A^K}^{(d)}(x)\in (\boldsymbol{k}\cap Z_K)[x]$ for each positive integer $d$. 
\end{thm}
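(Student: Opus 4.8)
The plan is to show that the polynomial $P_{A^K}^{(d)}(x)$ is fixed by the Galois group $\mathrm{Gal}(K/\boldsymbol{k})$ and has coefficients in $Z_K$, and then invoke Galois theory. First I would recall from Proposition~\ref{3.3} (applied to $H=A^K$, which is semisimple and cosemisimple over $K$ since semisimplicity and cosemisimplicity are preserved under base field extension for finite-dimensional Hopf algebras in this setting) that $P_{A^K}^{(d)}(x)\in Z_K[x]$. So the entire burden is to prove $\sigma\cdot P_{A^K}^{(d)}(x)=P_{A^K}^{(d)}(x)$ for every $\sigma\in\mathrm{Aut}(K/\boldsymbol{k})=\mathrm{Gal}(K/\boldsymbol{k})$; combined with the coefficient statement this forces the coefficients to lie in $K^{\mathrm{Gal}(K/\boldsymbol{k})}\cap Z_K=\boldsymbol{k}\cap Z_K$.

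To establish Galois-invariance I would set $H=A^K$ and chase through the earlier lemmas. Fix $\sigma\in\mathrm{Aut}(K/\boldsymbol{k})$. By Lemma~\ref{3.5}, for any finite-dimensional left $H$-module $M$ with $(\dim M)1_K\neq 0$ we have $\sigma^{-1}\cdot P_{H,M}(x)=P_{{}^{\sigma}\kern-0.2em H,{}^{\sigma}\kern-0.2em M}(x)$. Now the point is that the Hopf algebra isomorphism $\tilde{\sigma}:H\longrightarrow{}^{\sigma}\kern-0.2em H$ coming from the action \eqref{eq:action} transports everything back: since $\tilde\sigma$ is an isomorphism of Hopf algebras carrying $\underline{\mathrm{Braid}}(H)$ bijectively onto $\underline{\mathrm{Braid}}({}^{\sigma}\kern-0.2em H)$ (compatibly with $R$-dimensions, as $\tilde\sigma$ is a morphism of quasitriangular Hopf algebras), we get $P_{{}^{\sigma}\kern-0.2em H,\,{}^{\sigma}\kern-0.2em M}(x)=P_{H,\,\tilde\sigma^{-1}({}^{\sigma}\kern-0.2em M)}(x)$, where $\tilde\sigma^{-1}({}^{\sigma}\kern-0.2em M)$ denotes $M$ regarded as an $H$-module via $\tilde\sigma$ — equivalently, the module obtained by letting $a\otimes c$ act as $a\otimes\sigma^{-1}(c)$ originally did. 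In particular $\tilde\sigma^{-1}({}^{\sigma}\kern-0.2em M)$ is again absolutely simple of the same $K$-dimension $d$ whenever $M$ is, because $\tilde\sigma$ is a Hopf algebra isomorphism. Hence $\sigma$ permutes the set $\{M_1,\ldots,M_t\}$ of non-isomorphic absolutely simple left $H$-modules of dimension $d$ up to isomorphism: writing $\sigma\cdot M_i:=\tilde\sigma^{-1}({}^{\sigma}\kern-0.2em M_i)$, the assignment $M_i\mapsto\sigma\cdot M_i$ is a bijection of this finite set (its inverse is given by $\sigma^{-1}$).

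Assembling these facts: for each $i$ we have $\sigma^{-1}\cdot P_{H,M_i}(x)=P_{H,\sigma\cdot M_i}(x)$, and taking the product over $i=1,\ldots,t$ — using that $\{\sigma\cdot M_1,\ldots,\sigma\cdot M_t\}$ is again a full set of non-isomorphic absolutely simple modules of dimension $d$, so the product on the right is just a reordering of the defining product for $P_H^{(d)}(x)$ — yields $\sigma^{-1}\cdot P_H^{(d)}(x)=P_H^{(d)}(x)$. Since this holds for all $\sigma\in\mathrm{Gal}(K/\boldsymbol{k})$, every coefficient of $P_{A^K}^{(d)}(x)$ is fixed by the Galois group, hence lies in $\boldsymbol{k}$; together with Proposition~\ref{3.3} it lies in $\boldsymbol{k}\cap Z_K$. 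The main obstacle I anticipate is purely bookkeeping: carefully distinguishing the ``twisted scalar'' module ${}^{\sigma}\kern-0.2em M$ from the ``relabelled action'' module $\tilde\sigma^{-1}({}^{\sigma}\kern-0.2em M)$ and checking that the latter is genuinely an $H$-module isomorphic to one of the $M_j$, so that the permutation argument on the finite index set is legitimate; once the compatibility $P_{{}^{\sigma}\kern-0.2em H,{}^{\sigma}\kern-0.2em M}=P_{H,\tilde\sigma^{-1}({}^{\sigma}\kern-0.2em M)}$ is pinned down via Lemma~\ref{3.4}, the rest is formal. One should also confirm at the outset that $A^K$ inherits semisimplicity and cosemisimplicity from $A$ over the extension $K/\boldsymbol{k}$, which is standard for finite-dimensional Hopf algebras (e.g. via the integral/Maschke-type criterion, which is insensitive to base field extension).
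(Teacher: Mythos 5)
Your proposal is correct and follows essentially the same route as the paper: establish that $H=A^K$ is semisimple and cosemisimple, apply Proposition~\ref{3.3} to get coefficients in $Z_K$, use Lemma~\ref{3.5} together with the bijection $[M]\mapsto[{}^{\sigma}\kern-0.2em M]$ on isomorphism classes of $d$-dimensional absolutely simple modules to deduce $\sigma^{-1}\cdot P_H^{(d)}(x)=P_H^{(d)}(x)$, and conclude by Galois invariance. The only cosmetic difference is that you explicitly transport ${}^{\sigma}\kern-0.2em M$ back to an $H$-module via $\tilde\sigma$ to realize a permutation of $\{M_1,\ldots,M_t\}$, whereas the paper identifies $P_{{}^{\sigma}\kern-0.2em H}^{(d)}(x)$ with $P_H^{(d)}(x)$ directly through the isomorphism $\tilde\sigma:H\to{}^{\sigma}\kern-0.2em H$; these are the same argument.
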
 
\begin{proof}
First of all, let us check to see that the Hopf algebra $H=A^K$  is semisimple and cosemisimple. 
Since the Hopf algebra $A$ is semisimple, it is separable (see \cite[Corollary 2.2.2]{Mon}). 
Thus $H$ is a semisimple Hopf algebra over $K$ of finite dimension. 
Applying the same argument to the dual Hopf algebra $A^{\ast}$, we see that 
$H$ is a cosemisimple Hopf algebra over $K$. 
Eventually, we see that $H$ is semisimple and cosemisimple. 
\par 
Let $\textrm{Irr}^{(d)}_0(H)$ denote the set of isomorphism classes $[M]$ of absolutely simple left $H$-modules $M$ of dimension $d$, and set $t=\sharp \textrm{Irr}_0^{(d)}(H)$. 
If $t=0$, then $P_{H}^{(d)}(x)=1$, and hence $P_{H}^{(d)}(x)\in (\boldsymbol{k}\cap Z_K)[x]$. 
\par 
Hereinafter, we consider the case of $t>0$. 
For an automorphism $\sigma \in \textrm{Gal}(K/\boldsymbol{k})$, the map 
$$\textrm{Irr}^{(d)}_0(H)\longrightarrow \textrm{Irr}^{(d)}_0({}^{\sigma }\kern-0.2em H),\qquad [M] \longmapsto [{}^{\sigma }\kern-0.2em M]$$ 
is bijective. 
Here, by Lemma~\ref{3.5} we have 
$\sigma ^{-1}\cdot P_H^{(d)}(x)=P_{{}^{\sigma }\kern-0.2em H}^{(d)}(x)=P_H^{(d)}(x)$, and we see that $P_H^{(d)}(x)\in \boldsymbol{k}[x]$. 
On the other hand, since $P_H^{(d)}(x)\in Z_K[x]$ by Proposition~\ref{3.3}, it follows that $P_H^{(d)}(x)\in (\boldsymbol{k}\cap Z_K)[x]$. 
\end{proof}

\par \bigskip 
As applications of the above theorem we have two corollaries. 

\par 
\begin{cor}\label{3.7}
Let $K$ be a finite Galois extension field of $\mathbb{Q}$, and $A$ a semisimple Hopf algebra over $\mathbb{Q}$ of finite dimension. 
Then $P_{A^K}^{(d)}(x)\in \mathbb{Z}[x]$ for any positive integer $d$, where $\mathbb{Z}$ denotes the rational integral ring. 
\end{cor}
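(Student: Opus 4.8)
The plan is to obtain this as an immediate consequence of Theorem~\ref{3.6} applied with the base field $\boldsymbol{k}=\mathbb{Q}$, once the hypotheses of that theorem are checked and the coefficient ring $\boldsymbol{k}\cap Z_K$ is identified in the present situation.

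First I would confirm that $A$ is cosemisimple, not merely semisimple. Since $\textrm{ch}(\mathbb{Q})=0$, the Larson--Radford theorem guarantees that a finite-dimensional semisimple Hopf algebra over $\mathbb{Q}$ satisfies $S^2=\textrm{id}$ and is cosemisimple. Hence $A$ is a finite-dimensional semisimple and cosemisimple Hopf algebra over $\mathbb{Q}$, and $K/\mathbb{Q}$ is a finite Galois extension, so Theorem~\ref{3.6} applies and yields $P_{A^K}^{(d)}(x)\in(\mathbb{Q}\cap Z_K)[x]$ for every positive integer $d$.

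It then remains only to compute $\mathbb{Q}\cap Z_K$. As $\textrm{ch}(K)=0$, the ring $Z_K$ is by definition the ring of algebraic integers of $K$; and a rational number that is integral over $\mathbb{Z}$ already lies in $\mathbb{Z}$, because $\mathbb{Z}$ is integrally closed in $\mathbb{Q}$. Therefore $\mathbb{Q}\cap Z_K=\mathbb{Z}$, and the conclusion of Theorem~\ref{3.6} becomes $P_{A^K}^{(d)}(x)\in\mathbb{Z}[x]$, as claimed. There is no genuinely hard step here: the substantive work is already packaged in Theorem~\ref{3.6} (which itself rests on Lemma~\ref{3.2} together with the Galois-descent argument through Lemma~\ref{3.5}), and what is added is only the characteristic-zero implication ``semisimple $\Rightarrow$ cosemisimple'' and the elementary fact that $\mathbb{Z}$ is integrally closed in $\mathbb{Q}$.
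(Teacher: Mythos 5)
Your proposal is correct and follows essentially the same route as the paper: invoke the Larson--Radford theorem in characteristic $0$ to upgrade semisimplicity to cosemisimplicity, apply Theorem~\ref{3.6} with $\boldsymbol{k}=\mathbb{Q}$, and identify $\mathbb{Q}\cap Z_K=\mathbb{Z}$ since $\mathbb{Z}$ is integrally closed in $\mathbb{Q}$. The only cosmetic difference is that the paper states the cosemisimplicity for $A^K$ while you verify it for $A$ itself (which is the hypothesis Theorem~\ref{3.6} actually requires); both are immediate from the same citation.
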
 
\begin{proof} 
By \cite{LR1} a finite-dimensional semisimple Hopf algebra over a field of characteristic $0$ is cosemisimple. 
Thus the semisimple Hopf algebra $A^K$ is cosemisimple. 
Since $\mathbb{Q}\cap Z_K=\mathbb{Z}$, by applying Theorem~\ref{3.6} we have $P_{A^K}^{(d)}(x)\in \mathbb{Z}[x]$. 
\end{proof}

\par \bigskip 
\begin{cor}\label{3.8}
Let $\varGamma $ be a finite group, and 
$K$ a finite Galois extension field of $\mathbb{Q}$. Then 
$P_{K[\varGamma ]}^{(d)}(x)\in \mathbb{Z}[x]$ for any positive integer $d$. 
\end{cor}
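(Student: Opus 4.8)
The plan is to deduce Corollary~\ref{3.8} directly from Corollary~\ref{3.7}, since a group algebra over $\mathbb{Q}$ is the prototypical finite-dimensional semisimple Hopf algebra over $\mathbb{Q}$. Concretely, I would first observe that $\mathbb{Q}[\varGamma]$ is a semisimple Hopf algebra over $\mathbb{Q}$ of finite dimension: semisimplicity is Maschke's theorem (the characteristic of $\mathbb{Q}$ is $0$, hence does not divide $|\varGamma|$), and it is a Hopf algebra with the standard group-like coproduct $\Delta(g)=g\otimes g$, counit $\varepsilon(g)=1$, and antipode $S(g)=g^{-1}$. Its dimension is $|\varGamma|<\infty$.

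Next I would identify the scalar extension. For any field extension $K/\mathbb{Q}$ there is a canonical isomorphism of Hopf algebras $\mathbb{Q}[\varGamma]\otimes_{\mathbb{Q}} K\cong K[\varGamma]$, sending $g\otimes c\mapsto cg$; this respects the algebra structure, the coproduct, the counit and the antipode, since all of these are defined on the group-like basis $\varGamma$ independently of the ground field. Thus $K[\varGamma]=A^K$ with $A=\mathbb{Q}[\varGamma]$.

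With these two identifications in hand, the corollary is immediate: taking $A=\mathbb{Q}[\varGamma]$, which is a finite-dimensional semisimple Hopf algebra over $\mathbb{Q}$, and $K$ a finite Galois extension of $\mathbb{Q}$, Corollary~\ref{3.7} gives $P_{A^K}^{(d)}(x)\in\mathbb{Z}[x]$ for every positive integer $d$, and since $A^K\cong K[\varGamma]$ as Hopf algebras we have $P_{K[\varGamma]}^{(d)}(x)=P_{A^K}^{(d)}(x)\in\mathbb{Z}[x]$, using that the polynomial invariant depends only on the isomorphism class of the Hopf algebra (indeed only on the monoidal category ${}_A\mathbb{M}$, by Theorem~\ref{2.5}).

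There is essentially no obstacle here: the only point requiring a word of care is checking that the base-change isomorphism $\mathbb{Q}[\varGamma]\otimes_{\mathbb{Q}}K\cong K[\varGamma]$ is one of Hopf algebras (not merely of algebras), which is routine because every structure map is determined on $\varGamma$. Everything else is a direct invocation of Corollary~\ref{3.7}.
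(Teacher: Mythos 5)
Your proposal is correct and follows exactly the paper's argument: identify $K[\varGamma]$ with the scalar extension $\mathbb{Q}[\varGamma]^K$, note that $\mathbb{Q}[\varGamma]$ is semisimple (Maschke, characteristic $0$), and invoke Corollary~\ref{3.7}. The extra care you take in verifying that the base-change isomorphism respects the Hopf structure is a fine, if routine, addition.
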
 
\begin{proof} 
The group Hopf algebra $K[\varGamma ]$ is isomorphic to the scalar extension of $\mathbb{Q}[\varGamma ]$ by $K$. 
Since a group algebra over a field of characteristic $0$ is semisimple, by Corollary~\ref{3.7} we have 
$P_{K[\varGamma ]}^{(d)}(x)\in \mathbb{Z}[x]$ for any positive integer $d$. 
\end{proof}

\par \medskip 
\subsection{Stability of polynomial invariants} 
Given a field extension $K/\boldsymbol{k}$, any universal $R$-matrix $R$ of $A$ can be regarded as a universal $R$-matrix of the scalar extension $A^K$ in a natural way (see below for details). 
So, for an increasing sequence of finite extensions of fields 
$\boldsymbol{k} \subsetneqq K_1\subsetneqq K_2\subsetneqq \cdots $ 
we have an increasing sequence of sets of universal $R$-matrices $\underline{\textrm{Braid}}(A)\subset \underline{\textrm{Braid}}(A^{K_1})\subset \underline{\textrm{Braid}}(A^{K_2})\subset \cdots $. 
In this subsection we show that there is $n$ such that $\underline{\textrm{Braid}}(A^{K_n})=\underline{\textrm{Braid}}(A^{K_{n+1}})=\cdots $. 
As an application, we prove that the polynomial invariants $P_A^{(d)}(x)\ (d=1,2,\dots )$ are stabilized by taking some suitable extension of the base field. 
\par 
Let $A$ be a Hopf algebra over a field $\boldsymbol{k}$, and $L$ a commutative algebra over $\boldsymbol{k}$. 
Then $A^L=A\otimes L$ becomes a Hopf algebra over $L$. 
Furthermore, if $R=\sum _i\alpha _i\otimes \beta _i$ is a universal $R$-matrix of $A$, then 
$$R^L=\sum _i(\alpha _i\otimes 1_{\boldsymbol{k}})\otimes _L (\beta _i\otimes 1_{\boldsymbol{k}}) \ \ \in \ \ A^L\otimes _LA^L$$
is a universal $R$-matrix of $A^L$. 
Via the injection $\underline{\textrm{Braid}}(A) \longrightarrow \underline{\textrm{Braid}}(A^L)$ which sends $R$ to $R^L$, 
we  frequently regard $\underline{\textrm{Braid}}(A)$ as a subset of $\underline{\textrm{Braid}}(A^L)$.  
\par 
Let $\underline{\textrm{alg}}_{\boldsymbol{k}}$ denote the $\boldsymbol{k}$-additive category whose objects are commutative algebras over $\boldsymbol{k}$ and morphisms are algebra maps between them. 
Let $A$ and $B$ be two Hopf algebras over $\boldsymbol{k}$. 
For a commutative algebra $L\in \underline{\textrm{alg}}_{\boldsymbol{k}}$,  
we set 
$$\textrm{Hopf}_L(A^L, B^L):=\{ \textrm{ the $L$-Hopf algebra maps  $A^L \longrightarrow B^L$ }\} ,$$
and for an algebra map $f: L_1\longrightarrow L_2$ between commutative algebras $L_1, L_2\in \underline{\textrm{alg}}_{\boldsymbol{k}}$ and $\varphi \in \textrm{Hopf}_{L_1}(A^{L_1}, B^{L_1})$ 
we define a map $f_{\ast}\varphi \in \textrm{Hopf}_{L_2}(A^{L_2}, B^{L_2})$ by the composition: 
\begin{align*}
& A\otimes L_2 \xrightarrow{\ \textrm{id}\otimes \eta \ } A\otimes (L_1\otimes L_2) 
\cong (A\otimes L_1)\otimes L_2 \xrightarrow{\ \varphi \otimes \textrm{id}\ } 
(B\otimes L_1)\otimes L_2 \\ 
& \quad \qquad \xrightarrow{\ \textrm{id}_B\otimes f  \ } 
(B\otimes L_2)\otimes L_2\cong B\otimes (L_2\otimes L_2) \xrightarrow{\ \textrm{id}\otimes \mu_{L_2}  \ } B\otimes L_2, 
\end{align*}
where $\mu_{L_2}$ is the multiplication of $L_2$, and $\eta : L_2\longrightarrow L_1\otimes L_2$ is the $\boldsymbol{k}$-algebra map  defined by $\eta (y)=1_{L_1}\otimes y\ (y\in L_2)$. 
This $\boldsymbol{k}$-linear map $f_{\ast}\varphi $ is directly defined by 
$$(f_{\ast}\varphi )(a\otimes y)=\sum\limits_i b_i\otimes f(x_i)y, 
\qquad \Bigl(\textrm{$\varphi (a\otimes 1_{L_1})=\sum\limits_i b_i \otimes x_i$}\Bigr)$$
for all $a\in A,\ y\in L_2$. 
Let $\underline{\textrm{Set}}$ denote the category whose objects are sets and morphisms are maps. 
Then we have a covariant functor
$\textbf{Hopf}(A,B): \underline{\textrm{alg}}_{\boldsymbol{k}}\longrightarrow \underline{\textrm{Set}}$ such as 
\begin{align*}
\textrm{for an object} & : \quad L \longmapsto \textrm{Hopf}_L(A^L, B^L),\\ 
\textrm{for a morphism} & : \quad f\longmapsto \Bigl( \textbf{Hopf}(A,B)(f): \varphi \longmapsto f_{\ast}\varphi \Bigr) . 
\end{align*}

Provided that $A$ and $B$ are of finite dimension over $\boldsymbol{k}$, 
the functor $\textbf{Hopf}(A,B)$ becomes an algebraic affine scheme over $\boldsymbol{k}$, that is, it is represented by a finitely generated commutative algebra $Z\in \underline{\textrm{alg}}_{\boldsymbol{k}}$ \cite[p.4--5 \& p.58]{Wat1}. 
Furthermore, if $A$ is semisimple, and $B$ is cosemisimple, then the representing object $Z$ is separable and of finite dimension, as the proof is given in the next proposition.  
This result is essentially proved by Etingof and Gelaki \cite[Corollary 1.3]{EG2}, and  in the case of $A=B$ and the affine group scheme $\textbf{Aut}(A)$ the same result is proved by Waterhouse \cite[Theorem 1]{Wat2} (see also \cite[Corollary 6]{Ra1} and \cite[Proposition 1]{Ra2} in case of  characteristic $0$ or positive characteristic with some additional assumptions). 

\par \smallskip 
\begin{prop}\label{3.9}
Let $A$ and $B$ be Hopf algebras over $\boldsymbol{k}$ of finite dimension. 
If $A$ is semisimple, and $B$ is cosemisimple, then the affine scheme 
$\textbf{Hopf}(A,B)$ is finite etale, that is, a representing object $Z\in \underline{\textrm{alg}}_{\boldsymbol{k}}$ of $\textbf{Hopf}(A,B)$ is separable and of finite dimension. 
\end{prop}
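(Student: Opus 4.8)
The plan is to show first that $\textbf{Hopf}(A,B)$ is a \emph{finite} scheme (i.e. $Z$ is finite-dimensional over $\boldsymbol{k}$), and then that it is \emph{\'etale} (i.e. $Z$ is separable). For finiteness, I would argue that after passing to an algebraic closure $\overline{\boldsymbol{k}}$ the set $\textrm{Hopf}_{\overline{\boldsymbol{k}}}(A^{\overline{\boldsymbol{k}}},B^{\overline{\boldsymbol{k}}})$ is finite; since $Z$ is finitely generated and $Z\otimes\overline{\boldsymbol{k}}$ has only finitely many $\overline{\boldsymbol{k}}$-points while being a finitely generated reduced-or-not algebra, finiteness of the point set over the algebraically closed field forces $\dim_{\boldsymbol{k}} Z<\infty$. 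The finiteness of the set of Hopf algebra maps $A^{\overline{\boldsymbol{k}}}\to B^{\overline{\boldsymbol{k}}}$ is exactly the content behind Etingof--Gelaki \cite[Corollary 1.3]{EG2}: when $A$ is semisimple and $B$ is cosemisimple, both remain so after base field extension (the separability argument already used in the proof of Theorem~\ref{3.6}), and a Hopf algebra map out of a semisimple Hopf algebra into a cosemisimple one is rigid enough — via the associated algebra map on the semisimple algebra side and coalgebra map on the cosemisimple side — to admit only finitely many choices. Concretely one shows any such $\varphi$ is determined by a rigid combinatorial datum (matching of matrix blocks / of simple subcoalgebras compatible with comultiplication), of which there are finitely many.

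Next, for \'etaleness it suffices to show $Z\otimes_{\boldsymbol{k}}\overline{\boldsymbol{k}}$ is reduced, equivalently that the scheme $\textbf{Hopf}(A,B)$ has no infinitesimal deformations: every $\overline{\boldsymbol{k}}$-point must have trivial tangent space, i.e. $\textrm{Hopf}_{\overline{\boldsymbol{k}}[\varepsilon]}(A^{\overline{\boldsymbol{k}}[\varepsilon]},B^{\overline{\boldsymbol{k}}[\varepsilon]})\to \textrm{Hopf}_{\overline{\boldsymbol{k}}}(A^{\overline{\boldsymbol{k}}},B^{\overline{\boldsymbol{k}}})$ is bijective, where $\overline{\boldsymbol{k}}[\varepsilon]=\overline{\boldsymbol{k}}[\varepsilon]/(\varepsilon^2)$ are the dual numbers. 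Given a Hopf algebra map $\varphi_0$ over $\overline{\boldsymbol{k}}$, a lift over the dual numbers is of the form $\varphi_0+\varepsilon\delta$ with $\delta:A^{\overline{\boldsymbol{k}}}\to B^{\overline{\boldsymbol{k}}}$ a $\varphi_0$-derivation that is simultaneously a $\varphi_0$-coderivation; so I must show every such $\delta$ vanishes. This is where semisimplicity of $A$ and cosemisimplicity of $B$ enter decisively: the space of $\varphi_0$-derivations $A^{\overline{\boldsymbol{k}}}\to B^{\overline{\boldsymbol{k}}}$ (a Hochschild $1$-cocycle space) vanishes because $A^{\overline{\boldsymbol{k}}}$ is separable, hence all Hochschild cohomology in positive degree is zero — more precisely every derivation into a bimodule is inner, and combined with the coderivation condition and cosemisimplicity of $B^{\overline{\boldsymbol{k}}}$ (dually, vanishing of the relevant coalgebra cohomology) the inner part is forced to be zero as well. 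Thus $\delta=0$ and the lift is unique, giving formal \'etaleness; together with finite type this yields finite \'etale.

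I expect the tangent-space (reducedness) step to be the main obstacle. The derivation condition and the coderivation condition individually are controlled by separability of $A$ and of $B^*$ respectively, but one has to run \emph{both} at once and verify that the inner derivation produced on the algebra side is compatible with — and ultimately killed by — the coalgebra-side rigidity; keeping track of the bimodule structure on $B^{\overline{\boldsymbol{k}}}$ pulled back along $\varphi_0$, and dualizing cleanly, is the delicate bookkeeping. An alternative that sidesteps some of this is to invoke the Etingof--Gelaki argument \cite[Corollary 1.3]{EG2} more directly: they prove precisely that $\textbf{Aut}(A)$ and the relevant Hom-schemes are finite \'etale under exactly these hypotheses, and the reference to Waterhouse \cite[Theorem 1]{Wat2} handles $A=B$; so in the writeup I would give the deformation-theoretic argument in outline and then cite \cite[Corollary 1.3]{EG2} for the case at hand, noting that the only additional input needed beyond \emph{loc.\ cit.} is the already-established fact that $A^K$ is semisimple and $B^K$ is cosemisimple for every extension $K/\boldsymbol{k}$, which reduces everything to the geometric points over $\overline{\boldsymbol{k}}$.
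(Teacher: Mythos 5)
Your strategy is essentially the paper's: the decisive input in both is the vanishing of the tangent space $\textrm{Der}_{\phi}(\bar{Z},\bar{\boldsymbol{k}})$ at every $\bar{\boldsymbol{k}}$-point, taken from the proof of \cite[Corollary 1.3]{EG2}, wrapped in standard commutative algebra. The one organizational difference is that the paper does not prove finiteness of the point set as a separate first step: it shows $\mathfrak{m}=\mathfrak{m}^2$ for every maximal ideal of $\bar{Z}$ (via the derivation vanishing), deduces that every maximal ideal is a minimal prime, and then gets finiteness for free because a Noetherian ring has only finitely many minimal primes; reducedness follows since $\textrm{rad}\kern0.2em\bar{Z}=\mathfrak{m}_1\cdot\cdots\cdot\mathfrak{m}_l$ is then both idempotent and nilpotent. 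This ordering is worth adopting, because your standalone finiteness sketch --- that a Hopf map is ``determined by a rigid combinatorial datum'' of matrix blocks and simple subcoalgebras --- would not survive scrutiny on its own: algebra maps out of a finite-dimensional semisimple algebra come in positive-dimensional families (conjugation), and dually for coalgebra maps into a cosemisimple coalgebra, so the finiteness genuinely requires the simultaneous algebra/coalgebra constraint and is exactly the nontrivial content of the Etingof--Gelaki result you would end up citing anyway. Your tangent-space step is correctly calibrated (separability of $A$ only makes the derivation inner; killing it needs the coderivation condition too), and that is precisely the part the paper also outsources to the proof of \cite[Corollary 1.3]{EG2}.
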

\begin{proof}
For two  algebras $A$ and $B$ over $\boldsymbol{k}$, 
let $\textrm{Alg}_{\boldsymbol{k}}(A, B)$ denote the set of the $\boldsymbol{k}$-algebra maps from $A$ to $B$. 
Let $Z\in \underline{\textrm{alg}}_{\boldsymbol{k}}$ be a representing object of the functor 
$\textbf{Hopf}(A,B):\underline{\textrm{alg}}_{\boldsymbol{k}}\longrightarrow \underline{\textrm{Set}}$. 
Since $Z$ is finitely generated commutative, it is a  Noetherian algebra over $\boldsymbol{k}$. 
Thus for an algebraically closed field $\bar{\boldsymbol{k}}$ which contains  $\boldsymbol{k}$, the $\bar{\boldsymbol{k}}$-algebra $\bar{Z}:=Z\otimes \bar{\boldsymbol{k}}$ is also finitely generated commutative Noetherian. 
Then by Hilbert's Nullstellensatz the nilradical $\sqrt{0}$ of $\bar{Z}$ coincides with the Jacobson radical $\textrm{rad}\kern0.2em \bar{Z}$, and the set of minimal prime ideals of $\bar{Z}$ is finite. 
We will show that any maximal ideal of $\bar{Z}$ is a minimal prime ideal. 
To do this, it is enough to prove that $\mathfrak{m}^2=\mathfrak{m}$ for any maximal ideal $\mathfrak{m}$ of $\bar{Z}$. 
For $\phi \in \textrm{Alg}_{\bar{\boldsymbol{k}}}(\bar{Z}, \bar{\boldsymbol{k}})$, 
let us consider the $\phi $-derivations 
$$\textrm{Der}_{\phi}(\bar{Z}, \bar{\boldsymbol{k}})=\{ \ \textrm{$\bar{\boldsymbol{k}}$-linear maps } \delta : \bar{Z}\longrightarrow \bar{\boldsymbol{k}} \ \vert \ 
\delta (\bar{z}\bar{z}^{\prime})=\phi (\bar{z})\bar{z}^{\prime}+\bar{z}\phi (z^{\prime})\ \ (\bar{z}, \bar{z}^{\prime}\in \bar{Z})\ \} .$$
Considering the maximal ideal $\mathfrak{m}_{\phi} :=\textrm{Ker}\kern0.2em \phi $ of $\bar{Z}$, we have a $\bar{\boldsymbol{k}}$-linear isomorphism 
$(\mathfrak{m}_{\phi }/\mathfrak{m}_{\phi }^2)^{\ast} \longrightarrow \textrm{Der}_{\phi }(\bar{Z}, \bar{\boldsymbol{k}})$. 
However, $\textrm{Der}_{\phi}(\bar{Z}, \bar{\boldsymbol{k}})=0$ by the proof of \cite[Corollary 1.3]{EG2}. 
So, we have $\mathfrak{m}_{\phi }=\mathfrak{m}_{\phi }^2$. 
It follows from Hilbert's Nullstellensatz that $\mathfrak{m}=\mathfrak{m}^2$ for any maximal ideal $\mathfrak{m}$ of $\bar{Z}$. 
Since $\mathfrak{m}\bar{Z}_{\mathfrak{m}}=0$, the localization $\bar{Z}_{\mathfrak{m}}$ becomes a field. 
This means that $\mathfrak{m}$ is a minimal prime ideal of $\bar{Z}$. 
\par 
From the above argument, it follows that the set of maximal ideals of $\bar{Z}$ is finite. 
So, let $\mathfrak{m}_1,\ldots , \mathfrak{m}_l$ be the maximal ideals of $\bar{Z}$. 
Then we have $\textrm{rad} \kern0.2em \bar{Z}=\mathfrak{m}_1\cap  \ldots \cap \mathfrak{m}_l=\mathfrak{m}_1\cdot \cdots \cdot \mathfrak{m}_l$. 
Since $\textrm{rad}\kern0.2em \bar{Z}=\sqrt{0}$ is nilpotent, 
 $(\textrm{rad}\kern0.2em  \bar{Z})^n=0$ for a sufficiently large integer $n$. 
From this fact and $\mathfrak{m}_j^2=\mathfrak{m}_j\ (j=1,\ldots ,l)$, we see that $\textrm{rad}\kern0.2em  \bar{Z}=(\textrm{rad}\kern0.2em  \bar{Z})^n=0$. 
Therefore, $\mathfrak{m}_1\cdot \cdots \cdot \mathfrak{m}_l=0$. 
This implies that the Noetherian algebra $\bar{Z}$ is Artinian. 
Thus $\bar{Z}$ is semisimple, that is, $Z$ is separable. 
\end{proof}

\par \smallskip 
For a Hopf algebra $A$ over $\boldsymbol{k}$, the $\boldsymbol{k}$-linear space  $\textrm{Hom}_{\boldsymbol{k}}(A^{\ast}, A)$ possesses a left $A$-module action $\rightharpoonup$ and a right $A$-module action $\leftharpoonup$ defined as follows. 
\begin{align*}
(a\rightharpoonup f)(p)&=\sum a_{(1)} \langle p_{(1)}, a_{(2)}\rangle f(p_{(2)}),\\ 
(f\leftharpoonup a)(p)&=\sum \kern0.1em  f(p_{(1)})\langle p_{(2)}, a_{(1)}\rangle a_{(2)}
\end{align*}
for $f\in \textrm{Hom}_{\boldsymbol{k}}(A^{\ast}, A),\ p\in A^{\ast},\ a\in A$. 
Here $\langle \ ,\ \rangle $ denotes the natural pairing of $A^{\ast}$ and $A$, and the sigma notation such as $\Delta (a)=\sum a_{(1)}\otimes a_{(2)}$ is used.

\par \smallskip 
\begin{lem}[{\cite{Ra3}}]\label{3.10}
Let $A$ be a Hopf algebra over $\boldsymbol{k}$, and $F: A\otimes A \longrightarrow \textrm{Hom}_{\boldsymbol{k}}(A^{\ast}, A^{\textrm{cop}})$ 
be the  injective $\boldsymbol{k}$-linear map defined by 
$$(F(a\otimes b))(p)=p(a)b\qquad (a,b\in A,\ p\in A^{\ast}).$$
For an element $R\in A\otimes A$ we set $f=F(R): A^{\ast}\longrightarrow A^{\textrm{cop}}$. Then 
\par  
(1) given $a\in A$, $R$ satisfies 
$\Delta ^{\textrm{cop}}(a)\cdot R=R\cdot \Delta (a)$  if and only if 
$a \rightharpoonup f=f\leftharpoonup a$, 
\par 
(2) $R$ satisfies $(\Delta \otimes \textrm{id})(R)=R_{13}R_{23}$ and $(\varepsilon \otimes \textrm{id})(R)=1$ if and only if $f$ is an algebra map, 
\par 
(3) in the case where $A$ is finite-dimensional, 
$R$ satisfies $(\textrm{id}\otimes \Delta )(R)=R_{13}R_{12}$ and $(\textrm{id}\otimes \varepsilon )(R)=1$ if and only if $f$ is a coalgebra map. 
\par 
Therefore, if $A$ is finite-dimensional, then the following are equivalent : 
\begin{enumerate}
\item[(i)] $R$ is a universal $R$-matrix of $A$, 
\item[(ii)] $f$ is a Hopf algebra map satisfying $a \rightharpoonup f=f\leftharpoonup a$ for all $a\in A$,
\end{enumerate} 
and the restriction of $F$ to $\underline{\textrm{Braid}}(A)$ defines an injection  
$j_A : \underline{\textrm{Braid}}(A)\longrightarrow \textrm{Hopf}_{\boldsymbol{k}}(A^{\ast}, A^{\textrm{cop}})$.   
\end{lem}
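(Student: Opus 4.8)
The plan is to verify Raffael's correspondence piece by piece, translating each of the three defining conditions of a universal $R$-matrix into a statement about the linear map $f = F(R) \colon A^{\ast} \longrightarrow A^{\mathrm{cop}}$. First I would record the elementary observation that $F$ is injective: since $A$ is finite-dimensional, choosing dual bases $\{e_i\}$ of $A$ and $\{e_i^{\ast}\}$ of $A^{\ast}$, the map $F$ sends $\sum_{i,j} c_{ij}\, e_i \otimes e_j$ to the map $p \mapsto \sum_{i,j} c_{ij}\, p(e_i)\, e_j$, and evaluating on $p = e_k^{\ast}$ recovers $\sum_j c_{kj} e_j$, so $F$ is a linear isomorphism onto $\mathrm{Hom}_{\boldsymbol{k}}(A^{\ast}, A)$. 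The real content is the three equivalences.

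For part (1), I would write $R = \sum_i \alpha_i \otimes \beta_i$ and expand both sides of $\Delta^{\mathrm{cop}}(a) \cdot R = R \cdot \Delta(a)$ in $A \otimes A$, then apply $F$ (equivalently, pair the first tensor leg against an arbitrary $p \in A^{\ast}$). Using that $F(\Delta^{\mathrm{cop}}(a)\cdot R)$ and $F(R \cdot \Delta(a))$ unwind, via the definition $(F(a\otimes b))(p) = p(a) b$ and the multiplication in $A \otimes A$, precisely into the expressions $(a \rightharpoonup f)(p)$ and $(f \leftharpoonup a)(p)$ given in the displayed formulas before the lemma — this is a direct bookkeeping check with the sigma notation. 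For part (2), I would similarly expand $(\Delta \otimes \mathrm{id})(R) = R_{13} R_{23}$ in $A \otimes A \otimes A$; pairing the first two legs against $p, q \in A^{\ast}$ turns the left side into $f(pq)$ (here one uses that $F$ lands in $A^{\mathrm{cop}}$, so the coproduct of $A^{\ast}$ relevant to the product $pq$ matches the opposite comultiplication), and the right side into $f(p) f(q)$; the normalization $(\varepsilon \otimes \mathrm{id})(R) = 1$ corresponds to $f(\varepsilon_{A^{\ast}}) = 1$, i.e. $f$ is unital. Part (3) is formally dual to part (2): finite-dimensionality lets me identify $(A^{\ast})^{\ast} = A$, and the condition $(\mathrm{id} \otimes \Delta)(R) = R_{13} R_{12}$ together with $(\mathrm{id} \otimes \varepsilon)(R) = 1$ dualizes to the statement that $f$ respects comultiplication and counit, i.e. $f$ is a coalgebra map from $A^{\ast}$ to $A^{\mathrm{cop}}$.

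Combining the three parts, $R$ is a universal $R$-matrix if and only if $f$ is simultaneously an algebra map and a coalgebra map — hence a bialgebra map, hence automatically a Hopf algebra map since a bialgebra map between Hopf algebras commutes with the antipodes — and in addition satisfies $a \rightharpoonup f = f \leftharpoonup a$ for all $a \in A$. Note that the conditions $(\varepsilon \otimes \mathrm{id})(R) = 1$ and $(\mathrm{id} \otimes \varepsilon)(R) = 1$, which appear in parts (2) and (3) as hypotheses, are in fact consequences of the three axioms of a universal $R$-matrix (apply $\varepsilon \otimes \mathrm{id} \otimes \mathrm{id}$ to the second axiom and $\mathrm{id}\otimes \mathrm{id} \otimes \varepsilon$ to the third, using invertibility of $R$), so no information is lost in the final ``therefore'' statement. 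Restricting the isomorphism $F$ to the subset $\underline{\mathrm{Braid}}(A) \subseteq A \otimes A$ then lands inside $\mathrm{Hopf}_{\boldsymbol{k}}(A^{\ast}, A^{\mathrm{cop}})$ and remains injective, giving the claimed injection $j_A$.

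The main obstacle is purely organizational rather than conceptual: keeping the $\mathrm{cop}$ decorations straight. Because $f$ is required to be a map into $A^{\mathrm{cop}}$, every place where one compares the comultiplication of $A$ with that of $A^{\ast}$ (in part (3)) or the product structure inherited through $F$ (in part (2)) carries a flip, and it is easy to get a transpose wrong and conclude that $f$ lands in $A$ rather than $A^{\mathrm{cop}}$. I would therefore be careful, in the expansions for parts (2) and (3), to track which tensor leg of $R_{13}R_{23}$ versus $R_{13}R_{12}$ is being paired against which functional, and to check the $\mathrm{cop}$ convention against the definition $(F(a \otimes b))(p) = p(a) b$ — i.e. the ``input'' slot $a$ is the first leg and the ``output'' $b$ is the second — so that the order of multiplication in $A^{\mathrm{cop}}$ comes out correctly. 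Everything else is routine manipulation with the axioms already recalled in the excerpt.
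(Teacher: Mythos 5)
The paper gives no proof of this lemma --- it is quoted from Radford \cite{Ra3} --- so there is nothing internal to compare against; your direct verification is exactly the standard argument and is correct. Two small inaccuracies are worth fixing. First, the injectivity of $F$ does not require finite-dimensionality (and parts (1)--(2) of the lemma are stated for arbitrary $A$): if $\sum_i a_i\otimes b_i\in\ker F$ with the $b_i$ linearly independent, then $\sum_i p(a_i)b_i=0$ for all $p$ forces each $a_i=0$; your dual-basis argument only covers the finite-dimensional case, where $F$ is indeed an isomorphism onto $\textrm{Hom}_{\boldsymbol{k}}(A^{\ast},A)$. Second, your parenthetical in part (2) is spurious: the product $pq$ on $A^{\ast}$ is the convolution dual to $\Delta_A$, and the $\textrm{cop}$ decoration on the target plays no role there, since passing from $A$ to $A^{\textrm{cop}}$ changes only the comultiplication; the flip genuinely enters only in part (3), where pairing the first leg of $(\textrm{id}\otimes\Delta)(R)=R_{13}R_{12}$ against $p$ yields $\Delta_A(f(p))=\sum f(p_{(2)})\otimes f(p_{(1)})$, i.e.\ comultiplicativity into $A^{\textrm{cop}}$ (and finite-dimensionality is needed there so that $A^{\ast}$ carries a coalgebra structure at all). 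With those corrections the computations in all three parts, the reduction of ``Hopf algebra map'' to ``bialgebra map,'' and the observation that the normalization conditions follow from the $R$-matrix axioms together with invertibility are all sound.
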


\par \medskip 
Combining Proposition~\ref{3.9} with Lemma~\ref{3.10}, we have the following theorem. 

\par \medskip 
\begin{thm}\label{3.11}
Let $A$ be a cosemisimple Hopf algebra over a field $\boldsymbol{k}$ of finite dimension. 
Then there is a separable finite extension field $L$ of $\boldsymbol{k}$ such that $\underline{\rm{Braid}}(A^L)=\underline{\rm{Braid}}(A^E)$ for any field extension $E/L$. 
\end{thm}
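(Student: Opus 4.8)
The plan is to realize $\underline{\textrm{Braid}}$ as a finite etale affine scheme over $\boldsymbol{k}$ by combining Proposition~\ref{3.9} with Lemma~\ref{3.10}, and then to read off the stabilization from the structure of finite separable commutative $\boldsymbol{k}$-algebras. First I would set up the scheme: since $A$ is cosemisimple, its dual $A^{\ast}$ is semisimple and $A^{\textrm{cop}}$ is cosemisimple, so Proposition~\ref{3.9} applies to the functor $\textbf{Hopf}(A^{\ast},A^{\textrm{cop}})$, which is therefore finite etale and is represented by a separable finite-dimensional commutative $\boldsymbol{k}$-algebra $Z$. For every field extension $E/\boldsymbol{k}$ one has $(A^{E})^{\ast}\cong (A^{\ast})^{E}$ and $(A^{E})^{\textrm{cop}}\cong (A^{\textrm{cop}})^{E}$ (using $\dim A<\infty$), and Lemma~\ref{3.10} applied to the Hopf algebra $A^{E}$ over $E$ gives a natural injection $j_{A^{E}}:\underline{\textrm{Braid}}(A^{E})\hookrightarrow \textrm{Hopf}_{E}((A^{\ast})^{E},(A^{\textrm{cop}})^{E})=\textbf{Hopf}(A^{\ast},A^{\textrm{cop}})(E)$ whose image is exactly the set of $\varphi$ satisfying $a\rightharpoonup\varphi=\varphi\leftharpoonup a$ for all $a\in A^{E}$; moreover, as $E$ varies, these injections are compatible with the base-change maps $R\mapsto R^{E}$ on $R$-matrices and with base change of Hopf algebra maps.

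Next I would check that $\underline{\textrm{Braid}}$ is a \emph{closed} subfunctor of $\textbf{Hopf}(A^{\ast},A^{\textrm{cop}})$. The condition $a\rightharpoonup\varphi=\varphi\leftharpoonup a$ is $E$-linear in $a$, so it suffices to impose it for $a$ running over a fixed $\boldsymbol{k}$-basis of $A$; evaluating $a\rightharpoonup\varphi-\varphi\leftharpoonup a$ on the universal Hopf algebra map over $Z$ and expanding in a $\boldsymbol{k}$-basis of $\textrm{Hom}_{\boldsymbol{k}}(A^{\ast},A^{\textrm{cop}})$ yields finitely many elements $z_{j,k}\in Z$ whose common zero locus is precisely $\underline{\textrm{Braid}}$. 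Hence $\underline{\textrm{Braid}}$ is itself representable, namely by the quotient algebra $Z':=Z/(z_{j,k})$.

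Finally I would extract the field. Being separable and finite-dimensional, $Z\cong L_{1}\times\cdots\times L_{m}$ with each $L_{i}/\boldsymbol{k}$ a finite separable field extension, and every quotient of such a product is again a product of some of the $L_{i}$; hence $Z'$ is again separable and finite-dimensional. Choose a finite Galois extension $L/\boldsymbol{k}$ that splits $Z'$, so that $Z'\otimes_{\boldsymbol{k}}L\cong L^{\times r}$ with $r=\dim_{\boldsymbol{k}}Z'$. Then for every field extension $E/L$ one has $\underline{\textrm{Braid}}(A^{E})=\textrm{Alg}_{\boldsymbol{k}}(Z',E)=\textrm{Alg}_{L}(Z'\otimes_{\boldsymbol{k}}L,E)=\textrm{Alg}_{L}(L^{\times r},E)$, which consists of the $r$ projections independently of $E$, and under these identifications the natural inclusion $\underline{\textrm{Braid}}(A^{L})\hookrightarrow\underline{\textrm{Braid}}(A^{E})$ becomes the obvious bijection $\textrm{Alg}_{L}(L^{\times r},L)\to\textrm{Alg}_{L}(L^{\times r},E)$. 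Therefore $\underline{\textrm{Braid}}(A^{L})=\underline{\textrm{Braid}}(A^{E})$ for every field extension $E/L$, which is the assertion.

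The step I expect to be the main obstacle is the middle one: verifying carefully that $\underline{\textrm{Braid}}$ is not merely a subset but a closed subfunctor of the representable functor $\textbf{Hopf}(A^{\ast},A^{\textrm{cop}})$ --- so that it, too, is representable --- while keeping track of the natural identifications $(A^{E})^{\ast}=(A^{\ast})^{E}$, the naturality of the embeddings $j_{A^{E}}$, and the compatibility of $R$-matrix base change with $Z'$-points. The only other ingredient is the elementary ring-theoretic fact that a quotient of a finite product of fields is again a finite product of some of them, which is what keeps $Z'$ separable.
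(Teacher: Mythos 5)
Your proposal is correct, and it follows the same basic strategy as the paper (Proposition~\ref{3.9} produces a finite etale representing object, one passes to a splitting field, and stabilization follows because the $E$-points of a split finite separable algebra do not grow), but it packages the last step differently. The paper never represents $\underline{\textrm{Braid}}$ itself: it splits the ambient algebra $Z$ representing $\textbf{Hopf}(A^{\ast},A^{\textrm{cop}})$, proves that $\textrm{Alg}_{\boldsymbol{k}}(Z,L)\to\textrm{Alg}_{\boldsymbol{k}}(Z,E)$ is bijective (every algebra map $Z\to E$ lands in $L$ because $Z\otimes L\cong L^{\oplus n}$), and then checks by hand that the base-change bijection on Hopf algebra maps preserves the conditions $a\rightharpoonup\varphi=\varphi\leftharpoonup a$ in both directions, so that by Lemma~\ref{3.10} it restricts to a bijection on universal $R$-matrices. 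You instead cut out $\underline{\textrm{Braid}}$ as a closed subfunctor represented by a quotient $Z'$ of $Z$, observe that a quotient of a finite product of separable field extensions is again such a product, and split $Z'$. The extra cost of your route is the verification that the braiding locus really is closed --- i.e.\ that the conditions $a\rightharpoonup\varphi=\varphi\leftharpoonup a$, imposed for $a$ in a $\boldsymbol{k}$-basis of $A$ and evaluated on the universal element over $Z$, are encoded by finitely many elements of $Z$ compatibly with base change along arbitrary algebra maps; this is true and you identify it correctly as the delicate point, but it is essentially the same bookkeeping the paper does once, at the level of the single extension $L\subset E$, rather than universally over $Z$. What your version buys is a cleaner conceptual statement (the braidings form the points of a finite etale scheme, so any splitting field of $Z'$ works) and it also reproves finiteness of $\underline{\textrm{Braid}}(A^{K})$ for every $K$ in one stroke, which the paper extracts separately in Remark~\ref{3.12}.
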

\begin{proof}
Since $A$ is cosemisimple, the dual Hopf algebra $A^{\ast}$ is semisimple. 
By Theorem~\ref{3.9}, a representing object $Z\in \underline{\textrm{alg}}_{\boldsymbol{k}}$ of the functor $\textbf{Hopf}(A^{\ast}, A^{\textrm{cop}}):\underline{\textrm{alg}}_{\boldsymbol{k}}\longrightarrow \underline{\textrm{Set}}$ is separable and of finite dimension. 
So, we take a separable finite extension field $L$ of $\boldsymbol{k}$ which is a splitting field of $Z$. 
We set $H=A^L$, and consider the injective map 
$j_H : \underline{\textrm{Braid}}(H)\longrightarrow \textrm{Hopf}_L(H^{\ast}, H^{\textrm{cop}})$ defined as in Lemma~\ref{3.10}. 
\par 
Let $E$ be an extension field of $L$. Then the following diagram commutes for the inclusion $i: L\longrightarrow E$. 

$$\setlength{\unitlength}{0.7mm}
\begin{picture}(160,97)(0,0)
\put(0,90){\makebox(20,10)[c]{$\underline{\textrm{Braid}}(H)$}}
\put(125,90){\makebox(20,10)[c]{$\textrm{Hopf}_L(H^{\ast}, H^{\textrm{cop}})$}}
\put(125,65){\makebox(20,10)[c]{$\textrm{Hopf}_L((A^{\ast})^L, (A^{\textrm{cop}})^L)$}}
\put(125,25){\makebox(20,10)[c]{$\textrm{Hopf}_E((A^{\ast})^E, (A^{\textrm{cop}})^E)$}}
\put(0,25){\makebox(20,10)[c]{$\underline{\textrm{Braid}}(H^E)$}}
\put(0,0){\makebox(20,10)[c]{$\underline{\textrm{Braid}}(A^E)$}}
\put(125,0){\makebox(20,10)[c]{$\textrm{Hopf}_E((A^E)^{\ast}, (A^E)^{\textrm{cop}})$}}
\put(30,95){\vector(1,0){75}}
\put(7,88){\vector(0,-1){50}}
\put(137,63){\vector(0,-1){25}}
\put(27,5){\vector(1,0){67}}
\put(5,16){$\wr\kern-0.3em \parallel$}
\put(135,16){$\wr\kern-0.3em \parallel$}
\put(135,81){$\wr\kern-0.3em \parallel$}
\put(55,98){$j_H$}
\put(55,9){$j_{A^E}$}
\put(140,50){$\textbf{Hopf}(A^{\ast}, A^{\textrm{cop}})(i)$}
\put(-7,16){\small $(\ast 1)$}
\put(141,81){\small $(\ast 2)$}
\put(141,16){\small $(\ast 3)$}
\end{picture}
$$
\noindent 
Here, the arrow $\underline{\textrm{Braid}}(H)\longrightarrow \underline{\textrm{Braid}}(H^E)$ of L.H.S. is expressing the map defined by $R\longmapsto R^E$, and 
the isomorphisms marked with $(\ast 1)$, $(\ast 2)$, $(\ast 3)$ are induced from 
the Hopf algebra isomorphisms defined below, respectively: 
\begin{align*}
H^E=(A^L)\otimes _LE \longrightarrow A^E,&\qquad (a\otimes x)\otimes _L y \longmapsto a\otimes xy\qquad (a \in A,\ x\in L,\ y\in E),\\ 
\iota _L: (A^{\ast})^L \longrightarrow (A^L)^{\ast}=H^{\ast},&\quad \iota _L(q\otimes x): a\otimes x^{\prime} \longmapsto q(a)xx^{\prime} \quad (q\in  A^{\ast},\ x,x^{\prime}\in L),\\ 
\iota _E: (A^{\ast})^E \longrightarrow (A^E)^{\ast},&\quad 
\iota _E(q\otimes y): a\otimes y^{\prime} \longmapsto q(a)yy^{\prime} \quad (q\in  A^{\ast},\ y,y^{\prime}\in E). 
\end{align*}

We will show that $\textbf{Hopf}(A^{\ast}, A^{\textrm{cop}})(i)$ is bijective. 
Since $Z$ is a representing object of $\textbf{Hopf}(A^{\ast}, A^{\textrm{cop}})$, there are  isomorphisms $\Phi_L$ and $\Phi_E$ such that 
the diagram

$$\setlength{\unitlength}{0.7mm}
\begin{picture}(160,50)(0,0)
\put(135,40){\makebox(20,10)[c]{$\textrm{Alg}_{\boldsymbol{k}}(Z, L)$}}
\put(30,40){\makebox(20,10)[c]{$\textrm{Hopf}_L((A^{\ast})^L, (A^{\textrm{cop}})^L)$}}
\put(30,0){\makebox(20,10)[c]{$\textrm{Hopf}_E((A^{\ast})^E, (A^{\textrm{cop}})^E)$}}
\put(135,0){\makebox(20,10)[c]{$\textrm{Alg}_{\boldsymbol{k}}(Z, E)$}}
\put(75,45){\vector(1,0){52}}
\put(44,40){\vector(0,-1){28}}
\put(146,40){\vector(0,-1){28}}
\put(75,5){\vector(1,0){50}}
\put(95,48){$\Phi_L$}
\put(95,8){$\Phi_E$}
\put(-5,24){$\textbf{Hopf}(A^{\ast}, A^{\textrm{cop}})(i)$}
\put(148,24){$\textrm{Alg}_{\boldsymbol{k}}(\textrm{id}_Z, i)$}
\end{picture}
$$

\par \noindent 
commutes. 
From this commutative diagram it is sufficient to show that 
$i_{\ast}:=\textrm{Alg}_{\boldsymbol{k}}(\textrm{id}_Z, i): \textrm{Alg}_{\boldsymbol{k}}\kern-0.1em (Z,L)\longrightarrow \textrm{Alg}_{\boldsymbol{k}}\kern-0.1em (Z,E)$ is bijective. 
It is clear that $i_{\ast}$ is injective. 
To prove that $i_{\ast}$ is surjective, let $f$ be an algebra map in $\textrm{Alg}_{\boldsymbol{k}}\kern-0.1em (Z,E)$. Then $f(Z)\subset L$, since $Z\otimes L=L^{\oplus n}$ for some $n$ as algebras over $L$, and $E$ is a field. 
So, every algebra map $f\in \textrm{Alg}_{\boldsymbol{k}}\kern-0.1em (Z,E)$ defines an algebra map $g: Z\longrightarrow L$ such as $i_{\ast}(g)=f$. 
This means that 
$\textrm{Alg}_{\boldsymbol{k}}(\textrm{id}_Z, i): \textrm{Alg}_{\boldsymbol{k}}\kern-0.1em (Z,L)\longrightarrow \textrm{Alg}_{\boldsymbol{k}}\kern-0.1em (Z,E)$ is surjective. 
\par 
Next, we will show that $\textbf{Hopf}(A^{\ast}, A^{\textrm{cop}})(i)$ preserves the actions $\rightharpoonup$ and $\leftharpoonup$. 
Let $\varphi $ be an element in $\textrm{Hopf}_L((A^{\ast})^L, (A^{\textrm{cop}})^L)$. Then by using what 
$i_{\ast}\varphi =(\textbf{Hopf}(A^{\ast}, A^{\textrm{cop}})(i))(\varphi )$ is 
given by 
$$(i_{\ast}\varphi )(p\otimes y)=\varphi (p\otimes 1_L)\cdot (1_A\otimes y)\qquad (p\in A^{\ast},\ y\in E),$$
we have 
\begin{align*}
&((a\otimes 1_L)\rightharpoonup \varphi )(p\otimes 1_L)=(\varphi \leftharpoonup (a\otimes 1_L) )(p\otimes 1_L) \\ 
&\qquad \qquad \qquad \Longleftrightarrow \ \ 
((a\otimes 1_E)\rightharpoonup  i_{\ast}\varphi )(p\otimes 1_E) 
=( i_{\ast}\varphi \leftharpoonup (a\otimes 1_E))(p\otimes 1_E)
\end{align*}
for all $a\in A,\ p\in A^{\ast}$. 
This implies that 
\begin{align*}
&(a\otimes x)\rightharpoonup \varphi  =\varphi \leftharpoonup (a\otimes x)  \ \ \textrm{for all $a\in A$ and $x\in L$} \\ 
&\quad \ \ \Longleftrightarrow \ \ 
(a\otimes y)\rightharpoonup  i_{\ast}\varphi = i_{\ast}\varphi \leftharpoonup  (a\otimes y)\ \ \textrm{for all $a\in A$ and $y\in E$}. 
\end{align*}

By Lemma~\ref{3.10}, then we see that the map $\underline{\textrm{Braid}}(H)\longrightarrow \underline{\textrm{Braid}}(H^E),\ R\longmapsto R^E$ is bijective. \end{proof}

\par \smallskip 
\begin{rem}\label{3.12}\rm 
Using Proposition~\ref{3.9} and Lemma~\ref{3.10}, one can give a proof of  Theorem~\ref{2.1}(i) as follows  (c.f. \cite[Corollary 1.5]{EG2}). 
Let $A$ be a cosemisimple Hopf algebra over $\boldsymbol{k}$ of finite dimension, and take $L$ and $H$ as in the proof of Theorem~\ref{3.11}. 
Then  by Proposition~\ref{3.9} the set $\textrm{Hopf}_L(H^{\ast}, H^{\textrm{cop}})$ is finite. 
It follows from the injectivity of $j_H$ that the set $\underline{\textrm{Braid}}(H)$ is finite, too. 
Since $\underline{\textrm{Braid}}(A)$ can be identified with a subset of $\underline{\textrm{Braid}}(H)$, the set $\underline{\textrm{Braid}}(A)$ is also finite. 
This ends the proof of Theorem~\ref{2.1}(i). 
\par 
We note that for any field extension $K/\boldsymbol{k}$ it is also true that $\underline{\rm{Braid}}(A^K)$ is finite. 
Because, the scalar extension $A^K$ is still cosemisimple.  
\end{rem} 

\par \smallskip 
\begin{cor}
Let $A$ be a semisimple and cosemisimple Hopf algebra over a field $\boldsymbol{k}$ of finite dimension. 
Then there is a separable finite extension field $L$ of $\boldsymbol{k}$ such that 
for any field extension $E$ of $L$ and for any positive integer $d$, 
$P_{A^E}^{(d)}(x)=P_{A^L}^{(d)}(x)$ in $E[x]$. 
\end{cor}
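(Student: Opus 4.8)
The plan is to combine Theorem~\ref{3.11} with the integral-root bound from Lemma~\ref{3.2}. First I would apply Theorem~\ref{3.11} to obtain a separable finite extension field $L_0/\boldsymbol{k}$ such that $\underline{\textrm{Braid}}(A^{L_0})=\underline{\textrm{Braid}}(A^{E})$ for every field extension $E/L_0$; here one should also note that, exactly as in the proof of Theorem~\ref{3.6}, scalar extensions of a semisimple and cosemisimple Hopf algebra remain semisimple and cosemisimple (semisimplicity is separability, which is stable under base change, and likewise for $A^{\ast}$), so $A^{L_0}$ and all further scalar extensions are again semisimple and cosemisimple, and the polynomials $P^{(d)}$ are defined at every stage.

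The subtle point is that stabilizing $\underline{\textrm{Braid}}$ alone does not immediately stabilize $P_A^{(d)}(x)$: passing from $L_0$ to a larger field $E$ could in principle split additional absolutely simple modules or change the $R$-dimensions. To control the module side, I would enlarge $L_0$ to a finite separable extension $L/\boldsymbol{k}$ which in addition splits $A^{L_0}$, i.e. such that $A^{L}$ is a split semisimple algebra (every simple $A^{L}$-module is absolutely simple); such an $L$ exists and can be taken separable because a semisimple algebra over $\boldsymbol{k}$ has a separable splitting field. Then for any field extension $E/L$ the simple $A^{E}$-modules are exactly the scalar extensions $M\otimes_{L}E$ of the simple $A^{L}$-modules $M$, and this sets up a dimension-preserving bijection between the absolutely simple $A^{L}$-modules of dimension $d$ and those of $A^{E}$. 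Moreover $\underline{\textrm{Braid}}(A^{L})=\underline{\textrm{Braid}}(A^{E})$ by the choice of $L_0\subseteq L$, and for a fixed $R\in\underline{\textrm{Braid}}(A^{L})$ with Drinfel'd element $u$, the scalar $\omega_M=\underline{\dim}_R M/\dim M\in Z_{L}$ by which $u$ acts on $M$ is unchanged when $M$ is replaced by $M\otimes_{L}E$, since $u$ acts by the same matrix. Hence each factor $x-\underline{\dim}_R(M\otimes_L E)/\dim(M\otimes_L E)$ equals $x-\omega_M$, and taking the product over $R\in\underline{\textrm{Braid}}(A^{L})$ and over a full set of absolutely simple modules of dimension $d$ gives $P_{A^{E}}^{(d)}(x)=P_{A^{L}}^{(d)}(x)$ in $E[x]$.

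The main obstacle, and the step requiring the most care, is the verification that passing to $E/L$ genuinely introduces no new absolutely simple modules and no change in $R$-dimensions — in other words, that splitting $A^{L}$ (together with the stabilization of $\underline{\textrm{Braid}}$) really is enough. This rests on the standard but slightly technical fact that over a splitting field the simple modules behave well under further field extension; it is worth spelling out that $\textrm{End}_{A^{E}}(M\otimes_L E)=E$ and that no simple summand is lost, so that $\{M_1\otimes_L E,\dots,M_t\otimes_L E\}$ is again a full set of non-isomorphic absolutely simple $A^{E}$-modules of dimension $d$. Everything else — the equality of the $R$-matrix sets, the invariance of $\omega_M$, and the bookkeeping of the two products — is routine given Theorem~\ref{3.11} and Lemma~\ref{3.2}.
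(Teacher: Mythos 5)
Your proposal is correct and follows essentially the same route as the paper: both arguments combine Theorem~\ref{3.11} (stabilization of $\underline{\textrm{Braid}}$) with a finite separable splitting field of the separable algebra $A$, so that absolutely simple modules and their $R$-dimensions are preserved under further base change; the only difference is that you adjoin the splitting field after stabilizing the braidings, while the paper splits first and then applies Theorem~\ref{3.11}, which is immaterial. The one point to state a little more carefully is that the existence of a \emph{separable} splitting field comes from $A$ being a separable algebra (which holds because it is a finite-dimensional semisimple Hopf algebra), not from semisimplicity alone.
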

\begin{proof}
Since $A$ is separable by \cite[Corollary 2.2.2]{Mon}, there is a finite separable extension field $K$ of $\boldsymbol{k}$ such that 
it is a splitting field of $A$. 
Then $H:=A^K$ is a semisimple and cosemisimple Hopf algebra over $K$ of finite dimension. 
\par 
Let $L$ be a separable finite extension field of $K$ having the property in Theorem~\ref{3.11}. 
Then for any field extension $E/L$ and any absolutely simple left $H^L$-module $M$, the left $(H^{L})^E$-module $M^E$ is absolutely simple, and 
the equation
\begin{equation}\label{eq3.3}
P_{(H^{L})^E, M^E}(x)=P_{H^{L}, M}(x)
\end{equation}
holds in $E[x]$ since $\underline{\dim}_{R^E} M^E = \underline{\dim}_R M$ for all 
$R\in \underline{\textrm{Braid}}(H)$. 
Since $K$ is a splitting field of $H$, if 
$\{ M_1,\ldots , M_t\} $ is a full set of non-isomorphic absolutely simple left $H$-modules of dimension $d$, then $\{ M_1^E,\ldots , M_t^E\} $ is a full set of non-isomorphic absolutely simple left $H^E$-modules of dimension $d$. 
From this fact and the equation (\ref{eq3.3}), we have the equation 
$$
P_{(H^L)^E}^{(d)}(x)
=\prod\limits_{i=1}^t P_{(H^{L})^E, (M_i^L)^E}(x) \\ 
=\prod\limits_{i=1}^t P_{H^{L}, M_i^L}(x) \\ 
=P_{H^L}^{(d)}(x) \qquad \textrm{in} \ \ E[x]. 
$$
Since $(H^L)^E\cong H^E=(A^K)^E\cong A^E$ as Hopf algebras over $E$,  
we have $P_{(H^L)^E}^{(d)}(x)=P_{A^E}^{(d)}(x)$, and since $H^L=(A^K)^L\cong A^L$ as Hopf algebras over $L$,  
we have $P_{H^L}^{(d)}(x)=P_{A^L}^{(d)}(x)$. 
Thus the desired equation 
$P_{A^E}^{(d)}(x)=P_{A^L}^{(d)}(x)$ is obtained.  
\par 
Finally, we remark that the extension $L/\boldsymbol{k}$ is separable since two extensions $L/K$ and $K/\boldsymbol{k}$ are separable. 
\end{proof}

\par \medskip 
\section{Dual formulas for polynomial invariants}
\par 
In this section we give a formula to compute the polynomial invariants for a self-dual Hopf algebra of finite dimension in terms of the braidings of the dual Hopf algebra. 
\par 
Let us recall the definition of a braiding of a Hopf algebra \cite{Doi}. 
Let $A$ be a Hopf algebra over a field $\boldsymbol{k}$, and let $\sigma :A\otimes A\longrightarrow \boldsymbol{k}$ be a $\boldsymbol{k}$-linear map that is  invertible with respect to the convolution product. 
The pair $(A,\sigma  )$ is said to be a \textit{braided Hopf algebra}, and 
$\sigma $ is said to be a \textit{braiding} of $A$
 if the following conditions are satisfied: for all $x,y,z\in A$
\begin{enumerate}
\item[(B1)] $\sum\sigma  (x_{(1)},y_{(1)})x_{(2)}y_{(2)}=\sum\sigma  (x_{(2)},y_{(2)})y_{(1)}x_{(1)}$,
\item[(B2)] $\sigma  (xy,z)=\sum\sigma  (x,z_{(1)})\sigma (y,z_{(2)})$,
\item[(B3)] $\sigma  (x,yz)=\sum \sigma  (x_{(1)},z)\sigma (x_{(2)},y)$. 
\end{enumerate}
Here, we use the sigma notation such as $\Delta (x)=\sum x_{(1)}\otimes x_{(2)}$ for $x\in A$.  
It is easy to see that any braiding $\sigma $ of $A$ satisfies  
\begin{enumerate}
\item[(B4)] $\sigma (1_A, x)=\sigma (x, 1_A)=\varepsilon (x)$ for all $x\in A$. 
\end{enumerate}

Let $(A,\sigma  )$ be a braided Hopf algebra over $\boldsymbol{k}$. 
Then the braiding $\sigma $ defines a braiding $c$ of the monoidal category $\mathbb{M}^A$ consisting of right $A$-comodules and $A$-colinear maps as follows. 
For two right $A$-comodules $V$ and $W$, a $\boldsymbol{k}$-linear isomorphism $c_{V,W}:V\otimes W\longrightarrow W\otimes V$ is defined by 
$$c_{V,W}(v\otimes w)=\sum\sigma (v_{(1)},w_{(1)})w_{(0)}\otimes v_{(0)} \qquad (v\in V,\ w\in W),$$
where we use the notations $\rhou_V(v)=\sum v_{(0)}\otimes v_{(1)}$ and $ \rhou_W(w)=\sum w_{(0)}\otimes w_{(1)}$ for the given right coactions $\rhou_V$ and $\rhou_W$ of $V$ and $W$, respectively. 
From the axiom of braiding (B1) -- (B3), we see that $c_{V,W}$ is a right $A$-comodule isomorphism, and the collection $c=\{ c_{V,W}:V\otimes W\longrightarrow W\otimes V\}_{V,W\in \mathbb{M}^A}$ gives a braiding of $\mathbb{M}^A$. 
\par 
Let us consider the element in the braided Hopf algebra which plays a role of the Drinfel'd element in a quasitriangular Hopf algebra.   

\par \medskip 
\begin{lem}[{\cite[Theorem 1.3]{Doi}} or {\cite[3.3.2]{Sc1}}]\label{4.2}
Let $(A,\sigma )$ be a braided Hopf algebra over $\boldsymbol{k}$, and define $\mu  \in A^{\ast }$ by  
$$\mu (a)=\sum \sigma (a_{(2)},S(a_{(1)})),\quad a\in A.$$
Then $\mu $ is convolution-invertible, and the following equation holds for any element $a\in A$: 
$$S^2(a)=\sum \mu  (a_{(1)})\mu ^{-1}(a_{(3)})a_{(2)}.$$ 
The $\boldsymbol{k}$-linear functional $\mu $ is called the (dual) Drinfel'd element of $(A,\sigma )$.  
\end{lem}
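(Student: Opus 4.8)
The plan is to prove both assertions by direct computation using the braiding axioms (B1)--(B4) and the Hopf algebra axioms, following the strategy familiar from the Drinfel'd element in the quasitriangular setting but now carried out purely in terms of the functional $\sigma$. First I would record the basic consequences I will need: the antipode axiom $\sum S(a_{(1)})a_{(2)} = \varepsilon(a)1_A = \sum a_{(1)}S(a_{(2)})$, the normalization (B4), and the two ``one-sided'' forms of (B2) and (B3) obtained by setting one argument to $1_A$. A key preliminary observation is that, because $\sigma$ is convolution-invertible with inverse $\sigma^{-1}$ satisfying $\sum \sigma(x_{(1)},y_{(1)})\sigma^{-1}(x_{(2)},y_{(2)}) = \varepsilon(x)\varepsilon(y)$, one can derive the identities $\sum \sigma(x_{(1)},y)\sigma^{-1}(x_{(2)},z) \cdots$ etc., and in particular that $\sigma^{-1}$ itself satisfies dual versions of (B2), (B3) with the multiplication twisted; these are the substitutes for the fact that $R^{-1} = (S\otimes \mathrm{id})(R)$ in the quasitriangular case.

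Next I would establish that $\mu$ is convolution-invertible. The cleanest route is to exhibit the inverse explicitly: I expect $\mu^{-1}(a) = \sum \sigma^{-1}(S(a_{(1)}), a_{(2)})$ (or a close variant, possibly with $S^{-1}$ or with the arguments transposed — the correct form is pinned down by demanding $\mu * \mu^{-1} = \varepsilon$). To verify $\sum \mu(a_{(1)})\mu^{-1}(a_{(2)}) = \varepsilon(a)$ one unfolds both factors using $\Delta$, applies coassociativity, and then collapses the resulting expression with (B2)/(B3) for $\sigma$ and $\sigma^{-1}$ together with the antipode axioms. Alternatively, and perhaps more safely, I would cite that convolution-invertibility of $\mu$ is part of the quoted source \cite[Theorem 1.3]{Doi} / \cite[3.3.2]{Sc1} and simply reproduce enough of the argument to make the paper self-contained.

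For the main formula $S^2(a) = \sum \mu(a_{(1)})\mu^{-1}(a_{(3)})a_{(2)}$, the plan is: start from the right-hand side, substitute the definitions $\mu(a_{(1)}) = \sum \sigma(a_{(1)(2)}, S(a_{(1)(1)}))$ and the chosen formula for $\mu^{-1}(a_{(3)})$, use coassociativity to rewrite everything in terms of a single iterated coproduct of $a$, and then transform the scalar factors using axiom (B1) — which is precisely the ``quasi-cocommutativity'' relation $\sum \sigma(x_{(1)},y_{(1)})x_{(2)}y_{(2)} = \sum \sigma(x_{(2)},y_{(2)})y_{(1)}x_{(1)}$ — to move the surviving tensor factor into the middle position. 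Repeated application of (B2), (B3) and the antipode axioms should telescope the $\sigma$ and $\sigma^{-1}$ factors down to counits, leaving exactly $S^2(a_{(2)})$ in the middle, i.e. $\sum \varepsilon(a_{(1)})S^2(a_{(2)})\varepsilon(a_{(3)}) = S^2(a)$.

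The main obstacle I anticipate is purely bookkeeping: getting the bracketing of the iterated coproduct and the placement of $S$ versus $S^2$ exactly right, and choosing the correct variant of the formula for $\mu^{-1}$ so that the cancellations actually occur (a wrong transpose or a missing $S$ will leave an uncancellable factor). There is no conceptual difficulty once (B1)--(B4) are in hand; the whole lemma is a ``diagram chase'' in the convolution algebra. To keep the exposition honest I would either carry out the computation in full with the $\sum$-notation, being careful to suppress no Sweedler indices, or — since the statement is explicitly attributed to Doi and Schauenburg — present the verification of convolution-invertibility and the $S^2$ formula as a short reproduction of their argument and refer the reader to \cite{Doi, Sc1} for the remaining routine checks.
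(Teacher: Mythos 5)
The paper does not actually prove Lemma \ref{4.2}: it is quoted as is from Doi and Schauenburg, so your fallback option of citing the sources is precisely what the author does, and your computational plan is in effect a reconstruction of Doi's own argument rather than of anything in this paper. As a plan it is sound — the lemma really is a convolution-algebra diagram chase from (B1)--(B4) and the antipode axioms — but two of the details you left open deserve comment, since they are exactly where the bookkeeping bites. First, the explicit inverse is usually written without $\sigma^{-1}$ at all, namely $\mu^{-1}(a)=\sum \sigma(S^2(a_{(2)}),a_{(1)})$; your candidate $\sum\sigma^{-1}(S(a_{(1)}),a_{(2)})$ becomes $\sum\sigma(S^2(a_{(1)}),a_{(2)})$ once one knows $\sigma^{-1}(x,y)=\sigma(S(x),y)$ (which follows from (B2) and the antipode axiom, and is the correct analogue of $R^{-1}=(S\otimes\mathrm{id})(R)$), and that is the \emph{transposed} variant you yourself warned about — it is not a priori the same functional, so the "pin it down by demanding $\mu\ast\mu^{-1}=\varepsilon$" step is genuinely necessary and should land on the former expression. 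Second, for the $S^2$ formula the efficient organization is to prove the single convolution identity $\sum\mu(a_{(1)})\,a_{(2)}=\sum S^2(a_{(1)})\,\mu(a_{(2)})$, i.e. $\mu\ast\mathrm{id}=S^2\ast\mu$ in $\mathrm{Hom}_{\boldsymbol{k}}(A,A)$, which is where (B1) enters; convolving on the right with $\mu^{-1}$ then yields $S^2(a)=\sum\mu(a_{(1)})\mu^{-1}(a_{(3)})a_{(2)}$ with no further index gymnastics. With those two points fixed your outline fills in to a complete proof; as written it is a correct strategy with the key formula still to be verified.
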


\par 
Let $\mathcal{V}=(\mathcal{C},\otimes ,\mathbb{I},a,r,l,c)$ be a left rigid braided monoidal category. 
For each object $X\in \mathcal{C}$ we choose a left dual $X^{\ast}$ with an  evaluation morphism $e_X: X^{\ast}\otimes X\longrightarrow \mathbb{I}$ and a coevaluation morphism $n_X: \mathbb{I}\longrightarrow X\otimes X^{\ast}$. 
Then for an endomorphism $f:X\longrightarrow X$ in $\mathcal{C}$, 
the braided trace of $f$ in $\mathcal{V}$, denoted by $\underline{\textrm{Tr}}_{\kern0.1em c}\kern0.1em f$, is defined by the composition 
$$\mathbb{I}\xrightarrow{\ n_X\ }X\otimes X^{\ast}\xrightarrow{f\otimes \textrm{id}}
X\otimes X^{\ast}\xrightarrow{c_{X,X^{\ast}}}X^{\ast}\otimes X\xrightarrow{\ e_X\ }\mathbb{I}.$$

In particular, the braided trace of the identity morphism $\textrm{id}_X$ is denoted by  $\underline{\dim}_{\kern0.1em c}\kern0.1em X$, and called the \textit{braided dimension} of $X$ in $\mathcal{V}$. 
\par 
Applying this to the braided monoidal category $(\mathbb{M}^A, c)$ constructed from a braiding $\sigma \in (A\otimes A)^{\ast}$, we have the following.  

\par 
\begin{lem}\label{4.3}
Let $(A,\sigma )$ be a braided Hopf algebra over $\boldsymbol{k}$, and 
$c$ the braiding of $\mathbb{M}^A$ constructed from $\sigma $. 
Then for a finite-dimensional right $A$-comodule $V$, 
the braided dimension $\underline{\dim}_{\kern0.1em c}\kern0.1em V$ is given by $\underline{\dim}_{\kern0.1em \sigma}\kern0.1em V=\mu (\chiu _V)$, 
where $\mu $ is the Drinfel'd element of $(A, \sigma )$, and $\chiu _V$ is the character of the comodule $V$, which is defined by 
$$\chi _V:=\sum\limits_{i=1}^n(v_i^{\ast}\otimes \textrm{id}_C)(\rhou_V(v_i))\ \ \in \ \ C$$
by use of dual bases $\{ v_i\}_{i=1}^n$ and $\{ v_i^{\ast}\}_{i=1}^n$.  
\end{lem}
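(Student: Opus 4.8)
The plan is to compute the braided dimension $\underline{\dim}_{\kern0.1em c}\kern0.1em V$ directly from its definition as the composition $\mathbb{I}\xrightarrow{n_V}V\otimes V^{\ast}\xrightarrow{c_{V,V^{\ast}}}V^{\ast}\otimes V\xrightarrow{e_V}\mathbb{I}$, unwinding each arrow in terms of a chosen pair of dual bases. First I would fix a basis $\{v_i\}_{i=1}^n$ of $V$ with dual basis $\{v_i^{\ast}\}$ of $V^{\ast}$, and recall that the left dual comodule $V^{\ast}$ carries the coaction determined by $\langle v_i^{\ast}{}_{(0)},v_j\rangle v_i^{\ast}{}_{(1)}=\sum_k\langle v_i^{\ast},v_{k(0)}\rangle S(v_{k(1)})\delta$-type formula; concretely, writing $\rho_V(v_j)=\sum_i v_i\otimes t_{ij}$ for the coaction matrix $(t_{ij})\in A$, one has $\rho_{V^{\ast}}(v_i^{\ast})=\sum_j v_j^{\ast}\otimes S(t_{ij})$. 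The coevaluation is $n_V(1)=\sum_i v_i\otimes v_i^{\ast}$, the evaluation is $e_V(v_i^{\ast}\otimes v_j)=\delta_{ij}$, and the braiding sends $v_i\otimes v_j^{\ast}\mapsto \sum \sigma(v_{i(1)},v_j^{\ast}{}_{(1)})\,v_j^{\ast}{}_{(0)}\otimes v_{i(0)}$.

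Carrying out the composition, I would apply $c_{V,V^{\ast}}$ to each summand $v_i\otimes v_i^{\ast}$, then apply $e_V$, which forces the surviving comodule indices to match. Substituting the coaction matrices, the image of $1$ becomes $\sum_{i,j}\sigma(t_{ji},S(t_{ij}))$, or after reindexing and using bilinearity of $\sigma$ together with the comultiplication-compatibility of the matrix $(t_{ij})$ (namely $\Delta(t_{ij})=\sum_k t_{ik}\otimes t_{kj}$), this collapses to $\sum_i \mu$-type expression evaluated on the relevant matrix entries. The key identity to invoke is the definition $\mu(a)=\sum\sigma(a_{(2)},S(a_{(1)}))$ from Lemma~\ref{4.2}: applying $\Delta$ to a diagonal entry $t_{ii}$ produces exactly the sums over the comodule coaction, so that $\sum_i\mu(t_{ii})$ matches the expression obtained from the categorical trace. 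Since $\chi_V=\sum_i t_{ii}$ by definition of the character (the trace of the coaction matrix, which is precisely $\sum_i (v_i^{\ast}\otimes\mathrm{id})(\rho_V(v_i))$), this yields $\underline{\dim}_{\kern0.1em c}\kern0.1em V=\mu(\chi_V)$.

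The main obstacle I anticipate is bookkeeping the left-dual comodule structure correctly and making sure the braiding is applied with the comodule elements in the right tensor slots, since $\sigma$ is not symmetric and the antipode $S$ enters through $V^{\ast}$; a sign or order error there would give $\mu$ composed with $S$ or an inverse instead of $\mu$ itself. I would double-check the computation against the known quasitriangular case: for $A$ finite-dimensional the braiding $\sigma$ on $A$ corresponds to a universal $R$-matrix on $A^{\ast}$, right $A$-comodules are left $A^{\ast}$-modules, and the formula should specialize to $\underline{\dim}_R M=\mathrm{Tr}(\text{action of }u)$ with $u$ the Drinfel'd element — this consistency check, together with the axioms (B1)--(B3) guaranteeing $c_{V,W}$ is a comodule map, is what makes the identification robust. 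The remainder is a routine but careful index computation, so I would present it compactly, emphasizing the single substitution of $\Delta(t_{ii})$ into the definition of $\mu$ as the crux.
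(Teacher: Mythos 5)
Your proof is correct, and it is precisely the direct computation that the paper leaves implicit (Lemma~4.3 is stated there without proof). Unwinding $e_V\circ c_{V,V^{\ast}}\circ n_V$ with the dual coaction $\rho_{V^{\ast}}(v_i^{\ast})=\sum_j v_j^{\ast}\otimes S(t_{ij})$ does give $\sum_{i,j}\sigma(t_{ji},S(t_{ij}))$, which equals $\sum_i\mu(t_{ii})=\mu(\chi_V)$ via $\Delta(t_{ii})=\sum_j t_{ij}\otimes t_{ji}$ and the definition $\mu(a)=\sum\sigma(a_{(2)},S(a_{(1)}))$; your placement of the antipode and of the tensor slots in the braiding is the correct one.
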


\par 
\begin{lem}\label{4.4}
 Let $A$ be a Hopf algebra over $\boldsymbol{k}$ of finite dimension, and 
 $\iota :A^{\ast }\otimes A^{\ast }\longrightarrow (A\otimes A)^{\ast }$ is the canonical $\boldsymbol{k}$-linear isomorphism. 
 Let  $\sigma $ be an element of $(A\otimes A)^{\ast }$ and set $R:=\iota ^{-1}(\sigma )$. Then 
\par 
\noindent 
(1) $\sigma $ is convolution-invertible if and only if  $R$ is invertible as an element of the algebra $A^{\ast }\otimes A^{\ast }$. 
\par \noindent 
(2) $\sigma $ is a braiding of $A$ if and only if $R$ is a universal $R$-matrix of the dual Hopf algebra $A^{\ast }$. 
In this case, the Drinfel'd element $\mu \in A^{\ast }$ of the quasitriangular Hopf algebra $(A^{\ast}, R)$ is given by $\mu (a)=\sum \sigma (a_{(2)},S(a_{(1)}))$ for all $a\in A$. 
\par 
(3) for a finite-dimensional right $A$-comodule $V$, the equation $\underline{\textrm{dim}}_R V=\underline{\textrm{dim}}_{\kern0.1em \sigma } V$ holds, where $\underline{\textrm{dim}}_R V$ is the $R$-dimension of the  left $A^{\ast}$-module $V$ with the action  
$$p\cdot v:=\sum p(v_{(1)}) v_{(0)} \qquad (p\in A^{\ast},\ v\in V). $$ 
\end{lem}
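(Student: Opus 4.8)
The plan is to prove the three statements of Lemma~\ref{4.4} by systematically transporting structure across the canonical isomorphism $\iota :A^{\ast}\otimes A^{\ast}\longrightarrow (A\otimes A)^{\ast}$, using the duality between the algebra structure on $A^{\ast}$ and the coalgebra structure on $A$, and vice versa. The key observation throughout is that under $\iota$, the convolution product of functionals $\sigma_1,\sigma_2\in (A\otimes A)^{\ast}$ corresponds to the product in the algebra $A^{\ast}\otimes A^{\ast}$; this is essentially the definition of the algebra structure on the dual of the coalgebra $A\otimes A$, and it gives Part (1) immediately, since an element is invertible in an algebra if and only if it has a two-sided inverse, and $\iota$ is an algebra isomorphism.

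For Part (2), first I would recall Lemma~\ref{3.10}, which already characterizes universal $R$-matrices of a finite-dimensional Hopf algebra $B$ via the map $F: B\otimes B\to \mathrm{Hom}_{\boldsymbol{k}}(B^{\ast},B^{\mathrm{cop}})$. Applying this with $B=A^{\ast}$ and identifying $(A^{\ast})^{\ast}\cong A$, the conditions ``(B1), (B2), (B3) for $\sigma$'' should match up termwise with the conditions ``$f=F(R)$ is a Hopf algebra map satisfying $a\rightharpoonup f = f\leftharpoonup a$'' for the corresponding $R=\iota^{-1}(\sigma)$. Concretely: axiom (B2) dualizes to $f$ being an algebra map (equivalently $(\Delta\otimes\mathrm{id})(R)=R_{13}R_{23}$ plus the counit condition), axiom (B3) to $f$ being a coalgebra map, and axiom (B1) to the centralizing condition $\Delta^{\mathrm{cop}}(p)R=R\Delta(p)$ for $p\in A^{\ast}$. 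The cleanest route is probably to write each axiom (B1)--(B3) as an equality of functionals on a suitable triple tensor power and show it is the $\iota$-image of the corresponding $R$-matrix axiom; I would also need to check that condition (B4) corresponds to the normalization conditions $(\varepsilon\otimes\mathrm{id})(R)=1=(\mathrm{id}\otimes\varepsilon)(R)$. The formula for the Drinfel'd element $\mu$ then follows: starting from $u=\sum S(\beta_i)\alpha_i$ for $R=\sum\alpha_i\otimes\beta_i$ in the algebra $A^{\ast}\otimes A^{\ast}$, one unwinds the definitions of the antipode and multiplication of $A^{\ast}$ in terms of $S$ and $\Delta$ of $A$, arriving at $\mu(a)=\sum\sigma(a_{(2)},S(a_{(1)}))$ — which is exactly the formula appearing in Lemma~\ref{4.2}, so consistency is built in.

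For Part (3), I would observe that the left $A^{\ast}$-module structure on $V$ given by $p\cdot v=\sum p(v_{(1)})v_{(0)}$ is precisely the standard one coming from viewing a right $A$-comodule as a left $A^{\ast}$-module; under this correspondence the trace of the action of the Drinfel'd element $\mu$ on $V$ is $\mu(\chi_V)$, where $\chi_V=\sum_i (v_i^{\ast}\otimes\mathrm{id})(\rho_V(v_i))$ is the character computed via Lemma~\ref{4.3}. On the comodule side, $\underline{\dim}_{\sigma}V=\mu(\chi_V)$ by Lemma~\ref{4.3}; on the module side, $\underline{\dim}_R V$ is by definition the trace of the left action of $u=\iota^{-1}$-image-of-$\mu$... more precisely, it is $\mu$ acting as the Drinfel'd element, so the two numbers literally coincide once one checks that the module action of $\mu\in A^{\ast}$ on $V$ has trace $\mu(\chi_V)$, which is a direct computation with dual bases. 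The main obstacle I anticipate is purely bookkeeping: keeping the several layers of dualization straight — the $\mathrm{cop}$ versus non-$\mathrm{cop}$ conventions, the order-reversal that $\iota$ may introduce between the two tensor factors, and the precise matching of Sweedler indices between axioms (B1)--(B3) and the $R$-matrix axioms. There is no deep difficulty, but getting every convention to line up so that the stated formula for $\mu$ comes out exactly as written (rather than with an $S$ or an $S^{-1}$ or a transposition out of place) will require care; I would pin down conventions once at the start and then verify each equivalence as a routine but careful Sweedler-notation calculation.
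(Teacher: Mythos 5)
Your proposal is correct: the paper states Lemma~\ref{4.4} without proof, and your plan supplies exactly the routine verification it implicitly relies on — $\iota$ is an algebra isomorphism onto the convolution algebra of the coalgebra $A\otimes A$ (Part (1)), evaluating each $R$-matrix axiom for $R=\sum_i\alpha_i\otimes\beta_i\in A^{\ast}\otimes A^{\ast}$ against elements of $A^{\otimes 3}$ reproduces (B1)--(B3) termwise and yields $\mu(a)=\sum_i\sum(\beta_i\circ S)(a_{(1)})\alpha_i(a_{(2)})=\sum\sigma(a_{(2)},S(a_{(1)}))$ (Part (2)), and the dual-basis computation $\mathrm{Tr}(\mu\cdot{})=\mu(\chi_V)$ matches Lemma~\ref{4.3} (Part (3)). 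The direct check of the axioms that you identify as the cleanest route is indeed preferable to detouring through Lemma~\ref{3.10}, and no convention mismatch arises.
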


\par \medskip 
Let $C$ be a coalgebra over a field $\boldsymbol{k}$. 
A right $C$-comodule $V$ is said to be an \textit{absolutely simple} if the right $C^K$-comodule $V^K$ is simple for an arbitrary field extension $K/\boldsymbol{k}$. 
This condition is equivalent to that $V$ is absolutely simple as a left $C^{\ast}$-module. We note that if a right $C$-comodule is simple, then it is automatically  finite-dimensional (see \cite[Corollary 5.1.2]{Mon}). 
So, there is a one-to-one correspondence between the absolutely simple right $C$-comodules and the  absolutely simple left $C^{\ast}$-modules. 
\par 
Let $\underline{\textrm{braid}}(A)$ denote the set of all braidings of a Hopf algebra $A$. Then by Part (2) of Lemma~\ref{4.4}, the map  
$$\underline{\textrm{braid}}(A) \longrightarrow \underline{\textrm{Braid}}(A^{\ast}),\qquad \sigma \longmapsto \iota ^{-1}(\sigma )$$
is bijective, and by Part (3) of the same lemma, 
the equation $\underline{\dim }_{\kern0.1em \iota ^{-1}(\sigma )} V=\underline{\dim }_{\kern0.1em \sigma } V$ holds for a finite-dimensional right $A$-comodule $V$. 
Hence we have the following: 

\par 
\begin{lem}\label{4.5}
Let $A$ be a semisimple and cosemisimple Hopf algebra over $\boldsymbol{k}$ of finite dimension. 
\par 
(1) For an absolutely simple right $A$-comodule $V$, 
$$P_{A^{\ast},V}(x)=\prod\limits_{\sigma \in \underline{\rm{braid}}(A)}\Bigl( x-\frac{\underline{\dim }_{\kern0.1em \sigma } V}{\dim V}\Bigr),$$
where in the left-hand side $P_{A^{\ast},V}(x)$ is the polynomial for $V$ regarded as an left $A^{\ast}$-module by usual manner. 
\par 
(2) Let $\{ V_1,\ldots ,V_t\} $ be a full set of non-isomorphic absolutely simple right $A$-comodules of dimension $d$.  Then  
$$P_{A^{\ast}}^{(d)}(x)=\prod\limits_{i=1}^tP_{A^{\ast},V_i}(x).$$  
\end{lem}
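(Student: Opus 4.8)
The plan is to deduce Lemma~\ref{4.5} directly from the bijection $\underline{\textrm{braid}}(A)\longrightarrow\underline{\textrm{Braid}}(A^\ast)$, $\sigma\longmapsto\iota^{-1}(\sigma)$, together with the equality of braided dimensions recorded in the paragraph preceding the statement. First I would recall that, by the discussion just above, Part (2) of Lemma~\ref{4.4} gives the bijection $\sigma\longmapsto\iota^{-1}(\sigma)$ between $\underline{\textrm{braid}}(A)$ and $\underline{\textrm{Braid}}(A^\ast)$, and Part (3) of the same lemma gives $\underline{\dim}_{\kern0.1em\iota^{-1}(\sigma)}V=\underline{\dim}_{\kern0.1em\sigma}V$ for every finite-dimensional right $A$-comodule $V$ regarded as a left $A^\ast$-module by the rule $p\cdot v=\sum p(v_{(1)})v_{(0)}$.

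For Part (1), I would take an absolutely simple right $A$-comodule $V$. By the one-to-one correspondence between absolutely simple right $A$-comodules and absolutely simple left $A^\ast$-modules recalled before the statement, $V$ is an absolutely simple left $A^\ast$-module, so the polynomial $P_{A^\ast,V}(x)$ is defined, and since $A^\ast$ is semisimple and cosemisimple (being the dual of a semisimple and cosemisimple Hopf algebra), Theorem~\ref{2.1}(2) guarantees $(\dim V)1_{\boldsymbol{k}}\neq0$. Now I simply rewrite the defining product
\begin{align*}
P_{A^\ast,V}(x)
&=\prod_{R\in\underline{\textrm{Braid}}(A^\ast)}\Bigl(x-\frac{\underline{\dim}_R V}{\dim V}\Bigr)\\
&=\prod_{\sigma\in\underline{\textrm{braid}}(A)}\Bigl(x-\frac{\underline{\dim}_{\kern0.1em\iota^{-1}(\sigma)} V}{\dim V}\Bigr)
=\prod_{\sigma\in\underline{\textrm{braid}}(A)}\Bigl(x-\frac{\underline{\dim}_{\kern0.1em\sigma} V}{\dim V}\Bigr),
\end{align*}
where the middle equality is the change of index along the bijection $\sigma\mapsto\iota^{-1}(\sigma)$ and the last is Lemma~\ref{4.4}(3). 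This is exactly the claimed formula.

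For Part (2), I would observe that $\{V_1,\ldots,V_t\}$ being a full set of non-isomorphic absolutely simple right $A$-comodules of dimension $d$ is, under the correspondence above, the same as a full set of non-isomorphic absolutely simple left $A^\ast$-modules of dimension $d$; hence by the definition of $P_{A^\ast}^{(d)}(x)$ in Section~2 we get $P_{A^\ast}^{(d)}(x)=\prod_{i=1}^t P_{A^\ast,V_i}(x)$, which is the assertion. I do not expect any real obstacle here: the lemma is essentially a bookkeeping consequence of the already-established bijection and dimension identity, so the only point requiring care is making sure the notion of ``absolutely simple'' for comodules matches the notion for $A^\ast$-modules, which is precisely the content of the paragraph immediately preceding the statement.
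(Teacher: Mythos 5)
Your proposal is correct and follows exactly the route the paper takes: the lemma is presented there as an immediate consequence of the bijection $\sigma\mapsto\iota^{-1}(\sigma)$ from Lemma~\ref{4.4}(2), the dimension identity of Lemma~\ref{4.4}(3), and the correspondence between absolutely simple right $A$-comodules and absolutely simple left $A^{\ast}$-modules, which is precisely your reindexing argument.
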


\par 
A Hopf algebra $A$ over a field $\boldsymbol{k}$ of finite dimension is called \textit{self-dual} if $A$ is isomorphic to the dual Hopf algebra $A^{\ast}$ as a Hopf algebra. 
Applying the above lemma to a self-dual Hopf algebra, we obtain immediately the following proposition. 

\par 
\begin{prop}\label{4.6}
Let $A$ be a semisimple and cosemisimple Hopf algebra over a field $\boldsymbol{k}$ of finite dimension. 
If $A$ is self-dual, then for a positive integer $d$ 
$$P_{A}^{(d)}(x)=\prod\limits_{i=1}^t\prod\limits_{\sigma \in \underline{\rm{braid}}(A)}\Bigl( x-\frac{\underline{\dim }_{\kern0.1em \sigma } V_i}{\dim V_i} \Bigr),$$
where $\{ V_1,\ldots ,V_t\} $ is a full set of non-isomorphic absolutely simple right $A$-comodules of dimension $d$. 
\end{prop}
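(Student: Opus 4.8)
The plan is to derive Proposition~\ref{4.6} directly from Lemma~\ref{4.5} by transporting everything along a fixed Hopf algebra isomorphism $\theta : A \xrightarrow{\ \sim\ } A^{\ast}$, whose existence is guaranteed by the self-duality hypothesis. Since $A$ is semisimple and cosemisimple and of finite dimension, so is $A^{\ast}$, so Lemma~\ref{4.5} applies to $A^{\ast}$ and gives
$$P_{A^{\ast\ast}, V}(x) = \prod_{\sigma \in \underline{\textrm{braid}}(A^{\ast})} \Bigl( x - \frac{\underline{\dim}_{\kern0.1em \sigma} V}{\dim V} \Bigr)$$
for an absolutely simple right $A^{\ast}$-comodule $V$; but under the canonical identification $A^{\ast\ast} \cong A$ this is exactly a statement about $P_{A,V}(x)$. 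So the real content is a change of variable: I want to replace $A^{\ast}$ by $A$ everywhere using $\theta$.

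First I would observe that $\theta$ induces a bijection between $\underline{\textrm{braid}}(A^{\ast})$ and $\underline{\textrm{braid}}(A)$ (pull back a braiding $\sigma$ on $A^{\ast}$ along $\theta \otimes \theta$), and simultaneously an equivalence between the category of right $A^{\ast}$-comodules and the category of right $A$-comodules (corestriction of scalars along $\theta$), which preserves dimension and sends absolutely simple comodules to absolutely simple comodules. The key compatibility to check is that these two transports are compatible with the braided dimension, i.e. if $\sigma'$ on $A$ corresponds to $\sigma$ on $A^{\ast}$ and $V'$ is the $A$-comodule corresponding to the $A^{\ast}$-comodule $V$, then $\underline{\dim}_{\kern0.1em \sigma'} V' = \underline{\dim}_{\kern0.1em \sigma} V$. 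This is essentially because a Hopf algebra isomorphism induces an isomorphism of braided monoidal categories of comodules (sending the braiding built from $\sigma$ to the one built from $\sigma'$), and braided dimension is an invariant of braided monoidal categories. Alternatively, one can check it at the level of the dual Drinfel'd element using the explicit formula $\mu(a) = \sum \sigma(a_{(2)}, S(a_{(1)}))$ from Lemma~\ref{4.2}, since $\theta$ intertwines the antipodes and comultiplications.

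Having set up these bijections, the proof is then a bookkeeping assembly. Fix $d$ and let $\{V_1, \ldots, V_t\}$ be a full set of non-isomorphic absolutely simple right $A$-comodules of dimension $d$; pulling back along $\theta$ (or rather its inverse, depending on orientation), this is identified with a full set of non-isomorphic absolutely simple right $A^{\ast}$-comodules of dimension $d$, i.e. with absolutely simple left $A^{\ast\ast} = A$-modules of dimension $d$. Applying Lemma~\ref{4.5}(2) to $A^{\ast}$ in place of $A$, then rewriting each factor $P_{A^{\ast\ast}, V_i}(x) = P_{A, V_i}(x)$ via Lemma~\ref{4.5}(1) and the compatibility above, and taking the product over $i = 1, \ldots, t$, yields
$$P_A^{(d)}(x) = \prod_{i=1}^{t} \prod_{\sigma \in \underline{\textrm{braid}}(A)} \Bigl( x - \frac{\underline{\dim}_{\kern0.1em \sigma} V_i}{\dim V_i} \Bigr),$$
which is the claim.

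I do not expect any serious obstacle here — the statement is flagged in the text as following "immediately" from Lemma~\ref{4.5}. The only point requiring genuine care is the compatibility of the braided dimension with transport along $\theta$; if one wants to avoid invoking "a Hopf isomorphism induces a braided monoidal equivalence of comodule categories" as a black box, the cleanest route is to note that Lemma~\ref{4.4}(2)--(3) already expresses $\underline{\dim}_{\kern0.1em \sigma} V$ purely in terms of the Hopf algebra structure and the coaction via $\mu(a) = \sum \sigma(a_{(2)}, S(a_{(1)}))$, so the identity $\underline{\dim}_{\kern0.1em \sigma'} V' = \underline{\dim}_{\kern0.1em \sigma} V$ becomes a one-line computation from $\theta$ being a coalgebra and algebra map commuting with $S$. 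Everything else is relabeling.
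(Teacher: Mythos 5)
Your argument is correct and is essentially the paper's intended (unwritten) deduction: Lemma~\ref{4.5} plus the self-duality isomorphism, with the compatibility of braided dimensions under transport along $\theta$ correctly identified as the only point needing a check (and your suggested verification via $\mu(a)=\sum\sigma(a_{(2)},S(a_{(1)}))$ does work, since it gives $\mu'=\mu\circ\theta$ while characters transport by $\theta^{-1}$). A marginally shorter packaging of the same idea is to apply Lemma~\ref{4.5}(2) to $A$ itself, which already expresses $P_{A^{\ast}}^{(d)}(x)$ as the desired product over $\underline{\rm{braid}}(A)$ and the right $A$-comodules $V_i$, and then invoke $P_{A}^{(d)}(x)=P_{A^{\ast}}^{(d)}(x)$ because $P^{(d)}$ is manifestly invariant under Hopf algebra isomorphism; this avoids the term-by-term transport entirely.
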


By using the above formula, we compute the self-dual Hopf algebras $A_{Nn}^{+\lambda}$ introduced by Satoshi Suzuki \cite{Suz} in the next section. 

\par \medskip 
\section{Examples}
In this section we give several computational results of polynomial invariants of Hopf algebras. 
By comparing polynomial invariants one may find new examples of pairs of Hopf algebras such that their representation rings are isomorphic, but they are not monoidally Morita equivalent. 

\subsection{$8$-dimensional non-commutative semisimple Hopf algebras}
\par 
By Masuoka~\cite{Mas1}, it is known that there are exactly three types of $8$-dimensional non-commutative semisimple Hopf algebras over an algebraically closed field $\boldsymbol{k}$ of $\textrm{ch}(\boldsymbol{k})\not= 2$. 
They are $\boldsymbol{k}[D_8], \boldsymbol{k}[Q_8]$ and $K_8$, where 
$D_8$ and $Q_8$ are the dihedral group of order $8$ and the quaternion group, respectively, 
and $K_8$ is the unique $8$-dimensional semisimple Hopf algebra which is non-commutative and non-cocommutative, that is called the Kac-Paljutkin algebra~\cite{KP}. 
Tambara and Yamagami \cite{TY} and also Masuoka \cite{Mas2} showed that their representation rings are isomorphic meanwhile their representation categories are not. 
In this subsection we derive this result by using our polynomial invariants. 
Throughout this subsection, we fix the following group presentations of $D_8$ and $Q_8$:  
\begin{align*}
D_8&=\langle s,t\ \vert \ s^4=1,\ t^2=1,\ st=ts^{-1}\rangle , \\ 
Q_8&=\langle s,t\ \vert \ s^4=1,\ t^2=s^2,\ st=ts^{-1}\rangle . 
\end{align*}

Let us start with determining the universal $R$-matrices of the group Hopf algebras $\boldsymbol{k}[D_8]$ and $\boldsymbol{k}[Q_8]$. 
For this the following proposition is useful. 

\par \smallskip 
\begin{lem}\label{5.14}
Let $G$ be a group, and $\boldsymbol{k}$ a field. 
Then for a universal $R$-matrix $R$ of $\boldsymbol{k}[G]$ there is a commutative and normal finite subgroup $H$ such that $R\in \boldsymbol{k}[H]\otimes \boldsymbol{k}[H]$. 
\end{lem}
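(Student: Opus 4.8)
The plan is to exploit the characterization of universal $R$-matrices of a finite-dimensional Hopf algebra in terms of Hopf algebra maps together with the structure of the dual of a group algebra. Let $R=\sum_i \alpha_i\otimes\beta_i$ be a universal $R$-matrix of $\boldsymbol{k}[G]$; the key point is that $R$ involves only finitely many group elements, so let $H_0$ be the subgroup of $G$ generated by the (finitely many) elements occurring in the tensor expansion of $R$. Then $R\in\boldsymbol{k}[H_0]\otimes\boldsymbol{k}[H_0]$, but a priori $H_0$ need not be normal nor commutative, so the work is to cut $H_0$ down to a subgroup with those properties while keeping $R$ inside it.

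\textbf{Commutativity.} First I would show $R$ lives in $\boldsymbol{k}[Z]\otimes\boldsymbol{k}[Z]$ for a \emph{commutative} subgroup $Z$. The quasitriangularity axiom $\Delta^{\textrm{cop}}(a)=R\cdot\Delta(a)\cdot R^{-1}$ applied to $a=g\in G$ gives $R\cdot(g\otimes g)=(g\otimes g)\cdot R$, i.e. $R$ commutes with all $g\otimes g$. Writing $R=\sum_{x,y\in H_0}r_{xy}\,x\otimes y$ and comparing coefficients, this forces $r_{xy}=r_{\,gxg^{-1},\,gyg^{-1}}$ for all $g\in G$; in particular the set $\Omega$ of pairs $(x,y)$ with $r_{xy}\neq 0$ is stable under simultaneous conjugation. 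Next, use the remaining axioms through Lemma~\ref{3.10}: under the injection $j_{\boldsymbol{k}[G]}$, $R$ corresponds to a Hopf algebra map $f\colon \boldsymbol{k}[G]^{\ast}\to\boldsymbol{k}[G]^{\textrm{cop}}$ with $f=F(R)$, $(F(R))(p)=\sum_{x,y}r_{xy}\,p(x)\,y$. Since $\boldsymbol{k}[G]^{\ast}$ is a commutative algebra, its image under the algebra map $f$ is a commutative subalgebra of $\boldsymbol{k}[G]$. Because $\boldsymbol{k}[G]^{\ast}$ is spanned by the evaluation functionals and $f(\delta_x)=\sum_y r_{xy}\,y$ (suitably interpreted when $G$ is infinite — but only finitely many $x$ are relevant), the image of $f$ is spanned by the elements of $G$ appearing as right-hand entries of $R$; commutativity of this subalgebra of the group algebra forces those group elements to pairwise commute. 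Symmetrically, applying the same argument to $R_{21}$ (or to $A^{\ast\textrm{cop}}$), the left-hand entries pairwise commute, and in fact all entries lie in one abelian subgroup $Z$ of $G$. Thus $R\in\boldsymbol{k}[Z]\otimes\boldsymbol{k}[Z]$ with $Z$ finite and abelian.

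\textbf{Normality.} Now let $H$ be the normal closure of $Z$ in $G$, i.e. the subgroup generated by all conjugates $gZg^{-1}$. Then $H$ is normal by construction and contains $Z$, so trivially $R\in\boldsymbol{k}[H]\otimes\boldsymbol{k}[H]$; but I must still ensure $H$ is \emph{commutative and finite}. Finiteness and commutativity of $H$ follow from the conjugation-invariance established above: the relation $r_{xy}=r_{\,gxg^{-1},gyg^{-1}}$ shows that for each $x$ in the support of $R$, all its conjugates $gxg^{-1}$ also lie in the support (since $R$, being invertible with $(\varepsilon\otimes\mathrm{id})(R)=1$, cannot have an entirely empty "row"), so the support set — hence the set of group elements generating $H$ — is already conjugation-stable and finite. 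Therefore $H$ is generated by the finite conjugation-closed set $\{\,gxg^{-1} : (x,y)\in\Omega \text{ or }(y,x)\in\Omega,\ g\in G\,\}$, which is finite because $\Omega$ is finite and each element has finitely many conjugates appearing. Commutativity of $H$ then needs the fact that any two elements of this generating set commute: conjugating the relation "$x$ and $x'$ commute for $x,x'$ in the support" by $g$ shows $gxg^{-1}$ and $gx'g^{-1}$ commute, but one also needs $gxg^{-1}$ to commute with $x'$ itself, which again follows from applying the commutativity-of-$\mathrm{im}(f)$ argument after noting that conjugates appear in the support of the same $R$.

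\textbf{Main obstacle.} The delicate point is this last step — showing that \emph{all} conjugates that end up in $H$ pairwise commute, not merely that each conjugation-orbit is internally abelian. The clean way is: the argument via Lemma~\ref{3.10} shows the subalgebra of $\boldsymbol{k}[G]$ generated by \emph{all} group elements in the support of $R$ (which is conjugation-stable) is commutative, because it is the image of the commutative algebra $\boldsymbol{k}[G]^{\ast}$ under the algebra map $f$ (together with the analogous statement for the opposite side). Once one commits to taking $H$ to be the subgroup generated by the full conjugation-closed support of $R$ — rather than by $Z$ and then normalizing afterward — normality is automatic (conjugation permutes the generators), finiteness is clear, and commutativity is exactly the statement that $\mathrm{im}(f)$ is commutative. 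So the real content is packaging the three quasitriangularity axioms correctly: the first axiom gives conjugation-invariance of the support, and axioms two and three (through $f$ being an algebra map, Lemma~\ref{3.10}(2)) give commutativity of the span of the support. I would verify carefully that these two inputs suffice and that no circularity hides in the finiteness claim (which is ultimately guaranteed since $R$ is a fixed element of $\boldsymbol{k}[G]\otimes\boldsymbol{k}[G]$, a space whose elements have finite support).
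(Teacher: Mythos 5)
Your overall strategy --- identify the group elements supporting the two tensor legs of $R$, show each support spans a finite abelian normal subgroup algebra, and then show the two supports commute with one another --- is essentially the paper's (which gets the first part by quoting Radford's results on the minimal quasitriangular subHopfalgebras $B=\{(\mathrm{id}\otimes\alpha)(R)\}$ and $H=\{(\alpha\otimes\mathrm{id})(R)\}$, plus Sweedler's lemma that a subcoalgebra of $\bigoplus_{g}\boldsymbol{k}g$ splits along group elements). But as written there are genuine gaps, and the one you yourself single out as the main obstacle is resolved incorrectly. First, your deduction that the right-leg group elements pairwise commute rests on the claim that the image of $f$, i.e.\ the span of the vectors $v_x=\sum_y r_{xy}\,y$, equals the span of the group elements occurring in the right legs. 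A priori it is only contained in that span, and commutativity of the smaller subalgebra does not force the individual group elements to commute (the identity $(a+b)c=c(a+b)$ is compatible with $ac=cb$, $bc=ca$). To identify $\mathrm{im}(f)$ with $\boldsymbol{k}[L]$ for a subset $L\subset G$ you need that $\mathrm{im}(f)$ is a sub\emph{coalgebra} of $\boldsymbol{k}[G]$ --- which follows from $(\mathrm{id}\otimes\Delta)(R)=R_{13}R_{12}$ --- together with Sweedler's splitting lemma; only then is $L$ literally the right-leg support and is $\boldsymbol{k}[L]$ a commutative subalgebra with $L$ multiplicatively closed. Second, without that multiplicative closure your claim that ``finiteness is clear'' fails: a finite, conjugation-stable set of pairwise-commuting elements can perfectly well generate an infinite subgroup (a central element of infinite order already does).

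Third, and most importantly: your proposed fix for the cross-commutativity between left-leg elements (forming $K$) and right-leg elements (forming $L$) does not work, because the image of the single algebra map $f$ consists only of the right legs; there is no one algebra map $\boldsymbol{k}[G]^{\ast}\to\boldsymbol{k}[G]$ whose image is the span of $K\cup L$, so commutativity of $\boldsymbol{k}[G]^{\ast}$ cannot by itself make $K$ and $L$ commute with each other. This cross-commutativity is a genuinely separate step. It can, however, be extracted from the conjugation relation you already derived: writing $R=\sum_{k\in K}k\otimes X_k$ with $X_k\in\boldsymbol{k}[L]$, the identity $\Delta^{\textrm{cop}}(k')\cdot R=R\cdot\Delta(k')$ for $k'\in K$ gives $X_{k'kk'^{-1}}=k'X_kk'^{-1}$, and since $K$ is abelian this reads $k'X_k k'^{-1}=X_k$; as the $X_k$ span $\boldsymbol{k}[L]$, every element of $L$ commutes with every element of $K$, so $LK$ is a finite commutative normal subgroup containing the support of $R$. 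This is exactly the paper's closing computation, and it (or an equivalent argument) is indispensable; your write-up asserts the conclusion in the ``Commutativity'' paragraph and then offers an invalid justification in the ``Main obstacle'' paragraph.
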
 
\begin{proof}
We set $A=\boldsymbol{k}[G]$. 
By \cite[Proposition 2(a)]{Ra4}, 
$$B=\{ \ (\textrm{id}\otimes \alpha )(R)\ \vert \ \alpha \in A^{\ast }\ \} ,\quad H:=\{ \ (\alpha \otimes \textrm{id})(R)\ \vert \ \alpha \in A^{\ast}\ \} $$
are finite-dimensional subHopf algebras of $A$. 
Since for each $g\in G$, $\boldsymbol{k}g$ is a subcoalgebra of $A=\bigoplus_{g\in G}\boldsymbol{k}g$, by \cite[Lemma 9.0.1(b)]{Sw} 
the subcoalgebra $B$ is written in $B=\bigoplus_{g\in G}B\cap \boldsymbol{k}g$. 
We set $K:=B\cap G$. Then $K$ is a subgroup of $G$, and 
$B\cap \boldsymbol{k}g\not= \{ 0\}$ if and only if $g\in K$. 
Thus $B=\bigoplus_{g\in K}B\cap \boldsymbol{k}g=\bigoplus_{g\in K}\boldsymbol{k}g=\boldsymbol{k}[K]$. 
Since $B$ is finite-dimensional, $K$ is a finite group. 
As a similar argument, we see that there is a finite subgroup $L$ of $G$ satisfying 
$H=\boldsymbol{k}[L]$. 
Since by \cite[Proposition 2(c)]{Ra4} $\boldsymbol{k}[K]^{\ast \textrm{cop}}\cong \boldsymbol{k}[L]$ as Hopf algebras, $L$ is commutative. 
Furthermore, $L$ is a normal subgroup of $G$ since 
$Lg\subset gH$, or equivalently $g^{-1}Lg\subset H=\boldsymbol{k}[L]$ by \cite[Proposition 3]{Ra4}. 
Similarly, we see that $K$ is a commutative and normal subgroup of $G$. 
\par 
At this point, $R\in \boldsymbol{k}[K]\otimes \boldsymbol{k}[L]\subset \boldsymbol{k}[LK]\otimes \boldsymbol{k}[LK]$ is verified. 
To complete the proof it is sufficient to show that $LK$ is a commutative and normal subgroup of $G$. 
Since $L$ is normal, it follows immediately that $LK$ is normal. 
To show that $LK$ is commutative, we write $R$ in the form 
$$R=\sum\limits_{k\in K}k\otimes X_k \quad (X_k\in \boldsymbol{k}[L]).$$
Since $\Delta ^{\textrm{cop}}(k')\cdot R=R\cdot \Delta (k')$ for all $k'\in K$, 
we have $X_{k'kk'^{-1}}=k'X_kk'^{-1}$ for all $k,k'\in K$. 
Since $K$ is commutative, this condition is equivalent to $X_{k}k'=k'X_k$. 
It follows from $\boldsymbol{k}[L]=H=\textrm{Span}\{ X_k\ \vert \ k\in K\} $ that any element $l\in L$ is represented by a $\boldsymbol{k}$-linear combination of $\{ X_k\ \vert \ k\in K\} $. Thus $lk'=k'l$ holds. 
This means that $LK$ is commutative. 
\end{proof}

\par \smallskip 
For a cyclic group the universal $R$-matrices of the group Hopf algebra are given by (\ref{eq2.3}) in Example~\ref{2.3} in Section 2. 
For a direct product of two cyclic groups the universal $R$-matrices of the group Hopf algebra are given as in the following lemma. 
The lemma can be verified by use of the same method of the proof of Lemma~\ref{5.17} described later. 

\par \smallskip 
\begin{lem}\label{5.19}
Let $G$ be the direct product of the cyclic groups $C_m=\langle g\rangle $ and  $C_n=\langle h\rangle $, and let $\omega $ be a primitive $mn$-th root of unity in a field $\boldsymbol{k}$ whose characteristic does not divide $mn$. 
We set $X(m,n)=\{ d\in \{ 0,1,\ldots , m-1\} \ \vert \ dn\equiv 0\ ({\rm mod}\kern0.2em m)\} $. 
Then any universal R-matrix of $\boldsymbol{k}[G]$ is given by the formula
$$R_{pqrs}^{\boldsymbol{k}[G]}:=\sum_{i,j=0}^{m-1}\sum_{k,l=0}^{n-1}\omega ^{n(pij+rkj)+m(skl+qil)}E_{ik}\otimes E_{jl},$$ 
where $p\in X(m,m), q\in X(n,m),r\in X(m,n), s\in X(n,n)$, and \newline 
$E_{ik}=\frac{1}{mn}\sum_{j=0}^{m-1}\sum_{l=0}^{n-1}\omega^{-nij-mkl}g^jh^l$.   
\end{lem}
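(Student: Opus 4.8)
The plan is to identify $\underline{\textrm{Braid}}(\boldsymbol{k}[G])$ for $G = C_m \times C_n$ by reducing to the dual picture, exactly as the preceding lemmas suggest, and then to verify directly that the listed family exhausts all universal $R$-matrices. First I would recall that since $G$ is abelian and $\boldsymbol{k}$ contains a primitive $mn$-th root of unity with $\mathrm{ch}(\boldsymbol{k}) \nmid mn$, the group Hopf algebra $\boldsymbol{k}[G]$ is semisimple, cosemisimple, and in fact self-dual, with the complete set of primitive idempotents $E_{ik}$ as given, indexed by the characters $\chi_{ik}(g^jh^l)=\omega^{-nij-mkl}$ (equivalently, by $\widehat{G}\cong C_m\times C_n$). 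By Lemma~\ref{4.4}(2) (or directly, since $\boldsymbol{k}[G]$ is commutative and cocommutative), a universal $R$-matrix is the same thing as a Hopf algebra map $f\colon \boldsymbol{k}[G]^\ast \to \boldsymbol{k}[G]$, i.e. a group homomorphism $\widehat{G}\to G$ whose associated bilinear form on $\widehat{G}$ — or on $G$, after using self-duality — is "symmetric" in the sense forced by axiom (B1)/the quasitriangularity condition $\Delta^{\mathrm{cop}}(a)R=R\Delta(a)$. Since $\boldsymbol{k}[G]$ is cocommutative, this last condition is automatic, so $\underline{\textrm{Braid}}(\boldsymbol{k}[G])$ is in bijection with $\mathrm{Hom}(\widehat{G},G)\cong \mathrm{Hom}(G,G)$, and the remaining axioms $(\Delta\otimes\mathrm{id})(R)=R_{13}R_{23}$ and $(\mathrm{id}\otimes\Delta)(R)=R_{13}R_{12}$ pin down which elements of $A\otimes A$ arise.

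Concretely, I would write a general candidate as $R=\sum_{i,k;j,l} c_{(ik),(jl)}\, E_{ik}\otimes E_{jl}$ with scalars $c_{(ik),(jl)}\in\boldsymbol{k}$, using that $\{E_{ik}\otimes E_{jl}\}$ is a basis of orthogonal idempotents of $A\otimes A$. Invertibility of $R$ then forces every $c_{(ik),(jl)}$ to be nonzero. Feeding this into the two comultiplicativity axioms and using $\Delta(E_{ik})=\sum E_{i'k'}\otimes E_{i-i',\,k-k'}$ (indices mod $m$ and $n$ respectively), one finds that the function $c$ must be "bi-multiplicative": $c_{(ik)+(i'k'),(jl)} = c_{(ik),(jl)}c_{(i'k'),(jl)}$ and similarly in the second slot, so $c$ is determined by a bicharacter $\widehat{G}\times\widehat{G}\to \boldsymbol{k}^\times$; since $\boldsymbol{k}$ contains a primitive $mn$-th root of unity $\omega$ and $|\widehat{G}|$ has exponent dividing $mn$, every such bicharacter is of the form $c_{(ik),(jl)}=\omega^{n(pij+rkj)+m(qil+skl)}$ for integers $p,q,r,s$ defined modulo the orders of the relevant cyclic factors. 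The well-definedness of the exponents modulo $m$ in the $i$-slot and modulo $n$ in the $k$-slot is precisely the congruence conditions $p\in X(m,m)$, $q\in X(n,m)$, $r\in X(m,n)$, $s\in X(n,n)$ — e.g. $pij$ must be well defined as $i$ ranges mod $m$ and $j$ mod $m$, which is free, while the cross term $rkj$ needs $r\cdot(\text{anything mod }n)\cdot(\text{anything mod }m)$ to land consistently in the exponent of $\omega$ read modulo $mn$, forcing $rn\equiv 0\pmod m$, i.e. $r\in X(m,n)$; and similarly for $q$. I would then transcribe $R$ back from the idempotent basis to the group basis using $E_{ik}=\tfrac{1}{mn}\sum_{j,l}\omega^{-nij-mkl}g^jh^l$ to see it has the stated closed form, though as the excerpt remarks this mirrors the (to-be-given) proof of Lemma~\ref{5.17} and need not be spelled out.

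The main obstacle — and the only genuinely nontrivial bookkeeping — is getting the index arithmetic on the exponents exactly right: one must check that the four congruence constraints are necessary \emph{and} sufficient for the exponent $n(pij+rkj)+m(qil+skl)$ to give a well-defined element of $\mathbb{Z}/mn\mathbb{Z}$ as $i,j$ range over $\mathbb{Z}/m\mathbb{Z}$ and $k,l$ over $\mathbb{Z}/n\mathbb{Z}$, and that distinct $(p,q,r,s)$ in the indicated ranges yield distinct $R$'s (so that the parametrization is exact, not redundant). This is where care is needed because the two cyclic factors have different orders, so the natural pairing $\widehat{G}\times\widehat{G}\to\mu_{mn}$ is not simply $\mathbb{Z}/mn\mathbb{Z}$-bilinear on the nose; one has to track the factors of $n$ and $m$ inserted in the exponents. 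Once that is settled, matching against Example~\ref{2.3} (the case $n=1$) is a sanity check: there $X(m,1)=\{0,\dots,m-1\}$ collapses to a single parameter $p=d$ and $R^{\boldsymbol{k}[G]}_{p000}$ reduces to $R_d$.
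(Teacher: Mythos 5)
Your proposal is correct and follows essentially the route the paper itself prescribes (it defers Lemma~\ref{5.19} to the method of the proof of Lemma~\ref{5.17}): expand $R$ in the orthogonal idempotent basis, note that commutativity and cocommutativity make the first axiom vacuous, use the two comultiplication axioms to force the coefficient function to be multiplicative in each slot, and read off the congruence constraints from the orders of the generators — your bicharacter/$\mathrm{Hom}(\widehat{G},G)$ framing is a conceptual repackaging of exactly that computation, including the needed converse and non-redundancy checks. The only quibble is in your sanity check against Example~\ref{2.3}: the set that is all of $\{0,\dots,m-1\}$ is $X(m,m)$ (the range of $p$), while $X(m,1)=X(1,m)=X(1,1)=\{0\}$ is what kills $q,r,s$.
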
 

By using Lemma~\ref{5.14} and Lemma~\ref{5.19}, one can determine that the quasitriangular structures of $\boldsymbol{k}[D_8]$ and $\boldsymbol{k}[Q_8]$ 
as described in the following lemma \cite{W1}. 

\par \smallskip 
\begin{lem}
Let $\boldsymbol{k}$ be a field of $\textrm{ch}(\boldsymbol{k})\not= 2$ that  contains a primitive $4$-th root  of unity $\zeta $. 
\par 
(1) The quasitriangular structures of $\boldsymbol{k}[D_8]$ are given by 
\begin{enumerate}
\item[(i)] universal $R$-matrices of $\boldsymbol{k}[\langle s\rangle ]\cong \boldsymbol{k}[C_4]$: 
$$R_d^{\boldsymbol{k}[D_8]}:=\dfrac{1}{4}\sum\limits_{i,k=0}^3\zeta ^{-ik}s^k\otimes s^{di}\quad (d=0,1,2,3),$$
\item[(ii)]  universal $R$-matrices of $\boldsymbol{k}[\langle t,s^2\rangle ]\cong \boldsymbol{k}[C_2\times C_2]$: 
$$R_{d+4}^{\boldsymbol{k}[D_8]}:=\dfrac{1}{4}\sum\limits_{i,j,k,l=0}^1(-1)^{-ij-kl}t^is^{2k}\otimes t^{l}s^{2(j+dl)}\quad (d=0,1),$$
\item[(iii)]  universal $R$-matrices of $\boldsymbol{k}[\langle ts,s^2\rangle ]\cong \boldsymbol{k}[C_2\times C_2]$: 
$$R_{d+6}^{\boldsymbol{k}[D_8]}:=\dfrac{1}{4}\sum\limits_{i,j,k,l=0}^1(-1)^{-ij-kl}(ts)^is^{2k}\otimes (ts)^{l}s^{2(j+dl)}\quad (d=0,1). $$
\end{enumerate}
The Drinfel'd element $u_d^{\boldsymbol{k}[D_8]}$ of $R_d^{\boldsymbol{k}[D_8]}$ is given by 
$$u_d^{\boldsymbol{k}[D_8]}
=\begin{cases}
1\quad & (d=0,4,6), \\[0.1cm]  
\frac{1}{2}(1+\zeta ^{-1})+\frac{1}{2}(1+\zeta )s^2 \quad & (d=1), \\[0.1cm]   
s^2 \quad & (d=2,5,7), \\[0.1cm]   
\frac{1}{2}(1+\zeta )+\frac{1}{2}(1+\zeta ^{-1})s^2 \quad & (d=3).
\end{cases}$$
\indent 
(2) The quasitriangular structures of $\boldsymbol{k}[Q_8]$ are given by 
\begin{enumerate}
\item[(i)] universal $R$-matrices of $\boldsymbol{k}[\langle s\rangle ]\cong \boldsymbol{k}[C_4]$: 
$$R_d^{\boldsymbol{k}[Q_8]}:=\dfrac{1}{4}\sum\limits_{i,k=0}^3\zeta ^{-ik}s^k\otimes s^{di}\quad (d=0,1,2,3),$$
\item[(ii)] universal $R$-matrices of $\boldsymbol{k}[\langle t\rangle ]\cong \boldsymbol{k}[C_4]$: 
$$R_{d+4}^{\boldsymbol{k}[Q_8]}:=\dfrac{1}{4}\sum\limits_{i,k=0}^3\zeta ^{-ik}t^k\otimes t^{(2d+1)i}\quad (d=0,1),$$
\item[(iii)] universal $R$-matrices of $\boldsymbol{k}[\langle ts\rangle ]\cong \boldsymbol{k}[C_4]$:
$$R_{d+6}^{\boldsymbol{k}[Q_8]}:=\dfrac{1}{4}\sum\limits_{i,k=0}^3\zeta ^{-ik}(ts)^k\otimes (ts)^{(2d+1)i}\quad (d=0,1).$$
\end{enumerate}
The Drinfel'd element $u_d^{\boldsymbol{k}[Q_8]}$ of $R_d^{\boldsymbol{k}[Q_8]}$ is given by 
$$u_d^{\boldsymbol{k}[Q_8]}
=\begin{cases}
1\quad & (d=0), \\[0.1cm]   
\frac{1}{2}(1+\zeta ^{-1})+\frac{1}{2}(1+\zeta )s^2 \quad & (d=1,4,6), \\[0.1cm]   
s^2 \quad & (d=2), \\[0.1cm]   
\frac{1}{2}(1+\zeta )+\frac{1}{2}(1+\zeta ^{-1})s^2 \quad & (d=3,5,7). 
\end{cases}
$$  
\end{lem}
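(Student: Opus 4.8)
The plan is to use Lemma~\ref{5.14} to localize every universal $R$-matrix onto a commutative normal subgroup, and then to sift the known lists of universal $R$-matrices of abelian group algebras (Example~\ref{2.3} for cyclic groups, Lemma~\ref{5.19} for $C_m\times C_n$) for those that satisfy the remaining quasitriangularity constraints on the nonabelian ambient group. First I would pin down the relevant subgroups: in $D_8=\langle s,t\rangle$ every commutative normal subgroup is contained in one of $\langle s\rangle\cong C_4$, $\langle t,s^2\rangle\cong C_2\times C_2$ or $\langle ts,s^2\rangle\cong C_2\times C_2$, while in $Q_8$, where all subgroups are normal, the maximal commutative ones are the three cyclic groups $\langle s\rangle$, $\langle t\rangle$, $\langle ts\rangle$, each $\cong C_4$. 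By Lemma~\ref{5.14} any $R\in\underline{\textrm{Braid}}(\boldsymbol{k}[G])$ lies in $\boldsymbol{k}[H]\otimes\boldsymbol{k}[H]$ for one of these $H$, and conversely such an $R$ is automatically a universal $R$-matrix of $\boldsymbol{k}[H]$ since $\boldsymbol{k}[H]\hookrightarrow\boldsymbol{k}[G]$ is a Hopf algebra map; so the task reduces to deciding, for each $H$ and each $R\in\underline{\textrm{Braid}}(\boldsymbol{k}[H])$, whether $R$ is a universal $R$-matrix of $\boldsymbol{k}[G]$.

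The simplification that makes this feasible is that the axioms $(\Delta\otimes\textrm{id})(R)=R_{13}R_{23}$ and $(\textrm{id}\otimes\Delta)(R)=R_{13}R_{12}$ pass from $\boldsymbol{k}[H]$ to $\boldsymbol{k}[G]$ for free, because $\Delta$ agrees on the two algebras; hence the only genuine condition is the first axiom $\Delta^{\textrm{cop}}(a)\,R=R\,\Delta(a)$, and since $\boldsymbol{k}[G]$ is cocommutative on group-likes this amounts to requiring that $R$ commute with $g\otimes g$ for every $g\in G$, equivalently for a generating set. Concretely I would test commutation of each candidate $R$ with $g\otimes g$ for the generators of $G$ not lying in $H$, using the conjugation relations: in $D_8$ conjugation by $s$ sends $t\mapsto ts^2$ and fixes $s^2$, and conjugation by $t$ inverts $s$; in $Q_8$ the relations $st=ts^{-1}$ and $t^2=s^2$ yield the analogous actions. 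In the parametrizations of Example~\ref{2.3} and Lemma~\ref{5.19} this becomes a linear condition on the parameters, which keeps all four $R$-matrices of $\boldsymbol{k}[\langle s\rangle]$ and leaves exactly two new ones from each of the remaining two subgroups. Lastly I would account for overlaps: two of the four $R$-matrices of $\boldsymbol{k}[C_4]$, namely the trivial one and the one whose Drinfel'd element is the central involution $s^2$, are in fact supported on $\langle s^2\rangle$ and so arise from several $H$ at once; expressing the $R_d$ in the idempotent basis $E_i=\frac1m\sum_j\omega^{-ij}g^j$ identifies exactly which ones collapse, giving a final count of $8$ for $D_8$ and $8$ for $Q_8$ with no repetitions.

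For the Drinfel'd elements I would use that the antipode of a group algebra is $S(g)=g^{-1}$, so for $R=\sum_i\alpha_i\otimes\beta_i$ one has $u=\sum_i S(\beta_i)\alpha_i=\sum_i\beta_i^{-1}\alpha_i$. For the cyclic pieces, $R_d^{\boldsymbol{k}[G]}=\frac14\sum_{i,k}\zeta^{-ik}s^k\otimes s^{di}$ gives $u_d=\frac14\sum_{i,k}\zeta^{-ik}s^{k-di}$; evaluating this — or, more transparently, passing to the idempotents and using $u_d=\sum_i\zeta^{-di^2}E_i$ as in Example~\ref{2.3}, then converting back to the group basis — yields the stated case list, e.g.\ $\sum_i\zeta^{-i^2}E_i=\frac12(1+\zeta^{-1})+\frac12(1+\zeta)s^2$ for $d=1$. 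For the $C_2\times C_2$ pieces the same computation in the product-idempotent basis shows that the Drinfel'd elements of the two surviving $R$-matrices lie in $\boldsymbol{k}\cdot 1\oplus\boldsymbol{k}\cdot s^2$ and equal $1$ and $s^2$ respectively, which assembles into the displayed tables for $u_d^{\boldsymbol{k}[D_8]}$ and $u_d^{\boldsymbol{k}[Q_8]}$.

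The main obstacle is the combinatorial bookkeeping in the second step: translating $R(g\otimes g)=(g\otimes g)R$ into the explicit parametrization of Lemma~\ref{5.19}, checking that the surviving parameter set has exactly the claimed size, and correctly matching the $R$-matrices common to two or three of the subgroups $H$, so that the final list is complete and free of duplicates. A secondary, purely notational hazard is reconciling the conventions of Lemma~\ref{5.19} (where the relevant root of unity is a primitive $4$th root, so $\omega^2=-1$) with the signs $(-1)^{-ij-kl}$ and the normalizing factor $\frac14$ in the statement. To keep this under control I would fix the idempotent bases of $\boldsymbol{k}[C_4]$ and $\boldsymbol{k}[C_2\times C_2]$ once at the outset and carry out all of the verification — the $R$-matrix axioms, the commutation constraints, and the Drinfel'd elements — in those bases, returning to group elements only to state the result.
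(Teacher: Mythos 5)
Your proposal is correct and follows essentially the same route as the paper: both localize every universal $R$-matrix onto a maximal commutative normal subgroup via Lemma~\ref{5.14}, then observe that only the single axiom $\Delta^{\textrm{cop}}(g)\cdot R=R\cdot\Delta(g)$ for the remaining generator needs checking, and finally remove the duplicates supported on $\langle s^2\rangle$. The paper merely states this more tersely (working out only the $Q_8$ case and deferring the $D_8$ bookkeeping to Proposition~\ref{5.20}), while you additionally spell out the Drinfel'd-element computation $u=\sum_i\beta_i^{-1}\alpha_i$ in the idempotent basis, which the paper omits.
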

\begin{proof}
In either case of $\boldsymbol{k}[D_8]$ or $\boldsymbol{k}[Q_8]$, the proof of the lemma can be done by the same method. 
So, we will determine the universal $R$-matrices only in the case of $\boldsymbol{k}[Q_8]$ (for $\boldsymbol{k}[D_8]$ see also Proposition~\ref{5.20} described later). 
\par 
It is easy to show that the maximal commutative and normal subgroups of $Q_8$ coincide with one of $H_1=\langle s\rangle $, $H_2=\langle t \rangle $ and $H_3=\langle ts \rangle $. 
Therefore, by Lemma~\ref{5.14} any universal $R$-matrix of $\boldsymbol{k}[Q_8]$ is that of $\boldsymbol{k}[H_i]$ for some $i=1,2,3$. 
Let $R$ be a universal $R$-matrix of $\boldsymbol{k}[H_1]$. 
Then $\Delta^{\rm{cop}}(t)\cdot R=R\cdot \Delta (t)$ holds, and whence $R$ is a universal $R$-matrix of $\boldsymbol{k}[Q_8]$. 
Similarly, we see that  any universal $R$-matrix of $\boldsymbol{k}[H_i]$ for $i=2,3$ satisfies $\Delta^{\rm{cop}}(s)\cdot R=R\cdot \Delta (s)$, and whence it is a universal $R$-matrix of $\boldsymbol{k}[Q_8]$. 
\end{proof}

\par \medskip 
Next, we describe the quasitriangular structures of the Kac-Paljutkin algebra $K_8$, that are determined by S. Suzuki \cite{Suz}. 
As an algebra the Kac-Paljutkin algebra $K_8$ coincides with the group algebra $\boldsymbol{k}[D_8]$. 
Let us consider the primitive orthogonal idempotents $e_0=\frac{1+s^2}{2},\ e_1=\frac{1-s^2}{2}$ in $K_8=\boldsymbol{k}[D_8]$. 
Then the Hopf algebra structure of $K_8$ is described as follows \cite{Mas2}: 
\begin{align*}
\Delta (t)&=t\otimes e_{0}t+st\otimes e_{1}t,\quad \Delta (s)=s\otimes e_0s+s^{-1}\otimes e_1s,\\ 
\varepsilon (t)&=1,\quad \varepsilon (s)=1, \\ 
S(t)&=e_{0}t+e_1st,\quad S(s)=e_0s^{-1}+e_1s. 
\end{align*}
We note that 
$\Delta (e_0)=e_0\otimes e_0+e_1\otimes e_1,\ 
\Delta (e_1)=e_0\otimes e_1+e_1\otimes e_0$, 
$\varepsilon (e_0)=1,\ \varepsilon (e_1)=0$, 
$S(e_0)=e_0,\ S(e_1)=e_1$, 
and therefore, 
$\boldsymbol{k}e_0+\boldsymbol{k}e_1$ is a subHopf algebra of $K_8$ which is isomorphic to the group Hopf algebra $\boldsymbol{k}[C_2]$. 
Let $\zeta \in \boldsymbol{k}$ be a primitive $4$-th root of unity. 
Then 
$$g:=\frac{1}{2}(1+\zeta )s+\frac{1}{2}(1-\zeta )s^{-1},\ \ h:=\frac{1}{2}(1-\zeta )s+\frac{1}{2}(1+\zeta )s^{-1}$$
satisfy $g^2=h^2=1$, 
and $\boldsymbol{k}[\langle s\rangle ]=\boldsymbol{k}1+\boldsymbol{k}g+\boldsymbol{k}h+\boldsymbol{k}gh$ hold. 
Moreover, since $g$ and $h$ are group-like, 
the subHopf algebra $\boldsymbol{k}[\langle s\rangle ]$ of $K_8$ is isomorphic to the group Hopf algebra $\boldsymbol{k}[C_2\times C_2]$. 

\par \smallskip 
\begin{lem}
Let $\boldsymbol{k}$ be a field of $\textrm{ch}(\boldsymbol{k})\not= 2$ that  contains a primitive $8$-th root  of unity $\omega $. 
Then the universal $R$-matrices of $K_8$ are given as follows: 
\begin{enumerate}
\item[(i)] universal $R$-matrices of $\boldsymbol{k}[\langle g,h\rangle ]\cong \boldsymbol{k}[C_2\times C_2]$: 
$$R_{pq}^{K_8}:=\frac{1}{4}\sum _{i ,j,k,l=0}^{1} (-1)^{-(ij+kl)}g^ih^k\otimes g^{pj+(q+1)l}h^{qj+pl}\quad (p,q\in \{ 0,1\} ), $$
\item[(ii)] minimal universal $R$-matrices of $K_8$: 
$$R_{l}^{K_8}:=\frac{1}{8}\sum\limits_{i,j,p,q,r,s=0,1}\kern-1em \omega ^{(2l+1)\frac{1-(-1)^i}{2}\cdot \frac{1-(-1)^j}{2}}(-1)^{jp+ir+(j(i+1)+lj+r+s)(li+p+q)}\kern0.1em 
t^ig^ph^q\otimes t^jg^rh^s$$
$(l=0,1,2,3)$.
\end{enumerate}
The Drinfel'd elements $u_{p,q}^{K_8}$ and $u_{l}^{K_8}$ of $R_{p,q}^{K_8}$ and $R_l^{K_8}$, respectively, are given by 
\begin{align*}
u_{pq}^{K_8}
&=\frac{1}{2}\sum\limits_{i,l=0}^1 (-1)^{(i+p)(l+p)}g^{i}h^l \quad  (p,q\in \{ 0,1\} ) ,\\[0.1cm]  
u_l^{K_8}
&=\frac{\omega ^{2l-1}}{2}(1-gh)+\frac{1}{2}(g+h)\quad (l=0,1,2,3). 
\end{align*}  
\end{lem}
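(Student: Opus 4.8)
The list of universal $R$-matrices of $K_8$ is due to S.~Suzuki \cite{Suz}; the plan is to recover it within the framework developed above. The structural input is the one already used in the proof of Lemma~\ref{3.1}: by Radford \cite{Ra4}, every $R\in\underline{\textrm{Braid}}(K_8)$ lies in $B\otimes H$ for subHopf algebras $B=\{(\alpha\otimes\textrm{id})(R)\}$ and $H=\{(\textrm{id}\otimes\alpha)(R)\}$ with $B\cong H^{\ast\textrm{cop}}$, and then $R$ is a minimal universal $R$-matrix of $B$. First I would classify the possible pairs $(B,H)$. A proper subHopf algebra of $K_8$ has dimension $1$, $2$ or $4$; being semisimple, it is commutative and cocommutative (for dimension $4$ this uses $\textrm{ch}(\boldsymbol{k})\neq 2$ and the classification of $4$-dimensional semisimple Hopf algebras), and since $\boldsymbol{k}$ contains the relevant roots of unity it is the group algebra of a subgroup of $G(K_8)=\langle g,h\rangle\cong C_2\times C_2$; in particular $\boldsymbol{k}[C_4]$ cannot occur, as $G(K_8)$ has no element of order $4$. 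Hence $B\cong H^{\ast\textrm{cop}}$ forces $H$ to be a group algebra too, so $B\otimes H\subseteq\boldsymbol{k}[\langle g,h\rangle]^{\otimes 2}=\boldsymbol{k}[\langle s\rangle]^{\otimes 2}$. Thus either $R$ is a universal $R$-matrix of $\boldsymbol{k}[\langle g,h\rangle]\cong\boldsymbol{k}[C_2\times C_2]$, or $B=H=K_8$ and $R$ is minimal.

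In the first case, Lemma~\ref{5.19} (with $m=n=2$) lists the $2^4$ universal $R$-matrices of $\boldsymbol{k}[\langle g,h\rangle]$. Such an $R$ belongs to $\underline{\textrm{Braid}}(K_8)$ exactly when, in addition, $\Delta^{\textrm{cop}}(t)\,R=R\,\Delta(t)$: the remaining two axioms involve only the coproduct on the subHopf algebra $\boldsymbol{k}[\langle s\rangle]$, where it agrees with that of $K_8$, and the identity $\Delta^{\textrm{cop}}(a)\,R=R\,\Delta(a)$ is multiplicative in $a$, so it suffices to check it on the algebra generators $\boldsymbol{k}[\langle s\rangle]$ and $t$. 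Because conjugation by $t$ carries $s$ to $s^{-1}$ and hence interchanges $g$ and $h$, while $\Delta(t)=t\otimes e_0t+st\otimes e_1t$ is genuinely twisted, this single identity turns out to select precisely the four bicharacters encoded by the $\mathbb{Z}/2$-linear substitution $(j,l)\mapsto(pj+(q+1)l,\ qj+pl)$, i.e. the $R_{pq}^{K_8}$. The corresponding Drinfel'd elements come from evaluating $u=\sum S(\beta_i)\alpha_i$ inside $\boldsymbol{k}[\langle g,h\rangle]$, yielding the stated $u_{pq}^{K_8}$.

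In the minimal case, Lemma~\ref{3.10} identifies the minimal $R\in\underline{\textrm{Braid}}(K_8)$, via $R\mapsto F(R)$, with the Hopf algebra isomorphisms $f\colon K_8^{\ast}\to K_8^{\textrm{cop}}$ satisfying $a\rightharpoonup f=f\leftharpoonup a$ for all $a\in K_8$. Using the self-duality $K_8\cong K_8^{\ast}$ one determines this finite set explicitly; the conclusion is that exactly four such $f$ occur, separated by a fourth-root-of-unity ambiguity, which is where the primitive $8$th root $\omega$ enters. Transporting each $f$ back through $F^{-1}$ with respect to the basis $\{t^ig^ph^q\}$ of $K_8$ and its dual basis yields the four $R_l^{K_8}$, and a final evaluation of $u=\sum S(\beta_i)\alpha_i$ using the coproduct and antipode of $K_8$ produces $u_l^{K_8}=\tfrac{\omega^{2l-1}}{2}(1-gh)+\tfrac12(g+h)$.

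The main obstacle is precisely this minimal case: pinning down from $a\rightharpoonup f=f\leftharpoonup a$ the four admissible Hopf isomorphisms $K_8^{\ast}\to K_8^{\textrm{cop}}$ and then unwinding $F^{-1}$ into the closed-form $R_l^{K_8}$ is a genuine — though finite — computation with the non-commutative multiplication and non-cocommutative comultiplication of $K_8$. In practice the efficient route is the reverse one: take the eight displayed elements as given (for the $R_l^{K_8}$ one may obtain them by deforming the universal $R$-matrices of $\boldsymbol{k}[\langle s\rangle]$ along the cocycle relating $K_8$ and $\boldsymbol{k}[D_8]$, cf.\ Masuoka \cite{Mas2}), verify the three quasitriangularity axioms and the Drinfel'd-element formulas by direct substitution, and then use the injection of Lemma~\ref{3.10} together with the subHopf-algebra analysis above to conclude that no further universal $R$-matrices exist — which is in essence Suzuki's argument \cite{Suz}.
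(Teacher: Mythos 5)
Your approach is sound and, in its final form, essentially coincides with the paper's proof: exhibit the eight elements, verify the quasitriangularity axioms and the Drinfel'd elements by direct substitution, and then argue that the list is complete. The paper handles the $R_{pq}^{K_8}$ exactly as you do (they are universal $R$-matrices of the subHopf algebra $\boldsymbol{k}[\langle g,h\rangle]$, and the only extra axiom to check on $K_8$ is $\Delta^{\textrm{cop}}(t)\cdot R=R\cdot\Delta(t)$, which suffices by multiplicativity in $a$), verifies the $R_l^{K_8}$ by a ``tedious'' direct computation, and then closes the argument by citing Suzuki's Proposition 3.10(ii): since $K_8\cong A_{12}^{+-}$, the total number of universal $R$-matrices is $8$, so the eight exhibited ones exhaust the list. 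Your more ambitious self-contained route via Radford's minimal quasitriangular theory is a genuinely different way to organize the completeness argument, and the structural reduction is correct: the only proper subHopf algebras of $K_8$ are sums of the one-dimensional simple subcoalgebras (this follows most quickly from the coalgebra decomposition of $K_8$, since simple subcoalgebras of a cosemisimple Hopf algebra have square dimension), so a non-minimal $R$ must lie in $\boldsymbol{k}[\langle g,h\rangle]^{\otimes 2}$ and can be filtered from the $16$ universal $R$-matrices of $\boldsymbol{k}[C_2\times C_2]$ given by Lemma~\ref{5.19}.

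The one genuine gap is the minimal case, and you have correctly located it yourself: the claim that exactly four Hopf algebra maps $f\colon K_8^{\ast}\to K_8^{\textrm{cop}}$ satisfy $a\rightharpoonup f=f\leftharpoonup a$ and give minimal $R$-matrices is asserted, not proven. Everything in your argument that makes the list complete ultimately hinges on this count (equivalently, on the total count being $8$), so as written the proposal does not establish completeness without either carrying out that finite computation or importing Suzuki's count, which is what the paper does. If you intend the self-contained route, you must actually execute the classification of the admissible $f$ (or, equivalently, show directly that a minimal universal $R$-matrix is determined up to the stated fourth-root-of-unity ambiguity); if you intend the fallback route of your last paragraph, you should cite \cite[Proposition 3.10(ii)]{Suz} explicitly rather than the vaguer ``subHopf-algebra analysis plus Lemma~\ref{3.10}'', since that analysis alone does not bound the number of minimal solutions.
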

\begin{proof}
Since the universal $R$-matrix $R_{pq}^{K_8}$ of $\boldsymbol{k}[\langle g,h\rangle ]$ satisfies $\Delta^{\rm{cop}}(t)\cdot R_{pq}^{K_8}=R_{pq}^{K_8}\cdot \Delta (t)$, we see immediately that $R_{pq}^{K_8}$ is a universal $R$-matrix of $K_8$. 
It can be also verified straightforward that $R_{l}^{K_8}$ are indeed a universal $R$-matrix of $K_8$, although the proof is tedious. 
Since $K_8$ is isomorphic to the Suzuki's Hopf algebra $A_{12}^{+-}$ (for example see \cite{CDMM}), by \cite[Proposition 3.10(ii)]{Suz} the number of universal $R$-matrices of $K_8$ is $8$. 
Thus there is no universal $R$-matrix of $K_8$ other than $R_{pq}^{K_8}\ (p,q\in \{ 0,1\} )$ and $R_{l}^{K_8}\ (l=0,1,2,3)$. 
\end{proof}

\par \smallskip 
Let $\boldsymbol{k}$ be a field of $\textrm{ch}(\boldsymbol{k})\not= 2$ that contains a primitive $4$-th root of unity. 
Then for any of the algebras $\boldsymbol{k}[D_8], \boldsymbol{k}[Q_8], K_8$ the number of isomorphism classes of (absolutely) simple modules is $5$. 
They consist of four $1$-dimensional modules and one $2$-dimensional simple module. 
\par 
The $1$-dimensional modules of $\boldsymbol{k}[D_8],\ \boldsymbol{k}[Q_8]$ and $K_8$ are given by $V_{ij}=\boldsymbol{k}\ (i,j=0,1)$ equipped with the left actions $\rho_{ij}$ defined by 
$$\rho_{ij}(s)=(-1)^i,\qquad \rho_{ij}(t)=(-1)^j.$$
For both $\boldsymbol{k}[D_8]$ and  $K_8$ a $2$-dimensional simple module, which is unique up to isomorphism, is given by $V=\boldsymbol{k}\oplus \boldsymbol{k}$ equipped with the left action $\rho$ defined by 
$$\rho (s)=\begin{pmatrix} 0 & -1 \\ 1 & 0 \end{pmatrix},\quad 
\rho (t)=\begin{pmatrix} -1 & 0 \\ 0 & 1 \end{pmatrix}, $$
and for $\boldsymbol{k}[Q_8]$ that is given by $V=\boldsymbol{k}\oplus \boldsymbol{k}$ equipped with the left action $\rho$  defined by 
$$\rho (s)=\begin{pmatrix} \zeta  & 0 \\ 0 & \zeta ^{-1} \end{pmatrix},\quad 
\rho (t)=\begin{pmatrix} 0 & -1 \\ 1 & 0 \end{pmatrix},$$
where $\zeta$ is a primitive $4$-th roof of unity in $\boldsymbol{k}$. 
\par 
Suppose that $\boldsymbol{k}$ contains a primitive $8$-th root of unity $\omega $. Then by the above two lemmas we have $P_{\boldsymbol{k}[D_8], V_{ij}}(x)=P_{\boldsymbol{k}[Q_8], V_{ij}}(x)=(x-1)^8,\ P_{K_8, V_{ij}}(x)=(x+(-1)^i)^8$ and 
\begin{align*}
P_{\boldsymbol{k}[D_8], V}(x)&=(x-1)^3(x+1)^3(x-\zeta )(x+\zeta )=x^8-2x^6+2x^2-1, \\ 
P_{\boldsymbol{k}[Q_8], V}(x)&=(x-1)(x+1)(x-\zeta )^3(x+\zeta )^3=x^8+2x^6-2x^2-1, \\ 
P_{K_8, V}(x)&=\prod\limits_{p,q=0,1}(x-(-1)^p)\cdot \prod\limits_{l=0}^3(x-\omega ^{2l-1})=x^8-2x^6+2x^4-2x^2+1.   
\end{align*}
Therefore, the polynomial invariants of $\boldsymbol{k}[D_8], \boldsymbol{k}[Q_8], K_8$ are given by 
\begin{align*}
P_{\boldsymbol{k}[D_8]}^{(1)}(x)=P_{\boldsymbol{k}[Q_8]}^{(1)}(x)=&(x-1)^{32},\qquad P_{K_8}^{(1)}(x)=(x-1)^{16}(x+1)^{16}, \displaybreak[1] \\ 
P_{\boldsymbol{k}[D_8]}^{(2)}(x)&=x^8-2x^6+2x^2-1, \\ 
P_{\boldsymbol{k}[Q_8]}^{(2)}(x)&=x^8+2x^6-2x^2-1, \\ 
P_{K_8}^{(2)}(x)&=x^8-2x^6+2x^4-2x^2+1. 
\end{align*}
Since polynomials $P_{\boldsymbol{k}[D_8]}^{(2)}(x),\ P_{\boldsymbol{k}[Q_8]}^{(2)}(x),\ P_{K_8}^{(2)}(x)$ are mutually distinct, we conclude that the Hopf algebras $\boldsymbol{k}[D_8], \boldsymbol{k}[Q_8], K_8$ are not mutually monoidally Morita equivalent by Theorem~\ref{2.5}. 

\par \medskip 
\subsection{The Hopf algebra $A_{Nn}^{\nu \lambda }$} 
Satoshi Suzuki  introduced a family of cosemisimple Hopf algebras of finite dimension parametrized by $\nu , \lambda , N, n$, where $\nu , \lambda =\pm1 $, and $N\geq 1$ and $n\geq 2$ are integers. 
This family includes not only the Kac-Paljutkin algebra $K_8$, but also Hopf algebras which can be regarded as a generalization of $K_8$.  
In this subsection we compute the polynomial invariants for Suzuki's Hopf algebras. 
\par 
Let us recall the definition of Suzuki's Hopf algebras $A_{Nn}^{\nu \lambda }$  \cite{Suz}. 
Let $\boldsymbol{k}$ be a field of $\textrm{ch}(\boldsymbol{k})\not= 2$, which contains a primitive $4nN$-th root of unity, and let $C$ be the $2\times 2$-matrix coalgebra over $\boldsymbol{k}$. 
By definition $C$ has a basis $\{ X_{11},X_{12},X_{21},X_{22}\} $ 
which satisfies the equation 
$$\Delta (X_{ij})=X_{i1}\otimes X_{1j}+X_{i2}\otimes X_{2j},\qquad \varepsilon (X_{ij})=\delta _{ij}.$$
Since $C$ is a coalgebra, the tensor algebra $\mathcal{T}(C)$ of $C$ has  a bialgebra structure in a natural way. 
Let $I$ be the  coideal of $\mathcal{T}(C)$ defined by 
$$I=\boldsymbol{k}(X_{11}^2-X_{22}^2)+\boldsymbol{k}(X_{12}^2-X_{21}^2)+\sum\limits_{i-j\not\equiv l-m\ (\textrm{mod}\ 2)}\boldsymbol{k}(X_{ij}X_{lm}),$$
and consider the quotient bialgebra $B:=\mathcal{T}(C)/\langle I\rangle $. 
We write $x_{ij}$ for the image of  $X_{ij}$  under the natural projection $\mathcal{T}(C)\longrightarrow B$.  
We fix $N\geq 1,\ n\geq 2$ and $\nu ,\lambda =\pm 1$, 
and consider the following subspace  $J_{Nn}^{\nu \lambda }$ of $B$: 
\begin{equation}
\begin{aligned}
J_{Nn}^{\nu \lambda }:=&\boldsymbol{k}(x_{11}^{2N}+\nu x_{12}^{2N}-1) \notag \\ 
+&\boldsymbol{k}(\underbrace{x_{11}x_{22}x_{11}\ldots \ldots }_{\textrm{$n$ }}-\underbrace{x_{22}x_{11}x_{22}\ldots \ldots }_{\textrm{$n$}})+\boldsymbol{k}(-\lambda \underbrace{x_{12}x_{21}x_{12}\ldots \ldots }_{\textrm{$n$}}+\underbrace{x_{21}x_{12}x_{21}\ldots \ldots }_{\textrm{$n$}}). \notag 
\end{aligned}
\end{equation}
Here, 
$$\underbrace{x_{11}x_{22}x_{11}\ldots \ldots }_{\textrm{$n$}}=\begin{cases} (x_{11}x_{22})^{\frac{n}{2}} \ \ & \textrm{if \ $n$ is even}, \\ 
(x_{11}x_{22})^{\frac{n-1}{2}}x_{11} \ \ & \textrm{if \ $n$ is odd}, \end{cases}  $$ 
and the other notation has the same meaning of that, too. 
Since the subspace $J_{Nn}^{\nu \lambda }$ is a coideal of $B$, we obtain the quotient bialgebra $A_{Nn}^{\nu \lambda }:=B/\langle J_{Nn}^{\nu \lambda }\rangle $. 
This bialgebra $A_{Nn}^{\nu \lambda }$ becomes a $4nN$-dimensional cosemisimple Hopf algebra over $\boldsymbol{k}$.  
For the image of  $x_{ij}$  under the natural projection  $\pi :B\longrightarrow A_{Nn}^{\nu \lambda }$ we write $x_{ij}$, again.  
Then $A_{Nn}^{\nu \lambda }$ equips with the basis 
\begin{equation}\label{eq5.1}
\{ x_{11}^s\overbrace{x_{22}x_{11}x_{22}\ldots \ldots }^t,\ x_{12}^s\overbrace{x_{21}x_{12}x_{21}\ldots \ldots }^t\ \vert \ 1\leq s\leq 2N,\ 0\leq t\leq n-1\}  ,
\end{equation}
and the Hopf algebra structure is given by 
$$\Delta (x_{ij})=x_{i1}\otimes x_{1j}+x_{i2}\otimes x_{2j}, \quad 
\varepsilon (x_{ij})=\delta _{ij}, \quad 
S(x_{ij})=x_{ji}^{2N-1}. $$
If $\textrm{ch}(\boldsymbol{k})\kern-0.2em \nmid 2nN $, then $A_{Nn}^{\nu \lambda }$ is semisimple \cite[Theorem 3.1(viii)]{Suz}. 
Moreover, $A_{12}^{+-}$ is isomorphic to the Kac-Paljutkin algebra $K_8$, and the Hopf algebras $A_{1n}^{++}$ and $A_{1n}^{+-}$ are isomorphic to the Hopf algebras $\mathcal{A}_{4n}$ and $\mathcal{B}_{4n}$, respectively, that are introduced by Masuoka \cite{Mas2, CDMM}. 
The following lemma, which is a direct consequence of the definition of  $A_{Nn}^{\nu \lambda }$, is useful for computation.  

\par \smallskip 
\begin{lem}\label{5.1}
For each $i,j=1,2$,  the square $x_{ij}^2$ is in the center of $A_{Nn}^{\nu \lambda }$, and the following equations hold in $A_{Nn}^{\nu \lambda }$. 
\par 
(1) $x_{11}^2=x_{22}^2,\  x_{12}^2=x_{21}^2,\  x_{ij}x_{lm}=0 \ (i-j\not\equiv l-m\ (\textrm{mod}\ 2))$. 
\par 
(2) If $n$ is even, then 
$(x_{22}x_{11})^{\frac{n}{2}}= (x_{11}x_{22})^{\frac{n}{2}}$, $(x_{21}x_{12})^{\frac{n}{2}}=\lambda (x_{12}x_{21})^{\frac{n}{2}}$. 
\par 
If $n$ is odd, then 
$(x_{22}x_{11})^{\frac{n-1}{2}}x_{22}=(x_{11}x_{22})^{\frac{n-1}{2}}x_{11}$, $(x_{21}x_{12})^{\frac{n-1}{2}}x_{21}= \lambda (x_{12}x_{21})^{\frac{n-1}{2}}x_{12}$.  
\par 
(3) $x_{ii}^{2N+1}=x_{ii},\ x_{i\kern0.2em i+1}^{2N+1}=\nu x_{i\kern0.2em i+1}$.
\par 
(4) $x_{11}^{4N}+x_{12}^{4N}=1$. 
\par 
(5) $(x_{11}x_{22})^n=x_{11}^{2n},\ (x_{21}x_{12})^n=\lambda x_{12}^{2n}$.   \end{lem}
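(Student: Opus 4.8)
The plan is to trace every assertion back to the defining relations of $B$ and of the coideal $J_{Nn}^{\nu\lambda}$, so that each part becomes a one- or two-line computation inside $A_{Nn}^{\nu\lambda}=B/\langle J_{Nn}^{\nu\lambda}\rangle$.

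First I would record the relations available by construction. From the ideal $\langle I\rangle$ defining $B$ we already have $x_{11}^2=x_{22}^2$, $x_{12}^2=x_{21}^2$ and the parity-vanishing relations $x_{ij}x_{lm}=0$ whenever $i-j\not\equiv l-m\ (\mathrm{mod}\ 2)$; these are exactly statement (1), and they descend to $A_{Nn}^{\nu\lambda}$. From the coideal $J_{Nn}^{\nu\lambda}$ we get in $A_{Nn}^{\nu\lambda}$ the scalar relation $x_{11}^{2N}+\nu x_{12}^{2N}=1$ together with $\underbrace{x_{11}x_{22}x_{11}\cdots}_{n}=\underbrace{x_{22}x_{11}x_{22}\cdots}_{n}$ and $\underbrace{x_{21}x_{12}x_{21}\cdots}_{n}=\lambda\,\underbrace{x_{12}x_{21}x_{12}\cdots}_{n}$; reading off the two parity cases of $n$ from the notational conventions stated just before the lemma turns the last two into precisely the identities of (2). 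For the centrality of $x_{ij}^2$, I would check that $x_{11}^2$ (and hence $x_{22}^2$) commutes with each of the four generators: trivially with $x_{11}$, with $x_{22}$ because $x_{11}^2x_{22}=x_{22}^2x_{22}=x_{22}x_{11}^2$, and it annihilates $x_{12}$ and $x_{21}$ from both sides (for instance $x_{11}^2x_{12}=x_{11}(x_{11}x_{12})=0$ and $x_{12}x_{11}^2=(x_{12}x_{11})x_{11}=0$) by (1); the argument for $x_{12}^2=x_{21}^2$ is the same with the even and odd indices exchanged.

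For (3) I would multiply the relation $x_{11}^{2N}+\nu x_{12}^{2N}=1$ on the right successively by $x_{11}$, $x_{22}$, $x_{12}$, $x_{21}$. In each case one of the two summands vanishes by a parity relation (for instance $x_{12}^{2N}x_{11}=x_{12}^{2N-1}(x_{12}x_{11})=0$ and $x_{11}^{2N}x_{12}=x_{11}^{2N-1}(x_{11}x_{12})=0$), while the surviving summand simplifies via $x_{11}^2=x_{22}^2$ or $x_{12}^2=x_{21}^2$; using $\nu^{-1}=\nu$ this yields the four identities $x_{11}^{2N+1}=x_{11}$, $x_{22}^{2N+1}=x_{22}$, $x_{12}^{2N+1}=\nu x_{12}$, $x_{21}^{2N+1}=\nu x_{21}$ of (3). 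Part (4) is then immediate: from (3), $x_{11}^{4N}=x_{11}^{2N-1}x_{11}^{2N+1}=x_{11}^{2N}$ and $x_{12}^{4N}=x_{12}^{2N-1}x_{12}^{2N+1}=\nu x_{12}^{2N}$, so $x_{11}^{4N}+x_{12}^{4N}=x_{11}^{2N}+\nu x_{12}^{2N}=1$.

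For (5) I would argue by induction on $n$: using the relevant identity of (2) one folds $(x_{11}x_{22})^n$ into a product of two shorter alternating words meeting in a repeated letter $x_{11}$, whose square is the central element $x_{11}^2$ and can be pulled out, after which zipping up the successive adjacent equal letters --- each squaring to the central $x_{11}^2=x_{22}^2$ --- collapses the word to $x_{11}^{2n}$. The identical computation for $(x_{21}x_{12})^n$ differs only in that the identity of (2) for the odd indices carries the scalar $\lambda$, which is accumulated exactly once, giving $(x_{21}x_{12})^n=\lambda x_{12}^{2n}$. The whole argument is elementary; the only step that needs care is this bookkeeping in (5) --- making the fold-and-zip reduction of the alternating word precise and checking that $\lambda$ enters with multiplicity exactly one.
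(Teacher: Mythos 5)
Your proof is correct: the paper states Lemma 5.1 without proof, calling it ``a direct consequence of the definition,'' and your write-up supplies exactly that verification --- (1) and (2) are literal readings of the ideals $\langle I\rangle$ and $\langle J_{Nn}^{\nu\lambda}\rangle$, centrality and (3)--(4) follow from the parity-vanishing relations and $\nu^2=1$, and the fold-and-zip reduction in (5) is sound (each of the $n$ central squares pulled out equals $x_{11}^2$ resp.\ $x_{12}^2$, and $\lambda$ is introduced exactly once when replacing one of the two half-words via (2)). No gaps.
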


\par \smallskip 
To describe the simple right $A_{Nn}^{\nu \lambda }$-comodules, we use the following notations. 
For $m\geq 1$, we set 
$$\chi _{11}^m:=\overbrace{x_{11}x_{22}x_{11}\ldots\ldots }^{\textrm{$m$ }}\ ,\ \chi _{22}^m:=\overbrace{x_{22}x_{11}x_{22}\ldots\ldots }^{\textrm{$m$ }},$$
$$\chi _{12}^m:=\overbrace{x_{12}x_{21}x_{12}\ldots\ldots }^{\textrm{$m$ }}\ ,\ \chi _{21}^m:=\overbrace{x_{21}x_{12}x_{21}\ldots\ldots }^{\textrm{$m$ }}.$$
Then the basis (\ref{eq5.1}) of $A_{Nn}^{\nu \lambda}$ can be represented by 
\begin{equation}\label{eq5.2}
\{ \ x_{11}^s\chi _{22}^t,\ x_{12}^s\chi _{21}^t\ \vert \ 1\leq s\leq 2N,\ 0\leq t\leq n-1\ \}  ,
\end{equation}
and the following equations hold: 
\begin{equation}\label{eq5.3}
\Delta (\chi_{ij}^m)=\chi_{i1}^m\otimes \chi_{1j}^m+\chi_{i2}^m\otimes \chi_{2j}^m\qquad (m\geq 1,\ i,j=1,2).
\end{equation}
Thus for $s,t\geq 0$ with $s+t\geq 1$, 
\begin{align*}
\Delta (x_{11}^s\chi _{22}^t)
&=x_{11}^s\chi _{22}^t\otimes x_{11}^s\chi _{22}^t
+x_{12}^s\chi _{21}^t\otimes x_{21}^s\chi _{12}^t,\\ 
\Delta (x_{12}^s\chi _{21}^t)
&=x_{11}^s\chi _{22}^t\otimes x_{12}^s\chi _{21}^t
+x_{12}^s\chi _{21}^t\otimes x_{22}^s\chi _{11}^t. 
\end{align*}
Furthermore,  
\begin{align*}
S(x_{11}^s\chi_{22}^t)
&=\begin{cases}
x_{11}^{(2N-2)(t+s)+s}\chi_{11}^t \quad (\textrm{$s, t$ are even}), \\ 
x_{11}^{(2N-2)(t+s)+s}\chi_{22}^t \quad (\textrm{$s$ is odd, and $t$ is even}) , \\ x_{11}^{(2N-2)(t+s)+s}\chi_{22}^t \quad (\textrm{$s$ is even, and $t$ is odd}), \\ 
x_{22}^{(2N-2)(t+s)+s}\chi_{11}^t \quad (\textrm{$s,t$ are odd}), 
\end{cases} \displaybreak[0] \\ 
S(x_{12}^s\chi_{21}^t)
&=\begin{cases}
x_{12}^{(2N-2)(t+s)+s}\chi_{21}^t \quad (\textrm{$s,t$ are even}), \\ 
x_{21}^{(2N-2)(t+s)+s}\chi_{12}^t \quad (\textrm{$s$ is odd, and $t$ is even}), \\ x_{12}^{(2N-2)(t+s)+s}\chi_{12}^t \quad (\textrm{$s$ is even, and $t$ is odd}), \\ 
x_{12}^{(2N-2)(t+s)+s}\chi_{21}^t \quad (\textrm{$s,t$ are odd}). 
\end{cases} 
\end{align*}

\par 
Hereafter to the end of this subsection we suppose that $N\geq 1,\ n\geq 2$ and $\nu ,\lambda =\pm 1$, and $\boldsymbol{k}$ is a field of $\textrm{ch}(\boldsymbol{k})\not= 2$, which contains a primitive $4nN$-th root of unity. 
The following proposition was proved by S. Suzuki. 

\par 
\begin{prop}[{\cite[Theorem 3.1]{Suz}}]\label{5.2}\ 
\par 
(1) The dimension of a simple subcoalgebra of $A_{Nn}^{\nu \lambda }$ is $1$ or $4$. 
\par 
(2) The order of the group $G:=G(A_{Nn}^{\nu \lambda })$ is $4N$, and $G$ is explicitly given by 
$$G=\{ \ 
x_{11}^{2s}\pm x_{12}^{2s},\ 
x_{11}^{2s+1}\chi _{22}^{n-1}\pm \sqrt{\lambda}x_{12}^{2s+1}\chi_{21}^{n-1}\ \vert \ 1\leq s\leq N\ \}.$$
\par 
(3) There are exactly $N(n-1)$ simple subcoalgebras of dimension $4$, and they are given by 
$$C_{st}=\boldsymbol{k}x_{11}^{2s}\chi_{11}^t+\boldsymbol{k}x_{12}^{2s}\chi_{12}^t+\boldsymbol{k}x_{12}^{2s}\chi_{21}^t+\boldsymbol{k}x_{11}^{2s}\chi_{22}^t \quad (0\leq s\leq N-1,\ 1\leq t\leq n-1).$$
Therefore, the cosemisimple Hopf algebra $A_{Nn}^{\nu \lambda }$ is decomposed to the direct sum of  simple subcoalgebras such as 
$$A_{Nn}^{\nu \lambda }=\bigoplus\limits_{g\in G}\boldsymbol{k}g\ \ \oplus \bigoplus\limits_{\substack{0\leq s\leq N-1\\[0.1cm] 1\leq t\leq n-1}}C_{st}.$$
 \end{prop}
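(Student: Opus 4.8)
The plan is to prove all three parts at once by exhibiting the decomposition in (3) explicitly and then reading off (1) and (2) by a dimension count. Recall that since $A:=A_{Nn}^{\nu\lambda}$ is cosemisimple it is the direct sum of all of its simple subcoalgebras, that a $1$-dimensional simple subcoalgebra is exactly $\boldsymbol{k}g$ for a group-like element $g$, and that distinct simple subcoalgebras appear only once in this sum. Hence it is enough to produce a family of simple subcoalgebras --- some $1$-dimensional of the form $\boldsymbol{k}g$ with $g$ as in (2) and the rest $4$-dimensional of the form $C_{st}$ --- whose dimensions sum to $\dim A=4nN$; then this family is the complete list of simple subcoalgebras of $A$, which forces (1), $\sharp G=4N$ with $G$ exactly as described, and exactly $N(n-1)$ four-dimensional ones.

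First I would check that the $4N$ elements listed in (2) are group-like. Each has the shape $v\pm w$ where, for the first family, $v=x_{11}^{2s}$ and $w=x_{12}^{2s}$, and for the second family $v=x_{11}^{2s+1}\chi_{22}^{n-1}$ and $w=\sqrt{\lambda}\,x_{12}^{2s+1}\chi_{21}^{n-1}$. Using the displayed coproduct formulas for the basis elements of~(\ref{eq5.2}) together with $x_{21}^{2}=x_{12}^{2}$, $x_{22}^{2}=x_{11}^{2}$ (central, by Lemma~\ref{5.1}(1)), one gets for the first family $\Delta v=v\otimes v+w\otimes w$ and $\Delta w=v\otimes w+w\otimes v$, so $\Delta(v\pm w)=(v\pm w)\otimes(v\pm w)$ because $\textrm{ch}(\boldsymbol{k})\neq 2$; the second family is handled the same way, where the collapses $\chi_{21}^{n}=\lambda\chi_{12}^{n}$ and $\chi_{22}^{n}=\chi_{11}^{n}$ from Lemma~\ref{5.1}(2),(5) together with the peeling identities $\chi_{11}^{m}=x_{11}\chi_{22}^{m-1}$, $\chi_{12}^{m}=x_{12}\chi_{21}^{m-1}$ (immediate from the definitions) are exactly what makes the factor $\sqrt{\lambda}$ close the computation up. The counit condition $\varepsilon=1$ is immediate. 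Reducing exponents via $x_{ii}^{2N+1}=x_{ii}$, $x_{i\,i+1}^{2N+1}=\nu x_{i\,i+1}$ (Lemma~\ref{5.1}(3)), each pair $\{v,w\}$ becomes a pair of distinct elements of the basis~(\ref{eq5.2}), and distinct group-likes give disjoint pairs, so $\sum_{g}\boldsymbol{k}g$ has dimension $4N$.

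Next I would verify that each $C_{st}$ ($0\le s\le N-1$, $1\le t\le n-1$) is a subcoalgebra isomorphic to the $2\times 2$-matrix coalgebra. Put $Y_{11}=x_{11}^{2s}\chi_{11}^{t}$, $Y_{12}=x_{12}^{2s}\chi_{12}^{t}$, $Y_{21}=x_{12}^{2s}\chi_{21}^{t}$, $Y_{22}=x_{11}^{2s}\chi_{22}^{t}$. Rewriting $Y_{11}=x_{11}^{2s+1}\chi_{22}^{t-1}$ and $Y_{12}=x_{12}^{2s+1}\chi_{21}^{t-1}$ (and, when $s=0$, noting $Y_{ij}=\chi_{ij}^{t}$ so that~(\ref{eq5.3}) applies directly), one computes $\Delta(Y_{ij})=\sum_{k}Y_{ik}\otimes Y_{kj}$ and $\varepsilon(Y_{ij})=\delta_{ij}$ from the displayed formulas, the centrality of the $x_{ij}^{2}$ and the vanishing relations $x_{11}x_{12}=x_{12}x_{11}=0$ etc. of Lemma~\ref{5.1}(1) being precisely what removes the unwanted cross terms. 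The four $Y_{ij}$ are, up to nonzero scalars, distinct elements of the basis~(\ref{eq5.2}) (again after reducing exponents when $s=0$, using $x_{11}^{2N}\chi_{22}^{t}=\chi_{22}^{t}$ and $x_{12}^{2N}\chi_{21}^{t}=\nu\chi_{21}^{t}$), so $C_{st}$ is $4$-dimensional, and the matrix-coalgebra form of its coproduct identifies it with the linear dual of $M_{2}(\boldsymbol{k})$, hence it is cosimple; moreover $C_{st}$ and $C_{s't'}$ are spanned by disjoint sets of basis vectors when $(s,t)\neq(s',t')$.

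Finally I would match basis vectors: the $4N$ basis elements occurring in the $g$'s of (2), together with the $4N(n-1)$ elements $Y_{ij}$ ranging over all $C_{st}$, exhaust the basis~(\ref{eq5.2}), each exactly once. Hence $\bigl(\bigoplus_{g}\boldsymbol{k}g\bigr)\oplus\bigl(\bigoplus_{s,t}C_{st}\bigr)$ is a direct sum of simple subcoalgebras of total dimension $4N+4N(n-1)=4nN=\dim A$, so it equals $A$ and is the complete decomposition of $A$ into simple subcoalgebras; parts (1), $\sharp G=4N$ with $G$ as in (2), and the count $N(n-1)$ in (3) follow at once. I expect the main obstacle to be the index bookkeeping in the boundary cases --- reducing exponents in $\chi_{ij}^{m}$ when $m$ reaches $n$ and in $x_{ij}^{s}$ when $s$ reaches $2N+1$, and treating $s=0$ in $C_{0t}$ and $s=N$ in the two families of group-likes uniformly with the generic cases --- together with keeping track of the scalars $\nu$ and $\sqrt{\lambda}$ that these reductions produce, which are exactly the source of the asymmetry between the two halves of $A$ visible in the formulas of (2) and (3). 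Everything else is a routine check from Lemma~\ref{5.1} and the coproduct formulas preceding the proposition.
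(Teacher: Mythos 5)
The paper offers no proof of this proposition at all: it is stated as a quotation of Suzuki's Theorem~3.1 (the citation in the proposition's bracket), so there is nothing internal to compare your argument against. Your blind proof is nevertheless correct and self-contained, and it is the natural argument: exhibit the candidate simple subcoalgebras, check each is simple, and let the dimension count $4N+4\cdot N(n-1)=4nN=\dim A_{Nn}^{\nu\lambda}$ together with the uniqueness of the decomposition of a cosemisimple coalgebra into its simple subcoalgebras do the rest. I verified the key computations: for the first family of group-likes, $\Delta(x_{11}^{2s})=x_{11}^{2s}\otimes x_{11}^{2s}+x_{12}^{2s}\otimes x_{12}^{2s}$ follows from the displayed coproduct and $x_{21}^{2}=x_{12}^{2}$, $x_{22}^{2}=x_{11}^{2}$; for the second family the cross term is $x_{12}^{2s+1}\chi_{21}^{n-1}\otimes x_{21}^{2s+1}\chi_{12}^{n-1}$ and $x_{21}^{2s+1}\chi_{12}^{n-1}=x_{12}^{2s}\chi_{21}^{n}=\lambda\,x_{12}^{2s+1}\chi_{21}^{n-1}$, which is exactly absorbed by the normalization $w=\sqrt{\lambda}\,x_{12}^{2s+1}\chi_{21}^{n-1}$; and the four elements $Y_{ij}$ of $C_{st}$ do satisfy $\Delta(Y_{ij})=\sum_k Y_{ik}\otimes Y_{kj}$, $\varepsilon(Y_{ij})=\delta_{ij}$, and are (up to the scalars $1$ or $\nu$ arising from $x_{11}^{2N}\chi_{22}^t=\chi_{22}^t$, $x_{12}^{2N}\chi_{21}^t=\nu\chi_{21}^t$ at $s=0$) four distinct members of the basis~(\ref{eq5.2}). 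The supports of the group-likes (even $x$-exponent with $\chi$-degree $0$, odd $x$-exponent with $\chi$-degree $n-1$) and of the $C_{st}$ (even $x$-exponent with $\chi$-degree $1,\dots,n-1$, odd $x$-exponent with $\chi$-degree $0,\dots,n-2$) are pairwise disjoint and exhaust the basis, so the count closes. The only point deserving an explicit sentence in a written-up version is the reduction at $s=N$ in the second family, where $x_{11}^{2N+1}=x_{11}$ but $x_{12}^{2N+1}=\nu x_{12}$ by Lemma~\ref{5.1}(3); when $\nu=-1$ this swaps the two signs within that pair of group-likes but leaves the set $G$ unchanged, which is what makes the stated list well defined for both values of $\nu$. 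You flagged this bookkeeping as the main hazard, and indeed it is the only place where the argument is not purely mechanical.
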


\par 
\begin{cor}\label{5.3}
The set 
$$\{ \ \boldsymbol{k}g\ \vert \ g\in G(A_{Nn}^{\nu \lambda })\ \} 
\cup \{ \ \boldsymbol{k}x_{11}^{2s}\chi_{22}^t+\boldsymbol{k}x_{12}^{2s}\chi_{21}^t  \ \vert \ 0\leq s\leq N-1,\ 1\leq t\leq n-1\ \} $$
is a full set of non-isomorphic (absolutely) simple right $A_{Nn}^{\nu \lambda }$-comodules, where the coactions of the comodules listed above are given by  restrictions of the coproduct $\Delta$. 
\end{cor}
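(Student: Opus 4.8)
The plan is to reduce the statement to the standard correspondence, valid over any cosemisimple coalgebra $C$, between isomorphism classes of simple right $C$-comodules and simple subcoalgebras of $C$: a simple right $C$-comodule $M$ is finite-dimensional, its coefficient coalgebra $C_M$ is a simple subcoalgebra of $C$, and $[M]\mapsto C_M$ is a bijection onto the set of simple subcoalgebras of $C$. By Proposition~\ref{5.2} the simple subcoalgebras of $A_{Nn}^{\nu\lambda}$ are the $4N$ lines $\boldsymbol{k}g$ with $g\in G=G(A_{Nn}^{\nu\lambda})$ together with the $N(n-1)$ four-dimensional coalgebras $C_{st}$ ($0\le s\le N-1$, $1\le t\le n-1$); so $A_{Nn}^{\nu\lambda}$ has exactly $4N+N(n-1)$ isomorphism classes of simple right comodules, and it suffices to exhibit that many pairwise non-isomorphic \emph{absolutely} simple right comodules, all with coaction the restriction of $\Delta$, among the subcomodules of $A_{Nn}^{\nu\lambda}$.

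For the one-dimensional part: for $g\in G$ the line $\boldsymbol{k}g$ is a subcoalgebra, hence a right subcomodule of $A_{Nn}^{\nu\lambda}$ with coaction $g\mapsto g\otimes g$; being one-dimensional it is absolutely simple, and its coefficient coalgebra is $\boldsymbol{k}g$, so distinct group-likes give non-isomorphic comodules.

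For each $(s,t)$ I would show that $W_{st}:=\boldsymbol{k}x_{11}^{2s}\chi_{22}^t+\boldsymbol{k}x_{12}^{2s}\chi_{21}^t$ is a two-dimensional absolutely simple right subcomodule of $C_{st}$. Substituting $s\mapsto 2s$ in the coproduct formulas for $\Delta(x_{11}^{s}\chi_{22}^{t})$ and $\Delta(x_{12}^{s}\chi_{21}^{t})$ displayed just before Proposition~\ref{5.2} (using~(\ref{eq5.3}) when $s=0$) and simplifying with the relations $x_{11}^2=x_{22}^2$, $x_{12}^2=x_{21}^2$ of Lemma~\ref{5.1}(1), one gets
\begin{align*}
\Delta(x_{11}^{2s}\chi_{22}^t)&=x_{11}^{2s}\chi_{22}^t\otimes x_{11}^{2s}\chi_{22}^t+x_{12}^{2s}\chi_{21}^t\otimes x_{12}^{2s}\chi_{12}^t,\\
\Delta(x_{12}^{2s}\chi_{21}^t)&=x_{11}^{2s}\chi_{22}^t\otimes x_{12}^{2s}\chi_{21}^t+x_{12}^{2s}\chi_{21}^t\otimes x_{11}^{2s}\chi_{11}^t;
\end{align*}
both right-hand sides lie in $W_{st}\otimes A_{Nn}^{\nu\lambda}$, so $W_{st}$ is a right subcomodule, and by Proposition~\ref{5.2}(3) the vectors $x_{11}^{2s}\chi_{22}^t$ and $x_{12}^{2s}\chi_{21}^t$ are linearly independent, so $\dim W_{st}=2$. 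The coefficient coalgebra of $W_{st}$ is a nonzero subcoalgebra of $C_{st}$, which is simple by Proposition~\ref{5.2}(3), hence equals $C_{st}$; since the simple coalgebra $C_{st}$ has dimension $4=2^2$ and carries a two-dimensional comodule, it is split, i.e.\ $C_{st}\otimes K\cong M_2^{c}(K)$ for every field extension $K/\boldsymbol{k}$, so $W_{st}^{K}$ stays simple and $W_{st}$ is absolutely simple. (Equivalently, the two displayed identities together with their analogues for the remaining two basis vectors realize $C_{st}$ as the comatrix coalgebra $M_2^{c}(\boldsymbol{k})$, of which $W_{st}$ is a row.) It follows that $W_{st}\cong W_{s't'}$ iff $C_{st}=C_{s't'}$ iff $(s,t)=(s',t')$, and no $W_{st}$ is isomorphic to any $\boldsymbol{k}g$ for dimension reasons.

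Putting these together, the $4N$ comodules $\boldsymbol{k}g$ and the $N(n-1)$ comodules $W_{st}$ form $4N+N(n-1)$ pairwise non-isomorphic absolutely simple right $A_{Nn}^{\nu\lambda}$-comodules with coaction the restriction of $\Delta$, which by the count above is a full set; this is precisely the asserted list. The proof is essentially formal once Proposition~\ref{5.2} is available: the only genuine computation is the one-step derivation of the two displayed coproduct identities from the formulas preceding Proposition~\ref{5.2} via Lemma~\ref{5.1}(1), and the only non-elementary ingredient is the comodule/subcoalgebra dictionary over a cosemisimple coalgebra. I expect the minor bookkeeping with that dictionary, and the verification of the split-simplicity of $C_{st}$, to be the only points requiring any care.
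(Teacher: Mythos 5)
Your proposal is correct and follows essentially the same route as the paper: both reduce the problem via cosemisimplicity to picking one simple comodule per simple subcoalgebra as classified in Proposition~\ref{5.2}, and both verify that $W_{st}=V_{st}$ is a subcomodule using the same coproduct identities together with $x_{21}^{2s}=x_{12}^{2s}$, $x_{22}^{2s}=x_{11}^{2s}$. The only divergence is at the last step: the paper checks simplicity of $V_{st}$ directly by showing no non-zero $v$ satisfies $\Delta(v)\in\boldsymbol{k}v\otimes C_{st}$, whereas you argue via the coefficient coalgebra being the full simple $4$-dimensional $C_{st}$ and its forced splitness, which has the mild advantage of making the \emph{absolute} simplicity explicit.
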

\begin{proof}
Since $A_{Nn}^{\nu \lambda }$ is cosemisimple, 
a full set of non-isomorphic simple right $A_{Nn}^{\nu \lambda }$-comodules can be obtained by taking a simple right $D$-comodule for each simple subcoalgebra $D$ of $A_{Nn}^{\nu \lambda }$ and by collecting them. 
\par 
Let $D$ be a simple subcoalgebra of $A_{Nn}^{\nu \lambda }$. 
If $\dim D=1$, then $D=\boldsymbol{k}g$ for some $g\in G(A_{Nn}^{\nu \lambda })$. 
So, we can take $D$ itself as a simple right $D$-comodule. 
If $\dim D=4$, then by Proposition~\ref{5.2} (3), 
$D=C_{st}$ for some $s\in \{0,1,\ldots , N-1 \}$ and $t\in \{ 1,\ldots , n-1 \}$.  
Let us consider the subspace 
$$V_{st}:=\boldsymbol{k}x_{11}^{2s}\chi_{22}^t+\boldsymbol{k}x_{12}^{2s}\chi_{21}^t$$
of $D$. 
Then the subspace $V_{st}$ is a right $D$-comodule. 
For 
\begin{align*}
\Delta (x_{11}^{2s}\chi_{22}^t)
&=x_{11}^{2s}\chi_{22}^t\otimes x_{11}^{2s}\chi_{22}^t
+x_{12}^{2s}\chi_{21}^t\otimes x_{21}^{2s}\chi_{12}^t, \\ 
\Delta (x_{12}^{2s}\chi_{21}^t)
&=x_{11}^{2s}\chi_{22}^t\otimes x_{12}^{2s}\chi_{21}^t
+x_{12}^{2s}\chi _{21}^t\otimes x_{22}^{2s}\chi_{11}^t,  
\end{align*}
and $x_{21}^{2s}\chi_{12}^t=x_{12}^{2s}\chi_{12}^t,\ x_{22}^{2s}\chi_{11}^t =x_{11}^{2s}\chi_{11}^t\in C_{st}$. 
Furthermore, the right $D$-comodule $V_{st}$ is simple, since 
there is no non-zero element $v\in V_{st}$ such that $\Delta (v)\in \boldsymbol{k}v\otimes C_{st}$. 
\end{proof}

\par \bigskip 
Let us explain on the braiding structures of $A_{Nn}^{\nu \lambda }$, which is determined by S. Suzuki~\cite{Suz}. 
Suppose that $\alpha ,\beta \in \boldsymbol{k}$ satisfy $(\alpha \beta )^N=\nu ,\  (\alpha \beta ^{-1})^n=\lambda $. 
Then there is a braiding $\sigma _{\alpha \beta }$ of $A_{Nn}^{\nu \lambda }$ such that the values $\sigma _{\alpha \beta }(x_{ij},x_{kl})\ (i,j,k,l=1,2)$ are  given by the list below. 

\begin{center}
\begin{tabular}{c|cccc}
$x\diagdown y$ & $x_{11}$ & $x_{12}$ & $x_{21}$ & $x_{22}$ \\
\hline 
 $x_{11}$  & $0$ & $0$ & $0$ & $0$ \\
 $x_{12}$  & $0$ & $\alpha $ & $\beta $ & $0$ \\
 $x_{21}$  & $0$ & $\beta $ & $\alpha $ & $0$ \\
 $x_{22}$  & $0$ & $0$ & $0$ & $0$ \\ 
\end{tabular}
\end{center}

In fact, by using the braiding conditions (B2) and (B3) described in Section 4, repeatedly, we see that the values of $\sigma _{\alpha \beta }$ on the basis (\ref{eq5.2}) are given as follows: for integers $s, s', t, t'\geq 0$ with $s+t\geq 1$ and $s^{\prime}+t^{\prime}\geq 1$, and $j^{\prime}=1,2$, 
\begin{align*}
&\sigma _{\alpha \beta }(x_{11}^s\chi _{22}^t,\ x_{1j'}^{s'}\chi _{2,j'+1}^{t'}) \\ 
&\qquad =\begin{cases}
\delta _{0,t'}\delta _{1+j',t}(\alpha \beta )^{\frac{t's+(s+t)s'}{2}}\beta ^{tt'} & \textrm{($s$ and $s'$ are even),} \\ 
\delta _{1,t'}\delta _{1+j',t}(\alpha \beta )^{\frac{t's+(s+t)s'-t}{2}}\alpha ^{t(t'+1)} & \textrm{($s$ is even, and $s'$ is odd),} \\ 
\delta _{0,t'}\delta _{j',t}(\alpha \beta )^{\frac{t's+(s+t)s'-t'}{2}}\alpha ^{(t+1)t'} & \textrm{($s$ is odd, and $s'$ is even),} \\ 
\delta _{1,t'}\delta _{j',t}(\alpha \beta )^{\frac{t's+(s+t)s'-(1+t+t')}{2}}\beta ^{(t+1)(t'+1)} 
& \textrm{($s$ and $s'$ are odd)},  
\end{cases} \displaybreak[0] \\ 
&\sigma _{\alpha \beta }(x_{12}^s\chi _{21}^t,\ x_{1j'}^{s'}\chi _{2,j'+1}^{t'}) \\ 
&\qquad =\begin{cases}
\delta _{1,t'}\delta _{1+j',t}(\alpha \beta )^{\frac{t's+(s+t)s'}{2}} \alpha ^{tt'} & \textrm{($s$ and $s'$ are even),} \\ 
\delta _{0,t'}\delta _{1+j',t}(\alpha \beta )^{\frac{t's+(s+t)s'-t}{2}} \beta ^{t(t'+1)}  & \textrm{($s$ is even, and $s'$ is odd),} \\ 
\delta _{1,t'}\delta _{j',t}(\alpha \beta )^{\frac{t's+(s+t)s'-t'}{2}}\beta ^{(t+1)t'} & \textrm{($s$ is odd, and $s'$ is even),} \\ 
\delta _{0,t'}\delta _{j',t}(\alpha \beta )^{\frac{t's+(s+t)s'-(1+t+t')}{2}} \alpha ^{(t+1)(t'+1)} & \textrm{($s$ and $s'$ are odd)},  
\end{cases}
\end{align*}
where the indices of $\chi$ and $\delta $ are treated as modulo $2$.  
\par 
When $n=2$, in addition to the above braidings $\sigma _{\alpha \beta }$, 
for $\gamma , \xi \in \boldsymbol{k}$ which satisfy $\gamma ^2=\xi ^2,\ \gamma ^{2N}=1$ there is a braiding $\tau _{\gamma \xi }$ of $A_{N2}^{\nu \lambda }$ such that the values $\tau _{\gamma \xi }(x_{ij},x_{kl})\ (i,j,k,l=1,2)$ are given by the list below, where $x_{ij}$ and $x_{kl}$ correspond to a row and a column, respectively. 

\begin{center}
\begin{tabular}{c|cccc}
$x\diagdown y$ & $x_{11}$ & $x_{12}$ & $x_{21}$ & $x_{22}$ \\
\hline 
 $x_{11}$  & $\gamma $ & $0$ & $0$ & $\xi $ \\
 $x_{12}$  & $0$ & $0$ & $0$ & $0$ \\
 $x_{21}$  & $0$ & $0$ & $0$ & $0$ \\
 $x_{22}$  & $\lambda \xi $ & $0$ & $0$ & $\gamma $ \\ 
\end{tabular}
\end{center} 

We see also that the values of $\tau _{\gamma \xi }$ on the basis (\ref{eq5.2}) are given as follows: 
for integers $s, s', t, t'\geq 0$ with $s+t\geq 1$ and $s^{\prime}+t^{\prime}\geq 1$, and $j^{\prime}=1,2$, 
\begin{align*}
&\tau _{\gamma \xi }(x_{11}^s\chi _{22}^t,\ x_{1j'}^{s'}\chi _{2,j'+1}^{t'}) \\ 
&\quad =\begin{cases}
\delta _{j^{\prime}1}\gamma ^{ss^{\prime}}
(\gamma \xi )^{\frac{tt^{\prime}}{4}+\frac{st^{\prime}}{2}}
(\gamma \lambda \xi )^{\frac{tt^{\prime}}{4}+\frac{ts^{\prime}}{2}}
& (\textrm{$t$ and $t^{\prime}$ are even}),\\
\delta _{j^{\prime}1}\gamma ^{ss^{\prime}}
(\lambda \xi )^{s^{\prime}}
(\gamma \xi )^{\frac{(t-1)t^{\prime}}{4}+\frac{st^{\prime}}{2}}
(\gamma \lambda \xi )^{\frac{(t+1)t^{\prime}}{4}+\frac{(t-1)s^{\prime}}{2}} 
& (\textrm{$t$ is odd, and $t^{\prime}$ is even}),\\
\delta _{j^{\prime}1}\gamma ^{ss^{\prime}}\xi ^s
(\gamma \xi )^{\frac{t(t^{\prime}+1)}{4}+\frac{s(t^{\prime}-1)}{2}}
(\gamma \lambda \xi )^{\frac{t(t^{\prime}-1)}{4}+\frac{ts^{\prime}}{2}} 
& (\textrm{$t$ is even, and $t^{\prime}$ is odd}),\\
\delta _{j^{\prime}1}\gamma ^{ss^{\prime}+1} \xi ^s
(\lambda \xi)^{s^{\prime}}
(\gamma \xi )^{\frac{(t-1)(t^{\prime}+1)}{4}+\frac{s(t^{\prime}-1)}{2}}
(\gamma \lambda \xi )^{\frac{(t+1)(t^{\prime}-1)}{4}+\frac{(t-1)s^{\prime}}{2}}
& (\textrm{$t$ and $t^{\prime}$ are odd}), 
\end{cases}\\ 
&\tau _{\gamma \xi }(x_{12}^s\chi _{21}^t,\ x_{1j'}^{s'}\chi _{2,j'+1}^{t'})
=0, 
\end{align*}
where the indices of $\chi$ and $\delta $ are treated as modulo $2$.  

\par \smallskip 
\begin{thm}[{\cite[Proposition 3.10]{Suz}}]\label{5.5}
If $n\geq 3$, then the braidings of $A_{Nn}^{\nu \lambda }$ are given by
$$\{ \ \sigma _{\alpha \beta }\ \vert \ \alpha , \beta \in \boldsymbol{k},\ (\alpha \beta )^N=\nu ,\ (\alpha \beta ^{-1})^n=\lambda \ \},  $$
and if $n=2$, then that are given by
$$\{ \ \sigma _{\alpha \beta }\ \vert \ \alpha , \beta \in \boldsymbol{k},\ (\alpha \beta )^N=\nu ,\ (\alpha \beta ^{-1})^2=\lambda \ \} \cup \{ \ \tau _{\gamma \xi }\ \vert \ \gamma , \xi \in \boldsymbol{k},\ \gamma ^2=\xi ^2,\ \gamma ^{2N}=1\ \} . 
$$
 \end{thm}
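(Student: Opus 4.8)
The plan is to exploit the algebra presentation $A:=A_{Nn}^{\nu \lambda }=B/\langle J_{Nn}^{\nu \lambda }\rangle $ with $B=\mathcal{T}(C)/\langle I\rangle $ and $C$ the $2\times 2$ matrix coalgebra, together with the braiding axioms (B1)--(B4). Since $A$ is generated as an algebra by the subcoalgebra $\boldsymbol{k}x_{11}+\boldsymbol{k}x_{12}+\boldsymbol{k}x_{21}+\boldsymbol{k}x_{22}$, and since (B2), (B3) determine $\sigma $ on any product once it is known on shorter words, a braiding $\sigma $ of $A$ is completely fixed by the sixteen scalars $\sigma (x_{ij},x_{kl})$; in fact the formulas displayed above for $\sigma _{\alpha \beta }$ and $\tau _{\gamma \xi }$ on the basis~(\ref{eq5.2}) are exactly what iterating (B2), (B3) (and (\ref{eq5.3})) produces from the two $4\times 4$ tables. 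So the problem splits into: (i) decide which assignments of the sixteen scalars extend to a braiding of $A$; (ii) check that each surviving assignment really gives one.

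For (i) I would extend the data on $C\otimes C$ to $\mathcal{T}(C)$ by the rule forced by (B2) and (B3) --- unambiguous on a free algebra --- and then ask when it descends to $B$ and to $A$. As $I$ and $J_{Nn}^{\nu \lambda }$ are coideals, this amounts to requiring $\sigma (r,x_{kl})=0$ and $\sigma (x_{ij},r)=0$ for every defining relation $r$ of Lemma~\ref{5.1} and every generator. The relations $x_{ij}x_{lm}=0$ for $i-j\not\equiv l-m\ (\mathrm{mod}\ 2)$ are decisive: pushed through (B2), (B3) they give a system of quadratic equations whose only solutions confine the support of $\sigma |_{C\otimes C}$ to one of the two parity blocks $(\boldsymbol{k}x_{12}+\boldsymbol{k}x_{21})^{\otimes 2}$ or $(\boldsymbol{k}x_{11}+\boldsymbol{k}x_{22})^{\otimes 2}$, and on that block to the two-parameter shape of the tables, the internal symmetries ($\sigma (x_{12},x_{12})=\sigma (x_{21},x_{21})$ and $\sigma (x_{12},x_{21})=\sigma (x_{21},x_{12})$ for the first block, $\tau (x_{22},x_{11})=\lambda \kern0.1em \tau (x_{11},x_{22})$ and $\tau (x_{11},x_{11})=\tau (x_{22},x_{22})$ for the second) being forced by (B1). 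Then feeding in $x_{ii}^{2N+1}=x_{ii}$, $x_{i\,i+1}^{2N+1}=\nu x_{i\,i+1}$ (Lemma~\ref{5.1}(3)) and the order-$n$ relations (Lemma~\ref{5.1}(2),(5)) yields precisely $(\alpha \beta )^N=\nu $, $(\alpha \beta ^{-1})^n=\lambda $ for the first family and $\gamma ^{2N}=1$, $\gamma ^2=\xi ^2$ for the second; the same computation shows the diagonal-supported family survives the order-$n$ relations only when $n=2$, which accounts for the case split in the statement.

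For (ii), given scalars meeting those conditions, one lets $\sigma $ be the extension of the corresponding table by (B2), (B3); this descends to $A$ exactly because of the constraints of the previous paragraph, so $\sigma $ satisfies (B2), (B3) by construction and its values on~(\ref{eq5.2}) are the displayed ones. Convolution-invertibility is then immediate, the Drinfel'd functional of Lemma~\ref{4.2} being visibly invertible once the exponent conditions hold. What is left is the quasi-cocommutativity axiom (B1). I would verify it first on the sixteen generator pairs, i.e.\ check $\sum \sigma ((x_{ij})_{(1)},(x_{kl})_{(1)})(x_{ij})_{(2)}(x_{kl})_{(2)}=\sum \sigma ((x_{ij})_{(2)},(x_{kl})_{(2)})(x_{kl})_{(1)}(x_{ij})_{(1)}$ using $\Delta (x_{ij})=x_{i1}\otimes x_{1j}+x_{i2}\otimes x_{2j}$ and the tables, and then propagate to all pairs of basis elements by the standard fact that, given (B2), (B3), (B4), the set of elements satisfying (B1) against everything is a subalgebra (and symmetrically in the second argument). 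The laborious part of the whole proof --- and the step I expect to be the main obstacle --- is carrying out this propagation explicitly against the basis~(\ref{eq5.2}): it forces a case analysis on the parities of the exponents $s,t$ in $x_{11}^s\chi _{22}^t$ and $x_{12}^s\chi _{21}^t$, and the four-case form of the displayed formulas for $\sigma _{\alpha \beta }$ and $\tau _{\gamma \xi }$ is precisely the shadow of that bookkeeping.
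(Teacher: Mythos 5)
The paper gives no proof of Theorem~\ref{5.5}: it is imported verbatim from Suzuki \cite[Proposition 3.10]{Suz}, so there is no in-paper argument to measure your proposal against; I can only assess it on its own terms, where it is the natural strategy and, as far as I can tell, essentially the one followed in the cited source. Your reduction framework is sound: by (B2)--(B4) a braiding is determined by its restriction to the generating subcoalgebra $\boldsymbol{k}x_{11}+\boldsymbol{k}x_{12}+\boldsymbol{k}x_{21}+\boldsymbol{k}x_{22}$; descent from $\mathcal{T}(C)$ reduces, via (B2), (B3) and the coideal property, to finitely many scalar equations; and (B1) need only be checked on generator pairs because the set of elements satisfying (B1) against everything is a subalgebra in each argument. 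Two corrections, though. First, the necessary-and-sufficient descent condition is that $\sigma$ annihilate the coideals $I$ and $J_{Nn}^{\nu\lambda}$ themselves (against all of $\mathcal{T}(C)$, which the coideal property reduces to generators); the identities of Lemma~\ref{5.1} are consequences holding in $A$ and yield only necessary constraints, so for the sufficiency half you must return to the actual coideals. Second, once the subalgebra argument for (B1) is invoked, no ``propagation against the basis (\ref{eq5.2})'' is needed to verify the braiding axioms --- the four-case formulas on that basis are needed only downstream (Lemma~\ref{5.8}) to evaluate the Drinfel'd functional of Lemma~\ref{4.2} --- so the step you identify as the main obstacle is not actually part of the verification. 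What your sketch asserts but does not carry out is the decisive computation: the exhaustive solution of the quadratic system on the sixteen scalars showing that the support is confined to exactly one of the two parity blocks with the stated internal symmetries and parameter conditions, that no other solutions exist, and that the diagonal block survives the order-$n$ relations only when $n=2$. That case analysis is the real content of \cite[Proposition 3.10]{Suz} and cannot be waved through, but your reduction to it is correct.
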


\par \smallskip 
\begin{lem}\label{5.8}
\par 
(1) For $\alpha , \beta \in \boldsymbol{k}$ satisfying $(\alpha \beta )^N=\nu ,\ (\alpha \beta ^{-1})^n=\lambda$,  
the Drinfel'd element $\mu _{\alpha \beta }$ of the 
braided Hopf algebra $(A_{Nn}^{\nu \lambda }, \sigma _{\alpha \beta })$ is given as follows: for all integers $s, t\geq 0$ with $s+t\geq 1$, 
\begin{align*}
\mu _{\alpha \beta }(x_{11}^s\chi_{22}^t)
&=\begin{cases}
\nu ^t(\alpha \beta )^{\frac{-s^2}{2}-st-t^2}\alpha ^{t^2} &\quad (\textrm{$s$ is even}),\\ 
\nu ^{t+1}
(\alpha \beta )^{\frac{-1-s^2}{2}-t-st-t^2}\alpha ^{(t+1)^2}
&\quad (\textrm{$s$ is odd}), 
\end{cases} \\ 
\mu _{\alpha \beta }(x_{12}^s\chi_{21}^t)
&=0. 
\end{align*}
(2) For $\gamma , \xi \in \boldsymbol{k}$ satisfying $\gamma ^2=\xi ^2,\ \gamma ^{2N}=1$, the Drinfel'd element 
$\mu _{\gamma \xi }$ of the braided Hopf algebra $(A_{N2}^{\nu \lambda }, \tau _{\gamma \xi })$ is given as follows: for all integers $s, t\geq 0$ with $s+t\geq 1$, 
\begin{align*}
\mu _{\gamma \xi }(x_{11}^s\chi_{22}^t)
&=\begin{cases}
\gamma ^{-s^2-2st-t^2}
\lambda ^{\frac{t^2}{4}} & \quad (\textrm{$s$ and $t$ are even}),\\ 
\gamma ^{-s^2-2st-t^2}
\lambda ^{\frac{t^2-1}{4}}
 & \quad (\textrm{$t$ is odd, and $s$ is even}), \\ 
\gamma ^{-s^2-2st-t^2}
\lambda ^{\frac{t(t+2)}{4}}  & \quad (\textrm{$t$ is even, and $s$ is odd}), \\ 
\gamma ^{-s^2-2st-t^2}
\lambda ^{\frac{(t+1)^2}{4}}  & \quad (\textrm{$s$ and $t$ are odd}),
\end{cases}\\ 
\mu _{\gamma \xi }(x_{12}^s\chi_{21}^t)
&=0 .
\end{align*}
\end{lem}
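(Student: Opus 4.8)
The plan is to read off $\mu_{\alpha\beta}$ and $\mu_{\gamma\xi}$ directly from the formula for the dual Drinfel'd element in Lemma~\ref{4.2} (equivalently Lemma~\ref{4.4}(2)): for $a\in A_{Nn}^{\nu\lambda}$,
$$\mu(a)=\sum\sigma(a_{(2)},S(a_{(1)})).$$
So the whole computation reduces to evaluating the right-hand side on the two families of basis vectors $x_{11}^s\chi_{22}^t$ and $x_{12}^s\chi_{21}^t$ of (\ref{eq5.2}), using the coproduct and antipode formulas recorded just before the statement together with the tabulated values of $\sigma_{\alpha\beta}$ (resp. $\tau_{\gamma\xi}$) on that basis.

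For $a=x_{11}^s\chi_{22}^t$ the displayed coproduct gives
$$\mu_{\alpha\beta}(x_{11}^s\chi_{22}^t)=\sigma_{\alpha\beta}(x_{11}^s\chi_{22}^t,S(x_{11}^s\chi_{22}^t))+\sigma_{\alpha\beta}(x_{21}^s\chi_{12}^t,S(x_{12}^s\chi_{21}^t)),$$
and for $a=x_{12}^s\chi_{21}^t$,
$$\mu_{\alpha\beta}(x_{12}^s\chi_{21}^t)=\sigma_{\alpha\beta}(x_{12}^s\chi_{21}^t,S(x_{11}^s\chi_{22}^t))+\sigma_{\alpha\beta}(x_{22}^s\chi_{11}^t,S(x_{12}^s\chi_{21}^t)).$$
I would now split into the four parity classes of $(s,t)$, substitute the matching value of $S(x_{11}^s\chi_{22}^t)$ resp. $S(x_{12}^s\chi_{21}^t)$ from the antipode list, and normalize each argument into the form $x_{1j'}^{s'}\chi_{2,j'+1}^{t'}$ demanded by the tables. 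For this normalization I would use Lemma~\ref{5.1}: centrality of the squares $x_{ij}^2$, the peeling identities $\chi_{11}^m=x_{11}\chi_{22}^{m-1}$, $\chi_{22}^m=x_{22}\chi_{11}^{m-1}$, $\chi_{12}^m=x_{12}\chi_{21}^{m-1}$, $\chi_{21}^m=x_{21}\chi_{12}^{m-1}$, together with $x_{11}^2=x_{22}^2$, $x_{12}^2=x_{21}^2$, $x_{ii}^{2N+1}=x_{ii}$ and $x_{i\,i+1}^{2N+1}=\nu x_{i\,i+1}$, in order to bring the large powers of $x_{11}$ produced by $S$ (the exponent $(2N-2)(t+s)+s$) into admissible range. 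Then I read off $\sigma_{\alpha\beta}$ from the relevant line of the table and collapse the leftover exponents via $(\alpha\beta)^N=\nu$ and $(\alpha\beta^{-1})^n=\lambda$; the stray factors $\nu^t$, $\nu^{t+1}$ and $\alpha^{t^2}$, $\alpha^{(t+1)^2}$ in the asserted formula come out of exactly this last reduction. For the vanishing $\mu_{\alpha\beta}(x_{12}^s\chi_{21}^t)=0$ one checks case by case that the Kronecker deltas $\delta_{0,t'}$, $\delta_{1,t'}$, $\delta_{j',t}$, $\delta_{1+j',t}$ in the $\sigma_{\alpha\beta}$-formulas are incompatible with the $\chi$-exponents produced by $S$, so each summand is already $0$ (in the remaining subcases the two summands cancel).

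Part (2) is the same argument run with $\tau_{\gamma\xi}$ in place of $\sigma_{\alpha\beta}$. Here $n=2$, so the $\chi$-exponents $t,t'$ only take the values $0,1$ and the case analysis is shorter; the final simplification uses $\gamma^{2N}=1$ and $\gamma^2=\xi^2$. Moreover the table shows $\tau_{\gamma\xi}(x_{12}^s\chi_{21}^t,-)=0$ identically on the basis, which makes $\mu_{\gamma\xi}(x_{12}^s\chi_{21}^t)=0$ immediate and also kills the second summand in the expression for $\mu_{\gamma\xi}(x_{11}^s\chi_{22}^t)$, leaving a single term $\tau_{\gamma\xi}(x_{11}^s\chi_{22}^t,S(x_{11}^s\chi_{22}^t))$ to evaluate and simplify.

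I expect the main obstacle to be purely bookkeeping: keeping the parity cases straight while repeatedly normalizing products such as $x_{22}^s\chi_{11}^t$ and $x_{21}^s\chi_{12}^t$, and reducing the exponents modulo the periods $2N$ and $2n$ so that the factors of $\nu$, $\lambda$, $\alpha$, $\beta$ (resp. $\gamma$, $\xi$) emerge exactly as stated. No conceptual difficulty is anticipated beyond this, since every structural input — the coproduct, the antipode, the braiding values, and the defining relations of $A_{Nn}^{\nu\lambda}$ — is already in hand.
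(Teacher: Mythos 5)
Your plan is exactly the paper's proof: it evaluates $\mu(a)=\sum\sigma(a_{(2)},S(a_{(1)}))$ on the basis (\ref{eq5.2}) via the displayed coproduct and antipode formulas, normalizes the resulting arguments into the form $x_{1j'}^{s'}\chi_{2,j'+1}^{t'}$ using Lemma~\ref{5.1}, reads off the tabulated values of $\sigma_{\alpha\beta}$ and $\tau_{\gamma\xi}$, and reduces the exponents with $(\alpha\beta)^N=\nu$ and $(\alpha\beta^{-1})^n=\lambda$ --- which is precisely what the paper does, carrying out only the case where $s,t$ are even and $t\ge 2$ and leaving the other parity cases to the reader. The one small imprecision is the claim that $\tau_{\gamma\xi}(x_{12}^s\chi_{21}^t,-)=0$ makes $\mu_{\gamma\xi}(x_{12}^s\chi_{21}^t)=0$ ``immediate'': that observation kills only the first summand, while the second, $\tau_{\gamma\xi}(x_{22}^s\chi_{11}^t,\,S(x_{12}^s\chi_{21}^t))$, vanishes for a different reason, namely that its second argument has $j'\equiv 0$ so the factor $\delta_{j'1}$ in every line of the $\tau_{\gamma\xi}$-table is zero --- a detail your case-by-case execution would catch.
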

\begin{proof}
In the case where $s$ and $t$ are even, and $t\geq 2$, 
the values $\mu _{\alpha \beta }(a)$ and $\mu _{\gamma \xi }(a)$ for $a=x_{11}^s\chi_{22}^t,\ x_{12}^s\chi_{21}^t$ can be calculated as follows. 
\begin{align*}
\mu _{\alpha \beta }(x_{11}^s\chi_{22}^t)
&=\sigma _{\alpha \beta }(x_{11}^s\chi_{22}^t,\ x_{11}^{(2N-2)(s+t)+s+1}\chi_{22}^{t-1})
+\sigma _{\alpha \beta }(x_{12}^{s+1}\chi_{21}^{t-1},\ x_{12}^{(2N-2)(s+t)+s}\chi_{21}^t) \\ 
&=(\alpha \beta )^{\frac{(t-1)s+(s+t)((2N-2)(s+t)+s+1)-t}{2}}\alpha ^{t^2} \\ 
&=(\alpha \beta )^{(s+t)^2N}
(\alpha \beta )^{-\frac{s^2}{2}-st-t^2}\alpha ^{t^2} \\ 
&=(\alpha \beta )^{-\frac{s^2}{2}-st-t^2}\alpha ^{t^2}, \displaybreak[0] \\[0.1cm]  
\mu _{\alpha \beta }(x_{12}^s\chi_{21}^t)
&=\sigma _{\alpha \beta }(x_{12}^s\chi_{21}^t,\ x_{11}^{(2N-2)(s+t)+s+1}\chi_{22}^{t-1})
+\sigma _{\alpha \beta }(x_{11}^{s+1}\chi_{22}^{t-1},\ x_{12}^{(2N-2)(s+t)+s}\chi_{21}^t) \\ 
&=0, \displaybreak[0] \\[0.1cm]   
\mu _{\gamma \xi }(x_{11}^s\chi_{22}^t)
&=\tau _{\gamma \xi }(x_{11}^s\chi_{22}^t,\ x_{11}^{(2N-2)(s+t)+s+1}\chi_{22}^{t-1})
+\tau _{\gamma \xi }(x_{12}^{s+1}\chi_{21}^{t-1},\ x_{12}^{(2N-2)(s+t)+s}\chi_{21}^t) \\ 
&=\gamma ^{s((2N-2)(s+t)+s+1)}\xi ^s
(\gamma \xi )^{\frac{t^2}{4}+\frac{s(t-2)}{2}}
(\gamma \lambda \xi )^{\frac{t(t-2)}{4}+\frac{t((2N-2)(s+t)+s+1)}{2}} \\ 
&=\gamma ^{-2(s+t)^2+s^2+ts+\frac{t(t+s)}{2}}
\xi ^{\frac{t(t+s)}{2}}
\lambda ^{\frac{t^2}{4}}  \\ 
&=\gamma ^{(s+t)(-s-t)}\lambda ^{\frac{t^2}{4}}, \displaybreak[0] \\[0.1cm]   
\mu _{\gamma \xi }(x_{12}^s\chi_{21}^t)
&=\tau _{\gamma \xi }(x_{12}^s\chi_{21}^t,\ x_{11}^{(2N-2)(s+t)+s+1}\chi_{22}^{t-1})
+\tau _{\gamma \xi }(x_{11}^{s+1}\chi_{22}^{t-1},\ x_{12}^{(2N-2)(s+t)+s}\chi_{21}^t) \\ 
&=0.
\end{align*}

In other cases, the values $\mu _{\alpha \beta }(a)$ and $\mu _{\gamma \xi }(a)$ for $a=x_{11}^s\chi_{22}^t, x_{12}^s\chi_{21}^t$ can be  calculated by a similar manner, so we leave the rest of calculation to the reader. 
\end{proof}

\par \medskip 
By using  Lemma~\ref{5.8}, we know the braided dimensions of the simple right $A_{Nn}^{\nu \lambda }$-comodules. 

\par 
\begin{lem}\label{5.9}
\par 
(1) Let $\alpha , \beta $ be  elements in $\boldsymbol{k}$ satisfying $(\alpha \beta )^N=\nu $ and $ (\alpha \beta ^{-1})^n=\lambda$. 
\par 
(i) For an element $g\in G(A_{Nn}^{\nu \lambda })$ the character $\chi_g\in A_{Nn}^{\nu \lambda }$ of the simple right 
$A_{Nn}^{\nu \lambda }$-comodule $\boldsymbol{k}g$ is given by 
$\chi_g=g$, and the braided dimension $\underline{\dim}_{\kern0.1em \sigma _{\alpha \beta}} \boldsymbol{k}g $ with respect to the braiding $\sigma _{\alpha \beta}$ is given by 
$$\underline{\dim}_{\kern0.1em \sigma _{\alpha \beta}} \boldsymbol{k}g 
=\begin{cases}
(\alpha \beta )^{-2s^2} & (\textrm{$g=x_{11}^{2s}\pm x_{12}^{2s} \ (1\leq s\leq N)$}), \\[0.1cm]  
\nu ^{n}
(\alpha \beta )^{-2s^2-2sn-n^2}\alpha ^{n^2} & (\textrm{$g=
x_{11}^{2s+1}\chi _{22}^{n-1}\pm \sqrt{\lambda}x_{12}^{2s+1}\chi_{21}^{n-1}\ (1\leq s\leq N)$}).
\end{cases} 
$$
\indent 
(ii) For the simple  right $A_{Nn}^{\nu \lambda }$-comodule 
$V_{st}=\boldsymbol{k}x_{11}^{2s}\chi_{22}^t+\boldsymbol{k}x_{12}^{2s}\chi_{21}^t\ (0\leq s\leq N-1,\ 1\leq t\leq n-1)$, the character $\chi_{st}\in A_{Nn}^{\nu \lambda }$ of $V_{st}$ is given by $\chi_{st}=x_{11}^{2s+1}\chi_{22}^{t-1}+x_{11}^{2s}\chi_{22}^t$, and the braided dimension $\underline{\dim}_{\kern0.1em \sigma _{\alpha \beta}} V_{st}$ is given by 
$$\underline{\dim}_{\kern0.1em \sigma _{\alpha \beta}} V_{st}=2\nu ^{t}\alpha ^{t^2}(\alpha \beta )^{-2s^2-2st-t^2}.$$
(2) Let $\gamma , \xi $ be  elements in $\boldsymbol{k}$ such that $\gamma ^2=\xi ^2,\ \gamma ^{2N}=1$. 
\par 
(i) For an element $g\in G(A_{N2}^{\nu \lambda })$ the braided dimension $\underline{\dim}_{\kern0.1em \tau _{\gamma \xi}} \boldsymbol{k}g$ is given by 
$$\underline{\dim}_{\kern0.1em \tau _{\gamma \xi}} \boldsymbol{k}g 
=\begin{cases}
\gamma ^{-4s^2} & (\textrm{$g=x_{11}^{2s}\pm x_{12}^{2s} \ (1\leq s\leq N)$}),  \\[0.1cm]  
\gamma ^{-4(s+1)^2}\lambda 
 & (\textrm{$g=
x_{11}^{2s+1}\chi _{22}\pm \sqrt{\lambda}x_{12}^{2s+1}\chi_{21}\ (1\leq s\leq N)$}).
\end{cases} 
$$
\indent 
(ii) For the simple  right $A_{N2}^{\nu \lambda }$-comodule 
$V_{s1}=\boldsymbol{k}x_{11}^{2s}x_{22}+\boldsymbol{k}x_{12}^{2s}x_{21}\ (0\leq s\leq N-1)$ ,  the braided dimension $\underline{\dim}_{\kern0.1em \tau _{\gamma \xi}} V_{s1}$ is given by 
$$\underline{\dim}_{\kern0.1em \tau _{\gamma \xi}} V_{s1}=2\gamma ^{-(2s+1)^2}.$$  
\end{lem}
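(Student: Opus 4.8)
The plan is to compute every braided dimension in the statement from the single identity $\underline{\dim}_{\sigma}\,V=\mu(\chi_V)$ of Lemma~\ref{4.3}, combined with the explicit Drinfel'd functionals $\mu_{\alpha\beta}$ and $\mu_{\gamma\xi}$ produced in Lemma~\ref{5.8}. Thus the work naturally splits into two independent tasks: first, identify the characters $\chi_g$ and $\chi_{st}$ of the simple right comodules listed in Corollary~\ref{5.3}; second, evaluate $\mu_{\alpha\beta}$ (and, when $n=2$, $\mu_{\gamma\xi}$) on those characters and simplify.

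For the first task, a group-like $g\in G(A_{Nn}^{\nu\lambda})$ spans a one-dimensional subcomodule of $A_{Nn}^{\nu\lambda}$ whose coaction is the restriction of $\Delta$, namely $\Delta(g)=g\otimes g$; hence $\chi_g=g$. For the two-dimensional comodule $V_{st}=\boldsymbol{k}x_{11}^{2s}\chi_{22}^t+\boldsymbol{k}x_{12}^{2s}\chi_{21}^t$ I would read the matrix of the coaction off the two formulas for $\Delta(x_{11}^{2s}\chi_{22}^t)$ and $\Delta(x_{12}^{2s}\chi_{21}^t)$ already displayed in the proof of Corollary~\ref{5.3}: writing $\rho(v_j)=\sum_i v_i\otimes a_{ij}$ with $v_1=x_{11}^{2s}\chi_{22}^t$ and $v_2=x_{12}^{2s}\chi_{21}^t$, one gets $a_{11}=x_{11}^{2s}\chi_{22}^t$ and $a_{22}=x_{22}^{2s}\chi_{11}^t$, so $\chi_{st}=a_{11}+a_{22}$. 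Then $a_{22}$ is rewritten using Lemma~\ref{5.1}(1) ($x_{22}^{2s}=x_{11}^{2s}$) together with the factorization $\chi_{11}^t=x_{11}\chi_{22}^{t-1}$, which turns it into $x_{11}^{2s+1}\chi_{22}^{t-1}$; this yields $\chi_{st}=x_{11}^{2s}\chi_{22}^t+x_{11}^{2s+1}\chi_{22}^{t-1}$, as asserted in parts (1)(ii) and (2)(ii).

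For the second task I would simply substitute. Since Lemma~\ref{5.8} gives $\mu_{\alpha\beta}(x_{12}^s\chi_{21}^t)=\mu_{\gamma\xi}(x_{12}^s\chi_{21}^t)=0$, only the $x_{11}$-summand of each group-like $x_{11}^{2s}\pm x_{12}^{2s}$ and $x_{11}^{2s+1}\chi_{22}^{n-1}\pm\sqrt{\lambda}\,x_{12}^{2s+1}\chi_{21}^{n-1}$ contributes, and the claimed values of $\underline{\dim}_{\sigma_{\alpha\beta}}\boldsymbol{k}g$ and $\underline{\dim}_{\tau_{\gamma\xi}}\boldsymbol{k}g$ come straight out of the even/odd cases of Lemma~\ref{5.8} after expanding the exponents (for the second group-like one substitutes $s\mapsto 2s+1$, $t\mapsto n-1$). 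For $V_{st}$ one evaluates $\mu_{\alpha\beta}$ (resp.\ $\mu_{\gamma\xi}$ in the case $n=2$, $t=1$) on the two monomials $x_{11}^{2s}\chi_{22}^t$ and $x_{11}^{2s+1}\chi_{22}^{t-1}$: the first falls under the ``$s$ even'' case and the second under the ``$s$ odd'' case of Lemma~\ref{5.8}, and the decisive point is that after collecting the powers the two contributions are equal, which is exactly what produces the factor $2$ in the stated formulas.

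The computations are elementary, and the only genuine obstacle is the exponent bookkeeping needed to see that the ``even'' and ``odd'' contributions to $\mu(\chi_{st})$ literally coincide. Concretely, I expect the crux to be checking the identity $-\tfrac{(2s)^2}{2}-2st-t^2=\tfrac{-1-(2s+1)^2}{2}-(t-1)-(2s+1)(t-1)-(t-1)^2$ (both sides equal $-2s^2-2st-t^2$), and similarly that the accompanying powers of $\nu$ and $\alpha$ — and, in the $\tau_{\gamma\xi}$ case, of $\gamma$, $\xi$ and $\lambda$ — agree between the two cases. Once that is verified, everything else is routine substitution into Lemma~\ref{4.3}, Lemma~\ref{5.8} and Corollary~\ref{5.3}.
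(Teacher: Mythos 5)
Your proposal is correct and follows exactly the route the paper intends (the paper gives no written proof, only the remark that the lemma follows from Lemma~\ref{5.8}): identify the characters $\chi_g=g$ and $\chi_{st}=x_{11}^{2s}\chi_{22}^t+x_{11}^{2s+1}\chi_{22}^{t-1}$ from Corollary~\ref{5.3} and Lemma~\ref{5.1}, then apply $\underline{\dim}_{\sigma}V=\mu(\chi_V)$ from Lemma~\ref{4.3} with the values of $\mu_{\alpha\beta}$ and $\mu_{\gamma\xi}$ from Lemma~\ref{5.8}. Your exponent identity at the crux is right — both sides reduce to $-2s^2-2st-t^2$ — and the analogous check in the $\tau_{\gamma\xi}$ case gives $\gamma^{-(2s+1)^2}$ from both summands, so nothing is missing.
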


\par \smallskip 
\begin{prop}\label{5.10} 
Let $\omega \in \boldsymbol{k}$ be a primitive  $4nN$-th root of unity. 
\par 
(1) For each $g\in G(A_{Nn}^{\nu \lambda })$, the polynomial $P_{(A_{Nn}^{\nu \lambda })^{\ast}, \boldsymbol{k}g}(x)$ is given as follows. 
If $g=x_{11}^{2s}\pm x_{12}^{2s} \ (1\leq s\leq N)$, then 
$$P_{(A_{Nn}^{\nu \lambda })^{\ast}, \boldsymbol{k}g}(x)
=\begin{cases} 
\prod\limits_{i=0}^{N-1}
(x-\omega ^{-4n(2i+\frac{1-\nu }{2})s^2})^{2n} \quad & (n\geq 3), \vspace{0.1cm} \\  
\prod\limits_{i=0}^{N-1}
(x-\omega ^{-8(2i+\frac{1-\nu }{2})s^2})^{4} 
(x-\omega ^{-16is^2})^4 \quad & (n=2), \end{cases} $$
and if $g=x_{11}^{2s+1}\chi _{22}^{n-1}\pm \sqrt{\lambda}x_{12}^{2s+1}\chi_{21}^{n-1}\ (1\leq s\leq N)$, then 
$$P_{(A_{Nn}^{\nu \lambda })^{\ast}, \boldsymbol{k}g}(x)
=\begin{cases}
\prod\limits_{i=0}^{N-1}
(x^2-\omega ^{-2n(2s+n)^2(2i+\frac{1-\nu }{2})}
(-1)^{\frac{1-\lambda }{2}})^n \quad (\textrm{$n$ is odd}), \vspace{0.1cm} \\  
\prod\limits_{i=0}^{N-1}
(x-\omega ^{-n(2s+n)^2(2i+\frac{1-\nu }{2})}
(-1)^{\frac{n}{2}\frac{1-\lambda }{2}})^{2n} \quad (\textrm{$n\geq 4$ is even}), \vspace{0.1cm} \\  
\prod\limits_{i=0}^{N-1}
(x-\omega ^{-4(s+1)^2(2i+\frac{1-\nu }{2})}
(-1)^{\frac{1-\lambda }{2}})^{4} 
(x-\omega ^{-16i(s+1)^2}\lambda )^4 \ \ \ (n=2). 
\end{cases} $$
(2) For the simple right $A_{Nn}^{\nu \lambda }$-comodule $V_{st}=\boldsymbol{k}x_{11}^{2s}\chi_{22}^t+\boldsymbol{k}x_{12}^{2s}\chi_{21}^t\ (0\leq s\leq N-1,\ 1\leq t\leq n-1)$, the polynomial $P_{(A_{Nn}^{\nu \lambda })^{\ast}, V_{st}}(x)$ is given by 
$$P_{(A_{Nn}^{\nu \lambda })^{\ast}, V_{st}}(x)
=\begin{cases}
\prod\limits_{i=0}^{N-1}\prod\limits_{j=0}^{n-1}
(x^2-\omega ^{-2n(2s+t)^2(2i+\frac{1-\nu }{2})-2Nt^2(2j+\frac{\lambda -1}{2})}) \quad (\textrm{$n\geq 3$, and $t$ is odd}), \vspace{0.1cm} \\  
\prod\limits_{i=0}^{N-1}\prod\limits_{j=0}^{n-1}
(x-\omega ^{-n(2s+t)^2(2i+\frac{1-\nu }{2})
-Nt^2(2j+\frac{\lambda -1}{2})})^2
\quad (\textrm{$n\geq 3$, and $t$ is even}), \vspace{0.1cm} \\  
\prod\limits_{i=0}^{N-1}
(x^4-\omega ^{-8(2i+\frac{1-\nu }{2})(2s+1)^2+2N(1-\lambda)}) 
(x^2-\omega ^{-8i(2s+1)^2})^2  \ \ \ (n=2). 
\end{cases}$$
\end{prop}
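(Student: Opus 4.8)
The proof is an application of the comodule-side formula of Lemma~\ref{4.5}, combined with Suzuki's classification of the braidings (Theorem~\ref{5.5}) and the braided-dimension computations of Lemma~\ref{5.9}. To start, I would record that $A:=A_{Nn}^{\nu\lambda}$ is semisimple and cosemisimple: it is cosemisimple by construction, and since the presence of a primitive $4nN$-th root of unity in $\boldsymbol{k}$ forces $\textrm{ch}(\boldsymbol{k})\nmid 2nN$, the Hopf algebra $A$ is semisimple by \cite[Theorem~3.1(viii)]{Suz}. Hence for every absolutely simple right $A$-comodule $V$ in the list of Corollary~\ref{5.3} — the $\boldsymbol{k}g$ with $g\in G(A)$ and the $V_{st}$ — Lemma~\ref{4.5}(1) reduces the task to evaluating $\prod_{\sigma\in\underline{\textrm{braid}}(A)}\bigl(x-\frac{\underline{\dim}_{\sigma}V}{\dim V}\bigr)$, with $\dim\boldsymbol{k}g=1$ and $\dim V_{st}=2$.

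Next I would fix a uniform parametrization of $\underline{\textrm{braid}}(A)$ by the chosen root of unity $\omega$. Put $\varepsilon=\frac{1-\nu}{2}$ and $\eta=\frac{1-\lambda}{2}$; since $\omega$ has order $4nN$ one has $\omega^{2nN}=-1$, so the solutions of $(\alpha\beta)^{N}=\nu$ are exactly $\alpha\beta=\omega^{2n(2i+\varepsilon)}$ for $0\le i\le N-1$, those of $(\alpha\beta^{-1})^{n}=\lambda$ are $\alpha\beta^{-1}=\omega^{2N(2j+\eta)}$ for $0\le j\le n-1$, and then $\alpha^{2}=(\alpha\beta)(\alpha\beta^{-1})$ determines $\alpha$ up to sign, with $\beta=(\alpha\beta)/\alpha$. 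Thus each pair $(i,j)$ gives exactly two braidings $\sigma_{\alpha\beta}$; when $n=2$ one adjoins the $\tau_{\gamma\xi}$, for which $\gamma^{2}=\omega^{8i}$ ($0\le i\le N-1$), $\gamma=\pm\sqrt{\gamma^{2}}$, $\xi=\pm\gamma$. By Theorem~\ref{5.5} this list is complete and irredundant. I would then substitute these formulas, together with the expressions for $\underline{\dim}_{\sigma_{\alpha\beta}}$ and $\underline{\dim}_{\tau_{\gamma\xi}}$ from Lemma~\ref{5.9}, into the product, and reduce each exponent of $\omega$ modulo $4nN$; here the elementary identity $-2s^{2}-2st-t^{2}=-\tfrac12(2s+t)^{2}-\tfrac12 t^{2}$ (and its analogue with $t$ replaced by $n$) is precisely what turns $\alpha^{2}=(\alpha\beta)(\alpha\beta^{-1})$ into the quadratic exponents $(2s+t)^{2}$, $(2s+n)^{2}$ and $(s+1)^{2}$ appearing in the statement.

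The structural point is the behaviour under the two sign choices of $\alpha$ (and of $\gamma$). When the relevant power of $\alpha$ is even — $g$ of the first type, $V_{st}$ with $t$ even, $g$ of the second type with $n$ even — both signs give the same braided dimension and only contribute multiplicity, and counting the braidings that share a value ($2n$ of the $\sigma_{\alpha\beta}$ for each $i$, plus $4$ of the $\tau_{\gamma\xi}$ for each $i$ when $n=2$) produces the exponents $2n$ and $4$ in the statement. When that power is odd — $g$ of the second type with $n$ odd, $V_{st}$ with $t$ odd, and, for $\tau_{\gamma\xi}$ with $n=2$, the comodule $V_{s1}$ — the two signs give opposite braided dimensions, so the corresponding pair of linear factors collapses to a factor $x^{2}-c$ with $c$ the square of that braided dimension, and in the $n=2$, $t=1$ case a further pairing over $j$ (using $\omega^{2N(2+\eta)}=-\omega^{2N\eta}$) fuses two such quadratics into the displayed $x^{4}-c$. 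One also matches the $j$-indexed factors with those of the statement by the substitution $j\mapsto n-j$, legitimate because in the surviving exponents $\omega^{2Njt^{2}}$ (resp.\ $\omega^{4Njt^{2}}$) is invariant under $j\mapsto j+n$ when $t$ is even (resp.\ unconditionally); and in the second-type $g$ cases the factor $(\alpha\beta^{-1})^{n^{2}}$, resp.\ $(\alpha\beta^{-1})^{n^{2}/2}$, is outright independent of $j$, which is why those products come out as perfect $n$-th, resp.\ $2n$-th, powers.

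With this dictionary the remaining argument is a finite case analysis: for each of the three comodule families and each parity regime ($n\ge3$ versus $n=2$, and $t$ or $n$ even versus odd) one inserts Lemma~\ref{5.9}, squares the braided dimension where a sign-pairing forces it, reduces the $\omega$-exponents, and collects equal factors. I expect the only real labour — and the step most prone to slips — to be exactly this exponent arithmetic together with the bookkeeping of multiplicities; nothing conceptual beyond Lemmas~\ref{4.5}, \ref{5.8} and~\ref{5.9} and Theorem~\ref{5.5} is required.
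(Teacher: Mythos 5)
Your proposal is correct and follows essentially the same route as the paper: reduce via Lemma~\ref{4.5} to a product over $\underline{\textrm{braid}}(A_{Nn}^{\nu\lambda})$, parametrize $I_{\nu\lambda}$ (and $J$ when $n=2$) by powers of $\omega$ with a residual sign ambiguity in $\alpha$ (resp.\ $\gamma$), substitute the braided dimensions of Lemma~\ref{5.9}, and collect factors according to whether the relevant power of $\alpha$ is even (multiplicity) or odd (sign-pairing into $x^2-c$, with the extra $j$-pairing giving $x^4-c$ when $n=2$, $t=1$). Your organizing principle and the reindexing $j\mapsto n-j$ match the paper's computation exactly, so nothing further is needed.
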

\begin{proof}
The set $I_{\nu \lambda}=\{ \ (\alpha ,\beta )\in \boldsymbol{k}\times \boldsymbol{k}\ \vert \ (\alpha \beta )^N=\nu ,\ (\alpha \beta ^{-1})^n=\lambda \ \} $ can be expressed as 
\begin{align*}
I_{\nu \lambda}&=\Bigl\{ \ (\omega ^{n(2i+\frac{1-\nu }{2})+N(2j+\frac{1-\lambda }{2})},\ \omega ^{n(2i+\frac{1-\nu }{2})-N(2j+\frac{1-\lambda }{2})})\ \Bigl\vert \ \begin{matrix}\begin{array}{l} i=0,1,\ldots ,N-1,\\ j=0,1,\ldots ,n-1 \end{array} \end{matrix} \ \Bigr\} \\ 
& \quad \cup \Bigl\{ \ (-\omega ^{n(2i+\frac{1-\nu }{2})+N(2j+\frac{1-\lambda }{2})},\ -\omega ^{n(2i+\frac{1-\nu }{2})-N(2j+\frac{1-\lambda }{2})})\ \Bigl\vert \ \begin{matrix}\begin{array}{l} i=0,1,\ldots ,N-1,\\ j=0,1,\ldots ,n-1 \end{array} \end{matrix} \ \Bigr\} .   
\end{align*}
Because, for an element $(\alpha ,\beta )\in I_{\nu \lambda}$, $\alpha \beta $ and $\alpha \beta ^{-1}$ can be written as 
$$\begin{cases}
\alpha \beta =\omega ^{2n(2i+\frac{1-\nu }{2})} & \ \ \textrm{for some $i=0,1,\ldots ,N-1$},\\ 
\alpha \beta ^{-1}=\omega ^{2N(2j+\frac{1-\lambda}{2})} & \ \ \textrm{for some $j=0,1,\ldots ,n-1$}. 
\end{cases}$$
Similarly, we see that the set $J=\{ \ (\gamma , \xi )\in \boldsymbol{k}\times \boldsymbol{k}\ \vert \ \gamma ^2=\xi ^2,\ \gamma ^{2N}=1 \ \} $ 
can be expressed as 
$$J=\{ \ (\omega ^{4i},\ \omega ^{4i})\ \vert \ i=0,1,\ldots ,2N-1\ \} \cup \{ \ (\omega ^{4i},\ -\omega ^{4i})\ \vert \ i=0,1,\ldots ,2N-1\ \} .$$
We put $A=A_{Nn}^{\nu \lambda }$. 
\par 
(1) We consider the case of $g=x_{11}^{2s}\pm x_{12}^{2s}$. 
If $n\geq 3$, then  by Lemma~\ref{5.9}(1)(i)
\begin{align*}
P_{A^{\ast}, \boldsymbol{k}g}(x)
&=\prod\limits_{(\alpha ,\beta )\in I_{\nu \lambda}}
(x-(\alpha \beta )^{-2s^2}) \\ 
&=\prod\limits_{i=0}^{N-1}\prod\limits_{j=0}^{n-1}
(x-\omega ^{-4n(2i+\frac{1-\nu }{2})s^2})\cdot \prod\limits_{i=0}^{N-1}\prod\limits_{j=0}^{n-1}
(x-\omega ^{-4n(2i+\frac{1-\nu }{2})s^2}) \\ 
&=\prod\limits_{i=0}^{N-1}
(x-\omega ^{-4n(2i+\frac{1-\nu }{2})s^2})^{2n} , 
\end{align*}
and if $n=2$, then by Lemma~\ref{5.9}(2)(i) 
\begin{align*}
P_{A^{\ast}, \boldsymbol{k}g}(x)
&=\prod\limits_{(\alpha ,\beta )\in I_{\nu \lambda}}
(x-(\alpha \beta )^{-2s^2})
\cdot \prod\limits_{(\gamma ,\xi )\in J}
(x-\gamma ^{-4s^2}) \\ 
&=\prod\limits_{i=0}^{N-1}
(x-\omega ^{-8(2i+\frac{1-\nu }{2})s^2})^{4} 
\cdot 
\prod\limits_{i=0}^{2N-1}(x-\omega ^{-16is^2})^2 \\ 
&=\prod\limits_{i=0}^{N-1}
(x-\omega ^{-8(2i+\frac{1-\nu }{2})s^2})^{4} 
\cdot 
\prod\limits_{i=0}^{N-1}(x-\omega ^{-16is^2})^4. 
\end{align*}
Next we consider the case of $g=x_{11}^{2s+1}\chi _{22}^{n-1}\pm \sqrt{\lambda}x_{12}^{2s+1}\chi_{21}^{n-1}$. 
If $n\geq 3$, then by Lemma~\ref{5.9}(1)(i) 
\begin{align*}
P_{A^{\ast}, \boldsymbol{k}g}(x)
&=\prod\limits_{(\alpha ,\beta )\in I_{\nu \lambda}}
(x-\nu ^{n}(\alpha \beta )^{-2s^2-2sn-n^2}\alpha ^{n^2}) \displaybreak[0] \\ 
&=\prod\limits_{i=0}^{N-1}\prod\limits_{j=0}^{n-1}
(x-(-1)^{nj}\nu ^{n}\omega ^{-n(2i+\frac{1-\nu }{2})(2s+n)^2+n^2N\frac{1-\lambda }{2}})\\ 
&\quad \times \prod\limits_{i=0}^{N-1}\prod\limits_{j=0}^{n-1}
(x-(-1)^{n}(-1)^{nj}\nu ^n\omega ^{-n(2i+\frac{1-\nu }{2})(2s+n)^2+n^2N\frac{1-\lambda }{2}}) \\ 
&=
\begin{cases}
\prod\limits_{i=0}^{N-1}
(x^2-\omega ^{-2n(2s+n)^2(2i+\frac{1-\nu }{2})}
(-1)^{\frac{1-\lambda }{2}})^n\quad & (\textrm{$n$ is odd}), \vspace{0.1cm} \\ 
\prod\limits_{i=0}^{N-1}
(x-\omega ^{-n(2s+n)^2(2i+\frac{1-\nu }{2})}
(-1)^{\frac{n}{2}\frac{1-\lambda }{2}})^{2n} \quad & (\textrm{$n$ is even}), 
\end{cases} 
\end{align*}
and if $n=2$, then by Lemma~\ref{5.9}(2)(i) 
\begin{align*}
P_{A^{\ast}, \boldsymbol{k}g}(x)
&=\prod\limits_{(\alpha ,\beta )\in I_{\nu \lambda}}
(x-(\alpha \beta )^{-2s^2-4s-4}\alpha ^{4})
\cdot 
\prod\limits_{(\gamma ,\xi )\in J}
(x-\gamma ^{-4(s+1)^2}\lambda ) \\ 
&=\prod\limits_{i=0}^{N-1}
(x-\omega ^{-4(s+1)^2(2i+\frac{1-\nu }{2})}
(-1)^{\frac{1-\lambda }{2}})^{4} 
\cdot 
\prod\limits_{i=0}^{2N-1}
(x-\omega ^{-16i(s+1)^2}\lambda )^2 \\ 
&=\prod\limits_{i=0}^{N-1}
(x-\omega ^{-4(s+1)^2(2i+\frac{1-\nu }{2})}
(-1)^{\frac{1-\lambda }{2}})^{4} 
\cdot 
\prod\limits_{i=0}^{N-1}
(x-\omega ^{-16i(s+1)^2}\lambda )^4. 
\end{align*}
(2) If $n\geq 3$, then by Lemma~\ref{5.9}(1)(ii), 
\begin{align*}
P_{A^{\ast}, V_{st}}(x)
&=\prod\limits_{(\alpha ,\beta )\in I_{\nu \lambda}}
(x-\nu ^{t}\alpha ^{t^2}(\alpha \beta )^{-2s^2-2st-t^2}) \\ 
&=\prod\limits_{i=0}^{N-1}\prod\limits_{j=0}^{n-1}
(x-\nu ^{t}\omega ^{n(2i+\frac{1-\nu }{2})(-4s^2-4st-t^2)
+N(2j+\frac{1-\lambda }{2})t^2}) \\ 
&\quad \times \prod\limits_{i=0}^{N-1}\prod\limits_{j=0}^{n-1}
(x-(-1)^{t}\nu ^{t}\omega ^{n(2i+\frac{1-\nu }{2})(-4s^2-4st-t^2)
+N(2j+\frac{1-\lambda }{2})t^2}) \vspace{0.1cm}\\ 
&=\begin{cases}
\prod\limits_{i=0}^{N-1}\prod\limits_{j=0}^{n-1}
(x^2-\omega ^{-2n(2s+t)^2(2i+\frac{1-\nu }{2})+2Nt^2(2j+\frac{1-\lambda }{2})}) & \quad (\textrm{$t$ is odd}) \vspace{0.1cm}\\ 
\prod\limits_{i=0}^{N-1}\prod\limits_{j=0}^{n-1}
(x-\omega ^{-n(2s+t)^2(2i+\frac{1-\nu }{2})
+Nt^2(2j+\frac{1-\lambda }{2})})^2
 & \quad (\textrm{$t$ is even}) 
 \end{cases} \vspace{0.1cm}\\ 
&=\begin{cases}
\prod\limits_{i=0}^{N-1}\prod\limits_{j=0}^{n-1}
(x^2-\omega ^{-2n(2s+t)^2(2i+\frac{1-\nu }{2})+2Nt^2(2(n-j)+\frac{1-\lambda }{2})}) & \quad (\textrm{$t$ is odd}) \vspace{0.1cm}\\ 
\prod\limits_{i=0}^{N-1}\prod\limits_{j=0}^{n-1}
(x-\omega ^{-n(2s+t)^2(2i+\frac{1-\nu }{2})
+Nt^2(2(n-j)+\frac{1-\lambda }{2})})^2
 & \quad (\textrm{$t$ is even})  
 \end{cases} \displaybreak[0] \\ 
&=\begin{cases}
\prod\limits_{i=0}^{N-1}\prod\limits_{j=0}^{n-1}
(x^2-\omega ^{-2n(2s+t)^2(2i+\frac{1-\nu }{2})-2Nt^2(2j+\frac{\lambda -1}{2})}) & \quad (\textrm{$t$ is odd}) \vspace{0.1cm}\\ 
\prod\limits_{i=0}^{N-1}\prod\limits_{j=0}^{n-1}
(x-\omega ^{-n(2s+t)^2(2i+\frac{1-\nu }{2})
-Nt^2(2j+\frac{\lambda -1}{2})})^2
 & \quad (\textrm{$t$ is even}),  
 \end{cases}
\end{align*}
and if $n=2$, then by Lemma~\ref{5.9}(2)(ii), 
\begin{align*}
P_{A^{\ast}, V_{s1}}(x)
&=\prod\limits_{(\alpha ,\beta )\in I_{\nu \lambda}}
(x-\nu \alpha (\alpha \beta )^{-2s^2-2s-1}) 
\cdot 
\prod\limits_{(\gamma ,\xi )\in J}
(x-\gamma ^{-(2s+1)^2}) \\ 
&=\prod\limits_{i=0}^{N-1}\prod\limits_{j=0}^{1}
(x^2-\omega ^{-4(2i+\frac{1-\nu }{2})(2s+1)^2+2N(2j+\frac{1-\lambda }{2})})\cdot 
\prod\limits_{i=0}^{2N-1}(x-\omega ^{-4i(2s+1)^2})^2 \vspace{0.1cm}\\ 
&=\prod\limits_{i=0}^{N-1}
(x^4-\omega ^{-8(2i+\frac{1-\nu }{2})(2s+1)^2+2N(1-\lambda)})\cdot 
\prod\limits_{i=0}^{N-1}(x^2-\omega ^{-8i(2s+1)^2})^2. 
\end{align*}
\end{proof}

\par 
\medskip 
In the case where $N$ is odd and $\nu= +$, 
if $\lambda $ and $n$ satisfy the condition 
\begin{enumerate}
\item[(A)] $\lambda =-1$,\ or 
\item[(B)] $\lambda =1$, and $n$ is odd, 
\end{enumerate}
then the Hopf algebra 
$A_{Nn}^{+\lambda }$ is self-dual (see Corollary~\ref{6.7} in the next section). 
Therefore, by using Proposition~\ref{4.6} we have: 

\par 
\medskip 
\begin{cor}\label{5.11}
Let $N\geq 1$ be an odd integer and $\omega \in \boldsymbol{k}$ a primitive $4nN$-th root of unity. 
Suppose that $\lambda $ and $n$ satisfy the above condition (A) or (B). Then 
$$P_{A_{Nn}^{+\lambda}}^{(1)}(x)=
\begin{cases}
\prod\limits_{s=0}^{N-1}\prod\limits_{i=0}^{N-1}
(x-\omega ^{-8nis^2})^{4n}
(x^2-\omega ^{-4in(2s+1)^2}(-1)^{\frac{1-\lambda}{2}})^{2n} & \textrm{($n$ is odd),}\\[0.3cm]   
\prod\limits_{s=0}^{N-1}\prod\limits_{i=0}^{N-1}
(x-\omega ^{-8nis^2})^{4n}(x-\omega ^{-8ins^2}(-1)^{\frac{n}{2}})^{4n} & \textrm{($n\geq 4$ is even),} \\[0.3cm]   
\prod\limits_{s=0}^{N-1}\prod\limits_{i=0}^{N-1}
(x-\omega ^{-16is^2})^{16}(x+\omega ^{-8is^2})^8(x+\omega ^{-16is^2})^8 & (n=2), 
\end{cases} $$
$$P_{A_{Nn}^{+\lambda}}^{(2)}(x)=\begin{cases}
\prod\limits_{s=0}^{N-1}\prod\limits_{t=1}^{\frac{n-\epsilon (n)}{2}}
\prod\limits_{i=0}^{N-1}\prod\limits_{j=0}^{n-1}
(x^2-\omega ^{-4in(2s+1)^2-2N(2t-1)^2(2j+\frac{\lambda-1}{2})}) \\ 
 \qquad \times  
 \prod\limits_{s=0}^{N-1}\prod\limits_{t=1}^{\frac{n-2+\epsilon (n)}{2}}
\prod\limits_{i=0}^{N-1}\prod\limits_{j=0}^{n-1}
(x-\omega ^{-8ins^2-4Nt^2(2j+\frac{\lambda -1}{2})})^2 \quad (n\geq 3) , \\[0.3cm]   
\prod\limits_{s=0}^{N-1}\prod\limits_{i=0}^{N-1}
(x^4+\omega ^{-16i(2s+1)^2})(x^2-\omega ^{-8i(2s+1)^2})^2 \quad (n=2), 
\end{cases} $$
where 
$$\epsilon (n)=\begin{cases}
0 & \textrm{($n$ is even)}, \\ 
1 & \textrm{($n$ is odd)}. 
\end{cases}$$
 \end{cor}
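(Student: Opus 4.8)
The plan is to specialize Proposition~\ref{4.6} to the Hopf algebra $A=A_{Nn}^{+\lambda}$ under hypotheses (A) or (B), and then to substitute the explicit polynomials already computed in Proposition~\ref{5.10}. Since $N$ is odd and $\nu=+1$, and since (A) or (B) holds, the Hopf algebra $A_{Nn}^{+\lambda}$ is self-dual by Corollary~\ref{6.7} (cited from the next section). Hence Proposition~\ref{4.6} applies, giving $P_{A}^{(d)}(x)=\prod_{i}P_{A^{\ast},V_i}(x)$ where $\{V_i\}$ runs over a full set of non-isomorphic absolutely simple right $A$-comodules of dimension $d$. By Corollary~\ref{5.3}, for $d=1$ these are the $\boldsymbol{k}g$ with $g\in G(A_{Nn}^{\nu\lambda})$, and for $d=2$ they are the comodules $V_{st}$ with $0\le s\le N-1,\ 1\le t\le n-1$. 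So the computation reduces to multiplying together the formulas from Proposition~\ref{5.10}.

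For the degree-$1$ invariant, I would split the product over $g\in G$ according to the description of $G$ in Proposition~\ref{5.2}(2): the $2N$ group-likes $x_{11}^{2s}\pm x_{12}^{2s}$ ($1\le s\le N$) and the $2N$ group-likes $x_{11}^{2s+1}\chi_{22}^{n-1}\pm\sqrt{\lambda}x_{12}^{2s+1}\chi_{21}^{n-1}$ ($1\le s\le N$). Each contributes the corresponding $P_{A^{\ast},\boldsymbol{k}g}(x)$ from Proposition~\ref{5.10}(1). With $\nu=+1$ one has $\frac{1-\nu}{2}=0$, so the exponents simplify substantially; and because the two choices of sign $\pm$ give the same polynomial (the value $\underline{\dim}_{\sigma_{\alpha\beta}}\boldsymbol{k}g$ in Lemma~\ref{5.9}(1)(i) does not depend on the sign), each $s$-value is counted twice, producing the squared multiplicities in the stated answer. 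A mild re-indexing ($s\mapsto s+1$ converting the range $1\le s\le N$ to $0\le s\le N-1$, using $N$ odd so that $s^2$ and $(2s+1)^2$ range over the same residues mod $nN$) yields the form displayed in the corollary. The $n=2$ case additionally picks up the $\tau_{\gamma\xi}$-contributions, which is exactly the origin of the extra factors $(x+\omega^{-8is^2})^8(x+\omega^{-16is^2})^8$ there.

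For the degree-$2$ invariant, I would take the product of $P_{A^{\ast},V_{st}}(x)$ over $0\le s\le N-1$ and $1\le t\le n-1$, using Proposition~\ref{5.10}(2). Again $\nu=+1$ kills the $\frac{1-\nu}{2}$ terms. The key bookkeeping point is to separate the range of $t$ into odd and even values: odd $t\in\{1,3,\dots\}$ contribute the quadratic-in-$x^2$ factors, even $t$ contribute the squared linear factors, and the number of odd (resp. even) values of $t$ in $\{1,\dots,n-1\}$ is governed by the parity of $n$, which is precisely what the auxiliary function $\epsilon(n)$ encodes: writing $t=2t'-1$ for the odd ones, $t'$ ranges over $1\le t'\le \frac{n-\epsilon(n)}{2}$, and writing $t=2t'$ for the even ones, $t'$ ranges over $1\le t'\le\frac{n-2+\epsilon(n)}{2}$. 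Substituting $t=2t'-1$ (so $t^2=(2t'-1)^2$) into the odd-$t$ formula and $t=2t'$ into the even-$t$ formula, then renaming $t'$ back to $t$, produces the two displayed products; the $n=2$ case only has $t=1$ (odd), giving the single displayed product, where $2N(1-\lambda)$ and the sign come from specializing the $n=2$ line of Proposition~\ref{5.10}(2) with $\nu=+1$.

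The main obstacle is purely the index-chasing: verifying that after setting $\nu=+1$, re-indexing the $s$-range, and reorganizing the $t$-range by parity via $\epsilon(n)$, the exponents of $\omega$ match exactly what is written — in particular tracking the signs $(-1)^{\frac{1-\lambda}{2}}$ and $(-1)^{\frac{n}{2}\frac{1-\lambda}{2}}$ and confirming that hypotheses (A)/(B) are exactly what make these signs collapse to the stated form (e.g. under (B), $\lambda=1$ forces all these signs to be $+1$ while $n$ odd forces the use of the odd-$n$ branch). No genuinely new idea is needed beyond Proposition~\ref{4.6}, Corollary~\ref{5.3}, Proposition~\ref{5.2} and Proposition~\ref{5.10}; the proof is a careful but routine substitution and simplification, which is why the author is likely to state it as an immediate corollary.
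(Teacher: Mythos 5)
Your proposal is correct and is exactly the paper's (implicit) derivation: the paper states Corollary~\ref{5.11} as an immediate consequence of self-duality (Corollary~\ref{6.7}) via Proposition~\ref{4.6}, Corollary~\ref{5.3} and Proposition~\ref{5.2} for the list of simple comodules, and the explicit polynomials of Proposition~\ref{5.10}, with $\nu=+1$ and the sign/index reorganization you describe. The only detail worth spelling out further is that in the $d=2$ case one must also re-index $s$ (using that $s\mapsto 2s+t$ permutes odd, resp.\ even, residues mod $2N$ since $N$ is odd) to convert $(2s+t)^2$ into $(2s+1)^2$ for odd $t$ and into $4s^2$ for even $t$, but this is part of the routine bookkeeping you already flag.
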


\par 
\medskip 
From the above corollary the polynomial invariants of the Kac-Paljutkin algebra $K_8\cong A_{12}^{+-}$ are computed, again. 

\par 
\medskip 
\subsection{The group Hopf algebra $\boldsymbol{k}[G_{Nn}]$}
\par 
If $N$ is odd, $n\geq 2$, and $\lambda =\pm 1$, then 
$A_{Nn}^{+\lambda }$ is isomorphic to the group algebra of the finite group 
$$G=\langle h,\ t,\ w\ \vert \ t^2=h^{2N}=1,\ w^n=h^{(n+\frac{\lambda -1}{2})N},\ tw=w^{-1}t,\ ht=th,\ hw=wh \rangle $$
as an algebra (for the proof see Proposition~\ref{5.24} in the next section). 
The order of $G$ is $4nN$. 
If $(n, \lambda )=(\textrm{even}, 1)$ or $(n, \lambda )=(\textrm{odd}, -1)$, then 
the group $G$ is isomorphic to the direct product $D_{2n}\times C_{2N}$. 
If $(n, \lambda )=(\textrm{even}, -1)$ or $(n, \lambda )=(\textrm{odd}, 1)$, then 
$G$ is isomorphic to the semidirect product of 
$$H:=\langle \ w,\ h\ | \ h^{2N}=1,\ w^n=h^N,\ wh=hw\ \rangle$$ 
and $C_2$, where the action of $C_2=\langle t\rangle $ on $H$ is given by $t\cdot w:=w^{-1}$ and $t\cdot h:=h$. 
In particular, when $N=1$, the group $G$ is the dihedral group of order $4n$.  
\par 
We shall determine the universal $R$-matrices of the group Hopf algebra $\boldsymbol{k}[G]$, and calculate the polynomial invariants of it in the case where $(n, \lambda )=(\textrm{even}, -1)$ or $(n, \lambda )=(\textrm{odd}, 1)$. In this case, $G$ coincides with the group 
$$G_{Nn}=\langle h,\ t,\ w\ \vert \ t^2=h^{2N}=1,\ w^n=h^N,\ tw=w^{-1}t,\ ht=th,\ hw=wh \rangle .$$

\par \smallskip 
\begin{lem}\label{5.15}
Let $N$ be an odd integer, and $n\geq 2$ an integer. 
\par 
(1) If $n\geq 3$, then the finite group $G_{Nn}$ has a unique maximal commutative normal subgroup, which is given by $H=\langle h, w\rangle $. 
\par 
(2) If $n=2$, then the finite group $G_{Nn}$ has exactly three maximal commutative normal subgroups, which are given by $H_1=\langle h, w\rangle ,\  
H_2=\langle h, t\rangle $ and $H_3=\langle h, tw\rangle $. 
\end{lem}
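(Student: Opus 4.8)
The plan is to handle both parts by the same template: exhibit the candidate subgroup(s), check from the presentation that each is abelian and normal of index $2$, infer maximality from the fact that $G_{Nn}$ is non-abelian (true for $n\geq 2$, since $tw=w^{-1}t$ equals $wt$ only when $w^2=1$), and then prove that every commutative normal subgroup is contained in one of the candidates. Throughout I use that $h$ is central (it commutes with $t$ and $w$), that $twt^{-1}=w^{-1}$ and hence $tw^{-m}=w^{m}t$ for all $m$, and that $w$ has order $2n$ because $w^n=h^N$ has order $2$ (as $N$ is odd) while $|G_{Nn}|=4nN$.

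For Part (1), i.e. the case $n\geq 3$, the subgroup $H=\langle h,w\rangle$ is abelian since $hw=wh$, and normal since $twt^{-1}=w^{-1}\in H$; as $[G_{Nn}:H]=2$ and $G_{Nn}$ is non-abelian, $H$ is a maximal commutative normal subgroup. For uniqueness, suppose $K$ is a commutative normal subgroup with $K\not\subseteq H$, and pick $g=h^aw^bt\in K$ (possible because $G_{Nn}\setminus H=Ht$). Using the identities above one computes $wgw^{-1}=h^aw^{b+2}t$, hence $w^{-2}=g^{-1}(wgw^{-1})\in K$ by normality. Since $K$ is abelian, $g$ commutes with $w^{-2}$; but $gw^{-2}g^{-1}=w^{2}$, so $w^4=1$ and therefore $n\leq 2$, a contradiction. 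Thus every commutative normal subgroup lies in $H$, so $H$ is the unique maximal one.

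For Part (2), i.e. the case $n=2$, we have the extra relation $w^2=h^N$, so $w^2$ is central. I check that $H_1=\langle h,w\rangle$, $H_2=\langle h,t\rangle$, $H_3=\langle h,tw\rangle$ are abelian (using $h$ central, $t^2=1$, and $(tw)^2=1$, which follows from $tw=w^{-1}t$ and $t^2=1$), each of index $2$, and normal: conjugation by the central $h$ is trivial, conjugation by $t$ fixes $h$ and sends $tw$ to $h^N(tw)$, and conjugation by $w$ gives $twt^{-1}=w^{-1}\in H_1$, $wtw^{-1}=w^2t=h^Nt\in H_2$, and $w(tw)w^{-1}=wt=h^N(tw)\in H_3$ (the last two again using that $w^2=h^N$ is central). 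Hence each $H_i$ is a maximal commutative normal subgroup, and they are pairwise distinct because $w$, $t$, $tw$ lie in three different cosets of $\langle h\rangle$. Conversely, $\langle h\rangle$ is central and $G_{Nn}/\langle h\rangle\cong C_2\times C_2$ (generated by the order-$2$ images of $w$ and $t$, which commute since $\bar t\bar w=\bar w^{-1}\bar t=\bar w\bar t$), and the three index-$2$ subgroups of $G_{Nn}$ containing $\langle h\rangle$ are exactly $H_1,H_2,H_3$. For a commutative normal $K$, the image $K\langle h\rangle/\langle h\rangle$ cannot be all of $C_2\times C_2$, since otherwise $G_{Nn}=K\langle h\rangle$ would be abelian, being a product of the abelian subgroup $K$ and the central subgroup $\langle h\rangle$; hence $K\langle h\rangle\in\{\langle h\rangle,H_1,H_2,H_3\}$, so $K\subseteq H_i$ for some $i$, and maximality forces $K=H_i$. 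Therefore $H_1,H_2,H_3$ are precisely the maximal commutative normal subgroups.

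The main obstacle I anticipate is purely computational bookkeeping: pinning down the order of $w$ and the conjugation relations, and in the $n=2$ case using $w^2=h^N$ carefully so that conjugates such as $wt$ and $w(tw)w^{-1}$ visibly return to the intended $H_i$. Once those identities are in hand, both parts reduce to the standard index-$2$ argument together with the observation that a group generated by an abelian subgroup and a central subgroup is abelian.
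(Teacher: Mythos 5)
Your proof is correct and follows essentially the same route as the paper: in part (1) both arguments conjugate an element of $K\setminus H$ by $w$ and use normality plus commutativity to force $w^2=w^{-2}$, contradicting the order $2n$ of $w$; in part (2) both rest on the observation that $K\langle h\rangle$ is still commutative and normal, the paper then doing a direct case analysis on whether $t$ or $tw$ lies in $K$ while you repackage the same step via the quotient $G_{Nn}/\langle h\rangle\cong C_2\times C_2$. The computations you flag (order of $w$, the conjugates $wtw^{-1}=h^Nt$ and $w(tw)w^{-1}=h^N(tw)$ when $w^2=h^N$) all check out.
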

\begin{proof}
We set $G=G_{Nn}$. 
It is clear that $H\ (=H_1)$ is a commutative and normal subgroup of $G$, and it is not hard  to show that $H$ is maximal between commutative subgroups. 
Hence $H$ is a maximal commutative normal subgroup of $G$. 
In the case of $n=2$ we see that $H_2$ and $H_3$ are also commutative and normal subgroups of $G$, and  maximal between commutative subgroups. 
\par 
We show that the converse is true. 
\par 
(1) Let $K$ be a maximal commutative normal subgroup $K$ of $G=G_{Nn}$. 
Suppose that $K\not\subset H$. 
Then 
$K\cap (G-H)\not= \varnothing $. 
So, if we take an element 
$tw^ih^j\in K\cap (G-H) \ (0\leq i<n,\ 0\leq j<N)$, then 
$w(tw^ih^j)w^{-1}=tw^{i-2}h^j\in K$, 
that is, $tw^ih^j\in K$ implies $h^{-j}w^{-i+2}t=(tw^{i-2}h^j)^{-1}\in K$. 
Since $K$ is commutative, we have the equation
$$w^2=h^{-j}w^{-i+2}t\cdot tw^ih^j
=tw^ih^j\cdot h^{-j}w^{-i+2}t
=w^{-2}=w^{2n-2}=w^{(n-2)+n}=w^{n-2}h^N. $$
This is a contradiction since $n\geq 3$. 
Thus $K\subset H$. 
This implies that $K=H$ from maximality of $K$. 
\par  
(2) Let $K$ be a maximal commutative normal subgroup $K$ of $G=G_{Nn}$. 
Then $\langle h\rangle \subset K$ holds from maximality of $K$ since $K\langle h \rangle $ is a commutative and normal subgroup of $G$. 
If $K\subset H_1$, then $K=H_1$ since $H_1$ is a maximal commutative normal subgroup of $G$. 
So, we suppose that $K\not\subset H_1$. 
Then $tw^ih^j\in K$ for some $i,j\ (i=0,1,\ j=0,1,\ldots ,N-1)$. 
Since $\langle h\rangle \subset K$, we have $tw^i\in K$. 
\par 
If $i=0$, then $t\in K$, and hence $H_2\subset K$. 
By maximality of $H_2$, we see that $K=H_2$. 
By the same argument, we see that if $i=1$, then $K=H_3$. 
Thus there is no maximal commutative normal subgroup of $G$ except for $H_1, H_2, H_3$. 
\end{proof}

\par \smallskip 
In what follows, we assume that the characteristic $\rm{ch}(\boldsymbol{k})$ does not divide $2nN$. 
To determine the universal $R$-matrices of $\boldsymbol{k}[H]$, we use the basis consisting of the primitive idempotents of $\boldsymbol{k}[H]$. 

\par \medskip 
\begin{lem} \label{5.16}  
Let $N\geq 1$ be an odd integer, and $n\geq 2$ an integer. 
Let $H$ be the commutative group of order $2nN$ defined by 
$H=\langle h,\ w\ \vert \ h^{2N}=1,\ w^n=h^N,\ hw=wh \rangle $, and 
$\omega $ a primitive $4nN$-th root of unity in $\boldsymbol{k}$. 
For $i, k\in \mathbb{Z}$, we set 
$$E_{ik}=\frac{1}{2nN}\sum\limits_{j=0}^{n-1}\sum\limits_{l=0}^{2N-1}\omega ^{-2Nj(k+2i)-2nkl}w^jh^l \quad \in \ \ \boldsymbol{k}[H].$$
Then 
$\{ \ E_{ik}\ \vert \ i=0,1,\ldots ,n-1,\ k=0,1,\ldots ,2N-1\ \} $ is the set of primitive idempotents of $\boldsymbol{k}[H]$, and the following equations hold: for all $i,j, k,l\in \mathbb{Z}$
\begin{align}
&E_{i+n, k}=E_{ik},\qquad E_{i-N, k+2N}=E_{ik}, \label{eq5.4}\\ 
&E_{ik}E_{jl}=\delta _{kl}^{(2N)}\delta _{k+2i, l+2j}^{(2n)}E_{jl}=\delta _{kl}^{(2N)}\delta _{k+2i, l+2j}^{(2n)}E_{ik},  \label{eq5.5}
\end{align}
where 
$$\delta ^{(m)}_{kl}=\begin{cases}
1 & (k\equiv l\ (\textrm{mod}\kern0.2em m))\\ 
0 & (k\not\equiv l\ (\textrm{mod}\kern0.2em m)) 
\end{cases}$$
for $m=2N$ or $m=2n$. 
Furthermore, the coproduct $\Delta$, the counit $\varepsilon $, and the antipode $S$ of the group Hopf algebra $\boldsymbol{k}[H]$ are given as follows: 
\begin{align}
\Delta (E_{ik})&=\sum\limits_{\substack{0\leq p,q\leq 2N-1\\ p+q\equiv k\ (\textrm{mod}\ 2N)}}
\sum\limits_{\substack{0\leq a,b\leq n-1\\ a+b+\frac{-k+p+q}{2}\equiv i\ (\textrm{mod}\ n)}}E_{ap}\otimes E_{bq} , \label{eq5.6}\\ 
\varepsilon (E_{ik})&=\delta _{i,0}\delta _{k,0}, \label{eq5.7}\\ 
S(E_{ik})&=E_{-i,-k}. \label{eq5.8}
\end{align}
\end{lem}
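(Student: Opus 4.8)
The plan is to realize the elements $E_{ik}$ as the primitive idempotents of $\boldsymbol{k}[H]$ attached to the characters of $H$, and then to obtain all five assertions by translating standard facts about such idempotents. First I would record that $\langle w\rangle\cap\langle h\rangle=\langle w^{n}\rangle=\langle h^{N}\rangle$ has order $2$, so $w$ has order $2n$ and the $2nN$ elements $w^{j}h^{l}$ with $0\le j<n,\ 0\le l<2N$ are pairwise distinct; since $|H|=2nN$ they form a $\boldsymbol{k}$-basis of $\boldsymbol{k}[H]$. Because $\textrm{ch}(\boldsymbol{k})\nmid 2nN$ and $\boldsymbol{k}$ contains a primitive $4nN$-th root of unity, $\boldsymbol{k}$ is a splitting field for the abelian group $H$ and $\boldsymbol{k}[H]$ is semisimple; hence its primitive idempotents are exactly the elements $e_{\chi}=\tfrac{1}{|H|}\sum_{g\in H}\chi(g^{-1})g$ indexed by $\chi\in\widehat H=\textrm{Hom}(H,\boldsymbol{k}^{\times})$, and these satisfy $e_{\chi}e_{\psi}=\delta_{\chi\psi}e_{\chi}$, $\sum_{\chi}e_{\chi}=1$, $\varepsilon(e_{\chi})$ equal to $1$ or $0$ according as $\chi$ is trivial or not, $S(e_{\chi})=e_{\chi^{-1}}$, and $\Delta(e_{\chi})=\sum_{\psi\phi=\chi}e_{\psi}\otimes e_{\phi}$.

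Next I would introduce, for $i,k\in\mathbb{Z}$, the character $\chi_{ik}\in\widehat H$ determined by $\chi_{ik}(w)=\omega^{2N(k+2i)}$ and $\chi_{ik}(h)=\omega^{2nk}$; it respects the relations of $H$ since $\chi_{ik}(h)^{2N}=\omega^{4nNk}=1$ and $\chi_{ik}(w)^{n}=\omega^{2nN(k+2i)}$ equals $\chi_{ik}(h)^{N}=\omega^{2nNk}$ because $\omega^{4nNi}=1$. Evaluating on $w$ and $h$ shows $(i,k)\mapsto\chi_{ik}$ is injective on $\{0,\dots,n-1\}\times\{0,\dots,2N-1\}$, hence a bijection onto $\widehat H$ for cardinality reasons. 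The coefficient of the basis element $w^{j}h^{l}$ in $e_{\chi_{ik}}$ is $\tfrac{1}{|H|}\chi_{ik}\bigl((w^{j}h^{l})^{-1}\bigr)=\tfrac{1}{2nN}\omega^{-2N(k+2i)j-2nkl}$, which is exactly the coefficient in the definition of $E_{ik}$; therefore $E_{ik}=e_{\chi_{ik}}$, which proves the first assertion that the $E_{ik}$ are the primitive idempotents.

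The remaining formulas are then bookkeeping. The periodicities \eqref{eq5.4} follow at once from the defining formula via $\omega^{4nN}=1$ (each summand is unchanged when $i$ is replaced by $i+n$, or $(i,k)$ by $(i-N,k+2N)$), and equivalently say $\chi_{i+n,k}=\chi_{ik}$ and $\chi_{i-N,k+2N}=\chi_{ik}$. Evaluating $\chi_{ik}=\chi_{jl}$ on $h$ and on $w$ shows it amounts to $k\equiv l\ (\textrm{mod}\ 2N)$ together with $k+2i\equiv l+2j\ (\textrm{mod}\ 2n)$, i.e.\ to $\delta^{(2N)}_{kl}\delta^{(2n)}_{k+2i,l+2j}=1$, so $e_{\chi_{ik}}e_{\chi_{jl}}=\delta_{\chi_{ik}\chi_{jl}}e_{\chi_{ik}}$ gives \eqref{eq5.5} (and $E_{ik}=E_{jl}$ when the scalar is nonzero, which yields both displayed forms). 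Since $\chi_{00}$ is the trivial character and $\chi_{ik}^{-1}=\chi_{-i,-k}$ (again by evaluation on $w,h$), formulas \eqref{eq5.7} and \eqref{eq5.8} are immediate. Finally, evaluating $\chi_{ap}\chi_{bq}=\chi_{ik}$ on $h$ and $w$ shows it holds exactly when $p+q\equiv k\ (\textrm{mod}\ 2N)$ and $p+q+2a+2b\equiv k+2i\ (\textrm{mod}\ 2n)$; the first congruence forces $p+q-k$ to be even, so the second is equivalent to $a+b+\tfrac{-k+p+q}{2}\equiv i\ (\textrm{mod}\ n)$, and substituting these into $\Delta(e_{\chi_{ik}})=\sum_{\chi_{ap}\chi_{bq}=\chi_{ik}}e_{\chi_{ap}}\otimes e_{\chi_{bq}}$ reproduces \eqref{eq5.6} verbatim. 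The one place where care is needed is this last step, namely matching the index ranges of $a,b,p,q$ and the half-integer-looking exponent $\tfrac{-k+p+q}{2}$; I note in passing that a brute-force check of \eqref{eq5.5}--\eqref{eq5.6} by geometric sums is distinctly more awkward here than for a direct product of cyclic groups, precisely because $H$ is a non-split central extension of $C_{n}$ by $C_{2N}$, which is why I would route the whole argument through the character picture.
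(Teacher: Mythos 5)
Your proposal is correct and follows essentially the same route as the paper: the paper's proof also introduces the characters $\chi_{ik}(w)=\omega^{2N(k+2i)}$, $\chi_{ik}(h)=\omega^{2nk}$, identifies $E_{ik}$ with the associated primitive idempotents, and deduces \eqref{eq5.5}, \eqref{eq5.6}, \eqref{eq5.8} from character orthogonality (via the inversion formula $w^jh^l=\sum_{i,k}\omega^{2Nj(2i+k)+2nkl}E_{ik}$, which encodes the same standard facts you invoke).
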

\begin{proof}
For integers $i$ and $k$, 
let $\chi_{jk}$ be the group homomorphism from $H$ to $\boldsymbol{k}$ defined by $\chi_{ik}(w)=\omega ^{2N(k+2i)}, \ \chi_{ik}(h)=\omega ^{2nk}$. 
Then the set 
$\{ \ \chi _{ik}\ \vert \ i=0,1,\ldots ,n-1,\ k=0,1,\ldots ,2N-1\ \} $ 
consists of all irreducible characters of $H$.  
So, 
$$\frac{1}{2nN}\sum\limits_{j=0}^{n-1}\sum\limits_{l=0}^{2N-1}\chi _{ik}(w^{-j}h^{-l}) w^jh^l
=\frac{1}{2nN}\sum\limits_{j=0}^{n-1}\sum\limits_{l=0}^{2N-1}\omega ^{-2Nj(k+2i)-2nkl}\kern0.1em w^jh^l=E_{jk}$$
is a primitive idempotent of $\boldsymbol{k}[H]$, 
and the set $\{ E_{ik}\} $ consists of all primitive idempotents of $\boldsymbol{k}[H]$. 
By definition the equations (\ref{eq5.4}) and (\ref{eq5.7}) are obtained immediately, and  the equations (\ref{eq5.5}), (\ref{eq5.6}) and (\ref{eq5.8}) follow from that $w^jh^l$ is written as 
\begin{equation}\label{eq5.9}
w^jh^l=\sum\limits_{i=0}^{n-1}\sum\limits_{k=0}^{2N-1}\omega ^{2Nj(2i+k)+2nkl} E_{ik}, 
\end{equation}
which comes from orthogonality of characters. 
\end{proof} 

\par \bigskip 
\begin{lem} \label{5.17}  
Let $N\geq 1$ be an odd integer, and $n\geq 2$ an integer. 
Let $H$ be the commutative group of order $2nN$ defined in Lemma~\ref{5.16}, and $\omega \in \boldsymbol{k}$ a primitive $4nN$-th root of unity. Then any universal $R$-matrix $R$ of $\boldsymbol{k}[H]$ is 
given by 
\begin{equation}\label{eq5.10}
R=\sum\limits_{i,j=0}^{n-1}\sum\limits_{k,l=0}^{2N-1}\nu ^{kl}\omega^{2aN(2i+k)(2j+l)+2n(qkl+2pjk+2rij)} \kern0.2em E_{ik}\otimes E_{jl}
\end{equation}
for some $\nu \in \{ \pm 1\}$, $a\in \{ 0,1,\ldots ,n-1\} $ and $p,q,r\in \{ 0,1,\ldots ,N-1\} $ such that $pn, rn$ are multiples of $N$, where $\{ E_{ik}\} $ is the set of primitive idempotents of $\boldsymbol{k}[H]$ defined in Lemma~\ref{5.16}. 
Conversely, the above $R$ given by (\ref{eq5.10}) is a universal $R$-matrix of $\boldsymbol{k}[H]$.  
\end{lem}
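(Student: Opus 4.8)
The plan is to reduce the determination of the universal $R$-matrices of $\boldsymbol{k}[H]$ to the enumeration of bicharacters on the character group of $H$. Since $H$ is abelian, $\boldsymbol{k}[H]$ is commutative and cocommutative, so the condition $\Delta^{\textrm{cop}}(a)\cdot R=R\cdot\Delta(a)$ is automatic for every $R$, and a universal $R$-matrix is exactly an invertible $R\in\boldsymbol{k}[H]\otimes\boldsymbol{k}[H]$ with $(\Delta\otimes\textrm{id})(R)=R_{13}R_{23}$ and $(\textrm{id}\otimes\Delta)(R)=R_{13}R_{12}$. I would first expand an arbitrary $R$ in the idempotent basis, $R=\sum_{i,k,j,l}c^{ik}_{jl}\,E_{ik}\otimes E_{jl}$ with $c^{ik}_{jl}\in\boldsymbol{k}$, and note that, by orthogonality of the $E_{ik}$, such an $R$ is invertible if and only if every $c^{ik}_{jl}\ne 0$.

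Substituting this expansion into the two remaining axioms and using the formulas (\ref{eq5.5}) and (\ref{eq5.6}) of Lemma~\ref{5.16}, the identity $(\Delta\otimes\textrm{id})(R)=R_{13}R_{23}$, after comparing coefficients of $E_{ap}\otimes E_{bq}\otimes E_{jl}$, becomes the statement that $(i,k)\mapsto c^{ik}_{jl}$ is multiplicative for the group law on the indices $(i,k)$ encoded by (\ref{eq5.6}); likewise $(\textrm{id}\otimes\Delta)(R)=R_{13}R_{12}$ makes $(j,l)\mapsto c^{ik}_{jl}$ multiplicative. Writing $\xi:=\chi_{10}$ and $\eta:=\chi_{01}$ for the irreducible characters of $H$ with $\chi_{ik}(w)=\omega^{2N(2i+k)}$, $\chi_{ik}(h)=\omega^{2nk}$ as in the proof of Lemma~\ref{5.16}, one has $\chi_{ik}=\xi^i\eta^k$, and reading off (\ref{eq5.4}) gives the presentation $\widehat{H}=\langle\,\xi,\eta\mid\xi^n=1,\ \eta^{2N}=\xi^N\,\rangle$. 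Thus the two axioms say exactly that $(i,k;j,l)\mapsto c^{ik}_{jl}$ is bimultiplicative; combined with the invertibility remark this shows that universal $R$-matrices of $\boldsymbol{k}[H]$ correspond bijectively to bicharacters $b\colon\widehat{H}\times\widehat{H}\to\boldsymbol{k}^{\times}$ via $R=\sum_{i,k,j,l}b(\chi_{ik},\chi_{jl})\,E_{ik}\otimes E_{jl}$, and in particular invertibility is automatic. (That (\ref{eq5.6}) is the multiplication law of $\widehat{H}$ transported along $E_{ik}\leftrightarrow\chi_{ik}$ is a short direct check.) It remains to enumerate such $b$.

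A bicharacter $b$ of $\widehat{H}$ is determined by the four values $b(\xi,\xi),b(\xi,\eta),b(\eta,\xi),b(\eta,\eta)$, subject only to the constraints imposed by the relations $\xi^n=1$ and $\eta^{2N}=\xi^N$. I would solve this system using that $\omega$ is a primitive $4nN$-th root of unity, so that the $n$-th roots of unity in $\boldsymbol{k}$ are the powers of $\omega^{4N}$ and the $2N$-th roots of unity are the powers of $\omega^{2n}$: from $b(\xi,\eta)^n=1$ one gets $b(\xi,\eta)=\omega^{4aN}$ with $a\in\{0,\dots,n-1\}$; from $b(\xi,\xi)^n=1$ and $b(\xi,\xi)^N=b(\xi,\eta)^{2N}$ one gets $b(\xi,\xi)=\omega^{8aN+4nr}$ with $rn\equiv 0\ (\textrm{mod}\ N)$; from $b(\eta,\xi)^n=1$ together with the compatibility $b(\eta,\xi)^N=b(\xi,\eta)^N$ (which follows by comparing the two expressions $b(\eta,\eta)^{2N}=b(\xi,\eta)^N$ and $b(\eta,\eta)^{2N}=b(\eta,\xi)^N$) one gets $b(\eta,\xi)=\omega^{4aN+4np}$ with $pn\equiv 0\ (\textrm{mod}\ N)$; and solving $b(\eta,\eta)^{2N}=b(\xi,\eta)^N$ shows that $b(\eta,\eta)$ ranges over $\omega^{2aN}$ times all $2N$-th roots of unity, i.e.\ $b(\eta,\eta)=\nu\omega^{2aN+2nq}$ with $\nu\in\{\pm1\}$ and $q\in\{0,\dots,N-1\}$. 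Finally bimultiplicativity gives $b(\chi_{ik},\chi_{jl})=b(\xi,\xi)^{ij}b(\xi,\eta)^{il}b(\eta,\xi)^{kj}b(\eta,\eta)^{kl}$, and substituting the four values and collecting the exponent of $\omega$ (using $(2i+k)(2j+l)=4ij+2il+2kj+kl$) reproduces precisely the coefficient $\nu^{kl}\omega^{2aN(2i+k)(2j+l)+2n(qkl+2pjk+2rij)}$ of $E_{ik}\otimes E_{jl}$ in (\ref{eq5.10}). The converse is then the dual check: for parameters in the stated ranges these four special values do satisfy all of the constraints above, so $b$ is a genuine bicharacter and (\ref{eq5.10}) is a universal $R$-matrix.

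The step I expect to be the main obstacle is the bicharacter enumeration in the third paragraph, where one must avoid both over- and under-counting. The crucial observation there is the hidden compatibility $b(\xi,\eta)^N=b(\eta,\xi)^N$: it is this relation, rather than the more visible $b(\eta,\xi)^{2N}=b(\xi,\xi)^N$, that yields the divisibility condition on $p$, and analogous care is needed with the $2N$-th root extraction for $b(\eta,\eta)$, whose two-fold ambiguity is exactly the sign $\nu$. By contrast the translation of the axioms in the second paragraph is routine once one has identified (\ref{eq5.6}) with the multiplication of $\widehat{H}$, and the reduction in the first paragraph is immediate from commutativity and cocommutativity.
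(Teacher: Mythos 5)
Your proposal is correct and is essentially the paper's own argument recast in the language of bicharacters on $\widehat{H}$: the paper likewise expands $R$ in the idempotent basis, shows via (\ref{eq5.6}) that the hexagon axioms force the coefficient array $R^{ik}_{jl}$ to be multiplicative in each index pair, and then solves exactly the same system of root-of-unity constraints, including your key cross-relation (which appears there as the two computations of $(R^{01}_{01})^{2N}$ yielding $\omega^{4bN}=\omega^{4aN+4pn}$). The bicharacter packaging is a clean way to organize the same computation, but it introduces no genuinely different idea.
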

\begin{proof}
Let $R$ be an element of $\boldsymbol{k}[H]\otimes \boldsymbol{k}[H]$, and 
write $R$ in the form 
$$R=\sum\limits_{i,j=0}^{n-1}\sum\limits_{k,l=0}^{2N-1}R^{ik}_{jl}\kern0.2em E_{ik}\otimes E_{jl} \qquad (R^{ik}_{jl}\in \boldsymbol{k}).$$
We treat the indices $i$ and $j$ of $R^{ik}_{jl}$ as modulo $n$, therefore $R^{ik}_{jl}$ has a meaning for all integers $i$ and $j$. 
For $0\leq m\leq 4N-1$, we define $\delta (m)$ by 
$$\delta (m)=\begin{cases}
0 & (0\leq m\leq 2N-1),\\[0.1cm]  
1 & (2N\leq m\leq 4N-1).\end{cases}$$
Then we have 
\begin{align*}
(\Delta \otimes \textrm{id})(R)
&=\sum\limits_{j,a,b=0}^{n-1}\sum\limits_{l,p,q=0}^{2N-1}
R^{a+b+N\delta (p+q),\ p+q-2N\delta (p+q)}_{jl}\kern0.1em E_{ap}\otimes E_{bq} \otimes E_{jl},
 \\ 
R_{13}R_{23}
&=\sum\limits_{j,a,b=0}^{n-1}\sum\limits_{l,p,q=0}^{2N-1}
R^{ap}_{jl}R^{bq}_{jl}
E_{ap}\otimes E_{bq} \otimes E_{jl}. 
\end{align*}
Hence $(\Delta \otimes \textrm{id})(R)=R_{13}R_{23}$ if and only if 
\begin{equation}\label{eq5.11}
R^{a+b+N\delta (p+q),\ p+q-2N\delta (p+q)}_{jl}=R^{ap}_{jl}R^{bq}_{jl}
\end{equation}
for all $j,a,b=0,1,\ldots ,n-1$ and $ l,p,q=0,1,\ldots ,2N-1$. 
Similarly, $(\textrm{id}\otimes \Delta )(R)=R_{13}R_{12}$ if and only if 
\begin{equation}\label{eq5.12}
R^{ik}_{a+b+N\delta (p+q),\ p+q-2N\delta (p+q)}=R^{ik}_{bq}R^{ik}_{ap}
\end{equation}
for $i,a,b=0,1,\ldots ,n-1$ and $k,p,q=0,1,\ldots ,2N-1$, and also we have 
$(\varepsilon \otimes \textrm{id})(R)=(\textrm{id}\otimes \varepsilon )(R)=1$ if and only if 
\begin{equation}\label{eq5.13}
R^{00}_{jl}=R^{jl}_{00}=1
\end{equation}
for $j=0,1,\ldots ,n-1$ and $l=0,1,\ldots ,2N-1$. 
Thus $R$ is a universal $R$-matrix of $\boldsymbol{k}[H]$ if and only if the equations (\ref{5.11}), (\ref{eq5.12}), (\ref{eq5.13}) hold. 
From the equations (\ref{eq5.11}) and  (\ref{eq5.12}), 
$R^{ik}_{jl}$ can be expressed as the form 
$$R^{ik}_{jl}=(R_{01}^{01})^{kl}(R_{10}^{01})^{jk}(R_{01}^{10})^{il}(R_{10}^{10})^{ij}.$$
Since $(R_{01}^{10})^{n}=R_{01}^{n0}=R_{01}^{00}=1$ and $(R_{10}^{01})^{n}=R_{n0}^{01}=R_{00}^{01}=1$ by (\ref{eq5.11}), we see that 
$R^{10}_{01}$ and $R^{01}_{10}$ are can be written as 
\begin{equation}\label{eq5.14}
R^{10}_{01}=\omega ^{4aN},\quad  R^{01}_{10}=\omega ^{4bN}\qquad (0\leq a,b\leq n-1). 
\end{equation}
By using the equations (\ref{eq5.11}) and (\ref{eq5.12}), repeatedly, we have
$$\begin{cases}
(R_{01}^{01})^{2N}=(R_{01}^{01})^{2N-2}R_{01}^{02}=
\cdots =R_{01}^{01}R_{01}^{0,2N-1}
=R_{01}^{N0}=(R_{01}^{10})^N=\omega^{4aN^2},\\[0.1cm]  
(R_{01}^{01})^{2N}=(R_{01}^{01})^{2N-2}R_{02}^{01}=
\cdots =R_{01}^{01}R_{0,2N-1}^{01}
=R_{N0}^{01}=(R_{10}^{01})^N=\omega^{4bN^2}. 
\end{cases}$$
Therefore $\omega^{4bN}=\omega^{4aN+4pn}$ must be required for some $0\leq p\leq N-1$ such that $pn$ is a multiple of $N$. 
From the equation $(R_{01}^{01})^{2N}=\omega ^{4aN^2}$, we may set 
$(R_{01}^{01})^{2}=\omega^{4aN+4qn}\ (0\leq q\leq N-1)$, and hence 
\begin{equation}\label{eq5.16}
R_{01}^{01}=\pm \omega^{2aN+2qn}. 
\end{equation}

By using the equation (\ref{eq5.11}), repeatedly, we have 
$(R_{10}^{10})^N=R_{N0}^{10}=(R_{01}^{10})^{2N}=\omega ^{8aN^2}$. 
So, $R_{10}^{10}$ can be expressed as 
\begin{equation}\label{eq5.17}
R_{10}^{10}=\omega^{8aN+4rn}
\end{equation}
for some $0\leq r\leq N-1$ such that $rn$ is a multiple of $N$. 
From (\ref{eq5.14}), (\ref{eq5.16}) and (\ref{eq5.17}) 
we see that a universal $R$-matrix $R$ of $\boldsymbol{k}[H]$ is written in the form 
$$R=\sum\limits_{i,j=0}^{n-1}\sum\limits_{k,l=0}^{2N-1}R^{ik}_{jl}\kern0.2em E_{ik}\otimes E_{jl},\quad R^{ik}_{jl}
=\nu ^{kl}\omega ^{2aN(2i+k)(2j+l)+2n(qkl+2pjk+2rij)} $$
for some $\nu \in \{ \pm 1\} $, $a\in \{ 0,1,\ldots ,n-1\} $ and  $p,q,r\in \{ 0,1,\ldots ,N-1 \} $ that $pn, rn$ are multiples of $N$. 
\par 
Conversely, it is not hard to check that $R\in \boldsymbol{k}[H]\otimes \boldsymbol{k}[H]$ which is given by the  form above is a universal $R$-matrix of $\boldsymbol{k}[H]$.
\end{proof}

\par \medskip 
By use of Lemma~\ref{5.14} and Lemma~\ref{5.17} one can determine the universal $R$-matrices of $\boldsymbol{k}[G_{Nn}]$. 

\par \smallskip 
\begin{prop} \label{5.18}  
Let $N\geq 1$ be an odd integer, and $n\geq 2$ an integer. 
Let $\omega $ be a primitive $4nN$-th root of unity in $\boldsymbol{k}$, and $\{ E_{ik}\} $ be the set of primitive idempotent of $\boldsymbol{k}[H]$ defined in Lemma~\ref{5.16}. 
Then for any $\nu \in \{ \pm 1\},\ a\in \{ 0,1,\ldots ,n-1\},\ q\in \{ 0,1,\ldots , N-1\} $, 
\begin{equation}\label{eq5.18}
R_{aq\nu }:=\sum\limits_{i,j=0}^{n-1}\sum\limits_{k,l=0}^{2N-1}
\nu ^{kl}\omega ^{2aN(2i+k)(2j+l)+2qkln} 
\kern0.2em E_{ik}\otimes E_{jl}
\end{equation}
is a universal $R$-matrix of the group Hopf algebra $\boldsymbol{k}[G_{Nn}]$. Furthermore,  
\par 
$\bullet$ $R_{aq\nu }$ is a universal $R$-matrix of the group Hopf algebra $\boldsymbol{k}[\langle h\rangle ]$ if and only if $a=0$, and  
\par 
$\bullet$ if $n\geq 3$, then any universal $R$-matrix is given by the above form, therefore, the number of the universal $R$-matrices of $\boldsymbol{k}[G_{Nn}]$ is  $2nN$. 
\end{prop}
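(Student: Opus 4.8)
The plan is to combine the classification of $\underline{\textrm{Braid}}(\boldsymbol{k}[H])$ in Lemma~\ref{5.17} with Lemma~\ref{5.14} and Lemma~\ref{5.15}, the new ingredient being the way such an $R$, sitting inside $\boldsymbol{k}[H]\otimes\boldsymbol{k}[H]$, interacts with the group-like generator $t$. Since $\boldsymbol{k}[H]$ is a subHopf algebra of $\boldsymbol{k}[G_{Nn}]$, for $R\in\boldsymbol{k}[H]\otimes\boldsymbol{k}[H]$ the relations $(\Delta\otimes\textrm{id})(R)=R_{13}R_{23}$ and $(\textrm{id}\otimes\Delta)(R)=R_{13}R_{12}$ computed in $\boldsymbol{k}[G_{Nn}]^{\otimes 3}$ coincide with the ones in $\boldsymbol{k}[H]^{\otimes 3}$, and $\Delta^{\textrm{cop}}(a)R=R\Delta(a)$ holds automatically for $a\in H$; moreover $G_{Nn}$ is generated as an algebra by $H$ together with $t$. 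Hence an element $R\in\underline{\textrm{Braid}}(\boldsymbol{k}[H])$ lies in $\underline{\textrm{Braid}}(\boldsymbol{k}[G_{Nn}])$ if and only if it satisfies the single extra relation $(t\otimes t)R=R(t\otimes t)$, i.e.\ $(t\otimes t)R(t^{-1}\otimes t^{-1})=R$. So the first step is to compute conjugation by $t$ on the primitive idempotents $E_{ik}$ of $\boldsymbol{k}[H]$. Using $twt^{-1}=w^{-1}$, $tht^{-1}=h$, the relation $w^{n}=h^{N}$, and the sign $\omega^{2nN}=-1$ to fold $w^{-j}h^{l}=w^{n-j}h^{l+N}$ back into the range $0\le j\le n-1$ used in Lemma~\ref{5.16}, I expect to get $tE_{ik}t^{-1}=E_{-i-k,\,k}$, the first subscript read modulo $n$.

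Granting this, writing $R=\sum R^{ik}_{jl}\,E_{ik}\otimes E_{jl}$ the relation $(t\otimes t)R(t^{-1}\otimes t^{-1})=R$ becomes the system $R^{-i-k,\,k}_{-j-l,\,l}=R^{ik}_{jl}$ for all $i,j,k,l$. Substituting the Lemma~\ref{5.17} formula $R^{ik}_{jl}=\nu^{kl}\,\omega^{2aN(2i+k)(2j+l)+2n(qkl+2pjk+2rij)}$ and noting that $\nu^{kl}$, $\omega^{2aN(2i+k)(2j+l)}$ and $\omega^{2nqkl}$ are unchanged by $(i,j)\mapsto(-i-k,-j-l)$, the system reduces, after cancelling these factors and dividing the surviving $2n$-multiple exponents by $2n$, to the congruence $(-4p+2r)jk+(-2p+2r)kl+2r\,il\equiv 0\pmod{2N}$ for all integers; specializing $(i,j,k,l)$ forces first $r\equiv 0$ and then $p\equiv 0$ modulo $N$, hence $p=r=0$. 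This gives the two implications needed: the elements of $\underline{\textrm{Braid}}(\boldsymbol{k}[H])$ with $p=r=0$ are exactly the $R_{aq\nu}$ of (\ref{eq5.18}) and they do satisfy the $t$-relation, so each $R_{aq\nu}$ belongs to $\underline{\textrm{Braid}}(\boldsymbol{k}[G_{Nn}])$; and conversely, for $n\ge 3$ any $R\in\underline{\textrm{Braid}}(\boldsymbol{k}[G_{Nn}])$ lies in $\boldsymbol{k}[K]\otimes\boldsymbol{k}[K]$ for some commutative normal subgroup $K$ by Lemma~\ref{5.14}, which we enlarge to a maximal one, necessarily $H$ by Lemma~\ref{5.15}(1); thus $R\in\underline{\textrm{Braid}}(\boldsymbol{k}[H])$ has the Lemma~\ref{5.17} shape, the $t$-relation forces $p=r=0$, and $R=R_{aq\nu}$.

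For the $\langle h\rangle$-assertion I would use that $\sum_{i=0}^{n-1}\omega^{-4Nij}$ equals $n$ when $j=0$ and $0$ for $1\le j\le n-1$, which gives $\sum_{i=0}^{n-1}E_{ik}=\frac{1}{2N}\sum_{l}\omega^{-2nkl}h^{l}$, the primitive idempotents of $\boldsymbol{k}[\langle h\rangle]$; hence $\boldsymbol{k}[\langle h\rangle]\otimes\boldsymbol{k}[\langle h\rangle]$ is the span of the $(\sum_i E_{ik})\otimes(\sum_j E_{jl})$, and $R_{aq\nu}$ lies in it precisely when its coefficient $\nu^{kl}\,\omega^{2aN(2i+k)(2j+l)}$ is independent of $i$ and $j$, which by looking at the $\omega^{4aNil}$-term happens iff $a\equiv 0\pmod n$, i.e.\ $a=0$; since the quasitriangularity axioms and invertibility descend from $\boldsymbol{k}[G_{Nn}]$ to the subHopf algebra $\boldsymbol{k}[\langle h\rangle]$, this yields the stated equivalence. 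Finally, distinctness of the $R_{aq\nu}$ (and hence the count $2nN$) follows by reading off $R^{10}_{01}=\omega^{4aN}$, which recovers $a$ because $\omega^{4N}$ has order $n$, and then $R^{01}_{01}=\nu\,\omega^{2aN+2nq}$, which recovers $\nu$ and then $q$ because $\omega^{2n}$ has order $2N$. The main technical obstacle is the first step: since $w$ has order $2n$ while the summation index $j$ in the expression for $E_{ik}$ only runs over $0,\dots,n-1$, conjugation by $t$ pushes $w^{j}$ out of the chosen set of representatives, and one must be careful with the sign $\omega^{2nN}=-1$ when bringing $w^{-j}h^{l}$ back to standard form in order to identify $tE_{ik}t^{-1}$ cleanly as $E_{-i-k,k}$.
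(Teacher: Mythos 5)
Your proposal is correct and follows essentially the same route as the paper: reduce to $\underline{\textrm{Braid}}(\boldsymbol{k}[H])$ via Lemmas \ref{5.14}, \ref{5.15} and \ref{5.17}, use $tE_{ik}t^{-1}=E_{-i-k,k}$ to turn the remaining axiom into $R^{-i-k,k}_{-j-l,l}=R^{ik}_{jl}$, and specialize indices to force $p=r=0$. Your added verifications of the $\boldsymbol{k}[\langle h\rangle]$ criterion and of the count $2nN$ are sound and merely make explicit what the paper leaves implicit.
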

\begin{proof}
Let $R$ be a universal $R$-matrix of $\boldsymbol{k}[H]$, where $H$ is the subgroup of $G_{Nn}$ defined in Lemma~\ref{5.17}, 
and write it in the form 
$$R=\sum\limits_{i,j=0}^{n-1}\sum\limits_{k,l=0}^{2N-1}R^{ik}_{jl}\kern0.2em E_{ik}\otimes E_{jl}.$$
By using $E_{ik}t=tE_{-i-k, k}$ we have 
\begin{align*}
\Delta ^{\textrm{cop}}(t)\cdot R
&=\sum\limits_{i,j=0}^{n-1}\sum\limits_{k,l=0}^{2N-1}R^{ik}_{jl}\kern0.2em tE_{ik}\otimes tE_{jl},\\ 
R\cdot \Delta (t)
&=\sum\limits_{i,j=0}^{n-1}\sum\limits_{k,l=0}^{2N-1}R^{-i-k, k}_{-j-l, l}\kern0.2em tE_{ik}\otimes tE_{jl}. 
\end{align*}
Hence $R$ is a universal $R$-matrix of $\boldsymbol{k}[G_{Nn}]$ if and only if $R^{-i-k, k}_{-j-l, l}=R^{ik}_{jl}$ for all $i,j,k,l$. 
Since $R$ is a universal $R$-matrix of $\boldsymbol{k}[H]$, $R^{ik}_{jl}$ is written in the form
$$R^{ik}_{jl}=\nu ^{kl}\omega ^{2aN(2i+k)(2j+l)+2n(qkl+2pjk+2rij)}$$
for some $a\in \{ 0,1,\ldots ,n-1\}$ and $p, q, r \in \{ 0,1,\ldots , N-1\}$. Therefore, we  have 
\begin{align*}
R^{-i-k, k}_{-j-l, l}=R^{ik}_{jl}\ \ 
& \Longleftrightarrow \ \ 
\omega ^{2n(-2p(2j+l)k+2r(kj+il+kl))}=1. 
\end{align*}

Considering the equations in R.H.S. for $(i,j,k,l)=(1,0,0,1)$ and $(i,j,k,l)=(0,0,1,1)$, we see that  
$R$ is a universal $R$-matrix of $\boldsymbol{k}[G_{Nn}]$ if and only if $\omega ^{4pn}=\omega ^{4rn}=1$. 
This condition is equivalent to that the both $p$ and $r$ are multiples of $N$. 
It follows from $0\leq p, r\leq N-1$ that $p=r=0$. 
\end{proof}

\par \smallskip 
\begin{prop} \label{5.20}  
Let $N\geq 1$ be an odd integer, and let $\omega $ be a primitive $8N$-th root of unity in a field $\boldsymbol{k}$ whose characteristic dose not divide $2N$. 
Then the number of universal $R$-matrices of the group Hopf algebra $\boldsymbol{k}[G_{N2}]$ is $8N$, and they are given by the list below.  
\par 
$\bullet$ universal $R$-matrices of $\boldsymbol{k}[\langle h\rangle ]$: 
$$
R_d=\sum\limits_{k,l=0}^{2N-1}\omega ^{4dkl} E_k\otimes E_l\quad (d=0,1,\ldots , 2N-1),$$
where $E_k=\frac{1}{2N}\sum_{l=0}^{2N-1}\omega ^{-4kl}h^l$. 
\par 
$\bullet$ universal $R$-matrices of $\boldsymbol{k}[H_1]$, where $H_1=\langle h, w\rangle $: 
$$
R_{1q\nu }=\sum\limits_{i,j=0,1}\sum\limits_{k,l=0}^{2N-1}
\nu ^{kl}\omega ^{2N(2i+k)(2j+l)+4qkl} \kern0.2em E_{ik}\otimes E_{jl} \quad (q=0,1,\ldots , N-1,\ \nu =\pm 1),$$
where $E_{ik}=\frac{1}{4N}\sum_{j=0,1}\sum_{l=0}^{2N-1}(-1)^{ij}\omega^{-2Njk-4kl}w^jh^l$.
\par 
$\bullet$ universal $R$-matrices of $\boldsymbol{k}[H_2]$, where $H_2=\langle h, t\rangle $: 
\begin{equation}\label{eq5.19}
R_d^{(1)}:=R_{0N1d}^{\boldsymbol{k}[H_2]}=\sum_{i,j=0,1}\sum_{k,l=0}^{2N-1}(-1)^{jk+il}\omega ^{4dkl} E_{ik}\otimes E_{jl}\quad (d=0,1,\ldots , 2N-1),
\end{equation}
where $E_{ik}=\frac{1}{4N}\sum_{j=0,1}\sum_{l=0}^{2N-1}(-1)^{ij}\omega ^{-4kl}t^jh^l$. 
\par 
$\bullet$ universal $R$-matrices of $\boldsymbol{k}[H_3]$, where $H_3=\langle h, tw\rangle $: 
\begin{equation}\label{eq5.20}
R_d^{(2)}:=R_{0N1d}^{\boldsymbol{k}[H_3]}=\sum_{i,j=0,1}\sum_{k,l=0}^{2N-1}(-1)^{jk+il}\omega ^{4dkl} E_{ik}\otimes E_{jl} \quad (d=0,1,\ldots , 2N-1),
\end{equation}
where $E_{ik}=\frac{1}{4N}\sum_{j=0,1}\sum_{l=0}^{2N-1}(-1)^{ij}\omega ^{-4kl}(tw)^jh^l$. 
\end{prop}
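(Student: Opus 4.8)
The plan is to adapt the argument of Proposition~\ref{5.18} to the case $n=2$, where $G_{N2}$ has three maximal commutative normal subgroups instead of one. By Lemma~\ref{5.14} every universal $R$-matrix $R$ of $\boldsymbol{k}[G_{N2}]$ lies in $\boldsymbol{k}[K]\otimes\boldsymbol{k}[K]$ for some commutative normal subgroup $K$, and $K$ is contained in a maximal one, so by Lemma~\ref{5.15}(2) we may assume $R\in\boldsymbol{k}[H_i]\otimes\boldsymbol{k}[H_i]$ for one of $H_1=\langle h,w\rangle$, $H_2=\langle h,t\rangle$, $H_3=\langle h,tw\rangle$. Conversely, since the two ``triangle'' axioms and invertibility of $R$ only involve data that restrict from $G_{N2}$ to $H_i$, a universal $R$-matrix of $\boldsymbol{k}[H_i]$ is a universal $R$-matrix of $\boldsymbol{k}[G_{N2}]$ iff $\Delta^{\textrm{cop}}(g)\cdot R=R\cdot\Delta(g)$ holds for one generator $g\notin H_i$, namely $g=t$ when $i=1$ and $g=w$ when $i=2,3$. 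Thus the proof reduces to analyzing these three extension conditions and then counting.

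For $H_1$, which is exactly the group of Lemma~\ref{5.16} with $n=2$, Lemma~\ref{5.17} describes its universal $R$-matrices by $R^{ik}_{jl}=\nu^{kl}\omega^{2aN(2i+k)(2j+l)+4(qkl+2pjk+2rij)}$ with $\nu=\pm1$, $a\in\{0,1\}$, and $p,q,r\in\{0,\dots,N-1\}$ such that $2p,2r$ are multiples of $N$; since $N$ is odd this already forces $p=r=0$, and then the extension condition for $t$ (which by the computation in the proof of Proposition~\ref{5.18} reduces to $\omega^{4pn}=\omega^{4rn}=1$) is vacuous. Hence every universal $R$-matrix of $\boldsymbol{k}[H_1]$ extends, and there are $2\cdot 2\cdot N=4N$ of them, with coefficients $\nu^{kl}\omega^{2aN(2i+k)(2j+l)+4qkl}$. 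Those with $a=1$ are the matrices $R_{1q\nu}$ in the statement. Those with $a=0$ have coefficients independent of $i,j$; using $\sum_{i=0}^{1}E_{ik}=\frac1{2N}\sum_{l}\omega^{-4kl}h^{l}=E_{k}$ one gets $R=\sum_{k,l}\nu^{kl}\omega^{4qkl}E_{k}\otimes E_{l}$, and writing $\nu^{kl}$ as $\omega^{4qkl}$ or $\omega^{4(q+N)kl}$ according as $\nu=1$ or $\nu=-1$ (note $\omega^{4N}=-1$) identifies these with the $2N$ universal $R$-matrices $R_d$ of $\boldsymbol{k}[\langle h\rangle]$.

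For $H_2$ and $H_3$ the relevant generator $w$ is group-like, so the extension condition $\Delta^{\textrm{cop}}(w)\cdot R=R\cdot\Delta(w)$ says precisely that $R$ commutes with $w\otimes w$, i.e. $R$ is fixed by $\varphi\otimes\varphi$, where $\varphi$ is the algebra automorphism of $\boldsymbol{k}[H_i]$ induced by conjugation by $w$. Using $w^2=h^N$ and $tw=w^{-1}t$ one computes $wtw^{-1}=th^{N}$ and $w(tw)w^{-1}=(tw)h^{N}$, so in both cases $\varphi$ fixes $h$ and multiplies the order-two generator of $H_i$ by $h^N$. Starting from the list of universal $R$-matrices of $\boldsymbol{k}[H_i]\cong\boldsymbol{k}[C_{2N}\times C_2]$ supplied by Lemma~\ref{5.19} and imposing invariance under $\varphi\otimes\varphi$, a direct computation shows the survivors are exactly the $2N$ matrices $R_d^{(1)}$ (resp. $R_d^{(2)}$) displayed in the statement. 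This computation---pushing the general $R$-matrix of Lemma~\ref{5.19} through the $\varphi\otimes\varphi$-invariance and checking it matches the listed formulas on the nose, including the phases $(-1)^{jk+il}$---is the one genuinely laborious step; everything else is bookkeeping on top of Lemmas~\ref{5.14}--\ref{5.17}, \ref{5.19} and Proposition~\ref{5.18}.

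It remains to count without double-counting. The $R_d$ lie in $\boldsymbol{k}[\langle h\rangle]\otimes\boldsymbol{k}[\langle h\rangle]$ and, as $\langle h\rangle$ is central in $G_{N2}$, are automatically universal $R$-matrices of $\boldsymbol{k}[G_{N2}]$; they are common to the three cases. Since $H_i\cap H_j=\langle h\rangle$ for $i\ne j$, any coincidence among members of different families $\{R_{1q\nu}\}$, $\{R_d^{(1)}\}$, $\{R_d^{(2)}\}$ would force the common matrix to lie in $\boldsymbol{k}[\langle h\rangle]\otimes\boldsymbol{k}[\langle h\rangle]$; but none of these does, because its coefficient $R^{ik}_{jl}$ genuinely depends on the $w$- (resp. $t$-, resp. $tw$-) index. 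Within each family distinctness follows from linear independence of the primitive idempotents. Hence the universal $R$-matrices of $\boldsymbol{k}[G_{N2}]$ are exactly the $2N+2N+2N+2N=8N$ matrices listed.
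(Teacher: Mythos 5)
Your proposal is correct and follows essentially the same route as the paper: reduce via Lemma~\ref{5.14} and Lemma~\ref{5.15}(2) to the three maximal commutative normal subgroups, handle $H_1$ through Lemma~\ref{5.17}/Proposition~\ref{5.18}, and for $H_2,H_3$ impose commutation with $\Delta(w)$ on the $R$-matrices supplied by Lemma~\ref{5.19}; your reformulation of that last condition as invariance under $\varphi\otimes\varphi$ (conjugation by $w$, which sends $t\mapsto th^N$ and $tw\mapsto (tw)h^N$) is exactly the paper's condition $R^{i-k,k}_{j-l,l}=R^{ik}_{jl}$ obtained from $E_{ik}w=wE_{i+k,k}$. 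The one computation you defer as ``direct'' is precisely the step the paper writes out (it yields $(p,q,r)=(0,0,0)$ or $(0,N,1)$, the former giving the $R_d$ already counted), and your explicit disjointness/counting argument, which the paper leaves implicit, is sound.
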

\begin{proof}
By Proposition~\ref{5.18} it is sufficient to determine the universal $R$-matrices of $\boldsymbol{k}[G_{N2}]$ which come from that of $\boldsymbol{k}[H_2]$ or $\boldsymbol{k}[H_3]$.   
By Lemma~\ref{5.19} it follows from $H_i\cong C_2\times C_{2N}$ for $i=2,3$ that a universal $R$-matrix of $\boldsymbol{k}[H_i]$ is given by 
$$R_{pqrs}^{\boldsymbol{k}[H_i]}=\sum_{i,j=0}^{m-1}\sum_{k,l=0}^{n-1}(-1)^{(pi+rk)j}\omega ^{2(skl+qil)}E_{ik}\otimes E_{jl}, $$
where $p,r\in \{ 0,1\},\ q\in \{ 0,N\} ,\ s\in \{ 0,1,\ldots ,2N-1\} $. 
\par 
Let us consider the case of $i=2$ and $R=R_{pqrs}^{\boldsymbol{k}[H_2]}$. 
We set $R^{ik}_{jl}=(-1)^{j(pi+rk)}\omega^{2l(sk+qi)}$. 
Then by using $E_{ik}w=wE_{i+k, k}$, we have 
\begin{align*}
\Delta ^{\textrm{cop}}(w)\cdot R=R\cdot \Delta (w) 
\Longleftrightarrow \ \ &  R^{i-k, k}_{j-l, l}=R^{ik}_{jl} \ \ \textrm{for all $i,j,k,l$} \\ 
\Longleftrightarrow \ \ &
 \ \ (-1)^{-(j-l)pk-l(pi+rk)}\omega ^{-2lqk}=1\ \ \textrm{for all $i,j,k,l$} \\ 
\Longleftrightarrow \ \ & (p,q,r)=(0,0,0)\ \ \textrm{or} \ \ (p,q,r)=(0, N,1). 
\end{align*}
Thus $R_{pqrs}^{\boldsymbol{k}[H_2]}$ is a universal $R$-matrix of $\boldsymbol{k}[G_{N2}]$ if and only if $(p,q,r)=(0,0,0)$ or $(p,q,r)=(0, N,1)$. 
It is easy proved that $R_{000s}^{\boldsymbol{k}[H_2]}$ is a universal $R$-matrix of $\boldsymbol{k}[\langle h\rangle ]$, and $R_{0N1s}^{\boldsymbol{k}[H_2]}$ is not. 
By a similar manner, it is  shown that a universal $R$-matrix 
$R_{pqrs}^{\boldsymbol{k}[H_3]}$ of $\boldsymbol{k}[H_3]$ is a universal $R$-matrix of $\boldsymbol{k}[G]$ if and only if $(p,q,r)=(0,0,0), (0,N,1)$, 
and $R_{pqrs}^{\boldsymbol{k}[H_3]}$ is that of $\boldsymbol{k}[\langle h\rangle ]$ if and only if $(p,q,r)=(0,0,0)$. 
\end{proof}

\par \smallskip 
\begin{lem}
Let $N\geq 1$ be an odd integer, and $n\geq 2$ an integer. 
\par 
(1) The Drinfel'd element $u_{aq\nu }$ associated to $R_{aq\nu }$ in Proposition~\ref{5.18} is given by 
$$u_{aq\nu }=\sum\limits_{i=0}^{n-1}\sum\limits_{k=0}^{2N-1}
\nu ^k\omega ^{-2aN(2i+k)^2-2nqk^2} \kern0.2em E_{ik},$$
where $\{ E_{ik}\} $ be the set of primitive idempotents of $\boldsymbol{k}[H]$ defined in Lemma~\ref{5.16}. 
\par 
(2) In the case of $n=2$, for each $i=1,2$ 
the Drinfel'd element $u_d^{(i)}$ associated to $R_d^{(i)}$ given by (\ref{eq5.19}) and (\ref{eq5.20})  is given by $$u_d^{(i)}=\sum_{k=0}^{2N-1}\omega ^{-4dk^2} E_k,$$
where 
$E_k=\frac{1}{2N}\sum_{l=0}^{2N-1}\omega ^{-4kl}h^l$. 
\end{lem}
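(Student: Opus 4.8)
The plan is to compute each Drinfel'd element straight from its definition $u=\sum_i S(\beta_i)\alpha_i$ attached to a universal $R$-matrix $R=\sum_i\alpha_i\otimes\beta_i$, exploiting the multiplication rule and the antipode formula for the idempotent basis $\{E_{ik}\}$ established in Lemma~\ref{5.16}. Writing $R_{aq\nu}$ as in (\ref{eq5.18}), the definition yields
\[
u_{aq\nu}=\sum_{i,j=0}^{n-1}\sum_{k,l=0}^{2N-1}\nu^{kl}\,\omega^{2aN(2i+k)(2j+l)+2qkln}\,S(E_{jl})E_{ik},
\]
so the whole task reduces to evaluating the products $S(E_{jl})E_{ik}$. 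From (\ref{eq5.8}) we have $S(E_{jl})=E_{-j,-l}$, and by (\ref{eq5.5}) the product $E_{-j,-l}E_{ik}$ vanishes unless $-l\equiv k\ (\textrm{mod}\ 2N)$ and $-l-2j\equiv k+2i\ (\textrm{mod}\ 2n)$, in which case it equals $E_{ik}$. Eliminating $l$ between these two congruences (and using (\ref{eq5.4}) to normalise the ranges) shows that the only surviving terms are those with $l\equiv-k\ (\textrm{mod}\ 2N)$ and $j\equiv-i\ (\textrm{mod}\ n)$.

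Substituting $j=-i$ and $l=-k$ into the remaining single sum over $(i,k)$ then leaves pure bookkeeping: $\nu^{kl}=\nu^{-k^2}=\nu^{k}$ since $\nu=\pm1$ and $k^2\equiv k\ (\textrm{mod}\ 2)$, while the exponent of $\omega$ becomes $2aN(2i+k)(-2i-k)+2qk(-k)n=-2aN(2i+k)^2-2nqk^2$. Assembling these gives exactly the claimed formula $u_{aq\nu}=\sum_{i=0}^{n-1}\sum_{k=0}^{2N-1}\nu^{k}\omega^{-2aN(2i+k)^2-2nqk^2}E_{ik}$.

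For part (2) I would repeat the argument with the idempotents $\{E_{ik}\}$ of $\boldsymbol{k}[H_i]\cong C_2\times C_{2N}$ ($i=2,3$) used to write $R_d^{(1)}$ and $R_d^{(2)}$ in (\ref{eq5.19}) and (\ref{eq5.20}). On such a direct-product group algebra the antipode sends the idempotent of a character to the idempotent of the inverse character, which here fixes the first index (read modulo $2$) and negates the second (read modulo $2N$); hence $S(E_{jl})E_{ik}\neq0$ forces $j=i$ and $l\equiv-k\ (\textrm{mod}\ 2N)$, and then the coefficient $(-1)^{jk+il}\omega^{4dkl}$ collapses to $(-1)^{ik-ik}\omega^{-4dk^2}=\omega^{-4dk^2}$. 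This produces $u_d^{(i)}=\sum_{k=0}^{2N-1}\omega^{-4dk^2}\bigl(\sum_{i=0,1}E_{ik}\bigr)$, and it remains to note that summing the explicit expression for $E_{ik}$ over $i=0,1$ kills the component involving $t$ (respectively $tw$) and leaves $\frac{1}{2N}\sum_{l=0}^{2N-1}\omega^{-4kl}h^l=E_k$, giving $u_d^{(i)}=\sum_{k=0}^{2N-1}\omega^{-4dk^2}E_k$.

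I expect this to be a short verification with no conceptual obstacle; the one place demanding care is the collapse of the double sums, where one must keep track of the fact that the first index of $E_{ik}$ lives modulo $n$ while the second lives modulo $2N$ (and modulo $2$ in the $H_2,H_3$ case), so that the two congruence conditions coming from (\ref{eq5.5}) combine correctly into the pair $j\equiv-i$, $l\equiv-k$.
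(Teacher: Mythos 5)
Your proof is correct and follows essentially the same route as the paper: compute $u=\sum S(\beta_i)\alpha_i$, use the antipode formula (\ref{eq5.8}) and the multiplication rule (\ref{eq5.5}) to kill all terms except those with $(j,l)$ equivalent to $(-i,-k)$, and then simplify the surviving coefficient (part (2) likewise matches the paper's computation, including the final collapse $E_{0k}+E_{1k}=E_k$). The only point to make precise is that for standard-range representatives with $k\neq 0$ the surviving pair is $(j,l)=(-i-N\ \mathrm{mod}\ n,\ 2N-k)$ rather than literally $j\equiv -i\ (\mathrm{mod}\ n)$; your direct substitution $(j,l)=(-i,-k)$ is nonetheless valid because the coefficient $\nu^{kl}\omega^{2aN(2i+k)(2j+l)+2nqkl}$ is invariant under the shift $(j,l)\mapsto (j-N,\,l+2N)$ of (\ref{eq5.4}), which is exactly the bookkeeping the paper carries out by splitting into the cases $k=0$ and $k\in\{1,\dots,2N-1\}$.
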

\begin{proof}
(1) By definition, 
\begin{align*}
u_{aq\nu }
&=\sum\limits_{i,j=0}^{n-1}\sum\limits_{k,l=0}^{2N-1}
\nu ^{kl}\omega^{2aN(2i+k)(2j+l)+2qkln} \delta _{k,-l}^{(2N)}\delta _{2i+k, -2j-l}^{(2n)} \kern0.2em E_{ik}. 
\end{align*}

For a fixed integer $k\in \{ 0,1,\ldots ,2N-1\} $ 
there is a unique integer $l\in \{0,1,\ldots ,2N-1\} $ satisfying 
$-l\equiv k\ (\textrm{mod}\kern0.1em 2N)$, and that is given by 
$$l=\begin{cases} 
2N-k & (k=1,\ldots , 2N-1), \\ 
0 & (k=0), \end{cases}$$
and under this condition, for a fixed  integer $i\in \{0,1,\ldots ,n-1\}$ there is a unique integer $j\in \{ 0,1,\ldots ,n-1\}$ satisfying $-2j-l\equiv 2i+k\ (\textrm{mod}\kern0.1em 2n)$, and that is given by 
$$j\equiv \begin{cases} 
-N-i & (k=1,\ldots , 2N-1), \\ 
-i & (k=0) 
\end{cases} \quad (\textrm{mod}\ n).$$
Thus we have 
\begin{align*}
u_{aq\nu }
&=\sum\limits_{i=0}^{n-1}\sum\limits_{k=1}^{2N-1}
\nu ^{k}\omega ^{-2aN(2i+k)^2-2qk^2n} \kern0.2em E_{ik} 
+ \sum\limits_{i=0}^{n-1}\omega ^{-8ai^2N} \kern0.2em E_{i0}  \\ 
&=\sum\limits_{i=0}^{n-1}\sum\limits_{k=0}^{2N-1}
\nu ^{k}\omega^{-2aN(2i+k)^2-2qk^2n} \kern0.2em E_{ik}. 
\end{align*}
(2) By definition, 
$$
u_d^{(1)}=\sum_{i,j=0,1}\sum_{k,l=0}^{2N-1}(-1)^{jk+il}\omega ^{4dkl} S(E_{jl})E_{ik}=\sum_{i=0,1}\sum_{k=0}^{2N-1}\omega ^{-4dk^2} E_{ik}=\sum_{k=0}^{2N-1}\omega ^{-4dk^2} E_k.$$
By a similar computation, we have the formula for $u_d^{(2)}$. 
\end{proof}

\par \smallskip 
Let $\omega \in \boldsymbol{k}$ be a primitive $4nN$-th root of unity. 
Then a full set of non-isomorphic simple left $\boldsymbol{k}[G_{Nn}]$-modules is given by 
\begin{align*}
& \{ \ V_{ijk}\ \vert \ i,j=0,1,\ k=0,2,\ldots ,2N-2\} \\ 
\cup &
\{ \ V_{jk}\ \vert \ k=0,1,\ldots ,2N-1,\ j=1,2,\ldots ,n-1,\ j\equiv k \ (\textrm{mod}\ 2) \} , 
\end{align*}
\noindent 
where 
the action $\chi _{ijk}$ of $\boldsymbol{k}[G_{Nn}]$ on $V_{ijk}=\boldsymbol{k}$ is given by 
\begin{equation}\label{eq5.21}
\chi _{ijk}(t)=(-1)^i,\ \chi _{ijk}(w)=(-1)^j,\ \chi _{ijk}(h)=\begin{cases} \omega ^{2kn} & (\textrm{$n$ is even}),\\ \omega ^{2(j+k)n} & (\textrm{$n$ is odd}),  \end{cases}
\end{equation}
and the left action $\rho _{jk}$ of $\boldsymbol{k}[G_{Nn}]$ on $V_{jk}=\boldsymbol{k}\oplus \boldsymbol{k}$ is given by 
\begin{equation}\label{eq5.22}
\rho _{jk}(t)=\begin{pmatrix} 0 & 1 \\ 1 & 0 \end{pmatrix},\ 
\rho _{jk}(w)=\begin{pmatrix} \omega ^{2jN} & 0 \\ 0 & \omega^{-2jN} \end{pmatrix},\ 
\rho _{jk}(h)=\begin{pmatrix} \omega ^{2kn} & 0 \\ 0 & \omega ^{2kn} \end{pmatrix}.
\end{equation}

\par \medskip 
\begin{lem}\label{5.21}
For each universal R-matrix $R$ of $\boldsymbol{k}[G_{Nn}]$, 
the $R$-dimensions of the simple left $\boldsymbol{k}[G_{Nn}]$-modules $V_{ijk}$ and $V_{jk}$ are given as follows. 
\par 
(1) For $n\geq 2$, 
\begin{align}
\underline{\dim}_{R_{aq\nu }}\kern0.1em V_{ijk}
&=\begin{cases} \omega ^{-2nqk^2} & \quad (\textrm{$n$ is even}),\\ 
\nu ^{j}(-1)^{aj}\omega ^{-2nq(j+k)^2} & \quad (\textrm{$n$ is odd}), 
\end{cases} \label{eq5.23} \\ 
\underline{\dim}_{R_{aq\nu }}\kern0.1em V_{jk}&=2\nu ^{k}\omega ^{-2nqk^2-2Naj^2}. \label{eq5.24}
\end{align}
(2) For $n=2$, 
\begin{align*}
\underline{\dim}_{R_d^{(1)}}\kern0.1em V_{ijk}&=\underline{\dim}_{R_d^{(2)}}\kern0.1em V_{ijk}=\omega ^{-4dk^2}, \\ 
\underline{\dim}_{R_d^{(1)}}\kern0.1em V_{1k}&=\underline{\dim}_{R_d^{(2)}}\kern0.1em V_{1k}=2\omega ^{-4dk^2}.
\end{align*}
\end{lem}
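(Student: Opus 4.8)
The plan is to read off each $R$-dimension as the trace of the relevant Drinfel'd element, exploiting the fact that these elements have already been written in the idempotent bases of $\boldsymbol{k}[H]$ and $\boldsymbol{k}[\langle h\rangle]$. First I would recall that, by definition, $\underline{\dim}_R M$ is the trace of the left action of the Drinfel'd element $u$ of $(\boldsymbol{k}[G_{Nn}],R)$ on $M$, and that since $\boldsymbol{k}[G_{Nn}]$ is semisimple and cosemisimple, $u$ is central; hence $u$ acts on each absolutely simple module as a scalar and the trace equals $(\dim M)$ times that scalar. The Drinfel'd elements $u_{aq\nu}$ and $u_d^{(i)}$ were expressed in the preceding lemma as explicit $\boldsymbol{k}$-linear combinations of the primitive idempotents $E_{ik}$ of $\boldsymbol{k}[H]$ (with $H=\langle h,w\rangle$), respectively of the primitive idempotents $E_k$ of $\boldsymbol{k}[\langle h\rangle]$. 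So it suffices to evaluate these idempotents on the simple modules restricted to the relevant commutative subalgebra, which is immediate from orthogonality of characters (Lemma~\ref{5.16}): if $\chi$ is a one-dimensional character of $H$, then $\chi(E_{i'k'})$ is $1$ when $\chi$ coincides with the character $\chi_{i'k'}$ of Lemma~\ref{5.16} and $0$ otherwise, and similarly for $\langle h\rangle$.

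For the one-dimensional modules $V_{ijk}$, by (\ref{eq5.21}) the restriction of $\chi_{ijk}$ to $H$ is the character $w\mapsto(-1)^j,\ h\mapsto\omega^{2kn}$ when $n$ is even, and $w\mapsto(-1)^j,\ h\mapsto\omega^{2(j+k)n}$ when $n$ is odd; this coincides with exactly one $\chi_{i'k'}$, so $\chi_{ijk}(u_{aq\nu})$ collapses to the single term $\nu^{k'}\omega^{-2aN(2i'+k')^2-2nq(k')^2}$. The pair $(i',k')$ is determined by $k'\equiv k$ (resp.\ $k'\equiv j+k$) modulo $2N$ together with $2i'+k'\equiv nj$ modulo $2n$. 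Using that $k$ is even in $V_{ijk}$, that $\omega^{2nN}=-1$, and that $\omega^{4nN}=1$, one then reduces the exponents: $\nu^{k'}=1$ and $\omega^{-2aN(2i'+k')^2}=1$ when $n$ is even, while $\nu^{k'}=\nu^{j}$ and $\omega^{-2aN(2i'+k')^2}=(-1)^{aj}$ when $n$ is odd; and $\omega^{-2nq(k')^2}$ becomes $\omega^{-2nqk^2}$ (resp.\ $\omega^{-2nq(j+k)^2}$). This yields (\ref{eq5.23}). The corresponding part of (2) is the same argument with $H$ replaced by $\langle h\rangle$: for $n=2$, $V_{ijk}$ restricts to the single character $h\mapsto\omega^{4k}$, and the term of $u_d^{(i)}=\sum_k\omega^{-4dk^2}E_k$ it selects is $\omega^{-4dk^2}$.

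For the two-dimensional modules $V_{jk}$, by (\ref{eq5.22}) the action of $H$ is by diagonal matrices, so $V_{jk}|_H$ is the direct sum of the two $H$-characters $w\mapsto\omega^{\pm2jN},\ h\mapsto\omega^{2kn}$ (distinct, since $1\le j\le n-1$ forces $\omega^{4jN}\ne1$), and the trace of $u_{aq\nu}$ on $V_{jk}$ is the sum of the two scalars obtained as above. The two pairs $(i',k')$ satisfy $k'\equiv k$ modulo $2N$ and $2i'+k'\equiv\pm j$ modulo $2n$; since $(\pm j)^2=j^2$, the same reduction shows both scalars equal $\nu^{k}\omega^{-2nqk^2-2Naj^2}$, giving (\ref{eq5.24}). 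For $n=2$ and $R_d^{(i)}$, $V_{1k}$ restricts to twice the character $h\mapsto\omega^{4k}$ of $\langle h\rangle$, so its $R$-dimension is $2\omega^{-4dk^2}$.

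The only delicate point -- and the main obstacle -- is the modular bookkeeping in the second and third steps: one must check that the exponents, which a priori depend on the auxiliary indices $i',k'$ of the selected idempotent, in fact depend only on $j$ and $k$ in the stated form. This is a routine verification combining the defining congruences for $(i',k')$ with the parity of $k$ and the relations $\omega^{4nN}=1$ and $\omega^{2nN}=-1$, but it requires a careful case split on the parity of $n$ (and a separate, simpler treatment of the $n=2$ matrices $R_d^{(i)}$).
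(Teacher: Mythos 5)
Your proposal is correct and follows essentially the same route as the paper: express the Drinfel'd elements $u_{aq\nu}$ and $u_d^{(i)}$ in the primitive-idempotent basis, use orthogonality of characters to see that the restriction of each simple module to $H$ (resp.\ $\langle h\rangle$) selects exactly one idempotent per diagonal entry, and then reduce the resulting exponents modulo $4nN$ using $\omega^{4nN}=1$, $\omega^{2nN}=-1$ and the parity of $k$. The reductions you sketch (the congruences determining $(i',k')$ and the resulting simplifications of $\nu^{k'}$ and $\omega^{-2aN(2i'+k')^2}$) check out and coincide with the computation of $\chi_{ijk}(E_{ml})$ and $\rho_{jk}(E_{ml})$ carried out explicitly in the paper.
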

\begin{proof}
(1) Let $n$ be an even integer. 
Since 
\begin{align*}
\chi_{ijk}(E_{ml})
&=\frac{1}{2nN}\sum\limits_{b=0}^{n-1}\sum\limits_{p=0}^{2N-1}\omega ^{-2bN(l+2m)-2lpn}(-1)^{jb}(\omega )^{2kpn} \\ 
&=\delta _{k,l}
\frac{1}{n}\sum\limits_{b=0}^{n-1}\omega ^{4bN(\frac{nj-k}{2}-m)} \\ 
&=\delta _{k,l}\delta _{\frac{nj-k}{2},m}^{(n)}, 
\end{align*}
we have 
\begin{align*}
\chi_{ijk}(u_{aq\nu})
=\sum\limits_{m=0}^{n-1}
\nu ^{k}\omega ^{-2aN(2m+k)^2-2qk^2n} \delta _{\frac{nj-k}{2},m}^{(n)} 
=\sum\limits_{m=0}^{n-1}
\omega^{-2aN(2m+k)^2-2qk^2n} \delta _{\frac{nj-k}{2},m}^{(n)}. 
\end{align*}

Since there is a unique integer in 
$\{ 0,1,\ldots ,n-1\} $ which is equal to $\frac{nj-k}{2}$ as modulo $n$, 
\begin{align*}
\chi_{ijk}(u_{aq\nu})
&=\omega^{-2aN((nj-k)+k)^2-2qk^2n}  =\omega ^{-2qk^2n}. 
\end{align*}

This shows that 
$\underline{\dim}_{R_{aq\nu }}\kern0.1em V_{ijk}=\omega^{-2qk^2n}$. 
\par 
Let $n$ be an odd integer. In this case, 
$\chi_{ijk}(E_{ml})=\delta _{j+k,l}\delta _{\frac{nj-j-k}{2},m}^{(n)}$ since $n-1,\ k$ are even, and thus 
we have 
\begin{align*}
\chi_{ijk}(u_{aq\nu})
&=\sum\limits_{m=0}^{n-1}
\nu ^{j}\omega^{-2aN(2m+j+k)^2-2q(j+k)^2n} \delta _{\frac{nj-j-k}{2},m}^{(n)} \\ 
&=\nu ^j\omega^{-2aN((nj-j-k)+j+k)^2-2q(j+k)^2n}  
=\nu ^j(-1)^{aj}\omega ^{-2q(j+k)^2n}. 
\end{align*}
This shows that 
$\underline{\dim}_{R_{aq\nu }}\kern0.1em V_{ijk}=\nu ^j(-1)^{aj}\omega^{-2q(j+k)^2n}$. 
\par 
Next we calculate $\underline{\dim}_{R_{aq\nu }}\kern0.1em V_{jk}$. 
Since 
\begin{align*}
\rho _{jk}(E_{ml})
&=\frac{1}{2nN}\sum\limits_{b=0}^{n-1}\sum\limits_{p=0}^{2N-1}\omega ^{-2bN(l+2m)-2lpn}
\begin{pmatrix} 
\omega ^{2jN} & 0 \\ 
0 & \omega^{-2jN} \end{pmatrix}^b
\begin{pmatrix} 
\omega ^{2kn} & 0 \\ 
0 & \omega ^{2kn} \end{pmatrix}^p \\ 
&=\delta _{k,l}\begin{pmatrix} 
\dfrac{1}{n}\sum\limits_{b=0}^{n-1}\omega ^{4bN(\frac{j-k}{2}-m)} & 0 \\[0.1cm]  
0 & \dfrac{1}{n}\sum\limits_{b=0}^{n-1}\omega ^{4bN(\frac{-j-k}{2}-m)} \end{pmatrix} \\ 
&=\delta _{k,l}\begin{pmatrix} 
\delta _{\frac{j-k}{2},m}^{(n)} & 0 \\[0.1cm]  
0 & \delta _{\frac{-j-k}{2},m}^{(n)} \end{pmatrix}, 
\end{align*}
we have 
\begin{align*}
\rho_{jk}(u_{aq\nu})
&=\sum\limits_{m=0}^{n-1}
\nu ^{k}\omega ^{-2aN(2m+k)^2-2qk^2n}\begin{pmatrix} 
\delta _{\frac{j-k}{2},m}^{(n)} & 0 \\[0.1cm]  
0 & \delta _{\frac{-j-k}{2},m}^{(n)} \end{pmatrix}. 
\end{align*}
Thus we have 
$$
\underline{\dim}_{R_{aq\nu }}\kern0.1em V_{jk}
=2\nu ^{k}\omega^{-2aN(2m+k)^2-2qk^2n}
\delta _{\frac{j-k}{2},m}^{(n)} =2\nu ^{k}\omega^{-2aNj^2-2qk^2n}.$$
(2) From  
$\chi_{ijk}(E_{l})=\delta _{k,l}$ and $
\rho_{1k}(E_l)=\delta _{k,l}\begin{pmatrix} 
1 & 0 \\ 
0 & 1
\end{pmatrix}$ we have 
\begin{align*}
\underline{\dim}_{R_d^{(1)}} V_{ijk}
&=\underline{\dim}_{R_d^{(2)}} V_{ijk}
=\sum\limits_{l=0}^{2N-1}\omega ^{-4dl^2} \chi_{ijk}(E_l) 
=\sum\limits_{l=0}^{2N-1}\omega ^{-4dl^2} \delta _{k,l} 
=\omega ^{-4dk^2}, \\ 
\underline{\dim}_{R_d^{(1)}} V_{1k}
&=\underline{\dim}_{R_d^{(2)}} V_{1k}
=\sum\limits_{l=0}^{2N-1}\omega ^{-4dl^2} \textrm{Tr}\kern0.1em \rho_{1k}(E_l) =\sum\limits_{l=0}^{2N-1}\omega ^{-4dl^2} \cdot 2\delta _{k,l} 
=2\omega ^{-4dk^2} . 
\end{align*}
\end{proof}

\par \medskip 
Combining the results in Proposition~\ref{5.18}, Proposition~\ref{5.20}  and Lemma~\ref{5.21}, we have the following. 

\par \medskip 
\begin{prop}\label{5.22}
Let $N\geq 1$ be an odd integer, and $n\geq 2$ an integer, and consider the group 
$$G_{Nn}=\langle h,\ t,\ w\ \vert \ t^2=h^{2N}=1,\ w^n=h^{N},\ tw=w^{-1}t,\ ht=th,\ hw=wh \rangle .$$
Let $\omega $ be a primitive $4nN$-th root of unity in a field $\boldsymbol{k}$ whose characteristic dose not divide $2nN$. 
Then the polynomial invariants of the group Hopf algebra $\boldsymbol{k}[G_{Nn}]$ are  given by the following. 
\begin{align*}
P_{\boldsymbol{k}[G_{Nn}]}^{(1)}(x)
&=\begin{cases}   
\prod\limits_{s=0}^{N-1} 
\prod\limits_{q=0}^{N-1}
(x-\omega^{-8nqs^2})^{4n}
(x^2-\omega^{-4nq(2s+1)^2})^{2n}
& (\textrm{$n$ is odd}),   \\[0.4cm]  
\prod\limits_{s=0}^{N-1} 
\prod\limits_{q=0}^{N-1}(x-\omega^{-8nqs^2})^{8n} 
& (\textrm{$n\geq 4$ is even}), \\[0.4cm]  
\prod\limits_{s=0}^{N-1} 
\prod\limits_{q=0}^{N-1}(x-\omega^{-16qs^2})^{32} 
& (\textrm{$n=2$}),
\end{cases} \displaybreak[1] \\ 
P_{\boldsymbol{k}[G_{Nn}]}^{(2)}(x)
&=\begin{cases} 
\prod\limits_{s=0}^{N-1}\prod\limits_{t=1}^{\frac{n-\epsilon (n)}{2}}
\prod\limits_{a=0}^{n-1}\prod\limits_{q=0}^{N-1}
( x^2- \omega ^{-4(nq(2s+1)^2+Na(2t-1)^2)}) \\ 
\qquad \qquad  \times \prod\limits_{s=0}^{N-1}\prod\limits_{t=1}^{\frac{n-2+\epsilon (n)}{2}}
\prod\limits_{a=0}^{n-1}\prod\limits_{q=0}^{N-1}( x- \omega ^{-8(nqs^2+Nat^2)})^2  & (\textrm{$n\geq 3$}), \\[0.4cm]  
\prod\limits_{s=0}^{N-1}\prod\limits_{q=0}^{N-1}
( x^4- \omega ^{-16q(2s+1)^2})( x^2- \omega ^{-8q(2s+1)^2})^2
 & (\textrm{$n=2$}), 
\end{cases} 
\end{align*}
where 
$$\epsilon (n)=\begin{cases} 
0 & \textrm{$n$ is even}, \\ 
1 & \textrm{$n$ is odd}. \end{cases}$$
\end{prop}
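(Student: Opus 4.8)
The plan is to feed the data already assembled into the definition
$P_{\boldsymbol{k}[G_{Nn}]}^{(d)}(x)=\prod_i\prod_R\bigl(x-\underline{\dim}_R M_i/\dim M_i\bigr)$
and then regroup the product: the classification of universal $R$-matrices from Propositions~\ref{5.18} and \ref{5.20}, the list of absolutely simple modules $V_{ijk}$ (dimension $1$) and $V_{jk}$ (dimension $2$) recalled just above, and the $R$-dimensions of Lemma~\ref{5.21}. The whole argument is bookkeeping, and the difficulty is to keep the multiplicities and index ranges straight, so I would treat the cases $n$ odd, $n\ge 4$ even, and $n=2$ separately (the last because of the extra $R$-matrices $R_d^{(1)},R_d^{(2)}$, which by Propositions~\ref{5.18}--\ref{5.20} occur only for $n=2$).

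For $d=1$ the product runs over the $4N$ modules $V_{ijk}$ with $i,j\in\{0,1\}$ and $k\in\{0,2,\dots,2N-2\}$; I set $k=2s$ throughout. If $n$ is even, Lemma~\ref{5.21} gives $\underline{\dim}_{R_{aq\nu}}V_{ijk}=\omega^{-2nqk^2}=\omega^{-8nqs^2}$, independent of $a,\nu,i,j$, so for each fixed $q$ the $2n$ matrices $R_{aq\nu}$ and the $4$ choices of $(i,j)$ contribute $(x-\omega^{-8nqs^2})^{8n}$, which after multiplying over $s$ and $q$ is the asserted formula when $n\ge 4$. If $n$ is odd I would split on $j$: for $j=0$ the $R$-dimension is still $\omega^{-8nqs^2}$, yielding $(x-\omega^{-8nqs^2})^{4n}$; for $j=1$ it is $\nu(-1)^a\omega^{-2nq(2s+1)^2}$, and the key point is that as $(a,\nu)$ runs over $\{0,\dots,n-1\}\times\{\pm1\}$ the sign $\nu(-1)^a$ equals $+1$ exactly $n$ times and $-1$ exactly $n$ times (here $n$ odd is essential), so $\prod_{a,\nu}(x-\nu(-1)^ay)=(x^2-y^2)^n$ and one collects $(x^2-\omega^{-4nq(2s+1)^2})^{2n}$.

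For $d=2$ the product runs over the modules $V_{jk}$ with $1\le j\le n-1$, $0\le k\le 2N-1$, $j\equiv k\pmod 2$, using $\underline{\dim}_{R_{aq\nu}}V_{jk}/2=\nu^k\omega^{-2nqk^2-2Naj^2}$. When $j,k$ are even I would put $k=2s$, $j=2t$, note $\nu^k=1$ so that $\prod_\nu$ merely squares, and collect $\prod_{a,q}(x-\omega^{-8(nqs^2+Nat^2)})^2$; when $j,k$ are odd I would put $k=2s+1$, $j=2t-1$, note $\nu^k=\nu$ so that $\prod_\nu(x-\nu y)=x^2-y^2$, and collect $\prod_{a,q}(x^2-\omega^{-4(nq(2s+1)^2+Na(2t-1)^2)})$. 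The parameter $\epsilon(n)$ enters here and only here: among $j\in\{1,\dots,n-1\}$ there are $(n-\epsilon(n))/2$ odd values (matched by $t=1,\dots,(n-\epsilon(n))/2$) and $(n-2+\epsilon(n))/2$ even values.

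Finally, for $n=2$ I would incorporate the universal $R$-matrices $R_d^{(1)},R_d^{(2)}$ ($d=0,\dots,2N-1$), whose $R$-dimensions by Lemma~\ref{5.21} are $\omega^{-4dk^2}$ on $V_{ijk}$ and $2\omega^{-4dk^2}$ on $V_{1k}$, and exploit that $\omega$ is a primitive $8N$-th root of unity, so $\omega^{4N}=-1$. This lets me collapse $\prod_{d=0}^{2N-1}$ to $\prod_{q=0}^{N-1}$ with the correct doubled multiplicity (because $\omega^{-16ds^2}$ depends only on $d\bmod N$), and it turns $\prod_{d=0}^{2N-1}(x-\omega^{-4dk^2})$ with $k$ odd into $\prod_{q=0}^{N-1}(x^2-\omega^{-8qk^2})$ and $\prod_{a=0,1}(x^2-\omega^{-8qk^2-4Na})$ into $x^4-\omega^{-16qk^2}$; combining with the $R_{aq\nu}$-contribution gives the two $n=2$ formulas. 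The main obstacle throughout is purely combinatorial accountancy: pinning down every exponent ($2$, $2n$, $4n$, $8n$, $16$, $32$) and every summation range exactly, in particular the $\epsilon(n)$ dichotomy and the sign-collapses occurring in the $n=2$ case.
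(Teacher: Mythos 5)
Your proposal is correct and follows essentially the same route as the paper: substitute the classification of universal $R$-matrices (Propositions~\ref{5.18} and \ref{5.20}), the list of simple modules $V_{ijk}$, $V_{jk}$, and the $R$-dimension formulas of Lemma~\ref{5.21} into the definition, then regroup by parity of $j$ and $k$ and, for $n=2$, collapse the extra products over $R_d^{(1)},R_d^{(2)}$ using $\omega^{4N}=-1$. All the multiplicity counts you identify (the sign distribution of $\nu(-1)^a$, the $\epsilon(n)$ dichotomy on the number of odd versus even $j$, and the exponents $4n$, $8n$, $32$) match the paper's computation.
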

\begin{proof}
First, we consider the case of $n\geq 3$. 
By Proposition~\ref{5.18} any universal $R$-matrix of $\boldsymbol{k}[G_{Nn}]$ coincides with exactly one of $R_{aq\nu }\ (a=0,1,\ldots , n-1,\ q=0,1,\ldots , N-1,\ \nu =\pm 1)$.  
\par 
Suppose that $n\ (\geq 4)$ is even. 
Then by Lemma~\ref{5.21}(1), the polynomial invariant $P_{\boldsymbol{k}[G_{Nn}]}^{(1)}(x)$ 
is given by 
$$P_{\boldsymbol{k}[G_{Nn}]}^{(1)}(x)
=\prod\limits_{i,j=0}^1\prod\limits_{s=0}^{N-1} 
P_{\boldsymbol{k}[G_{Nn}],\ V_{ij,2s}}(x) 
=\prod\limits_{i,j=0}^1\prod\limits_{s=0}^{N-1} 
\prod\limits_{q=0}^{N-1}(x-\omega^{-8nqs^2})^{2n} 
=\prod\limits_{s=0}^{N-1} 
\prod\limits_{q=0}^{N-1}(x-\omega^{-8nqs^2})^{8n}.$$
By the same lemma, since $P_{\boldsymbol{k}[G_{Nn}],\ V_{jk}}(x)$ is given by $P_{\boldsymbol{k}[G_{Nn}],\ V_{jk}}(x)=
\prod\limits_{a=0}^{n-1}\prod\limits_{q=0}^{N-1}
\prod\limits_{\nu =\pm 1}( x- \nu ^{k}\omega ^{-2(nqk^2+Naj^2)})$
for the simple left $\boldsymbol{k}[G_{Nn}]$-module $V_{jk}$, 
we have 
\begin{align*}
P_{\boldsymbol{k}[G_{Nn}]}^{(2)}(x)
&=
\Bigl( \prod\limits_{s=0}^{N-1}\prod\limits_{t=1}^{\frac{n}{2}}
P_{\boldsymbol{k}[G_{Nn}],\ V_{2t-1, 2s+1}}(x) \Bigr) \cdot 
\Bigl( \prod\limits_{s=0}^{N-1}\prod\limits_{t=1}^{\frac{n-2}{2}}
P_{\boldsymbol{k}[G_{Nn}],\ V_{2t, 2s}}(x) \Bigr) \\ 
&=
\prod\limits_{s=0}^{N-1}\prod\limits_{t=1}^{\frac{n}{2}}
\prod\limits_{a=0}^{n-1}\prod\limits_{q=0}^{N-1}
\prod\limits_{\nu =\pm 1}( x- \nu \omega ^{-2(nq(2s+1)^2+Na(2t-1)^2)}) \\ 
&\qquad \qquad \qquad \qquad 
 \times \prod\limits_{s=0}^{N-1}\prod\limits_{t=1}^{\frac{n-2}{2}}
\prod\limits_{a=0}^{n-1}\prod\limits_{q=0}^{N-1}( x- \omega ^{-8(nqs^2+Nat^2)})^2 \displaybreak[0] \\ 
&=
\prod\limits_{s=0}^{N-1}\prod\limits_{t=1}^{\frac{n}{2}}
\prod\limits_{a=0}^{n-1}\prod\limits_{q=0}^{N-1}
( x^2- \omega ^{-4(nq(2s+1)^2+Na(2t-1)^2)}) \\ 
&\qquad \qquad \qquad \qquad 
 \times \prod\limits_{s=0}^{N-1}\prod\limits_{t=1}^{\frac{n-2}{2}}
\prod\limits_{a=0}^{n-1}\prod\limits_{q=0}^{N-1}( x- \omega ^{-8(nqs^2+Nat^2)})^2. 
\end{align*}

By a quite similar consideration, we calculate the polynomial invariants of  $\boldsymbol{k}[G_{Nn}]$ in the case where $n\ (\geq 3)$ is odd. 
\par 
Next, we consider the case of $n=2$. 
Then by Proposition~\ref{5.20}, any universal $R$-matrix of $\boldsymbol{k}[G_{N2}]$ coincides with exactly one of $R_{aq\nu }\ (a=0,1,\ q=0,1,\ldots , N-1,\ \nu =\pm 1),\ R^{(1)}_d, R^{(2)}_d\ (d=0,1,\ldots , 2N-1)$.  
Thus by Lemma~\ref{5.21}, the polynomial invariant $P_{\boldsymbol{k}[G_{N2}]}^{(1)}(x)$ 
is given by 
\begin{align*}
P_{\boldsymbol{k}[G_{N2}]}^{(1)}(x)
&=\prod\limits_{i,j=0}^1\prod\limits_{s=0}^{N-1} 
P_{\boldsymbol{k}[G_{N2}],\ V_{ij,2s}}(x) \\ 
&=\prod_{i,j=0,1}\prod_{s=0}^{N-1}\biggl( \prod_{q=0}^{N-1}(x-\omega ^{-4q(2s)^2})^4\cdot \prod_{d=0}^{2N-1}(x-\omega ^{-4d(2s)^2})^2\biggr) \\ 
&=\prod_{s=0}^{N-1}\prod_{q=0}^{N-1}(x-\omega ^{-16qs^2})^{16}\cdot \prod_{s=0}^{N-1}\prod_{d=0}^{2N-1}(x-\omega ^{-16ds^2})^8 \\ 
&=\prod_{s=0}^{N-1}\prod_{q=0}^{N-1}(x-\omega ^{-16qs^2})^{32}.
\end{align*}
Similarly, by Proposition~\ref{5.20} and Lemma~\ref{5.21}, we have 
\begin{align*}
P_{\boldsymbol{k}[G_{N2}]}^{(2)}(x)
&=\prod_{s=0}^{N-1}P_{\boldsymbol{k}[G_{N2}], V_{1, 2s+1}}(x) \displaybreak[0] \\ 
&=\prod_{s=0}^{N-1}\biggl( \prod_{a=0}^{1}\prod_{q=0}^{N-1}\prod_{\nu =\pm 1}(x-\nu \omega ^{-2aN-4q(2s+1)^2})\cdot 
\prod_{d=0}^{2N-1}(x-\omega ^{-4d(2s+1)^2})^2\biggr) \displaybreak[0] \\ 
&=\prod_{s=0}^{N-1}\biggl( \prod_{a=0}^{1}\prod_{q=0}^{N-1}(x^2- \omega ^{-4aN-8q(2s+1)^2})\cdot 
\prod_{d=0}^{2N-1}(x-\omega ^{-4d(2s+1)^2})^2\biggr) . 
\end{align*}
Here, 
\begin{align*}
\prod_{d=0}^{2N-1}(x-\omega ^{-4d(2s+1)^2})
&=\prod_{q=0}^{N-1}(x-\omega ^{-4q(2s+1)^2})\cdot \prod_{q=0}^{N-1}(x-\omega ^{-4(N+q)(2s+1)^2}) \\ 
&=\prod_{q=0}^{N-1}(x-\omega ^{-4q(2s+1)^2})\cdot \prod_{q=0}^{N-1}(x+\omega ^{-4q(2s+1)^2}) \displaybreak[0] \\ 
&=\prod_{q=0}^{N-1}(x^2-\omega ^{-8q(2s+1)^2}), 
\end{align*}
and whence  
\begin{align*}
P_{\boldsymbol{k}[G_{N2}]}^{(2)}(x)
&=\prod_{s=0}^{N-1}\biggl( \prod_{q=0}^{N-1}(x^2- \omega ^{-4N-8q(2s+1)^2})\cdot 
\prod_{q=0}^{N-1}(x^2-\omega ^{-8q(2s+1)^2})^3\biggr) \\ 
&=\prod_{s=0}^{N-1}\prod_{q=0}^{N-1}(x^4-\omega ^{-16q(2s+1)^2})(x^2-\omega ^{-8q(2s+1)^2})^2. 
\end{align*}
\end{proof}

\par \medskip 
For an odd integer $N\geq 1$ and an integer $n\geq 2$, 
we set 
$$A_{Nn}=\begin{cases} 
A_{Nn}^{++}  & \textrm{if $n$ is odd}, \\[0.1cm]  
A_{Nn}^{+-}  & \textrm{if $n$ is even}. 
\end{cases}$$
We see immediately that if $n$ is odd, then $P_{A_{Nn}}^{(d)}(x)=P_{\boldsymbol{k}[G_{Nn}]}^{(d)}(x)$ for $d=1,2$. 
So, our polynomial invariants do not detect the representation categories of $A_{Nn}$ and $\boldsymbol{k}[G_{Nn}]$.  
However, for an even integer $n$ we have: 

\par 
\medskip 
\begin{cor}
Let $N\geq 1$ be an odd integer, and $n\geq 2$ an even integer. 
Let $\omega $ be a primitive $4nN$-th root of unity in a field $\boldsymbol{k}$ whose characteristic dose not divide $2nN$. 
Then two Hopf algebras 
$A_{Nn}$ and  $\boldsymbol{k}[G_{Nn}]$ are not monoidally Morita equivalent. 
\end{cor}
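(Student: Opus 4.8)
The plan is to invoke Theorem~\ref{2.5}: since the characteristic of $\boldsymbol{k}$ does not divide $2nN$, and hence does not divide $4nN=|G_{Nn}|$, both $A_{Nn}=A_{Nn}^{+-}$ (recall $n$ is even) and $\boldsymbol{k}[G_{Nn}]$ are semisimple, and both are cosemisimple ($A_{Nn}^{+-}$ by Suzuki's theorem, and the group algebra of a finite group is always cosemisimple). So it is enough to exhibit one positive integer $d$ with $P^{(d)}_{A_{Nn}}(x)\neq P^{(d)}_{\boldsymbol{k}[G_{Nn}]}(x)$. Because $N$ is odd and $\lambda=-1$ (condition~(A)), Corollary~\ref{5.11} gives $P^{(2)}_{A_{Nn}^{+-}}(x)$ explicitly, while Proposition~\ref{5.22} gives $P^{(2)}_{\boldsymbol{k}[G_{Nn}]}(x)$. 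I claim that $\zeta:=\omega^{-2N}$ — a root of unity of order $2n$ if $n\geq 3$ and of order $4$ if $n=2$ — is a root of the second polynomial but not of the first, which finishes the proof.

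To see that $x-\zeta$ divides $P^{(2)}_{\boldsymbol{k}[G_{Nn}]}(x)$: when $n\geq 3$, take $t=1$, $a=1$, $q=0$ (and any $s$) in the first product of the formula in Proposition~\ref{5.22}; this contributes the factor $x^2-\omega^{-4N}=(x-\zeta)(x+\zeta)$ since $\zeta^2=\omega^{-4N}$. When $n=2$, take $q=0$ in the first product; this contributes $x^4-1$, a factor having $\zeta$ as a root.

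Conversely I would check $P^{(2)}_{A_{Nn}^{+-}}(\zeta)\neq 0$ by inspecting the two product blocks of the formula in Corollary~\ref{5.11}, using $\lambda=-1$ so that $2j+\tfrac{\lambda-1}{2}=2j-1$. A factor $x^2-\omega^{-4in(2s+1)^2-2N(2t-1)^2(2j-1)}$ of the first block vanishes at $\zeta$ only if $\omega^{-4N}$ equals that power of $\omega$; halving the resulting congruence modulo $4nN$ gives
$$2in(2s+1)^2+N(2t-1)^2(2j-1)\equiv 2N \pmod{2nN},$$
whose left-hand side is odd (an even summand, as $n$ is even, plus an odd summand, as $N$, $(2t-1)^2$ and $2j-1$ are all odd) while the right-hand side is even; so there is no solution. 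A factor $(x-\omega^{-8ins^2-4Nt^2(2j-1)})^2$ of the second block vanishes at $\zeta$ only if
$$4ins^2+2Nt^2(2j-1)\equiv N \pmod{2nN},$$
whose left-hand side is even while $N$ is odd; again no solution. For $n=2$ the corresponding formula of Corollary~\ref{5.11} has the shapes $x^4+\omega^{-16i(2s+1)^2}$ and $(x^2-\omega^{-8i(2s+1)^2})^2$ instead, and the identical parity computation (using $\zeta^4=1$ and $\zeta^2=\omega^{-4N}$) shows neither vanishes at $\zeta$. Hence $x-\zeta\nmid P^{(2)}_{A_{Nn}^{+-}}(x)$, so $P^{(2)}_{A_{Nn}}(x)\neq P^{(2)}_{\boldsymbol{k}[G_{Nn}]}(x)$, and by Theorem~\ref{2.5} the Hopf algebras $A_{Nn}$ and $\boldsymbol{k}[G_{Nn}]$ are not monoidally Morita equivalent.

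The essential content is the parity bookkeeping of the third step; conceptually, the exponents occurring in $P^{(2)}_{A_{Nn}^{+-}}$ carry an extra odd term $N(2t-1)^2(2j-1)$ — forced by $\lambda=-1$ — that the exponents of $P^{(2)}_{\boldsymbol{k}[G_{Nn}]}$ do not, and this is exactly what the test element $\zeta=\omega^{-2N}$ detects. I anticipate no conceptual obstacle: the only care needed is to read off correctly the exponent of $\omega$ from each factor of the rather bulky formulas of Corollary~\ref{5.11} and Proposition~\ref{5.22}, and to confirm that the $n=2$ sub-case, where those formulas change shape, is genuinely covered by the same argument.
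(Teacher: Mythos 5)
Your proof is correct and follows essentially the same strategy as the paper: invoke Theorem~\ref{2.5} and distinguish the explicitly computed polynomials of degree-$2$ simples by a parity argument on the exponents of $\omega$ (I checked your congruences in both blocks of Corollary~\ref{5.11} and the $n=2$ sub-case; they all hold). The only harmless differences are that you exhibit a root of $P^{(2)}_{\boldsymbol{k}[G_{Nn}]}(x)$ that is not a root of $P^{(2)}_{A_{Nn}}(x)$ (namely $\omega^{-2N}$) whereas the paper goes in the opposite direction with $\omega^{-N}$, and you handle $n=2$ within $d=2$ where the paper switches to $P^{(1)}$ and the root $-1$.
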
 
\begin{proof} 
First, let us consider the case of $n\geq 3$, and compare $P_{A_{Nn}}^{(2)}(x)$ and $P_{\boldsymbol{k}[G_{Nn}]}^{(2)}(x)$. 
By Corollary~\ref{5.11} we see that $\omega ^{-N}$ is a root of the polynomial $P_{A_{Nn}}^{(2)}(x)$ since $n$ is even. 
On the other hand, by Proposition~\ref{5.22} an arbitrary root of $P_{\boldsymbol{k}[G_{Nn}]}^{(2)}(x)$ is written in the form $\omega ^{2k}$ for some integer $k$. 
If $P_{A_{Nn}}^{(2)}(x)=P_{\boldsymbol{k}[G_{Nn}]}^{(2)}(x)$, then $P_{\boldsymbol{k}[G_{Nn}]}^{(2)}(x)$ has to possess $\omega ^{-N}$ as a root. 
Then $\omega ^{-N}=\omega ^{2k}$ for some $k$, that is, $N+2k\equiv 0\ \ (\textrm{mod}\kern0.1em 4nN)$. This leads to a contradiction such that $N$ is even. 
So, $P_{A_{Nn}}^{(2)}(x)\not= P_{\boldsymbol{k}[G_{Nn}]}^{(2)}(x)$, and by Theorem~\ref{2.5} 
${}_{A_{Nn}}\mathbb{M}$ and ${}_{\boldsymbol{k}[G_{Nn}]}\mathbb{M}$ are not equivalent as $\boldsymbol{k}$-linear monoidal categories. 
\par 
Next, let us consider the case of $n=2$, and compare $P_{A_{N2}}^{(1)}(x)$ and $P_{\boldsymbol{k}[G_{N2}]}^{(1)}(x)$. 
By Corollary~\ref{5.11}  we see that $-1$ is a root of the polynomial $P_{A_{Nn}}^{(1)}(x)$. 
However, by Proposition~\ref{5.22} an arbitrary root of $P_{\boldsymbol{k}[G_{N2}]}^{(1)}(x)$ is written in the form $\omega ^{16k}$ for some integer $k$. 
By a similar argument above, we see that $-1=\omega ^{4N}$ is not a root of $P_{\boldsymbol{k}[G_{N2}]}^{(1)}(x)$. 
Thus 
$P_{A_{N2}}^{(1)}(x)\not= P_{\boldsymbol{k}[G_{N2}]}^{(1)}(x)$, and hence by Theorem~\ref{2.5} 
${}_{A_{N2}}\mathbb{M}$ and ${}_{\boldsymbol{k}[G_{N2}]}\mathbb{M}$ are not equivalent as $\boldsymbol{k}$-linear monoidal categories. 
\end{proof} 

\par \smallskip 
\begin{exam}\rm 
For a non-negative integer $h$, $\Phi_h$ denotes the $h$-th cyclotomic polynomial.  
Then by using Maple12 software,  we see that the polynomial invariants of Hopf algebras $\boldsymbol{k}[G_{Nn}]$ and $A_{Nn}$ for $N=1,3,5$ and $n=2,3,4$ are given as in the following table. 

\par \medskip 
\begin{center}
\renewcommand{\arraystretch}{1.3}
\begin{tabular}{c||c|c}\hline 
Hopf algebra $A$ & $P_A^{(1)}(x)$ & $P_A^{(2)}(x)$ \\ \hline 
$\boldsymbol{k}[G_{12}]$ & $\Phi_1^{32}$ &  $\Phi_4\Phi_2^3\Phi_1^3$ \\ 
$A_{12}$ & $\Phi_2^{16}\Phi_1^{16}$ &  $\Phi_8\Phi_2^2\Phi_1^2$ \\ \hline 
$\boldsymbol{k}[G_{32}]$ & $\Phi_3^{64}\Phi_1^{160}$ &  $\Phi_{12}^2\Phi_4^5\Phi_6^6\Phi_3^6\Phi_2^{15}\Phi_1^{15}$ \\ 
$A_{32}$ & $\Phi_6^{32}\Phi_3^{32}\Phi_2^{80}\Phi_1^{80}$ & $\Phi_{24}^2\Phi_8^5\Phi_6^4\Phi_3^4\Phi_2^{10}\Phi_1^{10}$  \\ \hline 
$\boldsymbol{k}[G_{52}]$ & $\Phi_5^{128}\Phi_1^{288}$ &  $\Phi_{20}^4\Phi_{10}^{12}\Phi_5^{12}\Phi_4^9\Phi_2^{27}\Phi_1^{27}$ \\ 
$A_{52}$ & $\Phi_{10}^{64}\Phi_5^{64}\Phi_2^{144}\Phi_1^{144}$ &  $\Phi_{40}^4\Phi_{10}^8\Phi_5^8  \Phi_8^9\Phi_2^{18}\Phi_1^{18}$ \\ \hline 
$\begin{matrix} 
\boldsymbol{k}[G_{13}] \\[0.2mm] 
A_{13}\end{matrix}$ & $\Phi_2^6\Phi_1^{18}$ &  $\Phi_6\Phi_3^3\Phi_2\Phi_1^3$ \\ \hline 
$\begin{matrix} 
\boldsymbol{k}[G_{33}] \\[0.2mm] 
A_{33}\end{matrix}$ & $\Phi_6^{12}\Phi_3^{36}\Phi_2^{30}\Phi_1^{90}$ &  $\Phi_6^9\Phi_3^{27}\Phi_2^9\Phi_1^{27}$ \\ \hline 
$\begin{matrix} 
\boldsymbol{k}[G_{53}] \\[0.2mm] 
A_{53}\end{matrix}$ & $\Phi_{10}^{24}\Phi_5^{72}\Phi_2^{54}\Phi_1^{162}$ &  $\Phi_{30}^4\Phi_{15}^{12}\Phi_{10}^4\Phi_5^{12}\Phi_6^9\Phi_3^{27}\Phi_2^9\Phi_1^{27}$ \\ \hline 
$\begin{matrix} 
\boldsymbol{k}[G_{14}] \\[0.2mm] 
A_{14}\end{matrix}$ & $\Phi_1^{32}$ & $\begin{matrix} 
\Phi_8^2\Phi_4^2\Phi_2^6\Phi_1^6 \\[0.2mm] 
\Phi_{16}^2\Phi_4^4 
\end{matrix} $ \\ \hline 
$\begin{matrix} 
\boldsymbol{k}[G_{34}] \\[0.2mm] 
A_{34}\end{matrix}$ & $\Phi_3^{64}\Phi_1^{160}$ & 
$\begin{matrix} 
\Phi_{24}^4\Phi_{12}^4\Phi_8^{10}\Phi_6^{12}\Phi_3^{12}\Phi_4^{10}\Phi_2^{30}\Phi_1^{30} \\[0.2mm]  
\Phi_{48}^4\Phi_{16}^{10}\Phi_{12}^8\Phi_4^{20}
\end{matrix} $ \\ \hline 
$\begin{matrix} 
\boldsymbol{k}[G_{54}] \\[0.2mm] 
A_{54}\end{matrix}$ & $\Phi_5^{128}\Phi_1^{288}$ &  
$\begin{matrix} 
\Phi_{40}^8\Phi_{20}^8\Phi_{10}^{24}\Phi_5^{24}\Phi_8^{18}\Phi_4^{18}\Phi_2^{54}\Phi_1^{54} \\[0.2mm] 
\Phi_{80}^8\Phi_{20}^{16}\Phi_{16}^{18}\Phi_4^{36}
\end{matrix} $ \\ \hline 
\end{tabular}
\end{center}
\end{exam}

\par \medskip 
We note that the representation rings of $A_{Nn}$ and $\boldsymbol{k}[G_{Nn}]$ are isomorphic as rings with $\ast$-structure (see Proposition~\ref{5.23} for details  in the next section). 
Thus the pair of two Hopf algebras $A_{Nn}$ and $\boldsymbol{k}[G_{Nn}]$ gives an example of that their representation rings are isomorphic, meanwhile their  representation categories are not.

\par \bigskip 
\section{Appendix: The representation ring and self-duality of $A_{Nn}^{+\lambda }$}
\par 
In this appendix, in the case where $N\geq 1$ is odd, by analyzing the algebraic structure of $A_{Nn}^{+\lambda }$ we introduce a \lq\lq convenient" basis of $A_{Nn}^{+\lambda }$ to compute the braidings $\sigma _{\alpha \beta}$ given in the previous section, and  determine the structure of the representation ring of it. 
As an application, we determine when $A_{Nn}^{+\lambda }$ is self-dual. 
As one more application, we show that if $n$ is even, then the representation ring of $A_{Nn}^{++}$ is non-commutative. 
This means that the dual Hopf algebra of $A_{Nn}^{+\lambda }$ has no quasitriangular structure. 
These results have already shown in \cite{CDMM} in the case of $N=1$. 
\par 
Throughout this section we assume that $N\geq 1$ is an odd integer,  $n\geq 2$ is an integer, and $\lambda =\pm 1$. 

\par \smallskip 
\subsection{The algebra structure of $A_{Nn}^{+\lambda}$} 
First of all, we determine the algebra structure of the Hopf algebra $A_{Nn}^{+\lambda }$. 
This is done by Masuoka \cite{Mas2} for the case of $N=1$ (see also \cite{CDMM}). 
\par  \smallskip 
\begin{prop}\label{5.24}
Let $G$ be the finite group presented by 
$$G=\langle h,\ t,\ w\ \vert \ t^2=h^{2N}=1,\ w^n=h^{(n+\frac{\lambda -1}{2})N},\ tw=w^{-1}t,\ ht=th,\ hw=wh \rangle .$$
Then there is an algebra isomorphism 
$\varphi :\boldsymbol{k}[G]\longrightarrow A^{+ \lambda }_{Nn}$ such that 
\begin{align}
\varphi (h)&=x_{11}^2-x_{12}^2, \label{eq6.1}\\ 
\varphi (t)&=x_{12}^N+x_{22}^N, \label{eq6.2}\\ 
\varphi (w)&=x_{11}^{2N-1}x_{22}-x_{21}^{2N-1}x_{12}.  \label{eq6.3}
\end{align}
\end{prop}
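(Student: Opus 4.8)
The plan is to define $\varphi$ by the three formulas (\ref{eq6.1})--(\ref{eq6.3}), check that the images obey the defining relations of $G$, and then read off bijectivity from a dimension count. Put $A:=A_{Nn}^{+\lambda}$ and let $H=x_{11}^2-x_{12}^2$, $T=x_{12}^N+x_{22}^N$, $W=x_{11}^{2N-1}x_{22}-x_{21}^{2N-1}x_{12}$ be the prescribed images of $h,t,w$. First I would isolate the consequences of Lemma~\ref{5.1} that the computation uses: the parity relations give $x_{11}x_{12}=x_{12}x_{11}=0$, $x_{11}x_{21}=x_{21}x_{11}=0$, $x_{22}x_{12}=x_{12}x_{22}=0$, $x_{22}x_{21}=x_{21}x_{22}=0$; with $x_{11}^{2N+1}=x_{11}$ and $x_{12}^{2N+1}=x_{12}$ this makes $e:=x_{11}^{2N}$ and $f:=x_{12}^{2N}$ orthogonal idempotents, and $e+f=1$ by Lemma~\ref{5.1}(4). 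Since $N$ is odd and the mixed terms in $(x_{11}^2-x_{12}^2)^N$ vanish, $H^N=e-f$, whence $e=\tfrac{1}{2}(1+H^N)$, $f=\tfrac{1}{2}(1-H^N)$, $x_{11}^2=eH$ and $x_{12}^2=-fH$.

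Next I would verify the relations. Centrality of the $x_{ij}^2$ makes $H$ central, so $HT=TH$ and $HW=WH$; and $T^2=x_{12}^{2N}+x_{22}^{2N}=f+e=1$, $H^{2N}=x_{11}^{4N}+x_{12}^{4N}=1$ after the cross terms cancel and by Lemma~\ref{5.1}(1),(4). For $w^n=h^{(n+\frac{\lambda-1}{2})N}$ I would write $a=x_{11}^{2N-1}x_{22}=(x_{11}^2)^{N-1}(x_{11}x_{22})$ and $b=x_{21}^{2N-1}x_{12}=(x_{12}^2)^{N-1}(x_{21}x_{12})$; parity forces $ab=ba=0$, so $W^k=a^k+(-1)^k b^k$, and as $x_{11}^2,x_{12}^2$ are central Lemma~\ref{5.1}(5) gives $a^n=(x_{11}^2)^{n(N-1)}(x_{11}x_{22})^n=x_{11}^{2nN}=e$ and $b^n=(x_{12}^2)^{n(N-1)}(x_{21}x_{12})^n=\lambda\,x_{12}^{2nN}=\lambda f$, so $W^n=e+(-1)^n\lambda f$; on the other hand $H^{(n+\frac{\lambda-1}{2})N}=e+(-1)^{(n+\frac{\lambda-1}{2})N}f=e+(-1)^n\lambda f$ because $N$ is odd and $(-1)^{(\lambda-1)/2}=\lambda$, so the two sides agree and $W$ is invertible. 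Finally $tw=w^{-1}t$ is equivalent, using $T^2=1$, to $WTW=T$; expanding $WT$ and then $(WT)W$ and discarding the products killed by the parity relations, one gets $WT=x_{11}^N-x_{12}^{N-1}x_{21}$ and $WTW=x_{11}^{N-1}x_{22}+x_{12}^N=T$, the last equality because $x_{22}^N=x_{11}^{N-1}x_{22}$. As $H,T,W$ are invertible and satisfy the defining relations of $G$, the universal property of $\boldsymbol{k}[G]$ produces an algebra map $\varphi\colon\boldsymbol{k}[G]\to A$ with $\varphi(h)=H$, $\varphi(t)=T$, $\varphi(w)=W$.

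Then I would show $\varphi$ is onto. Its image is a subalgebra containing $e,f$ (hence $x_{11}^2=eH$, $x_{12}^2=-fH$), the elements $eW=x_{11}^{2N-1}x_{22}$ and $fW=-x_{21}^{2N-1}x_{12}$, and $eWT=x_{11}^N$, $fT=x_{12}^N$. Since $\gcd(2,N)=1$, the periodicity $x_{11}^{2N+1}=x_{11}$, $x_{12}^{2N+1}=x_{12}$ lets one recover every $x_{11}^s$ from $x_{11}^2$ and $x_{11}^N$, and every $x_{12}^s$ from $x_{12}^2$ and $x_{12}^N$; so $\mathrm{im}\,\varphi$ contains all powers of $x_{11}$ and $x_{12}$. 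Then $x_{11}\cdot(eW)=x_{11}^{2N}x_{22}=x_{22}$ and $(-fW)\cdot x_{12}=x_{21}^{2N-1}x_{12}^2=x_{21}^{2N+1}=x_{21}$ by Lemma~\ref{5.1}(3), so $x_{11},x_{12},x_{21},x_{22}\in\mathrm{im}\,\varphi$ and therefore $\mathrm{im}\,\varphi=A$. As $\dim_{\boldsymbol{k}}\boldsymbol{k}[G]=|G|=4nN=\dim_{\boldsymbol{k}}A$, the surjection $\varphi$ is bijective, hence an algebra isomorphism.

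I expect the main obstacle to be bookkeeping rather than conceptual difficulty: one has to keep the parity vanishing of Lemma~\ref{5.1}(1) and the cyclic identities $x_{ii}^{2N+1}=x_{ii}$, $x_{i\,i+1}^{2N+1}=x_{i\,i+1}$ under control in each product, and it is exactly Lemma~\ref{5.1}(5)---which carries the sign $\lambda$ for the $x_{12},x_{21}$-words---that forces the exponent $(n+\frac{\lambda-1}{2})N$. The already-treated case $N=1$ in \cite{Mas2, CDMM} provides a convenient check on the reductions.
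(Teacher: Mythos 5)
Your proposal is correct and follows the same overall skeleton as the paper's proof (verify that the prescribed images satisfy the defining relations of $G$, prove surjectivity, and get injectivity from a dimension count), but the internal computations are organized differently. The paper introduces the auxiliary element $z=hw$ with $\varphi(z)=x_{11}x_{22}+x_{21}x_{12}$, computes $\varphi(z)^n$ via Lemma~\ref{5.1}(5) to get the relation $w^n=h^{(n+\frac{\lambda-1}{2})N}$, and reduces $tw=w^{-1}t$ to the identity $\varphi(z)\varphi(t)\varphi(z)=\varphi(h)^2\varphi(t)$; for surjectivity it solves the linear system $x_{22}\pm x_{12}=\varphi(th^{\mp m(N+1)}\cdots)$ and then recovers $x_{11},x_{21}$ by multiplying by $\varphi(z)^{-1}$. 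You instead work throughout with the orthogonal idempotents $e=x_{11}^{2N}$, $f=x_{12}^{2N}$ (equivalently $e_0,e_1$ of Corollary~\ref{5.25}), split $W=a-b$ with $ab=ba=0$ so that $W^n=a^n+(-1)^nb^n=e+(-1)^n\lambda f$, verify $WTW=T$ directly, and extract the generators from $eH,\,fH,\,eW,\,fW,\,eWT,\,fT$ together with $\gcd(2,N)=1$. This buys a cleaner and more systematic bookkeeping (the idempotent decomposition is exactly what the paper itself sets up afterwards in Corollary~\ref{5.25}), at the cost of nothing; both arguments use $\mathrm{ch}(\boldsymbol{k})\neq 2$, yours through $e=\frac{1}{2}(1+H^N)$ and the paper's through dividing the linear system by $2$.

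One small point to tighten: you assert $\lvert G\rvert=4nN$ without justification. From the presentation alone one only gets $\lvert G\rvert\leq 4nN$, since every element can be written $t^iw^jh^k$ with $0\leq i\leq 1$, $0\leq j\leq n-1$, $0\leq k\leq 2N-1$; the reverse inequality comes from the surjectivity you have just established, namely $\lvert G\rvert=\dim\boldsymbol{k}[G]\geq\dim A_{Nn}^{+\lambda}=4nN$. Adding that one line (which is exactly how the paper closes the argument) makes the injectivity step complete.
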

\begin{proof} 
First, we will check the well-definedness of $\varphi $. 
To do this it is sufficient to show that $\varphi $ preserves the defining relations of $G$. 
By definition we have two equations 
$\varphi (t)^2=(x_{12}^N+x_{22}^N)^2=x_{12}^{2N}+x_{22}^{2N}=1$ and 
$\varphi (h)^{2N}=(x_{11}^2-x_{12}^2)^{2N}=x_{11}^{4N}+x_{12}^{4N}=1$, and 
since $\varphi (h)=x_{11}^2-x_{12}^2$ is in the center of $A^{+ \lambda }_{Nn}$ by Lemma~\ref{5.1}, two equations 
$\varphi (h)\varphi (t)=\varphi (t)\varphi (h)$ and $\varphi (h)\varphi (w)=\varphi (w)\varphi (h)$ are obtained.  
Now, we set 
$z=hw$, and define 
$$\varphi (z)=x_{11}x_{22}+x_{21}x_{12}.$$
Then we have $\varphi (z)=\varphi (h)\varphi (w)$, 
and by using Lemma~\ref{5.1}(5), 
\begin{align*}
\varphi (z)^n
&=(x_{11}x_{22}+x_{21}x_{12})^n 
=(x_{11}x_{22})^{n}+(x_{21}x_{12})^n \\ 
&=x_{11}^{2n}+x_{12}^{2n} 
=(x_{11}^{2}+\lambda x_{12}^{2})(x_{11}^{2}+x_{12}^{2})^{n-1} \\ 
&=\varphi (h)^{(N+1)n+\frac{\lambda -1}{2}N}. 
\end{align*} 
\noindent 
Therefore, $\varphi (w)=\varphi (h)^{(n+\frac{\lambda-1}{2})N}$. 
To show the equation $\varphi (t)\varphi (w)=\varphi (w)^{-1}\varphi (t)$, it is sufficient to show that $\varphi (z)\varphi (t)\varphi (z)$ $=\varphi (h)^2\varphi (t)$ since 
\begin{align*}
\varphi (t)\varphi (w)=\varphi (w)^{-1}\varphi (t) 
& \ \ \Longleftrightarrow \ \ \varphi (t)\varphi (z)=\varphi (h)\varphi (w)^{-1}\varphi (t) \\ 
& \ \ \Longleftrightarrow \ \ \varphi (w)\varphi (t)\varphi (z)=\varphi (h)\varphi (t) \\ 
& \ \ \Longleftrightarrow \ \ \varphi (z)\varphi (t)\varphi (z)=\varphi (h)^2\varphi (t). 
\end{align*}

The last equation can be derived as follows. 
\begin{align*}
\varphi (z)\varphi (t)\varphi (z)
&=(x_{11}x_{22}+x_{21}x_{12})(x_{12}^N+x_{22}^N)(x_{11}x_{22}+x_{21}x_{12})\\ 
&=x_{11}x_{22}^{N+1}x_{11}x_{22}+x_{21}x_{12}^{N+1}x_{21}x_{12}\\ 
&=x_{11}^2x_{22}^{N+2}+x_{21}^2x_{12}^{N+2}\\ 
&=(x_{11}^2x_{22}^2+x_{21}^2x_{12}^2)(x_{22}^N+x_{12}^N)\\ 
&=(x_{11}^4+x_{12}^4)(x_{22}^N+x_{12}^N)\\ 
&=\varphi (h)^2\varphi (t). 
\end{align*}

\noindent 
Thus there is an algebra map $\varphi :\boldsymbol{k}[G]\longrightarrow A^{+ \lambda }_{Nn}$ satisfying (\ref{eq6.1}), (\ref{eq6.2}) and (\ref{eq6.3}). 
\par 
Next, we show that $\varphi $ is surjective. 
For this, it is enough to show that 
$x_{11},\ x_{12},\ x_{21},\ x_{22}$ belong to the image of $\varphi $. 
Since $N$ is odd, the equation $x_{22}^2+x_{12}^2=x_{11}^2+x_{12}^2=\varphi (h)^{N+1}$ holds. 
Putting $N=2m+1$, we have 
$$\varphi (t)=(x_{12}+x_{22})(x_{12}^2+x_{22}^2)^m=(x_{12}+x_{22})\varphi (h)^{m(N+1)},$$
and whence 
\begin{equation}\label{eq5.27}
x_{12}+x_{22}=\varphi (th^{-m(N+1)}). 
\end{equation}
Since $\varphi (h)=(x_{22}-x_{12})(x_{22}+x_{12})=(x_{22}-x_{12})\varphi (th^{-m(N+1)})$, 
we also have 
\begin{equation}\label{eq5.28}
x_{22}-x_{12}=\varphi (th^{m(N+1)+1}).
\end{equation}
From the equations (\ref{eq5.27}) and (\ref{eq5.28}), 
we see that $x_{12}, x_{22}$ are contained in the image of $\varphi $. 
\par 
Using the equation  
$x_{11}\varphi (z)=x_{11}(x_{11}x_{22}+x_{21}x_{12})=x_{11}^2x_{22}=x_{22}^3$, we have 
$x_{11}=x_{22}^3\varphi (z)^{-1}=x_{22}^3\varphi (z^{-1})$. 
This shows that $x_{11}$ is in the image of $\varphi $. 
Similarly, using the equation $x_{21}\varphi (z)=x_{21}^2x_{12}=x_{12}^3$ 
we have
$x_{21}=x_{12}^3\varphi (z)^{-1}=x_{12}^3\varphi (z^{-1})$, 
and whence $x_{21}$ is in the image of $\varphi $. 
\par 
Finally, we show that $\varphi $ is injective. 
From $\dim A_{Nn}^{+\lambda }=4nN$ and the surjectivity of 
$\varphi $, it follows that $4nN=\dim A_{Nn}^{+ \lambda }\leq |G|$. 
On the other hand, since 
$$G=\{ t^iw^jh^k\ \vert \ i=0,1,\ j=0,1,\ldots ,n-1,\ k=0,1,\ldots , 2N-1\} $$
as a set, we see that $|G|\leq 4nN$. 
Therefore, $|G|=4nN$, and $\varphi $ is injective. 
\end{proof}

\par \smallskip 
From the above proposition we see that $A_{Nn}\cong \boldsymbol{k}[G_{Nn}]$ as algebras. 

\par \smallskip 
\begin{cor}\label{5.25}
Set $N=2m+1$, and consider the following elements in $A_{Nn}^{+\lambda }$: 
$$h:=x_{11}^2-x_{12}^2,\quad t:=x_{12}^N+x_{22}^N,\quad  
w:=x_{11}^{2N-1}x_{22}-x_{21}^{2N-1}x_{12}.$$ 
If $\textrm{ch}(\boldsymbol{k})\not= 2$, then the following relations hold:
\par 
(1) $t^2=h^{2N}=1,\ w^n=h^{(n+\frac{\lambda -1}{2})N},\ tw=w^{-1}t,\ ht=th,\ hw=wh$.
\vspace{0.2cm}\par 
(2) $
x_{22}=\frac{h^{-m(N+1)}+h^{m(N+1)+1}}{2}t, \quad 
x_{12}=\frac{h^{-m(N+1)}-h^{m(N+1)+1}}{2}t$.
\vspace{0.2cm}\par 
(3) $x_{11}^{2N-1}+x_{21}^{2N-1}=wth^{-m(N+1)-1}$.
\vspace{0.2cm}\par 
(4) $\begin{cases} 
x_{11}^N=\frac{h^N+1}{2}wt, \\[0.2cm]  
x_{21}^N=\frac{h^N-1}{2}wt, 
\end{cases}
\qquad \begin{cases} 
x_{12}^N=-\frac{h^N-1}{2}t, \\[0.2cm]  
x_{22}^N=\frac{h^N+1}{2}t.
\end{cases}$
\vspace{0.2cm}\par 
(5) $\begin{cases} 
x_{11}^{2N-1}x_{22}=\frac{1+h^N}{2}w, \\[0.2cm]   
x_{21}^{2N-1}x_{12}=\frac{-1+h^N}{2}w,
\end{cases}$ 
$\begin{cases} 
x_{12}^{2N-1}x_{21}=x_{12}x_{21}^{2N-1}=\frac{h^{N}-1}{2}w^{-1},\\[0.2cm]  
x_{22}^{2N-1}x_{11}=x_{22}x_{11}^{2N-1}=\frac{h^{N}+1}{2}w^{-1}.
\end{cases}$ 
\vspace{0.2cm}\par 
In particular, 
$$w^{-1}=x_{22}^{2N-1}x_{11}-x_{12}^{2N-1}x_{21}.$$
\end{cor}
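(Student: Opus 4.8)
The plan is to reduce every assertion to two inputs: the algebra isomorphism $\varphi\colon\boldsymbol{k}[G]\to A_{Nn}^{+\lambda}$ of Proposition~\ref{5.24}, under which the elements $h,t,w$ of the statement are precisely $\varphi(h),\varphi(t),\varphi(w)$; and the structural relations among the $x_{ij}$ recorded in Lemma~\ref{5.1}. Part~(1) is then immediate, since the listed relations are exactly the defining relations of the group $G$ in Proposition~\ref{5.24} and were verified to hold in $A_{Nn}^{+\lambda}$ in the course of checking that $\varphi$ is well defined. For Part~(2) I would recall the two identities obtained there, $x_{12}+x_{22}=h^{-m(N+1)}t$ and $x_{22}-x_{12}=h^{m(N+1)+1}t$ (equations~\eqref{eq5.27} and \eqref{eq5.28}, using that $h$ and $t$ commute), and solve this $2\times2$ linear system; dividing by $2$ is where $\mathrm{ch}(\boldsymbol{k})\neq2$ enters.

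The core of the argument is a short list of ``master identities'' in $A_{Nn}^{+\lambda}$ that render Parts~(3)--(5) mechanical. Using that each $x_{ij}^2$ is central, that $x_{ij}x_{lm}=0$ whenever $i-j\not\equiv l-m\pmod2$ (so that any word in the generators containing such an incompatible pair of factors vanishes), that $x_{11}^2=x_{22}^2$ and $x_{12}^2=x_{21}^2$, that $x_{ij}^{2N+1}=x_{ij}$ (the $\nu=+1$ case of Lemma~\ref{5.1}), and the defining relation $x_{11}^{2N}+x_{12}^{2N}=1$, one obtains in order: $h^{N}=x_{11}^{2N}-x_{12}^{2N}$, whence $\tfrac{1+h^{N}}{2}=x_{11}^{2N}=x_{22}^{2N}$ and $\tfrac{1-h^{N}}{2}=x_{12}^{2N}=x_{21}^{2N}$; $h^{N+1}=x_{11}^{2}+x_{12}^{2}$ and $h^{N-1}=x_{11}^{2N-2}+x_{12}^{2N-2}$; and, raising the first of these to the $m$-th power, $h^{m(N+1)}=x_{11}^{N-1}+x_{12}^{N-1}$. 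Multiplying the last two by a single generator and discarding the incompatible terms then gives the uniform formulas $x_{ij}^{N}=h^{m(N+1)}x_{ij}$ and $x_{ij}^{2N-1}=h^{N-1}x_{ij}$ for all $i,j$, which I would isolate as a preliminary lemma.

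With these in hand the rest falls out by expanding the relevant products and deleting every term that contains an incompatible adjacent pair. For example $wt=(x_{11}^{2N-1}x_{22}-x_{21}^{2N-1}x_{12})(x_{12}^{N}+x_{22}^{N})$ collapses to $x_{11}^{N}-x_{21}^{N}$; multiplying this by $x_{11}^{2N}=\tfrac{1+h^{N}}{2}$, resp.\ by $x_{12}^{2N}=\tfrac{1-h^{N}}{2}$, and reducing with $x_{ij}^{2N+1}=x_{ij}$ yields the four formulas of Part~(4), and the analogous expansions of $x_{11}^{2N}w$ and $x_{12}^{2N}w$ give the first two formulas of Part~(5). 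Part~(3) comes from combining $x_{ij}^{2N-1}=h^{N-1}x_{ij}$ with $wt=h^{m(N+1)}(x_{11}-x_{21})$ and the one-line identity $h^{N}(x_{11}+x_{21})=x_{11}-x_{21}$ (immediate from $h^{N}=x_{11}^{2N}-x_{21}^{2N}$ and $x_{ij}^{2N+1}=x_{ij}$), so that $wt\,h^{-m(N+1)-1}=h^{N-1}(x_{11}+x_{21})=x_{11}^{2N-1}+x_{21}^{2N-1}$. Finally, the displayed formula for $w^{-1}$ is best proved directly: setting $w':=x_{22}^{2N-1}x_{11}-x_{12}^{2N-1}x_{21}$, one expands $ww'$ and $w'w$, cancels the incompatible terms, and is left with $x_{11}^{2N}+x_{12}^{2N}=1$, so $w'=w^{-1}$; the two $w^{-1}$-formulas in Part~(5) then follow by multiplying $w'$ by $x_{11}^{2N}$ and by $x_{12}^{2N}$, and the two commutation identities in Part~(5) follow from $x_{ij}^{2N-1}=h^{N-1}x_{ij}$ together with centrality of $h$.

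These computations are entirely elementary; the only genuine difficulty is bookkeeping --- tracking the several $(i,j)$-cases and parity patterns, and reducing the exponents that appear ($N=2m+1$, $m(N+1)$, $2N-1$, $3N$, $4N-1$, $\dots$) modulo $2N$ via $h^{2N}=1$ and $x_{ij}^{2N+1}=x_{ij}$. Once the master identities of the second paragraph are established, each remaining part is a two- or three-line manipulation, so I would state them first and then dispatch Parts~(3)--(5) and the $w^{-1}$ identity uniformly.
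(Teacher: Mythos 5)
Your proposal is correct, and it reorganizes the computation rather than reproducing the paper's. The paper works entirely from the identities extracted in the proof of Proposition~\ref{5.24}: it obtains Part~(4) by raising $x_{22}-x_{12}=th^{m(N+1)+1}$ to the $N$-th power to get $x_{22}^N-x_{12}^N=th^N$ and then solving the pair of equations $x_{22}^N\pm x_{12}^N$; it gets $x_{11}^N$ by a chain of substitutions through Parts~(2) and (3), $x_{21}^N$ from $w=(x_{11}^N+x_{21}^N)(x_{22}^N-x_{12}^N)$, and Part~(5) from the companion identity $x_{11}^{2N-1}x_{22}+x_{21}^{2N-1}x_{12}=wh^N$. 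You instead front-load the uniform ``master identities'' $x_{11}^{2N}=x_{22}^{2N}=\tfrac{1+h^N}{2}$, $x_{12}^{2N}=x_{21}^{2N}=\tfrac{1-h^N}{2}$, $x_{ij}^{N}=h^{m(N+1)}x_{ij}$ and $x_{ij}^{2N-1}=h^{N-1}x_{ij}$, and then extract each formula by multiplying $t$, $wt$, $w$ and $w'$ by the orthogonal central idempotents $\tfrac{1\pm h^N}{2}$ and killing incompatible products. Computationally the two routes are equivalent (adding and subtracting a pair of equations and halving is exactly multiplication by these idempotents), but your packaging makes Parts~(3)--(5) genuinely mechanical, treats all four $x_{ij}$ symmetrically, and gives a cleaner direct verification of $ww'=w'w=1$ than the paper's. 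One small imprecision to fix in writing it up: multiplying $wt=x_{11}^N-x_{21}^N$ by the two idempotents yields only the formulas for $x_{11}^N$ and $x_{21}^N$; the remaining two formulas of Part~(4) come from applying the same idempotent trick to $t=x_{12}^N+x_{22}^N$ itself, which your method handles identically but which you should state explicitly. This is a presentational slip, not a gap.
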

\begin{proof}
\par 
The equations in (1) are already derived in the proof of Proposition~\ref{5.24}.  
The equations in (2) are obtained by solving the system of linear equations
$$\begin{cases}
x_{22}+x_{12}=th^{-m(N+1)}, \\ 
x_{22}-x_{12}=th^{m(N+1)+1},  
\end{cases}$$
which appears in the proof of Proposition~\ref{5.24}. 
The equation (3) is obtained by substituting $x_{22}-x_{12}=th^{m(N+1)+1}$ to 
$w=(x_{11}^{2N-1}+x_{21}^{2N-1})(x_{22}-x_{12})$. 
\par 
(4) Since $x_{22}-x_{12}=th^{m(N+1)+1}$, and $N$ is odd, we have 
$x_{22}^N-x_{12}^N=t^Nh^{mN(N+1)+N}=th^N$. 
By solving the system of the equations  $x_{22}^N-x_{12}^N=th^N$,\ $x_{22}^N+x_{12}^N=t$, we get the equations 
$x_{22}^N=\frac{h^N+1}{2}t, \ 
x_{12}^N=\frac{1-h^N}{2}t$, 
and by Parts (2) and (3) we have 
\begin{align*}
x_{11}^N
&=x_{11}^{N+1}(x_{11}^{2N-1}+x_{21}^{2N-1})  \\ 
&=x_{22}x_{22}^{N}(x_{11}^{2N-1}+x_{21}^{2N-1}) \\ 
&=\frac{h^{-m(N+1)}+h^{m(N+1)+1}}{2}t\times \frac{h^N+1}{2}t\times wth^{-m(N+1)-1} \\ 
&=wt\frac{h^N+1}{2}. 
\end{align*}

We will calculate $x_{21}^N$. 
Since $N-1$ is even, we have 
$$\begin{cases}
x_{11}^Nx_{22}^N=x_{11}^Nx_{22}^{N-1}x_{22}=x_{11}^{2N-1}x_{22},\\[0.1cm]  
x_{21}^Nx_{12}^N=x_{21}^Nx_{12}^{N-1}x_{12}=x_{21}^{2N-1}x_{12}.  
\end{cases} 
$$
Hence $w$ is expressed as 
$$w=x_{11}^Nx_{22}^N-x_{21}^Nx_{12}^N=(x_{11}^N+x_{21}^N)(x_{22}^N-x_{12}^N)=(x_{11}^N+x_{21}^N)th^N.$$
By postmultiplying $th^N$ to the both sides of the above equation 
we have
$wth^N=x_{11}^N+x_{21}^N$. 
Thus $x_{21}^N=wth^N-x_{11}^N=\frac{h^N-1}{2}wt$. 
\par 
(5) From Part (3), we have 
\begin{align*}
x_{11}^{2N-1}x_{22}+x_{21}^{2N-1}x_{12}
&=(x_{11}^{2N-1}+x_{21}^{2N-1})(x_{22}+x_{12}) \\ 
&=wth^{-m(N+1)-1}\cdot th^{-m(N+1)} 
=wh^{-2m-1} 
=wh^N. 
\end{align*}
By solving the system of the equations $x_{11}^{2N-1}x_{22}+x_{21}^{2N-1}x_{12}wh^N$, $x_{11}^{2N-1}x_{22}-x_{21}^{2N-1}x_{12}=w$, we have 
$x_{11}^{2N-1}x_{22}=\frac{1+h^N}{2}w, \ 
x_{21}^{2N-1}x_{12}= \frac{-1+h^N}{2}w$, 
and whence 
$$x_{22}^{2N-1}x_{11}
=x_{22}x_{11}^{2N-1} 
=\frac{h^{-m(N+1)}+h^{m(N+1)+1}}{2}twth^{-m(N+1)-1} =\frac{h^{N}+1}{2}w^{-1}.$$
Furthermore, we have 
$x_{12}^{2N-1}x_{21}=x_{12}x_{21}^{2N-1}$,  
$x_{22}^{2N-1}x_{11}=x_{22}x_{11}^{2N-1}$, 
and 
$$x_{22}x_{11}^{2N-1}-x_{12}x_{21}^{2N-1}=(x_{22}-x_{12})(x_{11}^{2N-1}+x_{21}^{2N-1})=th^{m(N+1)+1}\cdot wth^{-m(N+1)-1}=w^{-1}.$$
So, we obtain the desired equation $x_{12}^{2N-1}x_{21}=x_{22}^{2N-1}x_{11}-w^{-1}
=\frac{h^{N}+1}{2}w^{-1}-w^{-1}
=\frac{h^{N}-1}{2}w^{-1}$. 
\end{proof}

\par \smallskip 
\begin{cor}\label{5.26}
Let $G$ be the finite group presented by 
$$G=\langle h,\ t,\ w\ \vert \ t^2=h^{2N}=1,\ w^n=h^{(n+\frac{\lambda -1}{2})N},\ tw=w^{-1}t,\ ht=th,\ hw=wh \rangle .$$
For the group algebra $\boldsymbol{k}[G]$ over $\boldsymbol{k}$ of $\textrm{ch}(\boldsymbol{k})\not= 2$, we define algebra maps  $\Delta :\boldsymbol{k}[G]\otimes \boldsymbol{k}[G]\longrightarrow \boldsymbol{k}[G]$,\ $\varepsilon :\boldsymbol{k}[G]\longrightarrow \boldsymbol{k}$ and an anti-algebra map $S:\boldsymbol{k}[G]\longrightarrow \boldsymbol{k}[G]$ as follows: 
\begin{align*}
&\Delta (h)=h\otimes h,\quad \Delta (t)=h^Nwt\otimes e_1t+t\otimes e_0t,\quad \Delta (w)=w\otimes e_0w+w^{-1}\otimes e_1w, \\ 
&\varepsilon (h)=1,\quad  \varepsilon (t)=1,\quad \varepsilon (w)=1,\\ 
&S(h)=h^{-1},\quad S(t)=(e_0-e_1w) t, \quad S(w)=e_0w^{-1}+e_1w,
\end{align*}
\noindent 
where $e_0=\frac{1+h^N}{2},\ e_1=\frac{1-h^N}{2}$. 
Then the algebra isomorphism $\varphi :\boldsymbol{k}[G]\longrightarrow A_{Nn}^{+ \lambda}$ defined in Proposition~\ref{5.24}  is a Hopf algebra isomorphism.  
\end{cor}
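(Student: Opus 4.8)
The plan is to transport the Hopf algebra structure of $A:=A_{Nn}^{+\lambda}$ back to $\boldsymbol{k}[G]$ along the algebra isomorphism $\varphi$ of Proposition~\ref{5.24}, and then compare the transported structure maps with the ones written in the statement. Concretely, put $\Delta'=(\varphi^{-1}\otimes\varphi^{-1})\circ\Delta_{A}\circ\varphi$, $\varepsilon'=\varepsilon_{A}\circ\varphi$ and $S'=\varphi^{-1}\circ S_{A}\circ\varphi$, where $\Delta_{A},\varepsilon_{A},S_{A}$ denote the structure maps of $A$. Then $(\boldsymbol{k}[G],\Delta',\varepsilon',S')$ is automatically a Hopf algebra and $\varphi$ is a Hopf algebra isomorphism onto $A$, so it remains only to check $\Delta'=\Delta$, $\varepsilon'=\varepsilon$ and $S'=S$ for the $\Delta,\varepsilon,S$ of the statement. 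Since $\Delta'$ and $\Delta$ are algebra maps into $\boldsymbol{k}[G]\otimes\boldsymbol{k}[G]$, $\varepsilon'$ and $\varepsilon$ are algebra maps into $\boldsymbol{k}$, and $S'$ and $S$ are anti-algebra maps, and since $G$ is finite and generated by $h,t,w$, it suffices to verify these three equalities on the generators $h,t,w$. In particular coassociativity, the counit axiom and the antipode axiom need not be checked — they are inherited from $A$.

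For $h$: expanding $\Delta_{A}(\varphi(h))=\Delta_{A}(x_{11}^{2}-x_{12}^{2})$ from $\Delta_{A}(x_{ij})=x_{i1}\otimes x_{1j}+x_{i2}\otimes x_{2j}$ and using the relations $x_{ij}x_{lm}=0$ for $i-j\not\equiv l-m\ (\mathrm{mod}\ 2)$, $x_{11}^{2}=x_{22}^{2}$, $x_{12}^{2}=x_{21}^{2}$ of Lemma~\ref{5.1}, all mixed terms vanish and one obtains $(x_{11}^{2}-x_{12}^{2})\otimes(x_{11}^{2}-x_{12}^{2})=\varphi(h)\otimes\varphi(h)$; also $\varepsilon_{A}(\varphi(h))=1$, and $S_{A}(\varphi(h))=\varphi(h)^{2N-1}=\varphi(h^{-1})$ using $S_{A}(x_{ij})=x_{ji}^{2N-1}$, centrality of the squares and $h^{2N}=1$. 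For $t$: the same computation gives $\Delta_{A}(x_{11}^{s})=x_{11}^{s}\otimes x_{11}^{s}+x_{12}^{s}\otimes x_{21}^{s}$ and $\Delta_{A}(x_{22}^{s})=x_{21}^{s}\otimes x_{12}^{s}+x_{22}^{s}\otimes x_{22}^{s}$, whence $\Delta_{A}(\varphi(t))=(x_{11}^{N}+x_{21}^{N})\otimes x_{12}^{N}+(x_{12}^{N}+x_{22}^{N})\otimes x_{22}^{N}$; Corollary~\ref{5.25}(4) rewrites the relevant factors as $x_{11}^{N}+x_{21}^{N}=\varphi(h^{N}wt)$, $x_{12}^{N}=\varphi(e_{1}t)$, $x_{22}^{N}=\varphi(e_{0}t)$, giving $\Delta'(t)=h^{N}wt\otimes e_{1}t+t\otimes e_{0}t$. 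Likewise $\varepsilon_{A}(\varphi(t))=1$, and from $S_{A}(x_{ij})=x_{ji}^{2N-1}$ together with the exponent reductions $x_{ii}^{2N+1}=x_{ii}$, $x_{i\,i+1}^{2N+1}=x_{i\,i+1}$ of Lemma~\ref{5.1}, one gets $S_{A}(\varphi(t))=x_{21}^{N}+x_{22}^{N}=\varphi\bigl((e_{0}-e_{1}w)t\bigr)$ again by Corollary~\ref{5.25}(4).

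The remaining, and most delicate, case is $w=x_{11}^{2N-1}x_{22}-x_{21}^{2N-1}x_{12}$. Using $\Delta_{A}(x_{21}^{s})=x_{21}^{s}\otimes x_{11}^{s}+x_{22}^{s}\otimes x_{21}^{s}$, $\Delta_{A}(x_{12})$, $\Delta_{A}(x_{22})$ and the vanishing relations one finds
\begin{align*}
\Delta_{A}(x_{11}^{2N-1}x_{22})&=x_{11}^{2N-1}x_{22}\otimes x_{11}^{2N-1}x_{22}+x_{12}^{2N-1}x_{21}\otimes x_{21}^{2N-1}x_{12},\\
\Delta_{A}(x_{21}^{2N-1}x_{12})&=x_{21}^{2N-1}x_{12}\otimes x_{11}^{2N-1}x_{22}+x_{22}^{2N-1}x_{11}\otimes x_{21}^{2N-1}x_{12}.
\end{align*}
Corollary~\ref{5.25}(5) identifies the four ``mixed'' products appearing here as $x_{11}^{2N-1}x_{22}=\varphi(e_{0}w)$, $x_{21}^{2N-1}x_{12}=-\varphi(e_{1}w)$, $x_{22}^{2N-1}x_{11}=\varphi(e_{0}w^{-1})$, $x_{12}^{2N-1}x_{21}=-\varphi(e_{1}w^{-1})$; substituting, subtracting, and using $e_{0}+e_{1}=1$, the $h$-dependence cancels and one is left with $\Delta'(w)=w\otimes e_{0}w+w^{-1}\otimes e_{1}w$. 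The same corollary yields $\varepsilon_{A}(\varphi(w))=1$ and $S_{A}(\varphi(w))=x_{22}^{2N-1}x_{11}-x_{21}^{2N-1}x_{12}=\varphi(e_{0}w^{-1}+e_{1}w)$. This completes the check on all generators, so $\Delta'=\Delta$, $\varepsilon'=\varepsilon$, $S'=S$ and $\varphi$ is a Hopf algebra isomorphism. The main obstacle is precisely this bookkeeping in the $w$-case: after expanding $\Delta_{A}$ on $x_{11}^{2N-1}x_{22}$ and $x_{21}^{2N-1}x_{12}$ one must recognize the mixed second-slot factors — via Corollary~\ref{5.25}(5), which is where the hypothesis that $N$ is odd enters — as scalar multiples of $\varphi(e_{i}w^{\pm1})$ and then see the idempotent sums telescope so that the $h$-dependence drops out; the analogous but easier step for $t$ is the identification of $x_{11}^{N}+x_{21}^{N}$, $x_{12}^{N}$, $x_{22}^{N}$ with $h^{N}wt$, $e_{1}t$, $e_{0}t$.
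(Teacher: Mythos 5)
Your proof is correct and follows exactly the route the paper intends: the paper states Corollary \ref{5.26} without proof, leaving to the reader precisely this verification that $\varphi$ intertwines the structure maps on the generators $h,t,w$ via the identities of Lemma \ref{5.1} and Corollary \ref{5.25}. Your reduction to generators (valid since all maps involved are algebra or anti-algebra maps) and your identifications $x_{11}^N+x_{21}^N=\varphi(h^Nwt)$, $x_{11}^{2N-1}x_{22}=\varphi(e_0w)$, $x_{21}^{2N-1}x_{12}=-\varphi(e_1w)$, etc., all check out.
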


\par \smallskip 
\subsection{The representation ring of $A_{Nn}^{+\lambda}$}
Via the algebra isomorphism $\varphi $ given in Proposition~\ref{5.24}, 
one can determine the structure of the representation ring of $A_{Nn}^{+\lambda }$. 
\par 
Let us recall the definition of the representation ring of a semisimple Hopf algebra, which is a natural extension of that of a finite group \cite{NR, Nik}. 
Let $A$ be a semisimple Hopf algebra of finite dimension over a field $\boldsymbol{k}$. 
By $\mathfrak{R}(A)$ we denote the set of isomorphism classes of finite-dimensional left $A$-modules, and for a finite-dimensional left $A$-module $V$
denote by $[V]$ the isomorphism class of $V$. 
Then $\mathfrak{R}(A)$ has a semiring structure induced by 
$$[V]+[W]=[V\oplus W],\quad [V][W]=[V\otimes W].$$
Also, $\mathfrak{R}(A)$ has the unit element, which is given by $[\boldsymbol{k}] $, where the left $A$-module action of $\boldsymbol{k}$ is due to the counit $\varepsilon $. 
Let $\textrm{Rep}(A)$ denote the Grothendieck group constructed from the enveloping group of $\mathfrak{R}(A)$ as an abelian semigroup. 
Then the semiring structure of $\mathfrak{R}(A)$ uniquely determines a ring structure of $\textrm{Rep}(A)$. 
Furthermore, the ring $\textrm{Rep}(A)$ has an anti-homomorphism of rings $\ast :\textrm{Rep}(A)\longrightarrow \textrm{Rep}(A)$, which induced from the antipode $S$. 
Explicitly, this anti-homomorphism $\ast$ is defined by $[V] \longmapsto [V^{\ast}]$ for a finite-dimensional left $A$-module $V$. 
We call the ring $\textrm{Rep}(A)$ with $\ast$ the \textit{representation ring} of $A$. 
In general, $\ast : \textrm{Rep}(A)\longrightarrow \textrm{Rep}(A)$ is not an involution, and  the representation ring $\textrm{Rep}(A)$ is not commutative. 
However, if the Hopf algebra $A$ possesses a universal $R$-matrix, then $\ast $ is an involution, 
and for two left $A$-modules $V$ and $W$ an $A$-linear isomorphism $c_{V,W}:V\otimes W\longrightarrow W\otimes V$ is defined by use of $R$, and whence we see that $\textrm{Rep}(A)$ is commutative. 
We note that 
$\textrm{Rep}(A)$ is a free $\mathbb{Z}$-module with finite rank, and 
a $\mathbb{Z}$-basis of $\textrm{Rep}(A)$ is given by the isomorphism classes of simple left $A$-modules. 

\par \medskip 
\begin{lem}\label{6.4}
Let $\boldsymbol{k}$ be a field whose characteristic dose not divide $2nN$,  and suppose that there is a primitive $4nN$-th root of unity in $\boldsymbol{k}$. 
For integers $i,j$ and an even integer $k$ let $\chi_{ijk}$ be the one-dimensional representation of the algebra $A_{Nn}=\boldsymbol{k}[G_{Nn}]$ given by (\ref{eq5.21}), and for integers $j,k$ with $j \equiv k\ (\rm{mod}\ 2)$ let $\rho _{jk}$ be the two-dimensional representation of the algebra $A_{Nn}=\boldsymbol{k}[G_{Nn}]$ given by (\ref{eq5.22}). 
We set 
$$\epsilon (n)=\begin{cases} 
0  & (\textrm{$n$ is even}), \\ 
1 & (\textrm{$n$ is odd}).
\end{cases}$$
Then as representations of the Hopf algebra $A_{Nn}$ the following hold for $i, j, k, i^{\prime}, j^{\prime}, k^{\prime} \in \mathbb{Z}$.  
\begin{enumerate}
\item[$\bullet$] 
\begin{enumerate}
\item[(i)] $[\rho_{2n+j,k}]=[\rho_{jk}]=[\rho_{-j,k}]$ for $j\equiv k\ (\rm{mod}\ 2)$, and 
$[\rho_{n+j,k}]=[\rho_{n-j,k}]$ for $n+j\equiv k\ (\rm{mod}\ 2)$,  
\item[(ii)] $[\rho_{0k}]=[\chi_{00k}\oplus \chi_{10k}]$ for $k\equiv 0\ (\rm{mod}\ 2)$, and \par  
$[\rho_{nk}]=[\chi_{01,k-\epsilon (n)}\oplus \chi_{11,k-\epsilon (n)}]$ for  $k\equiv n\ (\rm{mod}\ 2)$. 
\end{enumerate}
\item[$\bullet$] on representations of tensor products  
\begin{enumerate}
\item[(iii)] $[\chi_{ijk}\otimes \chi_{i^{\prime}j^{\prime}k^{\prime}}]=[\chi_{i+i^{\prime}, j+j^{\prime}, k+k^{\prime}}]$, where $k,k^{\prime}$ are even, 
\item[(iv)] $[\chi_{ijk}\otimes \rho_{j^{\prime}k^{\prime}}]=[\rho_{j^{\prime}k^{\prime}}\otimes \chi_{ijk}]=[\rho_{nj+j^{\prime}, k+k^{\prime}+\epsilon (n)j}]$ for $k\equiv 0,\ j^{\prime}\equiv k^{\prime}\ (\rm{mod}\ 2)$, 
\item[(v)] $
[\rho_{jk}\otimes \rho_{j^{\prime}k^{\prime}}]
=[\rho_{j+j^{\prime}, k+k^{\prime}}\oplus \rho_{j-j^{\prime}, k+k^{\prime}}]
$ for $j\equiv k,\ j^{\prime}\equiv k^{\prime}\ (\rm{mod}\ 2)$. 
\end{enumerate}
\item[$\bullet$] on contragredient representations $\chi _{ijk}^{\ast}$ and $\rho _{jk}^{\ast}$
\begin{enumerate}
\item[(vi)] $[\chi _{ijk}^{\ast}]=[\chi_{i,-j,-k}]$ for $k\equiv 0\ (\rm{mod}\ 2)$,  
\item[(vii)] $[\rho _{jk}^{\ast}]=[\rho_{j,-k}]$ for $j\equiv k\ (\rm{mod}\ 2)$. 
\end{enumerate}
\end{enumerate}
\end{lem}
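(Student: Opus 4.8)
The plan is to verify everything directly from the explicit matrix formulas \eqref{eq5.21}--\eqref{eq5.22} for $\chi_{ijk}$ and $\rho_{jk}$, together with the group presentation of $G_{Nn}$ and the Hopf-algebra structure maps transported along the isomorphism $\varphi$ of Proposition~\ref{5.24} (equivalently Corollary~\ref{5.26}). Since $A_{Nn}\cong\boldsymbol{k}[G_{Nn}]$ as algebras, every $A_{Nn}$-module is a $G_{Nn}$-module, and the listed modules are precisely a full set of irreducibles; the content of the lemma is that the \emph{Hopf} structure (i.e.\ $\Delta$, $S$ of Corollary~\ref{5.26}, which is \emph{not} the group-algebra comultiplication) still produces the stated tensor and dual decompositions.

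First I would dispose of the ``same module'' identities (i) and (ii). For (i), note that conjugating $\rho_{jk}$ by $\left(\begin{smallmatrix}0&1\\1&0\end{smallmatrix}\right)$ interchanges the two diagonal entries $\omega^{2jN}$ and $\omega^{-2jN}$ while fixing $t$, which gives $[\rho_{jk}]=[\rho_{-j,k}]$; periodicity in $j$ modulo $2n$ is immediate since $\omega^{2jN}$ has period $2n$ in $j$, and the reflection $[\rho_{n+j,k}]=[\rho_{n-j,k}]$ follows the same way (using $w^n=h^N$ central). For (ii), when $j\equiv 0$ the two $w$-eigenvalues coincide and $\rho_{0k}$ becomes reducible; one diagonalizes the involution $\rho_{0k}(t)=\left(\begin{smallmatrix}0&1\\1&0\end{smallmatrix}\right)$ to split off the $\pm1$ eigenlines, reading off the characters $\chi_{00k}$ and $\chi_{10k}$; the $\rho_{nk}$ case is identical except that $w$ acts by $\omega^{-2nN}=(-1)^{\cdots}$ forcing the shift $k\mapsto k-\epsilon(n)$ coming from the $\omega^{2(j+k)n}$ branch of \eqref{eq5.21} when $n$ is odd.

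Next come the tensor-product rules (iii)--(v). Here I would compute the action of the three generators on a tensor product $V\otimes W$ using $\Delta(h)=h\otimes h$, $\Delta(w)=w\otimes e_0w+w^{-1}\otimes e_1w$, and $\Delta(t)=h^Nwt\otimes e_1t+t\otimes e_0t$ from Corollary~\ref{5.26}, where $e_0,e_1$ act on a module $\chi_{ijk}$ or $\rho_{jk}$ as the spectral projections of $h^N$ (which is scalar $\omega^{2nNk}=\pm1$ on these modules). For (iii) both factors are one-dimensional, so the computation is a short bookkeeping of exponents in $\omega$, with the $\epsilon(n)j$ correction in (iv) arising exactly from the $h^Nwt$ term in $\Delta(t)$ acting on a module where $w$ contributes $(-1)^{j}$ and the $h$-exponent carries the odd-$n$ shift. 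For (iv) and (v), the key structural point is that $\Delta(t)$ mixes the two tensor factors with a $w$-twist, so on $\rho_{jk}\otimes\rho_{j'k'}$ the operator $\Delta(t)$ is not simply $t\otimes t$; one writes the $4\times 4$ matrices of $\Delta(w),\Delta(t),\Delta(h)$ in the obvious basis and exhibits an explicit change of basis block-diagonalizing them into the $\rho_{j+j',k+k'}$ and $\rho_{j-j',k+k'}$ blocks. The eigenvalues of $\Delta(w)$ on $V\otimes W$ are $\{\omega^{2(j+j')N},\omega^{2(j-j')N},\omega^{2(-j+j')N},\omega^{2(-j-j')N}\}$, which already predicts the two summands; one then checks $\Delta(t)$ swaps the pairs correctly.

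Finally, for the duals (vi)--(vii): the contragredient of a left $A$-module $V$ has action $p\cdot f = f\circ S(p)$, so I would feed the antipode formulas $S(h)=h^{-1}$, $S(w)=e_0w^{-1}+e_1w$, $S(t)=(e_0-e_1w)t$ into the transpose-inverse of the given matrices. For $\chi_{ijk}$ (one-dimensional, so transpose is trivial) this gives $h\mapsto\omega^{-2nNk}$-type inverses, $w\mapsto(-1)^{-j}=(-1)^j$ with the $h^N$-branch bookkeeping, yielding $\chi_{i,-j,-k}$. For $\rho_{jk}$, $S(w)$ inverts the diagonal (giving $\omega^{-2jN}$ where there was $\omega^{2jN}$), $S(h)$ inverts the $h$-eigenvalue (so $k\mapsto -k$), and $S(t)=(e_0-e_1w)t$ combined with the transpose leaves the off-diagonal involution in the same $\rho$-shape; after the harmless conjugation from (i) this is $[\rho_{j,-k}]$.

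\textbf{Main obstacle.} The routine exponent arithmetic is lengthy but mechanical; the genuine difficulty is item (v) (and the analogous twisted computations in (iv)), because $\Delta(t)$ from Corollary~\ref{5.26} carries the nontrivial $h^Nwt$ term, so one must produce an \emph{honest} intertwiner realizing $\rho_{jk}\otimes\rho_{j'k'}\cong\rho_{j+j',k+k'}\oplus\rho_{j-j',k+k'}$ rather than just matching Brauer characters — and one must be careful that the shift parameters land in the correct residue classes mod $2n$ and mod $2N$ and that the $\epsilon(n)$ corrections are consistent across (ii), (iv). I expect the cleanest route is to fix a basis $\{v_+\otimes v'_+,\,v_+\otimes v'_-,\,v_-\otimes v'_+,\,v_-\otimes v'_-\}$ diagonalizing $\Delta(w)$ and $\Delta(h)$, then to observe that $\Delta(t)$ is an involution permuting these four lines into the two $2$-dimensional $t$-invariant subspaces spanned by $\{v_+\otimes v'_+,\,v_-\otimes v'_-\}$ and $\{v_+\otimes v'_-,\,v_-\otimes v'_+\}$, on each of which the triple $(\Delta(h),\Delta(w),\Delta(t))$ is conjugate to the defining triple of the asserted $\rho$.
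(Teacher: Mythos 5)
Your proposal is correct and follows essentially the same route as the paper: direct computation of the generator actions on tensor products via the coproduct of Corollary~\ref{5.26} (with its nontrivial $h^Nwt\otimes e_1t$ term), explicit changes of basis to exhibit the two $2$-dimensional blocks in (v) (the paper's basis choice depends on the parity of $k'$, exactly the case split your $w$-eigenvalue analysis would force), and the antipode formulas for the contragredients. No substantive difference in method.
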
 
\begin{proof} 
(i) By definition $[\rho _{2n+j,k}]=[\rho _{j,k}]$ and $[\rho_{-j,k}]=[\rho _{jk}]$ 
are obtained immediately. 
By using these equations we have $[\rho _{n+j,k}]=[\rho _{-(n+j),k}]=[\rho _{2n-(n+j),k}]=[\rho _{n-j,k}]$. 
\par 
(ii) Let $\{ e_1,e_2\} $ be the standard basis of $V_{0k}=\boldsymbol{k}\oplus \boldsymbol{k}$. Then the subspaces $\boldsymbol{k}(e_1+e_2)$ and $\boldsymbol{k}(e_1-e_2)$ are $\rho _{0k}$-invariant, and 
$\boldsymbol{k}(e_1+e_2)=V_{00k}$ and $\boldsymbol{k}(e_1-e_2)=V_{10k}$ as submodules of $V_{jk}$. 
This implies that $[\rho _{0k}]=[\chi _{00k}\oplus \chi _{10k}]$. 
Similarly, the subspaces $\boldsymbol{k}(e_1+e_2)$ and $\boldsymbol{k}(e_1-e_2)$ of $V_{nk}$ are also $\rho _{nk}$-invariant, and 
$$\boldsymbol{k}(e_1+e_2)=\begin{cases} 
V_{01k} & (\textrm{$n$ is even}), \\ 
V_{01,k-1} & (\textrm{$n$ is odd}), \end{cases}
\quad \textrm{and} \quad 
\boldsymbol{k}(e_1-e_2)=\begin{cases} 
V_{11k} & (\textrm{$n$ is even}), \\ 
V_{11,k-1} & (\textrm{$n$ is odd}), \end{cases}$$
as submodules of $V_{nk}$. 
This proves $[\rho_{nk}]=[\chi_{01,k-\epsilon (n)}\oplus \chi_{11,k-\epsilon (n)}]$. 
\par 
(iii) It follows from $\chi _{ijk}\otimes \chi _{i'j'k'}=\chi _{i+i',j+j',k+k'}$.  
\par 
(iv) By using the coproduct $\Delta $ given in Corollary~\ref{5.26} we see that $(\chi _{ijk}\otimes \rho _{j'k'})(t)$, $(\chi _{ijk}\otimes \rho _{j'k'})(w)$, $(\chi _{ijk}\otimes \rho _{j'k'})(h)$ are represented by the matrices 
$$(-1)^{i+k'j}\begin{pmatrix} 0 & 1 \\ 1 & 0 \end{pmatrix}, \quad 
\begin{pmatrix} \omega ^{2(nj+j')N} & 0 \\ 0 & \omega^{-2(nj+j')N} \end{pmatrix},\quad  
\omega ^{2(k+k'+\epsilon (n)j)n}\begin{pmatrix} 1 & 0 \\ 0 & 1 \end{pmatrix},
$$
respectively. 
Let $\{ e_1, e_2\} $ be the standard basis of $\boldsymbol{k}^2$. 
Then by considering the matrix presentation of $\chi _{ijk}\otimes \rho _{j'k'}$  with respect to the basis $\{ e_1,\  (-1)^{i+k'j}e_2\} $, we see that $[\chi _{ijk}\otimes \rho _{j'k'}]=[\rho _{nj+j', k+k'+\epsilon (n)j}]$. 
\par 
Similarly, we see that if $n$ or $j$ is even, then 
$(\rho _{j'k'}\otimes \chi _{ijk})(t)$, $(\rho _{j'k'}\otimes \chi _{ijk})(w)$, $(\rho _{j'k'}\otimes \chi _{ijk})(h)$ are represented by the matrices 
$$(-1)^i\begin{pmatrix} 0 & 1 \\ 1 & 0 \end{pmatrix},\quad \begin{pmatrix} \omega^{2(nj+j')N} & 0 \\ 0 & \omega^{-2(nj+j')N} \end{pmatrix},\quad 
\omega ^{2(k+k'+\epsilon (n)j)n}\begin{pmatrix} 1 & 0 \\ 0 & 1 \end{pmatrix},$$
respectively, and if $n$ and $j$ are odd, then they are represented by the matrices 
$$(-1)^i\begin{pmatrix} 0 & \omega ^{2(nj'+j')N} \\ \omega ^{-2(nj'+j')N} & 0 \end{pmatrix},\quad 
\begin{pmatrix} \omega^{-2(nj+j')N} & 0 \\ 0 & \omega ^{2(nj+j')N} \end{pmatrix},\quad 
\omega ^{2(k+k'+j)n}\begin{pmatrix} 1 & 0 \\ 0 & 1 \end{pmatrix}, $$
respectively. 
So, in the case where $n$ or $j$ is even, by changing basis from $\{ e_1, e_2\} $ to $\{ e_1,\  (-1)^{i}e_2\} $ we see that $[\rho _{j'k'}\otimes \chi _{ijk}]=[\rho _{nj+j',k+k'+\epsilon (n)j}]$, and in the case where $n$ and $j$ are odd, by changing basis from $\{ e_1, e_2\} $ to $\{ e_2,\  (-1)^{i}\omega ^{2(nj'+j')N}e_1\} $ we have the same equation $[\rho _{j'k'}\otimes \chi _{ijk}]=[\rho _{nj+j',k+k'+j}]=[\rho _{nj+j',k+k'+\epsilon (n)j}]$. 
\par \noindent 
(v) Let $\{ e_1, e_2\}$ and $\{ e_1^{\prime}, e_2^{\prime}\}$ be the bases of $V_{jk}$ and $V_{j^{\prime}k^{\prime}}$, such that 
the matrix representations of $\rho_{jk}$ and $\rho_{j^{\prime}k^{\prime}}$ with respect to the bases are given by (\ref{eq5.22}), respectively. 
Then the action of $\rho _{jk}\otimes \rho _{j'k'}$ on $V_{jk}\otimes V_{j^{\prime}k^{\prime}}$ is given by 
 \begin{align*} 
t\cdot e_a\otimes e_b^{\prime}&=\begin{cases}
e_{3-a}\otimes e_{3-b}^{\prime} & (\textrm{if $k^{\prime}$ is even}),\\ 
\omega^{2(-1)^b(nj+j)N}e_{3-a}\otimes e_{3-b}^{\prime} & (\textrm{if $k^{\prime}$ is odd}), 
\end{cases} \\ 
w\cdot e_a\otimes e_b^{\prime}&=\begin{cases}
\omega ^{2((-1)^{1-a}j+(-1)^{1-b}j^{\prime})N}e_a\otimes e_b^{\prime}  & (\textrm{if $k^{\prime}$ is even}),\\
\omega^{2((-1)^aj+(-1)^{1-b}j^{\prime})N}e_a\otimes e_b^{\prime} & (\textrm{if $k^{\prime}$ is odd}), 
\end{cases} \\ 
h\cdot e_a\otimes e_b^{\prime}&=\omega ^{2(k+k^{\prime})n}e_a\otimes e_b^{\prime}
\end{align*}
for $a, b=1,2$. 
Therefore, we see that $[\rho _{jk}\otimes \rho _{j'k'}]=[\rho _{j+j', k+k'}\oplus \rho _{j-j',k+k'}]$ by considering the matrix presentation of  $\rho _{jk}\otimes \rho _{j'k'}$ with respect to the basis $\{ e_1\otimes e_1^{\prime}, e_2\otimes e_2^{\prime}, e_1\otimes e_2^{\prime}, e_2\otimes e_1^{\prime}\} $ or $\{ e_2\otimes e_1^{\prime}, \omega^{2(jn-j)N}e_1\otimes e_2^{\prime}, e_2\otimes e_2^{\prime}, \omega ^{2(jn+j)N}e_1\otimes e_1^{\prime}\}$ according to the case whether $k^{\prime}$ is even or odd. 
\par 
(vi) Since 
$\chi _{ijk}^{\ast }(t)=(-1)^i, \ 
\chi _{ijk}^{\ast }(w)=(-1)^{-j},\ \chi _{ijk}^{\ast}(h)=\omega ^{-2(k+\epsilon (n)j)n}$, we have 
$\chi _{ijk}^{\ast }=\chi _{i,-j,-k}$. 
\par \smallskip 
(vii) Let $\{ e_1,e_2\} $ be the standard basis of $V_{jk}=\boldsymbol{k}\oplus \boldsymbol{k}$. 
Then with respect to the dual basis $\{ e_1^{\ast}, e_2^{\ast}\} $ of $\{ e_1,e_2\} $, the contragredient representation $\rho _{jk}^{\ast}$ is represented as follows: 
$\rho _{jk}^{\ast }(h)
=\begin{pmatrix} \omega ^{-2kn} & 0 \\ 0 & \omega ^{-2kn} \end{pmatrix}$, and 
\par \smallskip \noindent 
if $k$ is even, then 
$\rho _{jk}^{\ast}(t)=\begin{pmatrix} 0 & 1 \\ 1 & 0 \end{pmatrix},\ 
\rho _{jk}^{\ast}(w)=\begin{pmatrix} \omega ^{-2jN} & 0 \\ 0 & \omega^{2jN} \end{pmatrix}$, and 
\par \smallskip \noindent 
if $k$ is odd, then 
$\rho _{jk}^{\ast}(t)
=\begin{pmatrix} 0 & -\omega^{-2jN} \\ -\omega ^{2jN} & 0 \end{pmatrix},\ 
\rho _{jk}^{\ast}(w)
=\begin{pmatrix} \omega^{2jN} & 0 \\ 0 & \omega ^{-2jN} \end{pmatrix}$. 
Thus in the case where $k$ is even, by considering the matrix presentation of $\rho _{jk}^{\ast }$ with respect to the basis $\{ e_2^{\ast}, e_1^{\ast}\} $, we see that $[\rho _{jk}^{\ast}]=[\rho _{j,-k}]$. 
In the case where $k$ is odd, by considering the matrix presentation of $\rho _{jk}^{\ast }$ with respect to the basis $\{ e_1^{\ast}, -\omega ^{2jN}e_2^{\ast}\} $, we have the same result $[\rho _{jk}^{\ast}]=[\rho _{j,-k}]$. 
\end{proof}

\par \smallskip 
From the above lemma, we see that the representation ring of $A_{Nn}$ is described as in the following proposition. 
In the case of $N=1$, this result has already proved by \cite[Proposition 3.9]{Mas2}. 
\par \smallskip 
\begin{prop}\label{5.23}
Let $\boldsymbol{k}$ be a field whose characteristic dose not divide $2nN$,  and suppose that there is a primitive $4nN$-th root of unity in $\boldsymbol{k}$. 
\par 
(1) If $n$ is even, then the representation rings of $A_{Nn}$ and $\boldsymbol{k}[G_{Nn}]$ are isomorphic as rings with $\ast$-structure, 
and both of them are isomorphic to the commutative ring generated by $a,b,c,x_1,\ldots, x_{n-1}$ subject to the commutativity relations and the following relations: 
\begin{align}
&a^2=b^2=c^{N}=1,\label{eq6.14}\\ 
&ax_i=x_{i} \ \ (i=1,2,\ldots ,n-1), \label{eq6.16}\\ 
&bx_i=x_{n-i} \ \ (i=1,2,\ldots ,n-1), \label{eq6.15}\\ 
&x_ix_j=c^{\frac{1-(-1)^{ij}}{2}}(x_{|i-j|}+x_{i+j}) \ \ (i,j=1,2,\ldots ,n-1), \label{eq6.17}
\end{align}
where indices of $x$ in R.H.S. of (\ref{eq6.17}) are treated under the rules
$$x_0=1+a, \quad x_{n}=b(1+a), \quad x_{n+i}=x_{n-i} \ \ (i=1,2,\ldots ,n-1).$$
The $\ast$-structure is given by 
$$a^{\ast}=a,\quad b^{\ast}=b,\quad c^{\ast} =c^{-1},\quad x_i^{\ast}=c^{\frac{(-1)^i-1}{2}}x_i  \ \ (i=1,\ldots ,n-1).$$
\par 
(2) If $n$ is odd, then then the representation rings of $A_{Nn}$ and $\boldsymbol{k}[G_{Nn}]$ are isomorphic as rings with $\ast$-structure, and both of them are isomorphic to the commutative ring generated by $a,b,x_1,\ldots, x_{n-1}$ subject to the commutativity relations and the following relations: 
\begin{align}
&a^2=b^{2N}=1, \label{eq6.18}\\ 
&ax_i=x_{i} \ \ (i=1,\ldots ,n-1), \label{eq6.19}\\  
&bx_i=x_{n-i} \ \ (i=2,4,\ldots ,n-1), \label{eq6.20}\\  
&x_ix_j=b^{1-(-1)^{ij}}(x_{|i-j|}+x_{i+j}) \ \ (i,j=1,\ldots ,n-1) \label{eq6.21}
\end{align}
\noindent 
where indices of $x$ in R.H.S. of (\ref{eq6.21}) are treated under the rules 
$$x_0=1+a,\ x_n=b(1+a),\ x_{n+i}=x_{n-i} \ \ (i=1,2,\ldots ,n-1).$$
The $\ast$-structure is given by 
$$a^{\ast}=a,\ b^{\ast} =b^{-1},\ x_i^{\ast}=b^{(-1)^i-1}x_i  \ \ (i=1,\ldots ,n-1).$$ 
\end{prop}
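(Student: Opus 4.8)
The plan is to read off all structural data from the explicit list of simple modules and the fusion/duality rules assembled in Lemma~\ref{6.4}. Since $A_{Nn}\cong \boldsymbol{k}[G_{Nn}]$ as Hopf algebras is \emph{not} being claimed (the coproduct of $A_{Nn}$ differs from the group one), I treat the two representation rings separately, but the underlying abelian group and the multiplication table are visibly the same once one uses Lemma~\ref{6.4}: note that the tensor-product formulas (iii)--(v) and the duality formulas (vi)--(vii) there were proved directly from the coproduct of $A_{Nn}$ given in Corollary~\ref{5.26}, and the same formulas hold verbatim for the group Hopf algebra $\boldsymbol{k}[G_{Nn}]$ by the standard character theory of $G_{Nn}$ (one can also invoke Proposition~\ref{5.22}'s setup, where the same list of simple modules $V_{ijk},V_{jk}$ was used for $\boldsymbol{k}[G_{Nn}]$). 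Hence it suffices to exhibit a ring isomorphism from each $\mathrm{Rep}$ onto the abstractly presented ring, and the two presentations coincide.

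First I would fix generators. For $n$ even, set $a=[\chi_{100}]$, $b=[\chi_{010}]$, $c=[\chi_{002}]$, and $x_i=[\rho_{i,i}]$ for $i=1,\dots,n-1$ (choosing the representative with second index $\equiv i \pmod 2$, e.g. $V_{i,i}$); the $\mathbb{Z}$-basis of $\mathrm{Rep}(A_{Nn})$ is $\{[\chi_{ij,2k}]\}\cup\{[\rho_{jk}]\}$ by the classification preceding Lemma~\ref{5.21}. One then checks: (\ref{eq6.14}) follows from (iii) since $\chi_{100}^{\otimes 2}=\chi_{000}$, $\chi_{010}^{\otimes 2}=\chi_{000}$ (using $w^2$-relation / that $j$ is read mod $n$ — here $n$ even so $\chi_{0,2,0}=\chi_{000}$ only after reducing $j$ mod $n$; this is exactly where $n$ even vs.\ odd splits the statement), and $\chi_{002}^{\otimes N}=\chi_{00,2N}=\chi_{000}$ because the third index is read mod $2N$ (as $h^{2N}=1$); (\ref{eq6.16}) and (\ref{eq6.15}) are the special cases of (iv) with $(i,j,k)=(1,0,0)$ and $(0,1,0)$ together with (i); and (\ref{eq6.17}) is precisely (v) rewritten: $[\rho_{ii}\otimes\rho_{jj}]=[\rho_{i+j,i+j}]+[\rho_{i-j,i-j}]$, then one normalizes indices using (i) to land in the range $0,\dots,n$, picks up the factor $c^{(1-(-1)^{ij})/2}=[\chi_{00,2}]^{\,\cdot}$ coming from the parity of $t$ in the tensor action (the sign $(-1)^{k'j}$ in (iv)/(v)), and identifies the boundary terms $x_0$, $x_n$, $x_{n+i}$ with $1+a$, $b(1+a)$, $x_{n-i}$ via (ii). The $\ast$-structure assignment is read off from (vi) and (vii): $a^\ast=a$, $b^\ast=b$, $c^\ast=c^{-1}$, $[\rho_{jk}]^\ast=[\rho_{j,-k}]$, and since $x_i=[\rho_{i,i}]$ one has $x_i^\ast=[\rho_{i,-i}]=c^{((-1)^i-1)/2}x_i$ after again moving the third index back into range by the relation $h^{2N}=1$ together with (i) (the sign $(-1)^i$ accounting for parity). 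The odd case (2) is identical with $c$ absorbed: since $n$ is odd, $\chi_{00,2}$ gets expressed through $b^2=[\chi_{020}]=[\chi_{0,2,0}]$ which is now a genuine new element because $j$ runs mod $n$ odd, so $b$ has order $2N$ rather than $2$; the computations (\ref{eq6.18})--(\ref{eq6.21}) are the same specializations of Lemma~\ref{6.4}(iii)--(v) and the $\ast$-structure the same specialization of (vi)--(vii).

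Second, to see this map is an \emph{isomorphism} of rings (not merely a surjection), I would count: the abstractly presented ring has an obvious spanning set over $\mathbb{Z}$, namely $\{a^\varepsilon b^\delta c^\gamma\}\cup\{c^\gamma x_i\}$ with $\varepsilon,\delta\in\{0,1\}$, $\gamma\in\{0,\dots,N-1\}$, $i\in\{1,\dots,n-1\}$ (and the analogous list in the odd case with $\delta\in\{0,\dots,2N-1\}$ and no $c$), of cardinality $4N+N(n-1)=N(n+3)$, which matches $\dim_{\mathbb{Z}}\mathrm{Rep}(A_{Nn})=\#\{\text{simple modules}\}$ computed from the classification ($4N$ one-dimensionals plus $N(n-1)$ two-dimensionals, cf.\ Proposition~\ref{5.2} on the comodule side). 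Since the surjection sends this spanning set onto the $\mathbb{Z}$-basis of $\mathrm{Rep}(A_{Nn})$, it is a bijection, hence a ring isomorphism; the same count gives the isomorphism for $\boldsymbol{k}[G_{Nn}]$, and composing proves the two representation rings are isomorphic as rings with $\ast$.

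The main obstacle will be the bookkeeping of indices modulo $n$ and modulo $2N$ in relation (\ref{eq6.17}) (resp.\ (\ref{eq6.21})) and in the $\ast$-structure: one must verify that the "folding rules" $x_0=1+a$, $x_n=b(1+a)$, $x_{n+i}=x_{n-i}$ are exactly consistent with Lemma~\ref{6.4}(i)--(ii), that the sign factor $c^{(1-(-1)^{ij})/2}$ really is the parity correction produced by the $t$-action in (iv)--(v) and not an artifact, and — in the odd case — that the order-$2N$ element $b$ correctly encodes what was $c$ in the even case. This is a finite but delicate case check; everything else is a direct transcription of Lemma~\ref{6.4}.
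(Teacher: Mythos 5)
Your overall strategy is the same as the paper's: send $a,b,c,x_i$ to classes of explicit simple modules, verify the presented relations via Lemma~\ref{6.4}, and conclude bijectivity by matching a spanning set of the presented ring against the $\mathbb{Z}$-basis of simple classes (the paper instead writes down the inverse $\mathbb{Z}$-linear map explicitly, but your rank count $4N+N(n-1)$ is an acceptable substitute). There is, however, a concrete error in your choice of generators that breaks the verification of the central relation. You set $x_i=[\rho_{i,i}]$, remarking only that the second index must be congruent to $i$ modulo $2$. But that index is read modulo $2N$, not modulo $2$: for $N\geq 3$ the modules $\rho_{i,i}$ and $\rho_{i,\epsilon(i)}$, where $\epsilon(i)\in\{0,1\}$ is the parity of $i$, are non-isomorphic, differing by $[\chi_{00,\,i-\epsilon(i)}]=c^{(i-\epsilon(i))/2}$. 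With your choice, Lemma~\ref{6.4}(v) gives $x_ix_j=[\rho_{i+j,\,i+j}]+[\rho_{|i-j|,\,i+j}]=x_{i+j}+c^{\min(i,j)}x_{|i-j|}$, which is not of the form $c^{(1-(-1)^{ij})/2}(x_{|i-j|}+x_{i+j})$; already $x_2^2=x_4+c^2x_0\neq x_0+x_4$ once $N\geq 3$, so the map from the presented ring is not even well defined. Likewise your computation $x_i^{\ast}=[\rho_{i,-i}]=c^{((-1)^i-1)/2}x_i$ fails for these generators, since $[\rho_{i,-i}]=c^{-i}[\rho_{i,i}]$.

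The correct choice, which the paper makes, is $x_i=[\rho_{i,\epsilon(i)}]$. Then the fusion relation and the $\ast$-structure come out exactly as stated, and the factor $c^{(1-(-1)^{ij})/2}$ arises not from ``the sign of the $t$-action'' as you suggest, but from additivity of the second index under tensor product: when $i$ and $j$ are both odd the product lands in $\rho_{\bullet,2}=c\cdot\rho_{\bullet,0}$, and otherwise in $\rho_{\bullet,\epsilon(i\pm j)}$ with no correction. This is precisely the ``delicate case check'' you deferred, and as written it does not go through. The remaining ingredients of your proposal --- reducing to $A_{Nn}$ because Lemma~\ref{6.4} holds verbatim for $\boldsymbol{k}[G_{Nn}]$, and the counting argument for bijectivity --- agree with the paper and are fine.
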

\begin{proof}
Since the same results in Lemma~\ref{6.4} hold for the group Hopf algebra $\boldsymbol{k}[G_{Nn}]$, it is sufficient to prove that the representation ring of $A_{Nn}$ is isomorphic to the commutative ring $\mathcal{R}$ which is presented by the given generators and relations described in the proposition. 
By Proposition~\ref{5.24} we may assume that $A_{Nn}=\boldsymbol{k}[G_{Nn}]$ as algebras. 
\par 
(1) We will prove that the representation  ring $\textrm{Rep}(A_{Nn})$ is the commutative ring generated by $a=[\chi _{100}],\ b=[\chi _{010}],\ c=[\chi _{002}]$, $x_i=[\rho _{i\epsilon (i)}]\ (i=1,\ldots ,n-1)$ with relations (\ref{eq6.14}), (\ref{eq6.15}), (\ref{eq6.16}) and (\ref{eq6.17}), 
where $\chi_{ijk}$ and $\rho _{jk}$ are the one-dimensional and two-dimensional representations of the algebra $A_{Nn}=\boldsymbol{k}[G_{Nn}]$ given by (\ref{eq5.21}) and (\ref{eq5.22}), respectively, and $\epsilon (i)$ is equal to $0$ or $1$ according to whether $i$ is even or odd, respectively. 
\par 
Since $[\chi _{000}]=[\varepsilon ]=1$, by Lemma~\ref{6.4}(iii) it follows that  $a^2=b^2=c^N=1$. 
By Parts (i) and (iv) of Lemma~\ref{6.4}, we have 
$$ax_i=[\chi _{100}][\rho _{i\epsilon (i)}]=[\rho _{i,\epsilon (i)}]=x_i,$$
and 
$$bx_i=[\chi _{010}][\rho _{i\epsilon (i)}]=[\rho _{n+i,\epsilon (i)}]=[\rho _{n-i,\epsilon (i)}]=x_{n-i}.$$
Furthermore, for integers $i,j\in \mathbb{Z}$, by Parts (iv) and (v) of Lemma~\ref{6.4} we have 
\begin{align*}
x_ix_j
&=[\rho _{i-j,\epsilon (i)+\epsilon (j)}]+[\rho _{i+j,\epsilon (i)+\epsilon (j)}] \\ 
&=\begin{cases} 
x_{i-j}+x_{i+j}=x_{|i-j|}+x_{i+j} \ & \textrm{($i$ or $j$ is even),} \\ 
[\rho _{i-j,2}]+[\rho _{i+j,2}]=[\chi _{002}]([\rho _{i-j,0}]+[\rho _{i+j,0}])\ & \textrm{($i$ and $j$ are odd)}, 
\end{cases} \\ 
&=c^{\frac{1-(-1)^{ij}}{2}}(x_{|i-j|}+x_{i+j}). 
\end{align*}
By Lemma~\ref{6.4}(ii) we have 
$$x_0=[\chi _{000}]+[\chi _{100}]=1+a,\ \ 
x_n=[\chi _{010}]+[\chi _{110}]=[\chi _{010}]+[\chi _{010}][\chi _{100}]=b(1+a),$$
and by Lemma~\ref{6.4}(i) we have 
$$x_{n+i}=[\rho _{n+i, \epsilon (n+i)}]=[\rho _{n-i, \epsilon (n+i)}]=[\rho _{n-i, \epsilon (n-i)}]=x_{n-i}.$$
From the results argued so far, we see that there is a ring homomorphism $f: \mathcal{R}\longrightarrow \textrm{Rep}(A_{Nn})$ such that 
$$f(a)=[\chi_{100}],\quad f(b)=[\chi_{010}],\quad f(c)=[\chi_{002}],\quad f(x_i)=[\rho_{i\epsilon (i)}]\quad (i=1,2,\ldots ,n-1).$$
To  show that $f$ is bijective, let us construct the inverse map of $f$. 
Let $g: \textrm{Rep}(A_{Nn})\longrightarrow \mathcal{R}$ be the $\mathbb{Z}$-linear map defined by 
\begin{align*}
g([\chi _{ijk}])&=a^ib^jc^{\frac{k}{2}}\quad (i,j=0,1,\ k=0,2,\ldots ,2N-2), \\ 
g([\rho _{jk}])&=c^{\frac{k-\epsilon (j)}{2}}x_j \quad (k=0,1,\ldots ,2N-1,\ j=1,2,\ldots ,n-1,\ j\equiv k \ (\textrm{mod}\ 2) ). 
\end{align*}
Then it is easily shown that $g$ is the inverse map of $f$, and this concludes that $f : \mathcal{R}\longrightarrow \textrm{Rep}(A_{Nn})$ is a ring isomorphism. 
\par 
The $\ast$-structure in $\textrm{Rep}(A_{Nn})$ is determined as follows.  
By Lemma~\ref{6.4}(vi) we see immediately that $a^{\ast }=a,\ b^{\ast}=b,\ c^{\ast}=[\chi _{00,-2}]=[\chi _{002}]^{N-1}=c^{N-1}=c^{-1}$. 
By Lemma~\ref{6.4}(vii) if $i$ is even, then $x_i^{\ast}=[\rho _{i0}]^{\ast}=[\rho _{i0}]=x_i$, and if $i$ is odd, then $x_i^{\ast}=[\rho _{i1}]^{\ast}=[\rho _{i,-1}]=[\rho _{i,2N-1}]=[\chi _{00,2N-2}\otimes \rho _{i1}]=[\chi _{002}]^{N-1}[\rho _{i1}]=c^{-1}x_i$. 
Therefore, we obtain $x_i^{\ast }=c^{\frac{(-1)^i-1}{2}}x_i$ for 
$i=1,2\ldots ,n-1$. 
\par 
(2) In the same manner as above it can be verified that there is a ring isomorphism 
$f: \mathcal{R}\longrightarrow \textrm{Rep}(A_{Nn})$ such that $f(a)=[\chi _{100}],\ f(b)=[\chi _{010}]$, $f(x_i)=[\rho _{i\epsilon (i)}]\ (i=1,\ldots ,n-1)$ preserving $\ast$-structures. 
Here, what we should take account is the following fact. 
If we set $b=[\chi _{010}]$, then $[\chi _{002}]=b^2$, and 
since $n$ is odd, it follows from Parts (i) and (iv) of Lemma~\ref{6.4} that 
$$
bx_i
=[\chi _{010}][\rho _{i\epsilon (i)}]
=[\rho _{n+i,1+\epsilon (i)}]
=[\rho _{n-i,1+\epsilon (i)}] \\ 
=\begin{cases}
[\chi _{002}\otimes \rho _{n-i,0}]=b^2x_{n-i} & (\textrm{$i$ is odd}), \\ 
[\chi _{000}\otimes \rho _{n-i,1}]=x_{n-i} & (\textrm{$i$ is even}).
\end{cases}
$$
This is equivalent to $bx_i=x_{n-i}$ for all even integers $i$. 
\end{proof}

\par \smallskip 
\begin{rem}\rm 
By Proposition~\ref{5.24} in the case where $(\lambda , n)=(+, \textrm{even})$ or $(\lambda , n)=(-, \textrm{odd})$, 
as an algebra  
$A_{Nn}^{+\lambda }$ is isomorphic to the group algebra $\boldsymbol{k}[G_{Nn}^{\prime}]$, where 
$$G_{Nn}^{\prime}=\langle h,\ t,\ w\ \vert \ t^2=h^{2N}=1,\ w^n=1,\ tw=w^{-1}t,\ ht=th,\ hw=wh \rangle .$$
As a similar manner of the proofs of Lemma~\ref{6.4} and Proposition~\ref{5.23},  one can determine the structure of $\textrm{Rep}(A_{Nn}^{++})$ in the case where $n$ is even, and the structure of $\textrm{Rep}(A_{Nn}^{+-})$ in the case where $n$ is odd. 
As a result, we see that in the case where $n$ is even the representation ring $\textrm{Rep}(A_{Nn}^{++})$ is not commutative (see Lemma~\ref{6.11} in the next subsection 6.3), whereas the representation ring $\textrm{Rep}(\boldsymbol{k}[G_{Nn}^{\prime}])$ is commutative since $\boldsymbol{k}[G_{Nn}^{\prime}]$ is cocommutative. 
Therefore, two representation rings of $A_{Nn}^{++}$ and $\boldsymbol{k}[G_{Nn}^{\prime}]$ are not isomorphic. 
On the contrary, in the case where $n$ is odd, 
we see that $\textrm{Rep}(A_{Nn}^{+-})\otimes _{\mathbb{Z}}\boldsymbol{k}$ and $\textrm{Rep}(\boldsymbol{k}[G_{Nn}^{\prime}])\otimes _{\mathbb{Z}}\boldsymbol{k}$ are isomorphic as algebras with $\ast$-structure over $\boldsymbol{k}$. 
\end{rem} 

\par \smallskip 
\subsection{Self-duality of $A_{Nn}^{+\lambda}$} 
Based on Corollary~\ref{5.26}, we identify $A_{Nn}^{+\lambda }$ with $\boldsymbol{k}[G]$ such as 
\begin{align*}
h&=x_{11}^2-x_{12}^2, \\  
t&=x_{12}^N+x_{22}^N=x_{11}^{N-1}x_{22}+x_{12}^N, \\ 
w&=x_{11}^{2N-1}x_{22}-x_{21}^{2N-1}x_{12}=x_{11}^{2N-1}\chi _{22}-x_{12}^{2N-2}\chi _{21}^2. 
\end{align*}

Then 
$$w^{-1}=x_{22}x_{11}^{2N-1}-x_{12}x_{21}^{2N-1}=x_{11}^{2N-2}\chi _{22}^2-x_{12}^{2N-1}\chi _{21}.$$  
\indent 
Let us determine the values of 
$\sigma _{\alpha \beta }$ on the elements of the basis $\{ \ h^iw^kt^p\ \vert \ 0\leq i\leq 2N-1,\ 0\leq k\leq n-1,\ p=0,1\ \} $ of $A_{Nn}^{+\lambda}$. 

\par \smallskip 
\begin{lem}\label{6.5}
Suppose that $\alpha ,\beta \in \boldsymbol{k}$ satisfy $(\alpha \beta )^N=1$ and $(\alpha \beta ^{-1})^n=\lambda $, and consider the braiding $\sigma _{\alpha \beta }$ of $A_{Nn}^{+\lambda }$ defined in Theorem~\ref{5.5}. 
We set 
$\xi :=\alpha \beta $ and $\eta :=\alpha \beta ^{-1}$. 
Then for $i,j,k,l\geq 0$ the following equations hold: 
\begin{align}
\sigma _{\alpha \beta }(h^iw^k, h^jw^l)&=\xi ^{2ij}\eta ^{-2kl}, \label{eq6.22}\\ 
\sigma _{\alpha \beta }(h^iw^k, h^jw^lt)&=(-1)^{i+k}\xi ^{2ij}\eta ^{-k(2l-1)}, \label{eq6.23}\\ 
\sigma _{\alpha \beta }(h^iw^kt, h^jw^l)&=(-1)^{j+l}\xi ^{2ij}\eta ^{(2k-1)l}, \label{eq6.24}\\ 
\sigma _{\alpha \beta }(h^iw^kt, h^jw^lt)&=(-1)^{i+j+k+l}\xi ^{2ij+\frac{N^2-1}{2}}\eta ^{2kl-k-l}\alpha . \label{eq6.25}
\end{align}
\end{lem}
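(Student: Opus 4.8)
The plan is to reduce both sides to data already available in Section~5: the explicit values of $\sigma_{\alpha\beta}$ on the monomial basis $\{x_{11}^s\chi_{22}^t,\ x_{12}^s\chi_{21}^t\}$ of $A_{Nn}^{+\lambda}$ (the two piecewise displays preceding Theorem~\ref{5.5}), together with bilinearity of $\sigma_{\alpha\beta}$. The first step, and the real work, is to rewrite each group-algebra monomial $h^iw^k$, $h^iw^kt$ in terms of that basis. Using Lemma~\ref{5.1} (centrality of the squares $x_{ij}^2$, the identities $x_{11}^2=x_{22}^2$, $x_{12}^2=x_{21}^2$, $x_{ij}x_{lm}=0$ for mismatched parities, $x_{11}^{2N+1}=x_{11}$, $x_{12}^{2N+1}=\nu x_{12}$ with $\nu=+$, and the normalisations in Lemma~\ref{5.1}(2),(5)) and Corollary~\ref{5.25}, one first records that $x_{11}^{2N}$ and $x_{12}^{2N}$ are complementary orthogonal central idempotents, equal to $e_0=\tfrac{1+h^N}{2}$ and $e_1=\tfrac{1-h^N}{2}$ respectively, that $h^i=x_{11}^{2i}+(-1)^ix_{12}^{2i}$, and that each of $w^k$ and $w^kt$ collapses to a combination of at most two of the basis vectors. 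Concretely this is an induction on $k$, starting from $w=x_{11}^{2N-1}\chi_{22}^1-x_{12}^{2N-2}\chi_{21}^2$ and $t=x_{11}^{N-1}\chi_{22}^1+x_{12}^N$ (as in the identifications listed just before the lemma), repeatedly multiplying by $w$ or by the central element $h$ and bringing the words $\chi_{ij}^m$ back to normal form via Lemma~\ref{5.1}(1),(2),(5).

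With these normal forms in hand, each of the four identities \eqref{eq6.22}--\eqref{eq6.25} is obtained by substituting into the two $\sigma_{\alpha\beta}$-formulas and expanding bilinearly. The four cases correspond exactly to the presence or absence of a trailing $t$ in the first and in the second argument; this is precisely what fixes the parities of the exponents $s,s',t,t'$ that occur after expansion, hence selects which branch of each piecewise formula contributes. The closing simplification uses the substitutions $\xi=\alpha\beta$, $\eta=\alpha\beta^{-1}$, the constraints $\xi^N=(\alpha\beta)^N=1$ and $\eta^n=(\alpha\beta^{-1})^n=\lambda$, and the oddness of $N$ to absorb the residual signs and reduce the exponents of $\xi$ modulo $N$; for example the factor $\xi^{(N^2-1)/2}$ in \eqref{eq6.25} is the image, under $\xi^N=1$, of an exponent of $(\alpha\beta)$ produced when both trailing $t$-factors are expanded at $s=s'=N-1$.

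I expect Step~1 to be the main obstacle: since multiplication in $A_{Nn}^{+\lambda}$ is genuinely noncommutative, obtaining and verifying the closed forms for $w^k$ and $w^kt$ requires careful, repeated use of the collapsing relations of Lemma~\ref{5.1}, and one must also exploit the fact that $w$ and $t$ each admit two convenient expressions (e.g. $t=x_{12}^N+x_{22}^N=x_{11}^{N-1}\chi_{22}^1+x_{12}^N$), choosing in each subcase the one that makes the subsequent parity analysis of the $\sigma_{\alpha\beta}$-formula transparent. Once that is settled, \eqref{eq6.22} is essentially immediate (only the ``both exponents even, no $t$'' branch is used), \eqref{eq6.23} and \eqref{eq6.24} are symmetric variants of each other, and \eqref{eq6.25} is the most computation-heavy but introduces no new idea. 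As a consistency check one can verify afterward that the resulting $\sigma_{\alpha\beta}$ is compatible with the coproduct formulas of Corollary~\ref{5.26} via axiom (B2), which also catches sign errors.
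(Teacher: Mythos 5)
Your proposal is correct in outline but follows a genuinely different route from the paper. You propose to first put every monomial $h^iw^k$, $h^iw^kt$ into normal form with respect to the basis $\{x_{11}^s\chi_{22}^t,\ x_{12}^s\chi_{21}^t\}$ (by induction on $k$, using Lemma~\ref{5.1} and Corollary~\ref{5.25}) and then to evaluate $\sigma_{\alpha\beta}$ purely by bilinearity from the full piecewise table of its values on basis pairs. The paper instead never computes the normal form of $w^k$ or $h^iw^kt$: it evaluates only a handful of seed values directly from that table --- $\sigma(h^i,h^j)$, $\sigma(h^i,w)$, $\sigma(h^iw^k,t)$, $\sigma(t,h^jw^l)$, $\sigma(t,t)$ and $\sigma(e_at,h^je_bw^l)$, using $h^j=x_{11}^{2j}+(-1)^jx_{12}^{2j}$ and the expressions for $w$ and $t$ --- and then propagates to all four identities by induction on $k$ and $l$ via the braiding axioms (B2), (B3), fed by the coproduct formulas $\Delta(w)=w\otimes e_0w+w^{-1}\otimes e_1w$ and $\Delta(t)=h^Nwt\otimes e_1t+t\otimes e_0t$ of Corollary~\ref{5.26} (for instance $\sigma(h^iw^{k+1},h^jw^l)=\sigma(h^iw^k,h^jw^l)\,\sigma(w,h^je_0w^l)+\sigma(h^iw^k,h^jw^{-l})\,\sigma(w,h^je_1w^l)$, of which only one summand survives). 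What the paper's route buys is that the induction step is a one-line multiplicativity argument, so the noncommutative normal-form bookkeeping you correctly flag as the main obstacle disappears; what your route buys is independence from the coproduct formulas and from the braiding axioms, reducing everything to linear algebra once the normal forms are known. Your Step~1 is heavier than it may appear --- e.g.\ $e_0w^k=(x_{11}^{2N-1}x_{22})^k=x_{11}^{(2N-2)k+1}\chi_{22}^{2k-1}$ must still be reduced modulo the length-$n$ relation once $2k-1\geq n$, and the trailing-$t$ words force further case splits --- but the relations you cite do suffice, so the plan goes through.
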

\begin{proof} 
First, we show $\sigma _{\alpha \beta }(h^iw^k, h^jw^l)=\xi ^{2ij}\eta ^{-2kl}$.  
In the case of $k=0$ the above equation is written as $\sigma _{\alpha \beta }(h^i, h^jw^l)=(\alpha \beta )^{2ij}$. 
This can be proved by induction on $l$ as follows. 
By use of the equation $h^j=x_{11}^{2j}+(-1)^jx_{12}^{2j}\ (j\geq 0)$, we see that 
\begin{align*}
\sigma _{\alpha \beta }(h^i, h^j)
&=\sigma _{\alpha \beta }(x_{11}^{2i}, x_{11}^{2j})+(-1)^j\sigma _{\alpha \beta }(x_{11}^{2i}, x_{12}^{2j}) \\ 
&\quad +(-1)^i\sigma _{\alpha \beta }(x_{12}^{2i}, x_{11}^{2j})+(-1)^{i+j}\sigma _{\alpha \beta }(x_{12}^{2i}, x_{12}^{2j}) \\ 
&=\sigma _{\alpha \beta }(x_{11}^{2i}, x_{11}^{2j})=(\alpha \beta )^{2ij}. 
\end{align*}
\noindent 
Suppose that $\sigma _{\alpha \beta }(h^i, h^jw^l)=(\alpha \beta )^{2ij}$ for some $l\geq 0$. 
Since 
\begin{align*}
\sigma _{\alpha \beta }(h^i, w)
&=\sigma _{\alpha \beta }(x_{11}^{2i},\ x_{11}^{2N-1}\chi _{22})
-\sigma _{\alpha \beta }(x_{11}^{2i},\ x_{12}^{2N-2}\chi _{21}^2) \\ 
&\quad +(-1)^i\sigma _{\alpha \beta }(x_{12}^{2i},\ x_{11}^{2N-1}\chi _{22})-(-1)^i\sigma _{\alpha \beta }(x_{12}^{2i},\ x_{12}^{2N-2}\chi _{21}^2) \\ 
&=\sigma _{\alpha \beta }(x_{11}^{2i},\ x_{11}^{2N-1}\chi _{22}) 
=(\alpha \beta )^{2iN} =1,
\end{align*}
it follows that 
$$\sigma _{\alpha \beta }(h^i, h^jw^{l+1})
=\sigma _{\alpha \beta }(h^i, w)\sigma _{\alpha \beta }(h^i, h^jw^l)=\sigma _{\alpha \beta }(h^i, h^jw^l)=(\alpha \beta )^{2ij}. $$
So, it is true for $l+1$. 
\par 
Now, let $k\geq 1$, and suppose that $\sigma _{\alpha \beta }(h^iw^k, h^jw^l)=(\alpha \beta )^{2ij}(\alpha ^{-1}\beta )^{2kl}$ for all $l\geq 0$. 
Then we have 
\begin{align*}
\sigma _{\alpha \beta }(h^iw^{k+1}, h^jw^l)
&=\sigma _{\alpha \beta }(h^iw^{k}, h^jw^l)\sigma _{\alpha \beta }(w, h^je_0w^l)
+\sigma _{\alpha \beta }(h^iw^{k}, h^jw^{-l})\sigma _{\alpha \beta }(w, h^je_1w^l) \\ 
&=\sigma _{\alpha \beta }(h^iw^{k}, h^jw^l)\sigma _{\alpha \beta }(w, h^jw^l) \\ 
&=(\alpha \beta )^{2ij}(\alpha ^{-1}\beta )^{2kl}\times (\alpha ^{-1}\beta )^{2l} \\ 
&=(\alpha \beta )^{2ij}(\alpha ^{-1}\beta )^{2(k+1)l}. 
\end{align*}
Thus it is true for $k+1$, and the equation (\ref{eq6.22}) is proved by induction on $k$. 
\par 
Second, we show (\ref{eq6.23}).  
By use of (\ref{eq6.22}) we have 
\begin{align*}
\sigma _{\alpha \beta }(h^iw^k, h^jw^lt)
&=\sigma _{\alpha \beta }(h^iw^k, t)\sigma _{\alpha \beta }(h^ie_0w^k, h^jw^l)+
\sigma _{\alpha \beta }(h^iw^{-k}, t)\sigma _{\alpha \beta }(h^ie_1w^k, h^jw^l) \\ 
&=\sigma _{\alpha \beta }(h^iw^k, t)(\alpha \beta )^{2ij}(\alpha ^{-1}\beta )^{2kl}. 
\end{align*}
By induction on $k$ we can show that 
$\sigma _{\alpha \beta }(h^iw^k, t)=(-\alpha \beta ^{-1})^k(-1)^i$, and thus the equation (\ref{eq6.23}) holds. 
\par 
As a similar manner we can show that 
$\sigma _{\alpha \beta }(h^iw^k, h^jw^{-l})=(\alpha \beta )^{2ij}(\alpha \beta ^{-1})^{2kl},\ \sigma _{\alpha \beta }(t, h^jw^{l})$ $=(-\alpha ^{-1}\beta )^l(-1)^j$, and the equation (\ref{eq6.24}) holds. 
\par 
Finally, we show that (\ref{eq6.25}). 
For $a,b\in \{ 0,1\}$ we have 
\begin{align*}
\sigma _{\alpha \beta }(e_at, h^je_bw^l)
&=\frac{1}{4}\Bigl( \sigma _{\alpha \beta }(t, h^jw^l)+(-1)^b\sigma _{\alpha \beta }(t, h^{j+N}w^l) \\ 
&\quad +(-1)^a\sigma _{\alpha \beta }(h^Nt, h^jw^l)+(-1)^{a+b}\sigma _{\alpha \beta }(h^Nt, h^{j+N}w^l)\Bigr) \\ 
&=\frac{1+(-1)^{b+1}+(-1)^a+(-1)^{a+b+1}}{4}(-1)^j(-\alpha ^{-1}\beta )^l \\ 
&=\begin{cases}
(-1)^j(-\alpha ^{-1}\beta )^l &\quad \textrm{if $a$ is even, and $b$ is odd}, \\ 
0 &\quad \textrm{otherwise}.
\end{cases}
\end{align*}
Since $
\sigma _{\alpha \beta }(t,t)
=\sigma _{\alpha \beta }(x_{11}^{N-1}\chi _{22}+x_{12}^N,\  x_{11}^{N-1}\chi _{22}+x_{12}^N) 
=\sigma _{\alpha \beta }(x_{12}^N,x_{12}^N) 
=(\alpha \beta )^{\frac{N^2-1}{2}}\alpha $, 
we have 
\begin{align*}
\sigma _{\alpha \beta }(t, h^je_aw^lt)
&=\sigma _{\alpha \beta }(h^Nwt, t)\sigma _{\alpha \beta }(e_1t, h^je_aw^l)
+\sigma _{\alpha \beta }(t,t)\sigma _{\alpha \beta }(e_0t, h^je_aw^l) \\ 
&=\begin{cases}
0 &\quad \textrm{($a=0$)}, \\ 
(\alpha \beta )^{\frac{N^2-1}{2}}\alpha (-1)^j(-\alpha ^{-1}\beta )^l & \quad \textrm{($a=1$)}. 
\end{cases}
\end{align*}
So, by using the equation 
$\sigma _{\alpha \beta }(h^iw^k, h^jw^{-l}t)=(-1)^{i+k}(\alpha \beta )^{2ij}(\alpha \beta ^{-1})^{k(2l+1)}$, we have 
\begin{align*}
\sigma _{\alpha \beta }(h^iw^kt, h^jw^lt)
&=\sigma _{\alpha \beta }(h^iw^k, h^jw^lt)\sigma _{\alpha \beta }(t, h^je_0w^lt) \\ 
&\quad +\sigma _{\alpha \beta }(h^iw^k, h^{j+N}w^{-l+1}t)\sigma _{\alpha \beta }(t, h^je_1w^lt) \\ 
&=\sigma _{\alpha \beta }(h^iw^k, h^{j+N}w^{-l+1}t)\sigma _{\alpha \beta }(t, t)(-1)^j(-\alpha ^{-1}\beta )^l  \\ 
&=(-1)^{i+k}(\alpha \beta )^{2i(j+N)}(\alpha \beta ^{-1})^{k(2(l-1)+1)}\cdot  (\alpha \beta )^{\frac{N^2-1}{2}}\alpha (-1)^j(-\alpha ^{-1}\beta )^l \\ 
&=(-1)^{i+j+k+l}(\alpha \beta )^{2ij+\frac{N^2-1}{2}}(\alpha \beta ^{-1})^{2kl-k-l} \alpha . 
 \end{align*}
\end{proof}

\par \smallskip 
From here to the end of the paper, we suppose that $\boldsymbol{k}$ is a field whose characteristic does not divide $2nN$, and it contains a primitive $4nN$-th root of unity. 

\par \smallskip 
\begin{thm}\label{6.6}
Let $\alpha ,\beta $ be elements in $\boldsymbol{k}$ satisfying $(\alpha \beta )^N=1$ and $(\alpha \beta ^{-1})^n=\lambda $. 
Suppose that $\lambda =-1$ or $(\lambda , n)=(1, \textrm{odd})$. 
Then the braiding $\sigma _{\alpha \beta }$ of $A_{Nn}^{+\lambda }$ is non-degenerate if and only if 
\begin{enumerate}
\item[(i)] $\alpha \beta $ is a primitive $N$-th root of unity, and
\item[(ii)] $\alpha \beta ^{-1}$ is a primitive $n$-th root of $\lambda $.
\end{enumerate}
\end{thm}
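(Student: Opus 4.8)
The plan is to translate non-degeneracy of $\sigma_{\alpha\beta}$ into invertibility of its Gram matrix on a convenient basis, and then to evaluate that matrix using Lemma~\ref{6.5}. Recall that the braiding $\sigma_{\alpha\beta}$ is non-degenerate exactly when the bilinear form $\sigma_{\alpha\beta}\colon A_{Nn}^{+\lambda}\otimes A_{Nn}^{+\lambda}\to\boldsymbol{k}$ has trivial left and right radical, i.e. when the $4nN\times 4nN$ matrix
$$M=\bigl(\sigma_{\alpha\beta}(h^{i}w^{k}t^{p},\,h^{j}w^{l}t^{q})\bigr)_{\substack{0\le i,j\le 2N-1\\ 0\le k,l\le n-1\\ p,q\in\{0,1\}}}$$
is invertible, where $\{h^{i}w^{k}t^{p}\}$ is the basis of $A_{Nn}^{+\lambda}$ provided by Corollary~\ref{5.26}. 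Thus the assertion is a determinant computation, and the four cases $(p,q)\in\{0,1\}^{2}$ of Lemma~\ref{6.5}, namely the formulas (\ref{eq6.22})--(\ref{eq6.25}), furnish the four blocks of $M$.

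First I would exploit the product structure visible in (\ref{eq6.22})--(\ref{eq6.25}): with $\xi=\alpha\beta$ and $\eta=\alpha\beta^{-1}$, each entry of $M$ is a product of a factor depending on $(i,j)$ only through $\xi^{2ij}$ (times a sign depending on the parities of $i,j$, and, in the $(p,q)=(1,1)$ block, the nonzero scalar $\xi^{(N^{2}-1)/2}\alpha$) and a factor depending on $(k,l)$ only through a power of $\eta$ (again with parity signs). Factoring out of the rows and columns the invertible diagonal matrices with entries $\pm 1$ and powers of $\xi,\eta$, and applying the permutation $k\mapsto -k\ (\mathrm{mod}\ n)$ needed to align the $p=1$ blocks with the $p=0$ blocks, turns $M$ into a permutation-and-diagonal conjugate of a $2\times 2$ block matrix built from the two Fourier-type matrices $(\xi^{2ij})_{0\le i,j\le 2N-1}$ and $(\eta^{-2kl})_{0\le k,l\le n-1}$. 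Since the standing relations force $\xi^{N}=1$ and $\eta^{n}=\lambda$, the first of these is already singular, so $M$ is not a single Kronecker product; instead I would combine the $p=0$ and $p=1$ rows and columns by the "sum and difference" operations corresponding to the central idempotents $e_{0}=\frac{1+h^{N}}{2}$ and $e_{1}=\frac{1-h^{N}}{2}$ of $A_{Nn}^{+\lambda}$. Because $\xi^{2ij}$ depends only on $i\bmod N$, the apparent redundancy between $h^{i}$ and $h^{i+N}$ is resolved only after it is coupled to the $p$-index via $e_{0},e_{1}$; each resulting block then collapses to a genuine $N\times N$ (in the $h$-variable) or $n\times n$ (in the $w$-variable) character-pairing matrix.

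At this point $\det M$ becomes, up to a nonzero scalar, a product of powers of two classical Vandermonde determinants, one of the form $\prod_{0\le a<b\le N-1}(\xi^{2a}-\xi^{2b})$ and one of the form $\prod_{0\le c<d\le n-1}(\eta^{-2c}-\eta^{-2d})$ (a signed or twisted variant according to $\lambda$). The first is nonzero iff $1,\xi^{2},\dots,\xi^{2(N-1)}$ are pairwise distinct, which, as $N$ is odd, is equivalent to $\xi=\alpha\beta$ having order exactly $N$, i.e. to condition (i). The second is nonzero iff $1,\eta^{2},\dots,\eta^{2(n-1)}$ are pairwise distinct, which, in view of $\eta^{n}=\lambda$, is equivalent to $\eta^{d}\ne\lambda$ for all $0<d<n$, i.e. to $\eta=\alpha\beta^{-1}$ being a primitive $n$-th root of $\lambda$, which is condition (ii). Conversely, if (i) or (ii) fails, the corresponding Vandermonde factor vanishes, so $M$ is singular; this yields the other implication. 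The hypothesis that $\lambda=-1$ or $(\lambda,n)=(1,\text{odd})$ enters to guarantee that the algebra isomorphism $A_{Nn}^{+\lambda}\cong\boldsymbol{k}[G]$ of Proposition~\ref{5.24} has $G$ in the form used above (with the relation $w^{n}=h^{N}$, whence $w^{n}=\pm 1$ on $e_{0}A_{Nn}^{+\lambda}$, $e_{1}A_{Nn}^{+\lambda}$), and to keep the parity bookkeeping in the $t$-block consistent.

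The main obstacle is exactly this $t$-block bookkeeping. Since $t$ is not group-like in $A_{Nn}^{+\lambda}$---its coproduct involves $h^{N}wt$---the four blocks of $M$ are built from four slightly different twisted Fourier matrices rather than from a single one, and bringing them to a common form requires the nontrivial diagonal rescalings, the shift $k\mapsto -k$, and correct handling of the half-exponent $\xi^{(N^{2}-1)/2}$ occurring in (\ref{eq6.25}). Verifying that all of these manipulations contribute only invertible scalar, diagonal, and permutation factors, and in particular that no accidental cancellation occurs when $\lambda=-1$, is the delicate point; once this is settled, what remains is just the two standard Vandermonde evaluations above.
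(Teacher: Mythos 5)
Your strategy is correct, and the block decomposition you sketch does go through: in the basis $\{h^{i}e_{a}w^{k}t^{p}\}$ ($0\le i\le N-1$, $a,p\in\{0,1\}$) obtained from your ``sum and difference'' operations with $e_{0}=\frac{1+h^{N}}{2}$, $e_{1}=\frac{1-h^{N}}{2}$, the parity signs in (\ref{eq6.22})--(\ref{eq6.25}) kill all but four of the sixteen $(a,p)$-versus-$(b,q)$ blocks, and each surviving block is a Kronecker product of a diagonal rescaling of the $N\times N$ Vandermonde matrix $(\xi^{2ij})$ with a diagonal rescaling of an $n\times n$ Vandermonde matrix in $\eta^{\pm 2}$; no extra permutation $k\mapsto -k$ is actually needed. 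The paper organizes the argument differently: for degeneracy it exhibits explicit radical elements ($1-h^{2m'}$, $1-w^{n-r}$, $h^{N}-w^{n-r}$), and for non-degeneracy it writes down the inverse of the map $a\mapsto\sigma_{\alpha\beta}(a,\cdot)$ by explicit Fourier inversion using the orthogonality sums $\sum_{i}\xi^{2i(j-j')}$ and $\sum_{k}\eta^{2k(l-l')}$. Your single determinant computation handles both directions at once and makes visible that each of (i), (ii) controls exactly one Vandermonde factor, at the cost of having to justify the block reduction; the paper's version yields concrete radical elements and inversion formulas. One correction: the hypothesis $\lambda=-1$ or $(\lambda,n)=(1,\textrm{odd})$ is not about the presentation of $G$ or parity bookkeeping in the $t$-block; it is needed precisely so that ``$\eta^{2d}\ne 1$ for all $0<d<n$'' (non-vanishing of your $\eta$-Vandermonde, equivalently the orthogonality relation (\ref{eq:6.6}) in the paper) is equivalent to $\eta$ being a primitive $n$-th root of $\lambda$ --- for $(\lambda,n)=(1,\textrm{even})$ one has $\eta^{2(n/2)}=1$ even when (ii) holds, and the form degenerates.
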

\begin{proof}
To show \lq\lq if" part, we show the contraposition.  
\par 
Suppose that $\alpha \beta $ is not a primitive $N$-th root of unity. 
Then $N\geq 3$ is required since if $N=1$, then $\alpha \beta =(\alpha \beta )^N=1$. 
Let $\xi $ be a primitive $N$-th root of unity. 
Then $\alpha \beta $ is represented by $\alpha \beta =\xi ^m$ for some divisor $m\ (\not= 1)$ of $N$. 
Hence by setting $m^{\prime}:=N/m\ (<N)$, 
we have
\begin{align*}
\sigma _{\alpha \beta }(h^{2m^{\prime}},\ h^jw^l)
&=(\alpha \beta )^{4m^{\prime}j} 
=\xi ^{4mm^{\prime}j}
=\xi ^{4Nj} =1=\sigma _{\alpha \beta }(1, \ h^jw^l),\\ 
\sigma _{\alpha \beta }(h^{2m^{\prime}},\ h^jw^lt)
&=(-1)^{2m^{\prime}}
(\alpha \beta )^{4m^{\prime}j} 
=1=\sigma _{\alpha \beta }(1, \ h^jw^lt). 
\end{align*}
 Thus $\sigma _{\alpha \beta }(1-h^{2m^{\prime}},\ a)=0$ for all $a\in A_{Nn}^{+\lambda}$. 
Since $1<2m^{\prime}<2N$, we see that 
$1-h^{2m^{\prime}}\not= 0$. 
Therefore, $\sigma _{\alpha \beta }$ degenerates as a bilinear form on $A_{Nn}^{+\lambda}$. 
\par 
Next, suppose that 
$\alpha \beta ^{-1}$ is not a primitive $n$-th root of $\lambda$. 
Then there is an $r\in \mathbb{N}$ such that 
$1\leq r<n$ and $(\alpha \beta ^{-1})^{r}=\lambda $. 
So, $(\alpha \beta ^{-1})^{n-r}=1$. 
By Lemma~\ref{6.5} we have 
\begin{align*}
\sigma _{\alpha \beta }(w^{n-r},\ h^jw^l)
&=(\alpha \beta ^{-1})^{-2(n-r)l} 
=1,\\ 
\sigma _{\alpha \beta }(w^{n-r},\ h^jw^lt)
&=(-1)^{n-r}
(\alpha \beta ^{-1})^{-(n-r)(2l-1)} 
=(-1)^{n-r}. 
\end{align*}
\par 
If $n-r$ is even, then 
\begin{align*}
\sigma _{\alpha \beta }(w^{n-r},\ h^jw^l)&=1
=\sigma _{\alpha \beta }(1,\ h^jw^l),\\ 
\sigma _{\alpha \beta }(w^{n-r},\ h^jw^lt)&=1
=\sigma _{\alpha \beta }(1,\ h^jw^lt). 
\end{align*}
Thus 
$\sigma _{\alpha \beta }(1-w^{n-r},\ a)=0$ for all $a\in A_{Nn}^{+\lambda}$. 
It follows from $0<n-r\leq n-1$ that $1-w^{n-r}\not= 0$, and hence $\sigma _{\alpha \beta }$ degenerates. 
\par 
If $n-r$ is odd, then 
\begin{align*}
\sigma _{\alpha \beta }(w^{n-r},\ h^jw^l)&=1
=\sigma _{\alpha \beta }(h^N,\ h^jw^l),\\ 
\sigma _{\alpha \beta }(w^{n-r},\ h^jw^lt)&=-1
=\sigma _{\alpha \beta }(h^N,\ h^jw^lt).
\end{align*}
Thus 
$\sigma _{\alpha \beta }(h^N-w^{n-r},\ a)=0$ for all $a\in A_{Nn}^{+\lambda}$. 
Since $h^N-w^{n-r}\not= 0$, the braiding $\sigma _{\alpha \beta }$ also degenerates as a bilinear form. 
\par 
We will show \lq\lq only if" part. 
Let us consider the linear map $F:A_{Nn}^{+\lambda }\longrightarrow (A_{Nn}^{+\lambda })^{\ast}$ defined by 
$$F(a)=\sum\limits_{j=0}^{2N-1}\sum\limits_{l=0}^{n-1}\sigma _{\alpha \beta }(a, h^jw^l)(h^jw^l)^{\ast}+\sum\limits_{j=0}^{2N-1}\sum\limits_{l=0}^{n-1}\sigma _{\alpha \beta }(a, h^jw^lt)(h^jw^lt)^{\ast} \qquad (a\in A_{Nn}^{+\lambda }).$$
Here, $\{ \ (h^jw^lt^p)^{\ast}\ \vert \ 0\leq j\leq 2N-1,\ 0\leq l\leq n,\ p=0,1\ \} $ stands for the dual basis of the basis $\{ \ h^jw^lt^p\ \vert \ 0\leq j\leq 2N-1,\ 0\leq l\leq n,\ p=0,1\ \} $ of $A_{Nn}^{+\lambda }$. 
Setting $\xi =\alpha \beta ,\ \eta =\alpha \beta ^{-1}$, we have 
\begin{align*}
F(h^iw^k)
&=\sum\limits_{j=0}^{2N-1}\sum\limits_{l=0}^{n-1}\xi ^{2ij}\eta ^{-2kl}\bigl( (h^jw^l)^{\ast}+(-1)^{i+k}\eta ^k(h^jw^lt)^{\ast}\bigr) , \\ 
F(h^iw^kt)
&=\sum\limits_{j=0}^{2N-1}\sum\limits_{l=0}^{n-1}(-1)^{j+l}\xi ^{2ij}\eta ^{2kl-l}\bigl( (h^jw^l)^{\ast}+(-1)^{i+k}\xi ^{\frac{N^2-1}{2}}\eta ^{-k}\alpha (h^jw^lt)^{\ast}\bigr) . 
\end{align*}

In what follows, let $\xi $ be a primitive $N$-th root of unity, and $\eta $ is a primitive $n$-th root of $\lambda $. 
\par 
We consider the case of $N\geq 3$. 
Since $N$ is odd, the following claims hold for $j\in \mathbb{Z}$: 
\begin{enumerate}
\item[$\bullet$] $\xi ^{2j}=1 \ \ \Longleftrightarrow \ \ j\equiv 0\ (\textrm{mod}\ N)$, 
\item[$\bullet$] $\xi ^{2j}\not= -1$. 
\end{enumerate}
Therefore, for $j,j'\in \mathbb{Z}$, we have 
$$\sum\limits_{i=0}^{2N-1}\xi ^{2i(j-j')}
=\begin{cases}
2N &\quad \textrm{if $j\equiv j'\ (\textrm{mod}\ N$)},\\ 
0 &\quad \textrm{otherwise}, 
\end{cases} \qquad  
\sum\limits_{i=0}^{2N-1}(-1)^i\xi ^{2i(j-j')}
=0. 
$$
Thus we have 
\begin{align*}
\sum\limits_{i=0}^{2N-1}\xi ^{-2ij'}F(h^iw^k)
&=2N\sum\limits_{l=0}^{n-1}\eta ^{-2kl}\bigl( (h^{j'}w^l)^{\ast}+(h^{j'+N}w^l)^{\ast}\bigr) , \\ 
\sum\limits_{i=0}^{2N-1}\xi ^{-2ij'}F(h^iw^kt)
&=2N\sum\limits_{l=0}^{n-1}(-1)^{j'+l}\eta ^{2kl-l}\bigl( (h^{j'}w^l)^{\ast}-(h^{j'+N}w^l)^{\ast}\bigr) .
\end{align*}

Since $\eta ^{2(l-l')}=1$ if and only if $l-l' \equiv 0\ (\textrm{mod}\ n)$ under the condition $\lambda =-1$ or $(\lambda , n)=(1, \textrm{odd})$, 
it follows that for $l'\in \mathbb{Z}$
\begin{equation}\label{eq:6.6}
\sum\limits_{k=0}^{n-1}\eta ^{2k(l-l')}
=\begin{cases}
0 &\quad \textrm{otherwise}, \\ 
n &\quad \textrm{if $l\equiv l'\ (\textrm{mod}\ n$)}. 
\end{cases}
\end{equation}
This implies that 
\begin{align*}
\sum\limits_{i=0}^{2N-1}\sum\limits_{k=0}^{n-1}\eta ^{2kl'}\xi ^{-2ij'}F(h^iw^k)
&=2nN\bigl( (h^{j'}w^{l'})^{\ast}+(h^{j'+N}w^{l'})^{\ast}\bigr) , \\ 
\sum\limits_{i=0}^{2N-1}\sum\limits_{k=0}^{n-1}\eta ^{-2kl'}\xi ^{-2ij'}F(h^iw^kt)
&=2nN(-1)^{j'+l'}\eta ^{-l'}\bigl( (h^{j'}w^{l'})^{\ast}-(h^{j'+N}w^{l'})^{\ast}\bigr) . 
\end{align*}
From these equations, we have 
\begin{align*}
(h^{j'}w^{l'})^{\ast}&=\frac{1}{4nN}\Bigl( \sum\limits_{i=0}^{2N-1}\sum\limits_{k=0}^{n-1}\eta ^{2kl'}\xi ^{-2ij'}F(h^iw^k)+(-1)^{j'+l'}\eta ^{l'}\sum\limits_{i=0}^{2N-1}\sum\limits_{k=0}^{n-1}\eta ^{-2kl'}\xi ^{-2ij'}F(h^iw^kt)\Bigr) , \\ 
(h^{j'+N}w^{l'})^{\ast}&=\frac{1}{4nN}\Bigl( \sum\limits_{i=0}^{2N-1}\sum\limits_{k=0}^{n-1}\eta ^{2kl'}\xi ^{-2ij'}F(h^iw^k)-(-1)^{j'+l'}\eta ^{l'}\sum\limits_{i=0}^{2N-1}\sum\limits_{k=0}^{n-1}\eta ^{-2kl'}\xi ^{-2ij'}F(h^iw^kt)\Bigr) . 
\end{align*}

As a similar manner, we have 
\begin{align*}
\sum\limits_{i=0}^{2N-1}\sum\limits_{k=0}^{n-1}(-1)^{i+k}\eta ^{2kl'-k}\xi ^{-2ij'}F(h^iw^k)
&=2nN\bigl( (h^{j'}w^{l'}t)^{\ast}+(h^{j'+N}w^{l'}t)^{\ast}\bigr) ,\\ 
\sum\limits_{i=0}^{2N-1}\sum\limits_{k=0}^{n-1}(-1)^{i+k}\eta ^{-2kl'+k}\xi ^{-2ij'}F(h^iw^kt)
&=2nN(-1)^{j'+l'}\xi ^{\frac{N^2-1}{2}}\eta ^{-l'}\alpha \bigl( (h^{j'}w^{l'}t)^{\ast}-(h^{j'+N}w^{l'}t)^{\ast}\bigr) , 
\end{align*}
and 
\begin{align*}
(h^{j'}w^{l'}t)^{\ast}
=&\frac{1}{4nN}\Bigl( \sum\limits_{i=0}^{2N-1}\sum\limits_{k=0}^{n-1}(-1)^{i+k}\eta ^{2kl'-k}\xi ^{-2ij'}F(h^iw^k) \\ 
&\qquad +(-1)^{j'+l'}\xi ^{-\frac{N^2-1}{2}}\eta ^{l'}\alpha ^{-1}\sum\limits_{i=0}^{2N-1}\sum\limits_{k=0}^{n-1}(-1)^{i+k}\eta ^{-2kl'+k}\xi ^{-2ij'}F(h^iw^kt)\Bigr) ,\\ 
(h^{j'+N}w^{l'}t)^{\ast} =&\frac{1}{4nN}\Bigl( \sum\limits_{i=0}^{2N-1}\sum\limits_{k=0}^{n-1}(-1)^{i+k}\eta ^{2kl'-k}\xi ^{-2ij'}F(h^iw^k) \\ 
&\qquad -(-1)^{j'+l'}\xi ^{-\frac{N^2-1}{2}}\eta ^{l'}\alpha ^{-1}\sum\limits_{i=0}^{2N-1}\sum\limits_{k=0}^{n-1}(-1)^{i+k}\eta ^{-2kl'+k}\xi ^{-2ij'}F(h^iw^kt)\Bigr) .
\end{align*}

Thus $F$ is surjective, and whence $F$ is an isomorphism. 
This implies that $\sigma _{\alpha \beta }$ is non-degenerate. 
\par 
We consider the case of $N=1$. 
Then $\xi =1$, and 
\begin{align*}
F(h^iw^k)
&=\sum\limits_{j=0}^{1}\sum\limits_{l=0}^{n-1}\eta ^{-2kl}\bigl( (h^jw^l)^{\ast}+(-1)^{i+k}\eta ^k(h^jw^lt)^{\ast}\bigr) ,\\ 
F(h^iw^kt)
&=\sum\limits_{j=0}^{1}\sum\limits_{l=0}^{n-1}(-1)^{j+l}\eta ^{2kl-l}\bigl( (h^jw^l)^{\ast}+(-1)^{i+k}\eta ^{-k}\alpha (h^jw^lt)^{\ast}\bigr) .
\end{align*}
As a similar manner as above, we see that 
$(w^{l'})^{\ast}, (hw^{l'})^{\ast}, (w^{l'}t)^{\ast}, (hw^{l'}t)^{\ast} $ can be represented by linear combinations of $\{ F(h^iw^k), F(h^iw^kt)\} $, and hence  $\sigma _{\alpha \beta }$ is non-degenerate. 
\end{proof}

\par \smallskip 
\begin{cor}\label{6.7}
Suppose that $\lambda =-1$, or $(\lambda , n)=(1, \textrm{odd})$. 
Then the Hopf algebra $A_{Nn}^{+\lambda }$ is self-dual. 
\end{cor}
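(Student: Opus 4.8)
The plan is the following: for a suitable choice of parameters one produces a non-degenerate braiding on $A_{Nn}^{+\lambda}$; Lemma~\ref{4.4} together with Lemma~\ref{3.10} then turns it into a Hopf algebra isomorphism $A_{Nn}^{+\lambda}\cong (A_{Nn}^{+\lambda})^{\ast\textrm{cop}}$; and finally the transpose symmetry of the generators $x_{ij}$ yields an isomorphism $A_{Nn}^{+\lambda}\cong (A_{Nn}^{+\lambda})^{\textrm{cop}}$, which removes the \lq\lq$\textrm{cop}$".

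First I would choose $\alpha ,\beta \in \boldsymbol{k}$ with $(\alpha \beta )^N=1$ and $(\alpha \beta ^{-1})^n=\lambda$ for which $\alpha \beta$ is a primitive $N$-th root of unity and $\alpha \beta ^{-1}$ is a primitive $n$-th root of $\lambda$. Such $\alpha ,\beta$ exist: writing $\omega $ for a primitive $4nN$-th root of unity in $\boldsymbol{k}$, the element $\xi =\omega ^{4n}$ is a primitive $N$-th root of unity and, according as $\lambda =-1$ or $\lambda =1$, the element $\eta =\omega ^{2N}$ respectively $\eta =\omega ^{4N}$ is a primitive $n$-th root of $\lambda$; since $\xi \eta$ is an even power of $\omega$ it has a square root $\alpha$ among the powers of $\omega$, and then $\beta :=\alpha \eta ^{-1}$ satisfies $\alpha \beta =\xi$ and $\alpha \beta ^{-1}=\eta$. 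By Theorem~\ref{6.6} the braiding $\sigma :=\sigma _{\alpha \beta }$ of $A_{Nn}^{+\lambda}$ is non-degenerate for this choice.

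Next I would invoke Lemma~\ref{4.4}: $\sigma$ corresponds to a universal $R$-matrix $R=\iota ^{-1}(\sigma )$ of the finite-dimensional dual Hopf algebra $A^{\ast}:=(A_{Nn}^{+\lambda})^{\ast}$. Applying Lemma~\ref{3.10} to $A^{\ast}$ gives a Hopf algebra homomorphism $j_{A^{\ast}}(R)\colon A_{Nn}^{+\lambda}=(A^{\ast})^{\ast}\longrightarrow (A^{\ast})^{\textrm{cop}}$, which under the canonical identification $(A^{\ast})^{\ast}=A_{Nn}^{+\lambda}$ is the map $a\longmapsto \sigma (a,-)$. Non-degeneracy of $\sigma$ means exactly that this linear map is bijective (this is what the last part of the proof of Theorem~\ref{6.6} makes explicit), so it is a Hopf algebra isomorphism $A_{Nn}^{+\lambda}\cong (A^{\ast})^{\textrm{cop}}$; passing to opposite coproducts on both sides gives $(A_{Nn}^{+\lambda})^{\textrm{cop}}\cong A^{\ast}$.

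Finally I would prove $A_{Nn}^{+\lambda}\cong (A_{Nn}^{+\lambda})^{\textrm{cop}}$ by means of the \lq\lq transpose" map $\theta$ determined on generators by $\theta (x_{ij})=x_{ji}$. Using $x_{11}^2=x_{22}^2$, $x_{12}^2=x_{21}^2$, $x_{ij}x_{lm}=0$ for $i-j\not\equiv l-m$, the identity $1-\lambda ^2=0$ for the $\lambda$-relation, and $x_{11}^{2N}+\nu x_{12}^{2N}=1$, one checks that $\theta$ preserves every defining relation of $A_{Nn}^{+\lambda}$, hence extends to an algebra automorphism; and from $\Delta (x_{ij})=x_{i1}\otimes x_{1j}+x_{i2}\otimes x_{2j}$ and $\varepsilon (x_{ij})=\delta _{ij}$ one reads off $\Delta ^{\textrm{cop}}\circ \theta =(\theta \otimes \theta )\circ \Delta$ and $\varepsilon \circ \theta =\varepsilon$, so $\theta$ is a bialgebra, hence Hopf algebra, isomorphism $A_{Nn}^{+\lambda}\to (A_{Nn}^{+\lambda})^{\textrm{cop}}$. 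Chaining this with the previous isomorphism yields $A_{Nn}^{+\lambda}\cong (A_{Nn}^{+\lambda})^{\textrm{cop}}\cong A^{\ast}$, which is the assertion. I expect the only real subtlety to be that the non-degenerate braiding naturally produces $(A^{\ast})^{\textrm{cop}}$ rather than $A^{\ast}$, which is precisely why the transpose symmetry is needed; verifying that $\theta$ respects the somewhat implicit defining relations of $A_{Nn}^{+\lambda}$ is the one computation that must be carried out by hand.
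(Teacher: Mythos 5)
Your proposal is correct and follows essentially the same route as the paper: choose $\alpha ,\beta$ (you arrive at the paper's exact values $\alpha =\omega ^{N+2n},\ \beta =\omega ^{2n-N}$, resp.\ $\alpha =\omega ^{2N+2n},\ \beta =\omega ^{2n-2N}$), apply Theorem~\ref{6.6} to get a non-degenerate braiding, convert it into a Hopf algebra isomorphism onto a cop-twist of the dual, and remove the twist with the transpose map $x_{ij}\mapsto x_{ji}$. The only cosmetic differences are that you route the second step through Lemmas~\ref{3.10} and~\ref{4.4} where the paper directly observes that a braiding is a Hopf pairing $A^{\textrm{cop}}\otimes A\to \boldsymbol{k}$, and that you verify the transpose isomorphism on the defining relations by hand (correctly, including the $1-\lambda ^2=0$ point) where the paper cites Suzuki.
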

\begin{proof}
\par 
In general, for a finite-dimensional Hopf algebra $A$, any braiding $\sigma :A\otimes A\longrightarrow \boldsymbol{k}$ gives rise to the Hopf pairing 
$\langle \ ,\ \rangle : A^{\textrm{cop}}\otimes A\longrightarrow \boldsymbol{k}$ defined by $\langle x , y \rangle =\sigma (x, y)$ for $x,y\in A$, 
and this pairing induces a Hopf algebra map $F: A\longrightarrow (A^{\textrm{cop}})^{\ast }$ defined by $(F(a))(b)=\sigma (a,b)$ for $a,b\in A$. 
Applying this fact to the Hopf algebra $A_{Nn}^{+\lambda }$ and the braiding $\sigma _{\alpha \beta }$, we have a Hopf algebra map $F:A_{Nn}^{+\lambda }\longrightarrow ((A_{Nn}^{+\lambda })^{\textrm{cop}})^{\ast}$. 
Furthermore, an algebra isomorphism $\phi :A_{Nn}^{+\lambda }\longrightarrow A_{Nn}^{+\lambda }$ can be defined by $\phi (x_{ij})=x_{ji}\ (i,j=1,2)$, and we see that it becomes a Hopf algebra isomorphism form $A_{Nn}^{+\lambda }$ to  $(A_{Nn}^{+\lambda })^{\textrm{cop}}$ \cite{Suz}. 
So, if $\sigma _{\alpha \beta }$ is non-degenerate, then the composition ${}^t\kern-0.2em \phi \circ F:A_{Nn}^{+\lambda }\longrightarrow (A_{Nn}^{+\lambda })^{\ast}$ gives a Hopf algebra isomorphism. 
\par 
To complete the proof, by Theorem~\ref{6.6}, it suffices to show that there are $\alpha , \beta $ such that $\alpha \beta $ is a primitive $N$-th root of unity, and $\alpha \beta ^{-1}$ is a primitive $n$-th root of $\lambda$. 
Let $\omega \in \boldsymbol{k}$ be a primitive $4nN$-th root of unity. 
In the case of $\lambda =-1$, we take $\alpha =\omega ^{N+2n},\ \beta =\omega ^{2n-N}$. 
Then $\alpha \beta ^{-1}=\omega ^{2N}$ is a primitive $n$-th root of $-1$, and 
$\alpha \beta =\omega ^{4n}$ is a primitive $N$-th root of unity. 
In the case where $\lambda =1$ and $n$ is odd, we take 
$\alpha =\omega ^{2N+2n},\ \beta =\omega ^{2n-2N}$. 
Then $\alpha \beta ^{-1}=\omega ^{4N}$ and $\alpha \beta =\omega ^{4n}$
are primitive $n$-th and  primitive $N$-th roots of unity, respectively. 
\end{proof}

To show that $A_{Nn}^{++}$ is not self-dual for any even integer $n$, we compare   the groups of group-like elements of $A_{Nn}^{++}$ and $(A_{Nn}^{++})^{\ast}$. 
The  structure of $G(A_{Nn}^{\nu \lambda })$ for all $N\geq 1, n\geq 2$ and $\lambda , \nu =\pm 1$ has already determined by S. Suzuki~\cite{Suz}.  
In the case where $N$ is odd, and $\nu =+$ the group $G(A_{Nn}^{\nu \lambda })$ is given as follows. 

\par \smallskip 
\begin{lem}[{\bf S.Suzuki}]\label{6.10}
$$G(A_{Nn}^{+\lambda })\cong \begin{cases}
C_2\times C_{2N}  \quad & \textrm{$(n$ is even, or $(n, \lambda )=(\textrm{odd}, 1))$}, \\  
C_{4N} \quad &\textrm{$(n$ is odd, and $\lambda =-1)$}.
\end{cases} 
$$  
\end{lem}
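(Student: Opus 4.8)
The plan is to read off the $4N$ group-like elements from Proposition~\ref{5.2}, identify an index-$2$ cyclic subgroup among them, and reduce the whole classification to computing one square. First I would note that $h:=x_{11}^2-x_{12}^2$ is a group-like element of $A_{Nn}^{+\lambda}$ — it is the $s=1$, sign $-$ entry of the list in Proposition~\ref{5.2} — which lies in the center of $A_{Nn}^{+\lambda}$ by Lemma~\ref{5.1}, and which has (multiplicative) order $2N$; this last point is visible from the basis~(\ref{eq5.1}), since $h^{j}=x_{11}^{2j}+(-1)^{j}x_{12}^{2j}$. Hence $\langle h\rangle\cong C_{2N}$, and since $|G|=4N$ with $G:=G(A_{Nn}^{+\lambda})$ (Proposition~\ref{5.2}) this subgroup has index $2$. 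Because $\langle h\rangle$ is central in $G$ and $G/\langle h\rangle\cong C_2$ is cyclic, $G$ is abelian; being abelian of order $4N$ with $N$ odd, $G$ is $C_{4N}$ or $C_2\times C_{2N}$ according as its Sylow $2$-subgroup (of order $4$) is cyclic or not.

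Next I would choose a coset representative $x\in G\setminus\langle h\rangle$, namely the $s=N$, sign $+$ entry of the list in Proposition~\ref{5.2}; using $x_{11}^{2N+1}=x_{11}$ and $x_{12}^{2N+1}=x_{12}$ (Lemma~\ref{5.1}(3), $\nu=+$) this is
$$x=x_{11}\chi_{22}^{\,n-1}+\sqrt{\lambda}\,x_{12}\chi_{21}^{\,n-1}=\chi_{11}^{\,n}+\sqrt{\lambda}\,\chi_{12}^{\,n}.$$
It does not lie in $\langle h\rangle$ because, in the basis~(\ref{eq5.1})/(\ref{eq5.2}), every power of $h$ is supported on basis elements with no $\chi$-tail while $x$ has nonzero components on basis elements with a tail of length $n-1\ge1$. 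Writing $x^2=h^{j}$, the dichotomy is now governed by the parity of $j$: if $j$ is even, then $y:=xh^{-j/2}$ satisfies $y^2=1$ and $y\notin\langle h\rangle$, so $\{1,h^{N},y,yh^{N}\}$ is a Klein four-group equal to the Sylow $2$-subgroup and $G\cong C_2\times C_{2N}$; if $j$ is odd, then — $2$ being invertible modulo $N$ — one can pick $i$ with $j+2i$ prime to $2N$ and replace $x$ by $xh^{i}$, so that $x^2$ generates $\langle h\rangle$ and $x$ has order $4N$, giving $G\cong C_{4N}$.

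It remains to compute $x^2$. Since $x_{ij}x_{kl}=0$ whenever $i-j\not\equiv k-l\pmod 2$ (Lemma~\ref{5.1}(1)), both cross terms $\chi_{11}^{\,n}\chi_{12}^{\,n}$ and $\chi_{12}^{\,n}\chi_{11}^{\,n}$ vanish, so $x^2=(\chi_{11}^{\,n})^2+\lambda(\chi_{12}^{\,n})^2$. Expanding the alternating words and repeatedly moving the central squares $x_{11}^2=x_{22}^2$ and $x_{12}^2=x_{21}^2$ past single letters, and invoking Lemma~\ref{5.1}(5) (together with Lemma~\ref{5.1}(2) for the $\chi_{12}$-word when $n$ is even), one obtains $(\chi_{11}^{\,n})^2=x_{11}^{2n}$ always, while $(\chi_{12}^{\,n})^2=x_{12}^{2n}$ for $n$ odd and $(\chi_{12}^{\,n})^2=\lambda\,x_{12}^{2n}$ for $n$ even. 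Therefore $x^2=x_{11}^{2n}+\lambda x_{12}^{2n}$ when $n$ is odd and $x^2=x_{11}^{2n}+x_{12}^{2n}$ when $n$ is even. Comparing with $h^{j}=x_{11}^{2j}+(-1)^{j}x_{12}^{2j}$, we get: for $n$ even, $x^2=h^{n}$ with $n$ even; for $n$ odd and $\lambda=-1$, $x^2=h^{n}$ with $n$ odd; for $n$ odd and $\lambda=1$, $x^2=x_{11}^{2n}+x_{12}^{2n}$, which is the even power $h^{2m}$ with $2m\equiv n\pmod N$ (such $m$ exists as $N$ is odd). Feeding these into the previous paragraph yields $G\cong C_{4N}$ exactly when $n$ is odd and $\lambda=-1$, and $G\cong C_2\times C_{2N}$ otherwise.

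The routine but delicate part is the word bookkeeping for $(\chi_{11}^{\,n})^2$ and $(\chi_{12}^{\,n})^2$: one must carry out the parity case analysis of the alternating words $x_{11}x_{22}x_{11}\cdots$ and $x_{12}x_{21}x_{12}\cdots$ correctly and apply the length-$n$ relations of Lemma~\ref{5.1}(2),(5) in the right places — the factor $\lambda$ in $(x_{21}x_{12})^n=\lambda x_{12}^{2n}$ being precisely what separates the two cases. One also needs the fact that $\langle h\rangle$ has order exactly $2N$, for which Proposition~\ref{5.24} or a direct inspection of the basis~(\ref{eq5.1}) suffices. Everything else is elementary finite abelian group theory.
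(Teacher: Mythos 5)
Your argument is correct. Note first that the paper itself offers no proof of this lemma: it is attributed to S.~Suzuki and cited from \cite{Suz}, so what you have written is a genuinely self-contained derivation from Proposition~\ref{5.2} and Lemma~\ref{5.1} rather than an alternative to an argument in the text. The structure is sound: $h=x_{11}^2-x_{12}^2$ is central of order exactly $2N$ (the formula $h^j=x_{11}^{2j}+(-1)^jx_{12}^{2j}$ together with the reduction $x_{11}^{2N+2}=x_{11}^2$, $x_{12}^{2N+2}=x_{12}^2$ and the oddness of $N$ shows the powers $h^0,\dots,h^{2N-1}$ are distinct basis combinations), so $G$ is abelian of order $4N$ with cyclic odd part, and everything reduces to the parity of $j$ in $x^2=h^j$. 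Your word computations check out: the cross terms vanish by Lemma~\ref{5.1}(1) since adjacent letters have different parities of $i-j$; $(\chi_{11}^n)^2=x_{11}^{2n}$ in both parities of $n$; and the only place $\lambda$ enters is $(\chi_{12}^n)^2$, where for $n$ even one combines Lemma~\ref{5.1}(2) (to convert $(x_{12}x_{21})^n$ into $(x_{21}x_{12})^n$) with Lemma~\ref{5.1}(5) to get the extra factor $\lambda$, which then cancels against the $\lambda=(\sqrt{\lambda})^2$ from the cross-free square, yielding $x^2=x_{11}^{2n}+x_{12}^{2n}$, an even power of $h$. This correctly isolates $(n,\lambda)=(\mathrm{odd},-1)$ as the only case where $x^2$ is an odd power of $h$, and your coprimality adjustment $x\mapsto xh^i$ (possible since $2$ is invertible mod the odd number $N$) then produces an element of order $4N$. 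The final group-theoretic dichotomy (Klein four versus cyclic Sylow $2$-subgroup) is standard. The one point worth stating explicitly in a written version is the reduction rule $x_{11}^{2j}=x_{11}^{2j-2N}$ when matching $x^2$ against $h^j$, since the exponent $2n$ may exceed $2N$; you use it implicitly and it is harmless, but it is where the congruence $j\equiv n \pmod N$ comes from.
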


\par \smallskip 
On the contrary, we have: 

\par \smallskip 
\begin{lem}\label{6.11} 
The  structure of $G((A_{Nn}^{+\lambda })^{\ast})$ is given as follows. 
\begin{align*}
G((A_{Nn}^{++})^{\ast})&\cong \begin{cases}
SA_{8N} \quad & \textrm{($n$ is even}),\\ 
C_2\times C_{2N}
 \quad & \textrm{($n$ is odd}),  \end{cases} \\ 
G((A_{Nn}^{+-})^{\ast})&\cong \begin{cases}
C_2\times C_2\times C_N \quad & \textrm{($n$ is even}),\\ 
C_{4N} \quad & \textrm{($n$ is odd}), 
 \end{cases}
\end{align*}
where $SA_{8N}$ is the finite group of order $8N$ defined by 
$$SA_{8N}=\langle \ b,\ c\ \vert \ b^2=c^{4N}=1,\ cb=bc^{2N+1}\ \rangle .$$
\end{lem}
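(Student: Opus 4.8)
*The structure of $G((A_{Nn}^{+\lambda })^{\ast})$ is as listed above.*

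The plan is to compute the group $G((A_{Nn}^{+\lambda})^{\ast})$ by identifying it explicitly with the set of one-dimensional representations of $A_{Nn}^{+\lambda}$, equipped with the convolution product. Indeed, for any finite-dimensional Hopf algebra $A$, a group-like element of $A^{\ast}$ is the same thing as an algebra map $A\longrightarrow \boldsymbol{k}$, i.e. a one-dimensional representation of $A$; the product in $G(A^{\ast})$ corresponds to the tensor product of such representations, which is computed by the coproduct of $A$. So the first step is to enumerate the one-dimensional representations of $A_{Nn}^{+\lambda}$ and to read off their tensor products from the formulas for $\Delta$ on $A_{Nn}^{+\lambda}$.

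For the case $(\lambda,n)=(-1,\text{odd})$ or $\lambda=1$ and $n$ odd, or $n$ even — in short, whenever $A_{Nn}^{+\lambda}$ coincides with the group algebra $\boldsymbol{k}[G_{Nn}]$ or $\boldsymbol{k}[G]$ via Proposition~\ref{5.24} and Corollary~\ref{5.26} — I would use the Hopf structure transported from $\boldsymbol{k}[G]$ in Corollary~\ref{5.26}. A one-dimensional representation $\chi$ of $\boldsymbol{k}[G]$ as an \emph{algebra} is just a character of the abelianization $G^{\mathrm{ab}}$; the subtlety is that the coalgebra structure on $\boldsymbol{k}[G]$ used here is \emph{not} the cocommutative group-coalgebra one but the deformed $\Delta$ from Corollary~\ref{5.26}, so the convolution product of two such characters $\chi,\psi$ is $(\chi\otimes\psi)\circ\Delta$, which twists by $e_0,e_1,h^Nwt$ and hence is generally different from the pointwise product. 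Concretely I would parametrize a one-dimensional representation by the triple $(\chi(h),\chi(t),\chi(w))\in\boldsymbol{k}^{\times 3}$ subject to the relations $\chi(t)^2=\chi(h)^{2N}=1$, $\chi(w)^n=\chi(h)^{(n+\frac{\lambda-1}{2})N}$, $\chi(w)^2=1$ (from $tw=w^{-1}t$, which forces $\chi(w)=\chi(w)^{-1}$), then compute the convolution product using the explicit $\Delta(h),\Delta(t),\Delta(w)$ and $\varepsilon(e_0)=1$, $\varepsilon(e_1)=0$. One finds that the $h$-component multiplies as $\chi(h)\psi(h)$, but the $t$- and $w$-components pick up a factor $\chi(h)^N$ or similar according to which of $e_0,e_1$ is hit, and this extra cocycle factor is exactly what produces the non-split extensions $SA_{8N}$ and $C_{4N}$ rather than direct products.

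The main obstacle — and where the computation really has content — is case $(\lambda,n)=(+1,\text{even})$, where the answer is the semi-abelian group $SA_{8N}=\langle b,c\mid b^2=c^{4N}=1,\ cb=bc^{2N+1}\rangle$. Here I would pick generators $b,c\in G((A_{Nn}^{++})^{\ast})$ realized as specific one-dimensional representations: $c$ of order $4N$ coming from the characters detecting $h$ and $w$, and $b$ of order $2$ detecting $t$, and then verify the single non-obvious relation $cb=bc^{2N+1}$ by a direct convolution computation using $\Delta(t)=h^Nwt\otimes e_1t+t\otimes e_0 t$. The factor $h^Nw$ appearing in the first summand of $\Delta(t)$ is precisely the source of the $c^{2N}$ twist. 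Counting: $|G((A_{Nn}^{++})^{\ast})|$ equals the number of algebra maps $A_{Nn}^{++}\to\boldsymbol{k}$, which by the algebra isomorphism $A_{Nn}^{++}\cong\boldsymbol{k}[G_{Nn}']$ (Remark after Proposition~\ref{5.23}, with $G_{Nn}'=\langle h,t,w\mid t^2=h^{2N}=1,w^n=1,tw=w^{-1}t,\ldots\rangle$) equals $|(G_{Nn}')^{\mathrm{ab}}|=8N$ when $n$ is even, matching $|SA_{8N}|$. So once the presentation-relation is checked and $b,c$ shown to generate, the orders force the isomorphism. For the dual of $A_{Nn}^{+-}$ and for $n$ odd the analogous but easier computations give the stated abelian groups, and for the self-dual cases one can cross-check against Lemma~\ref{6.10} via the isomorphism $A_{Nn}^{+\lambda}\cong(A_{Nn}^{+\lambda})^{\ast}$ of Corollary~\ref{6.7}. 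I would finally remark that the discrepancy $G(A_{Nn}^{++})\not\cong G((A_{Nn}^{++})^{\ast})$ for $n$ even (noncommutative $SA_{8N}$ versus abelian $C_2\times C_{2N}$) immediately yields that $A_{Nn}^{++}$ is not self-dual, and in particular its dual has no quasitriangular structure.
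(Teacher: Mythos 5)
Your strategy is exactly the one the paper uses: identify $G((A_{Nn}^{+\lambda})^{\ast})$ with the set of algebra maps $A_{Nn}^{+\lambda}\to\boldsymbol{k}$, enumerate these via the algebra isomorphism with a group algebra from Proposition~\ref{5.24}, compute the convolution product from the deformed coproduct of Corollary~\ref{5.26}, and present the resulting group by generators and relations. Your count of $8N$ characters via the abelianization, and your identification of the summand $h^Nwt\otimes e_1t$ of $\Delta(t)$ as the source of the noncommutative twist, are both correct and match the paper's computation, where the product comes out as $\chi_{ijk}\chi_{i'j'k'}=\chi_{i+i'+(j+k)k',\,j+j',\,k+k'}$.

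The one concrete slip is your choice of generators in the case $(\lambda,n)=(+1,\textrm{even})$. The order-$2$ character \emph{detecting $t$} (i.e. $\chi(t)=-1$, $\chi(w)=\chi(h)=1$) is central in $G((A_{Nn}^{++})^{\ast})$; in the paper's notation it is $a=\chi_{100}=c^{2N}$, where $c=\chi_{001}$ is the character with $c(h)$ a primitive $2N$-th root of unity. So the pair you propose commutes and generates only the cyclic subgroup $\langle c\rangle$ of order $4N$, not the full group of order $8N$, and the relation $cb=bc^{2N+1}$ would fail (it degenerates to $cb=bc$). The correct second generator is the character $b=\chi_{010}$ detecting $w$: evaluating at $t$ via $\Delta(t)=h^Nwt\otimes e_1t+t\otimes e_0t$, one gets $(cb)(t)=c(t)b(t)=1$ because $b(e_1)=0$, but $(bc)(t)=b(h^Nwt)c(t)=b(w)=-1$ because $c(h)^N=-1$ forces $c(e_1)=1$; hence $bc=cb\cdot c^{2N}$, i.e. $cb=bc^{2N+1}$, and $\langle b,c\rangle$ has order $8N$. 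Since you explicitly flag that generation must be verified, this error would be caught in carrying out the computation, but as written that step of your outline does not go through.
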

\begin{proof} 
Let $\omega $ be a primitive $4nN$-th root of unity. 
\par 
(1) If $n$ is even, then for fixed integers $i,j,k$ there is an algebra map $\chi _{ijk}:A_{Nn}^{++}\longrightarrow \boldsymbol{k}$ such that 
$\chi _{ijk}(t)=(-1)^i,\ \chi _{ijk}(w)=(-1)^j,\ \chi_{ijk}(h)=\omega ^{2nk}$. 
Since any algebra map $A_{Nn}^{++}\longrightarrow \boldsymbol{k}$ coincides with some $\chi _{ijk}$,  
we have $G((A_{Nn}^{++})^{\ast})
=\{ \ \chi _{ijk}\ \vert \ i,j=0,1,\ k=0,1,\ldots ,2N-1\ \} $. 
Furthermore, since the product of $G((A_{Nn}^{++})^{\ast})$ is given by 
$$\chi _{ijk}\chi_{i^{\prime}j^{\prime}k^{\prime}}=\chi _{i+i^{\prime}+(j+k)k^{\prime}, j+j^{\prime}, k+k^{\prime}}\qquad (i,i^{\prime}, j,j^{\prime}, k,k^{\prime}\in \mathbb{Z}),$$
$a:=\chi _{100},\ b:=\chi_{010},\ c:=\chi_{001}$ satisfy the equations $a^2=b^2=1,\ c^{2N}=a,\ cb=bc^{2N+1}$. 
Thus we have $G((A_{Nn}^{++})^{\ast})
=\langle \ b,\ c\ \vert \ b^2=c^{4N}=1,\ cb=bc^{2N+1}\ \rangle 
=SA_{8N}$. 
\par 
If $n$ is odd, then for fixed integers 
$i,k$ there is an algebra map $\chi _{ik}:A_{Nn}^{++}\longrightarrow \boldsymbol{k}$ such that $\chi _{ik}(t)=(-1)^i,\ \chi _{ik}(w)=(-1)^k,\ \chi_{ik}(h)=\omega ^{2nk}$. 
Since any algebra map $A_{Nn}^{++}\longrightarrow \boldsymbol{k}$ coincides with some $\chi _{ik}$,  
we have $G((A_{Nn}^{++})^{\ast})
=\{ \ \chi _{ik}\ \vert \ i=0,1,\ k=0,1,\ldots ,2N-1\ \} $. 
Furthermore, since the product of $G((A_{Nn}^{++})^{\ast})$ is given by 
$$\chi _{ik}\chi _{i^{\prime}k^{\prime}}=\chi _{i+i^{\prime}, k+k^{\prime}}\qquad (i,i^{\prime}, k,k^{\prime}\in \mathbb{Z}), $$
we see that 
$G((A_{Nn}^{++})^{\ast})\cong C_2\times C_{2N}$. 
\par 
(2) If $n$ is even, then as the same manner with the proof of Part (1) we see that 
$G((A_{Nn}^{+-})^{\ast})
=\{ \ \chi _{ijk}\ \vert \ i,j=0,1,\ k=0,2,\ldots ,2N-2\ \} $, 
where $\chi _{ijk}:A_{Nn}^{+-}\longrightarrow \boldsymbol{k}$ is the algebra map such that $\chi _{ijk}(t)=(-1)^i,\ \chi _{ijk}(w)=(-1)^j,\ \chi_{ijk}(h)=\omega ^{2nk}$. 
The product of $G((A_{Nn}^{+-})^{\ast})$ is given by $\chi _{ijk}\chi_{i^{\prime}j^{\prime}k^{\prime}}=\chi _{i+i^{\prime}, j+j^{\prime}, k+k^{\prime}}$ for all $i,i^{\prime}, j,j^{\prime}, k,k^{\prime}\in \mathbb{Z}$. 
Thus the group $G((A_{Nn}^{+-})^{\ast})$ is commutative. 
Since $a:=\chi _{100},\ b:=\chi_{010},\ c:=\chi_{002}$ satisfy the equations $a^2=b^2=1,\ c^{N}=1$, we see that 
$G((A_{Nn}^{+-})^{\ast})=C_2\times C_2\times C_N$. 
\par 
If $n$ is odd, then, $G((A_{Nn}^{+-})^{\ast})
=\{ \ \chi _{ik}\ \vert \ i=0,1,\ k=0,2,\ldots ,2N-2\ \} $, where 
$\chi _{ik}:A_{Nn}^{+-}\longrightarrow \boldsymbol{k}$ is the algebra map such that $\chi _{ik}(t)=(-1)^i,\ \chi _{ik}(w)=1,\ \chi_{ik}(h)=\omega ^{2nk}$. 
Since the product of $G((A_{Nn}^{+-})^{\ast})$ is given by 
$\chi _{ik}\chi _{i^{\prime}k^{\prime}}=\chi _{i+i^{\prime}+kk^{\prime}, k+k^{\prime}}$ for all $i,i^{\prime}, k,k^{\prime}\in \mathbb{Z})$, 
we see that the group $G((A_{Nn}^{+-})^{\ast})$ is commutative. 
Furthermore, $a:=\chi _{10}$ and $b:=\chi_{01}$ satisfy the equations $a^2=1,\  b^{2N}=a$, and hence $G((A_{Nn}^{+-})^{\ast})=C_{4N}$. 
\end{proof}

\par \medskip 
If a semisimple Hopf algebra $A$ possesses a quasitriangular structure, the representation ring needs to be commutative. 
In the case where $N\geq 1$ is odd, and $n$ is even, by Lemma~\ref{6.11} 
the representation ring of the dual Hopf algebra $(A_{Nn}^{++})^{\ast}$ are not  commutative, and therefore there is no quasitriangular structure of $(A_{Nn}^{++})^{\ast}$. 
\par 
By Lemma~\ref{6.10} and Lemma~\ref{6.11} we have: 

\par \medskip 
\begin{prop}\label{6.12}
If $n$ is even, then the Hopf algebra 
$A_{Nn}^{++}$ is not self-dual. 
\end{prop}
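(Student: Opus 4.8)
The plan is to obtain a contradiction by comparing the groups of group-like elements of $A_{Nn}^{++}$ and of its dual Hopf algebra. The key observation is that any isomorphism of Hopf algebras restricts to a group isomorphism between the respective groups of group-like elements; consequently, if $A_{Nn}^{++}$ were isomorphic to $(A_{Nn}^{++})^{\ast}$ as a Hopf algebra, then $G(A_{Nn}^{++})$ and $G((A_{Nn}^{++})^{\ast})$ would be isomorphic groups, and in particular would have the same cardinality. So the whole argument reduces to exhibiting a numerical discrepancy between the two orders.

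First I would apply Lemma~\ref{6.10} with the parameter choice $\nu=+$, $\lambda=+1$ and $n$ even: since $N$ is odd (a standing assumption of this section) this lands in the first listed case, giving $G(A_{Nn}^{++})\cong C_2\times C_{2N}$, a group of order $4N$. Next I would apply Lemma~\ref{6.11}, again in the case $n$ even, which yields $G((A_{Nn}^{++})^{\ast})\cong SA_{8N}$; by its very definition in that lemma, $SA_{8N}$ is a group of order $8N$. Since $4N\neq 8N$, the two groups are not isomorphic, so $A_{Nn}^{++}$ cannot be self-dual.

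This is essentially all there is to it: the serious work has already been carried out in Lemmas~\ref{6.10} and~\ref{6.11}, and no further computation is needed. The only points demanding a modicum of care are purely bookkeeping: verifying that the parameter values $(\nu,\lambda)=(+,+1)$ with $n$ even genuinely fall under the quoted case of each of the two lemmas, and noting that the order of $SA_{8N}$ is read off directly from its presentation as stated. There is no real obstacle; the proof is a one-line consequence of the preceding structural results, and I would present it as such.
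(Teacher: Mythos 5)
Your proof is correct and follows essentially the same route as the paper: both arguments deduce non-self-duality by comparing $G(A_{Nn}^{++})\cong C_2\times C_{2N}$ (Lemma~\ref{6.10}) with $G((A_{Nn}^{++})^{\ast})\cong SA_{8N}$ (Lemma~\ref{6.11}) via the fact that a Hopf algebra isomorphism induces an isomorphism of the groups of group-like elements. The only cosmetic difference is that you distinguish the two groups by their orders ($4N$ versus $8N$), whereas the paper distinguishes them by commutativity; either observation suffices.
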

\begin{proof}
The group $G(A_{Nn}^{++})\cong C_2\times C_{2N}$  is commutative by Lemma~\ref{6.10}, meanwhile 
$G((A_{Nn}^{++})^{\ast})\cong SA_{8N}$ is not by Lemma~\ref{6.11}. 
This implies that $G(A_{Nn}^{++})\not\cong G((A_{Nn}^{++})^{\ast})$, and $A_{Nn}^{++}\not\cong (A_{Nn}^{++})^{\ast}$. 
\end{proof} 

\par \medskip 
\begin{cor}
If $n$ is even, then all braidings of $A_{Nn}^{++}$ degenerate. 
\end{cor}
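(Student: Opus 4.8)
The plan is to argue by contradiction, reusing the mechanism from the proof of Corollary~\ref{6.7} together with the non-self-duality established in Proposition~\ref{6.12}. Write $A=A_{Nn}^{++}$ and suppose $n$ is even. Recall that \emph{any} braiding $\sigma :A\otimes A\longrightarrow \boldsymbol{k}$ induces a Hopf algebra map $F:A\longrightarrow (A^{\textrm{cop}})^{\ast}$ via $(F(a))(b)=\sigma (a,b)$, and that $F$ is injective exactly when $\sigma $ is non-degenerate as a bilinear form on $A$ (left- and right-non-degeneracy agreeing by finite-dimensionality); since $\dim A=\dim (A^{\textrm{cop}})^{\ast}$, such injectivity is equivalent to $F$ being a Hopf algebra isomorphism. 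Suzuki's algebra automorphism $\phi :A\longrightarrow A$, $\phi (x_{ij})=x_{ji}$, is a Hopf algebra isomorphism onto $A^{\textrm{cop}}$, so its transpose furnishes a Hopf algebra isomorphism $(A^{\textrm{cop}})^{\ast}\cong A^{\ast}$. Composing, a non-degenerate braiding of $A$ would yield a Hopf algebra isomorphism $A\longrightarrow A^{\ast}$, i.e. $A$ would be self-dual.

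Next I would observe that this conclusion contradicts Proposition~\ref{6.12}, which asserts that $A_{Nn}^{++}$ is not self-dual when $n$ is even. Hence no braiding of $A$ can be non-degenerate. By Theorem~\ref{5.5} every braiding of $A$ equals some $\sigma _{\alpha \beta}$, and, when $n=2$, possibly some $\tau _{\gamma \xi}$; but the argument above is indifferent to the concrete form of the braiding, so both families are covered simultaneously and no case analysis is needed.

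The only point requiring attention is the first paragraph: one must confirm that injectivity of $F$ is genuinely equivalent to non-degeneracy of $\sigma $, and that composing $F$ with the transpose of $\phi $ really produces a Hopf algebra map $A\to A^{\ast}$ rather than merely an algebra or coalgebra map — but this is precisely the verification carried out in the proof of Corollary~\ref{6.7}, so nothing new is involved. There is no substantive obstacle; the corollary is essentially the contrapositive of the implication ``$A$ admits a non-degenerate braiding $\Rightarrow$ $A$ is self-dual'' combined with Proposition~\ref{6.12}.
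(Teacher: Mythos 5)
Your argument coincides with the paper's own proof: both assume a non-degenerate braiding, obtain the Hopf algebra isomorphism $A_{Nn}^{++}\longrightarrow ((A_{Nn}^{++})^{\textrm{cop}})^{\ast}$, compose with the transpose of Suzuki's isomorphism $\phi$ to deduce self-duality, and contradict Proposition~\ref{6.12}. The proposal is correct and essentially identical in structure to the published proof.
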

\begin{proof}
Assume that there is a non-degenerate braiding of $A_{Nn}^{++}$. 
Then there is a Hopf algebra isomorphism $F:A_{Nn}^{++}\longrightarrow ((A_{Nn}^{++})^{\textrm{cop}})^{\ast}$. 
Let us consider the Hopf algebra isomorphism $\phi :A_{Nn}^{++}\longrightarrow (A_{Nn}^{++})^{\textrm{cop}}$ defined by $\phi (x_{ij})=x_{ji} \ (i,j=1,2)$. 
Then the composition ${}^t\kern-0.2em \phi \circ F: A_{Nn}^{++}\longrightarrow (A_{Nn}^{++})^{\ast}$ is also a Hopf algebra isomorphism. This contradicts Proposition~\ref{6.12}. 
\end{proof}

\par \bigskip \noindent 
{\bf \large Acknowledgements}
\bigskip \par \ 
The author would like to thank Professor Akira Masuoka and Professor Ikuo Satake for helpful advice and discussions. 
Thanks are also due to Professor Juan Cuadra for his suggestions to improve my  preliminary version of the proof of Lemma~\ref{5.14}. 
This research is partially supported by Grant-in-Aid for Scientific Research (No. 19540025), JSPS.

\end{document}